\newtheorem{definition}{Definition}[section]
\newtheorem{lemma}{Lemma}[section]
\newtheorem{theorem}{Theorem}
\newtheorem{corollary}{Corollary}
\newtheorem{remark}{Remark}[section]
\newtheorem{proposition}{Proposition}[section]
\numberwithin{equation}{section}
\newtheorem{claim}{Claim}[section]
\numberwithin{equation}{section}
\renewcommand*{\backref}[1]{}
\renewcommand*{\backrefalt}[4]{\quad \tiny
  \ifcase #1 (\textbf{NOT CITED.})%
  \or    (Cited on Section~#2.)%
  \else   (Cited on Section~#2.)%
  \fi}
\def\MRbibitem{\@ifnextchar[\my@lbibitem\my@bibitem}
\def\mybiblabel#1#2{\@biblabel{{\hyperref{http://www.ams.org/mathscinet-getitem?mr=#1}{}{}{#2}}}}
\def\myhyperanchor#1{\Hy@raisedlink{\hyper@anchorstart{cite.#1}\hyper@anchorend}}
\def\my@lbibitem[#1]#2#3#4\par{%
  \item[\mybiblabel{#2}{#1}\myhyperanchor{#3}\hfill]#4%
  \@ifundefined{ifbackrefparscan}{}{\BR@backref{#3}}%
  \if@filesw{\let\protect\noexpand\immediate
    \write\@auxout{\string\bibcite{#3}{#1}}}\fi\ignorespaces%
}
\def\my@bibitem#1#2#3\par{%
  \refstepcounter\@listctr
  \item[\mybiblabel{#1}{\the\value\@listctr}\myhyperanchor{#2}\hfill]#3%
  \@ifundefined{ifbackrefparscan}{}{\BR@backref{#2}}%
  \if@filesw\immediate\write\@auxout
    {\string\bibcite{#2}{\the\value\@listctr}}\fi\ignorespaces%
}
\subjclass[2020]{Primary: 37D35, 37C45.  Second: 37B40.}
\keywords{fiber specification, fiber Bowen's topological entropy, fiber measure theoretical entropy, variational principle}
\author{Nian Liu}
\address[Nian Liu]
{Department of Mathematics\\
 Pennsylvania State University\\
 State College, PA 16801, USA}
\email[N.~Liu]{nkl5330@psu.edu}
\author{Xue Liu}
\address[Xue Liu]
{School of Mathematics\\ Southeast University\\
 Nanjing 211189, PR China}
\email[X.~Liu]{xueliuseu@seu.edu.cn}
\begin{document}

\begin{abstract}
In this paper, we establish a variational principle, between the fiber Bowen's topological entropy on conditional level sets of Birkhoff average and fiber measure-theoretical entropy, for the skew product transformation driven by a uniquely ergodic homeomorphism system satisfying Anosov and topological mixing on fibers property. We prove it by utilizing a fiber specification property.
Moreover, we prove that such skew product transformation has specification property defined by Gundlach and Kifer\cite{Kifer2000specification}. Employing their main results, every H\"older continuous potential has a unique equilibrium state, and we also establish a variational principle between the fiber measure-theoretic entropy and the fiber Bowen's topological entropy on conditional level sets of local entropy for such unique equilibrium state.
Examples of systems under consideration are given, such as fiber Anosov maps on 2-dimension tori driven by any irrational rotation on circle and random composition of 2x2 area preserving positive matrices driven by uniquely ergodic subshift.
\end{abstract}


\title{On the variational principle for a class of skew product transformations}
\maketitle

\tableofcontents
\parskip 10pt         

\section{Introduction}\label{section 1}
The present paper is devoted to the study of the multifractal structure of Birkhoff averages for skew product transformations.

\subsection{Motivation}Given a continuous transformation $f$ on a compact metric space $M$, and a continuous observable $\varphi:M\to\mathbb{R}$, then the space $M$ has a natural multifractal decomposition
\begin{equation*}
  M=\cup_{\alpha\in\mathbb{R}} M_{\varphi,\alpha}\cup I_\varphi,
\end{equation*}where $ M_{\varphi,\alpha}$ denotes the level sets of Birkhoff average
\begin{equation*}
   M_{\varphi,\alpha}=\{x\in M:\ \lim_{n\to\infty}\frac{1}{n}\sum_{i=0}^{n-1}\varphi(f^i(x))=\alpha\},
\end{equation*}and $I_{\varphi}$ denotes the collection of points for which the Birkhoff average does not exist. The multifractal analysis aims to investigates the dimension complexity of these sets, where the dimension characteristics include Hausdorff dimension, Bowen's topological entropy\cite{Bowen1973Topentropy}, and topological pressure given by Pesin and Pitskel \cite{PesinPitskel1984toppressure}. These dimension characteristic are all special cases of Carathéodory dimension structure, see \cite{Pesinbook} or \cite{climenhagaPesin}. The Hausdorff dimension and Bowen's topological entropy of $M_{\varphi,\alpha}$ was extensively studied in past decades, see \cite{PesinWeiss1997, TakensVerbitski1999specification, BarreiraSaussol2001, BarreiraSussolSchmeling2002, FengLauWu2002, Olsen2003, JohanssonThomasPollicott2010, Climenhaga2013PressureSimultaneousset, TianVarandas2017} and reference therein.
In particular, for a continuous observable $\varphi$ and for those $\alpha$ such that $M_{\varphi,\alpha}\not=\emptyset$, one established the following variational principle between $h_{top}(M_{\varphi,\alpha})$, the Bowen topological entropy of $M_{\varphi,\alpha}$, and the measure-theoretical entropy $h_\mu(f)$
\begin{equation*}
  h_{top}(M_{\varphi,\alpha})=\sup\{h_\mu(f):\ \mu\in\mathcal{M}_f(M) \mbox{ and }\int\varphi d\mu=\alpha\},
\end{equation*}for topological mixing subshift of finite type\cite{FanFeng2001}, then for systems with specification property\cite{TakensVerbitsky2003variationalprinciple}, for systems with $g$-almost product property\cite{PfisterSullivan2007galmost}, and for systems with non-uniform specification property\cite{TianVarandas2017}. For Banach valued continuous observable, similar variational principle was also established for system with specification property\cite{FanLiaoPeryiere2008}. As a generalisation of above variational principle, the variational principle regarding topological pressure of $M_{\varphi,\alpha}$ was also established for systems we mentioned above \cite{Thompson2009Variationalprinciplepressure, PeiYuErcaiChen2010, YinErcaiZhou2016}. Meanwhile, for the above systems,  the measure negligible set $I_\varphi$ carries full topological pressure \cite{Thompson2010irregularspecification, Thompson2012almostspecification, Tian2017}.  In the proof of above results, specification or ``weak" specification property plays a key role, and it is called orbit-gluing approach in some reference\cite{Climenhaga2014}. Roughly speaking, the specification property guarantees that any finite collection of arbitrary long orbit segments can be shadowed by the orbit of a point within given precision as long as one allows for enough time gap between segments.

In the scope of skew product transformation or random dynamical systems (which can also be transferred to skew product transformation), the conditional level sets of Birkhoff average has been investigated from Hausdorff dimension point of view for several systems. In \cite{FengShu2009}, for the product of a ergodic transformation on a Lebesgue space and a full shift of finite type, authors relates the Hausdorff dimension of conditional level sets of Birkhoff average and the relative measure-theoretic entropy via a variational principle. In \cite{Shulin2010}, the author proved that the Hausdorff dimension of conditional level sets of Birkhoff average for random perturbation of mixing smooth conformal repeller is approximating to the Hausdorff dimension of Birkhoff average for the unperturbed system. One of difficulty in studying the conditional level sets of Birkhoff average from topological entropy point of view is the missing of orbits gluing technique.
 Recently, in \cite{HLL}, for Anosov systems driven by external force satisfying topological mixing on fibers property, authors state a random specification property, which motivate us the fiber specification property. The fiber specification property introduced in this paper provides an orbit-gluing technique along the orbit of external force, which gives possibility to analysis the multifratcal structure of conditional level sets of Birkhoff average from the topological entropy point of view for skew product transformation generated by such external forced system.

For the purpose of getting external forced system involved, we introduce the fiber topological entropy on non-compact sets by employing the Carathéodory dimension structure (abbr. C-structure). The fiber Bowen's  topological entropy is a generalisation of the topological entropy defined by Bowen\cite{Bowen1973Topentropy} or equivalently by Pesin\cite{Pesinbook} on non-compact sets for deterministic system. While the classical fiber topological entropy resembles the box dimension, the fiber Bowen's topological entropy defined by C-structure resembles the Hausdorff dimension. In following subsections, we will introduce the settings of system, fiber specification property, fiber Bowen's topological entropy and our main results in details.
\subsection{Settings of system and fiber specification property}\label{section settings}
 Let $M$ be a connected closed smooth Riemannian manifold, and $d_M$ be the induced Riemannian metric on $M$. Let $\theta:\Omega\rightarrow \Omega$ be a homeomorphism on a compact metric space $(\Omega,d_\Omega)$. The product space $\Omega\times M$ is a compact metric space with distance $d((\omega_1,x_1),(\omega_2,x_2))=d_\Omega(\omega_1,\omega_2)+d_M(x_1,x_2)$ for any $\omega_1,\omega_2\in\Omega$ and $x_1,x_2\in M$. Let $\mathcal{H}=$diff$^2(M)$ be the space of $C^2$ diffeomorphisms on $M$ equipped with $C^2$-topology, and $F:\Omega\rightarrow \mathcal{H}$ be a continuous map. The skew product $\Theta:\Omega\times M\rightarrow \Omega\times M$ induced by $F$ and $\theta$ is defined by:
\begin{equation*}
  \Theta(\omega,x)=(\theta\omega,F_\omega x),\ \forall\omega\in\Omega,\ x\in M.
\end{equation*}
where we rewrite $F(\omega) $ as $F_\omega$. Then inductively:
\begin{equation*}
  \Theta^n(\omega,x)=(\theta^n\omega,F_\omega^nx):=
  \begin{cases}
  (\theta^n\omega,F_{\theta^{n-1}\omega}\circ\cdots\circ F_\omega x), & \mbox{if } n>0 \\(\omega,x), & \mbox{if } n=0 \\ (\theta^n\omega,(F_{\theta^n\omega})^{-1}\circ\cdots\circ (F_{\theta^{-1}\omega})^{-1}x), & \mbox{if } n<0.
  \end{cases}
\end{equation*}In this paper, we consider the target system $(\Omega\times M,\Theta)$ satisfying:
\begin{enumerate}[label=\textbf{(C.\arabic*)}]
  \item \label{C1} the driven system $(\Omega,\mathcal{B}_P(\Omega),P,\theta)$ is a uniquely ergodic homeomorphism equipped with the unique invariant Borel probability measure $P$ and $\sigma$-algebra $\mathcal{B}_P(\Omega)$, which is the completion of Borel $\sigma$-algebra with respect to $P$;
  \item \label{C2} Anosov on fibers:  for every $(\omega,x)\in \Omega\times M$, there is a splitting of the tangent bundle of $M_\omega=\{\omega\}\times M$ at $x$
\begin{equation*}
  T_xM_\omega=E_{(\omega,x)}^s\oplus E_{(\omega,x)}^u
\end{equation*}which depends continuously on $(\omega,x)\in M\times \Omega$ with $\dim E^s_{(\omega,x)},\ \dim E_{(\omega,x)}^u>0$ and satisfies that
\begin{equation*}
  D_xF_\omega E_{(\omega,x)}^u=E_{\Theta(\omega,x)}^u,\ \ D_xF_\omega E_{(\omega,x)}^s=E_{\Theta(\omega,x)}^s
\end{equation*}and
\begin{equation*}
  \begin{cases}
    |D_xF_\omega\xi|\geq e^{\lambda_0}|\xi|, & \ \forall \xi\in E_{(\omega,x)}^u, \\
    |D_xF_\omega\eta|\leq e^{-\lambda_0}|\eta|, &\ \forall \eta\in E_{(\omega,x)}^s,
  \end{cases}
\end{equation*}where $\lambda_0>0$ is a constant;
\item \label{C3} topological mixing on fibers: for any nonempty open sets $U,V\subset M$, there exists  $N>0$ such that for any $n\geq N$ and $\omega\in\Omega$
\begin{equation*}
  \Theta^n(\{\omega\}\times U)\cap (\{\theta^n\omega\}\times V)\not=\emptyset.
\end{equation*}
\end{enumerate}Examples of Anosov and topological mixing on fibers skew product driven by uniquely ergodic homeomorphism are given in Section \ref{section example}.

By Remark 1.1.8 in \cite{ArnoldBook1998}, we can identity the skew product $\Theta$ and the corresponding random dynamical system (abbr. RDS), which is $F:\mathbb{Z}\times \Omega\times M\to M$ by $F(n,\omega,x)=F_\omega^nx$. Here, we still use notation $F$ by avoiding abuse of notation.

\begin{definition}\label{def fiber specification}
 For any $\omega\in\Omega$, an $\omega-$specification $S_\omega=(\omega,\tau,P_\omega)$ consists of a finite collection of intervals $\tau=\{I_1,...,I_k\}$, $I_i=[a_i,b_i]\subset \mathbb{Z}$, and a map $P_{\omega}:\cup_{i=1}^kI_i\rightarrow M$ such that for $t_1,t_2\in I\in \tau,$
  \begin{equation*}
    F_{\theta^{t_1}\omega}^{t_2-t_1}( P_\omega(t_1))=P_\omega(t_2).
  \end{equation*}
An $\omega$-specification $S_\omega$ is called $m-$spaced if $a_{i+1}>b_i+m$ for all $i\in\{1,...,k-1\}$.

   The system $\Theta$ is said to have the fiber specification property if for any $\epsilon>0$, there exists $m=m(\epsilon)>0$ such that for any $\omega\in\Omega$, any $m$-spaced $\omega$-specification $S_\omega=(\omega,\tau,P_\omega)$, we can find a point $x\in M$, which is $(\omega,\epsilon)$-shadowing $S_\omega$, i.e.,
\begin{equation*}
d_M(P_\omega(t),F_\omega^tx)<\epsilon\mbox{ for any }t\in I\in \tau.
\end{equation*}
\end{definition}

\subsection{Fiber Bowen's topological entropy}

In this section, we introduce the fiber Bowen's  topological entropy on noncompact set, which is a special case of Carathéodory dimension structure. The definition of C-structure can be found in Chap. 1 of \cite{Pesinbook} or Sec. 5 of \cite{climenhagaPesin}. We note that for a fixed $\omega\in\Omega$, the systems along the orbit of $\omega$ can be viewed as an non-autonomous systems, whose Bowen's topological entropy is given in \cite{kolyada}.

For $n\in\mathbb{N}$, $\omega\in \Omega$, we denote the fiber Bowen's metric
\begin{equation*}
  d_\omega^n(x,y)=\max_{0\leq i\leq n-1}\{d_M(F_\omega^ix,F_\omega^iy)\}\mbox{ for }x,y\in M.
\end{equation*}Let $B_n(\omega,x,\epsilon)\subset M$ be the open ball of radius $\epsilon$ around $x\in M$ with respect to the metric $d_\omega^n$. For any $\epsilon>0$, $s\in\mathbb{R}$, on  the fiber $\{\omega\}\times M$, for any nonempty set $Z\subset M$, we define
\begin{equation}\label{def M(Z,s,w,N,epsilon)}
 m(Z,s,\omega,N,\epsilon)= \inf_{\Gamma_\omega^\epsilon}\sum_{i}e^{-sn_i},
\end{equation}where the infimum is taken over all finite or countable collections $\Gamma_\omega^\epsilon=\{B_{n_i}(\omega,x_i,\epsilon)\}$ with $Z\subset \cup_iB_{n_i}(\omega,x_i,\epsilon)$ and $\min\{n_i\}\geq N$. Note that $m(Z,s,\omega,N,\epsilon)$ does not decrease as $N$ increases, the following limit exists
\begin{align}\label{def m(z,s,omega,N,epsilon)}
   m(Z,s,\omega,\epsilon)&=\lim_{N\rightarrow\infty} m(Z,s,\omega,N,\epsilon).
\end{align}There exists a number $h_{top}(F,Z,\omega,\epsilon)\in[0,+\infty)$ such that
  \begin{equation}\label{def m(z,s,omega,epsilon)}
     m(Z,s,\omega,\epsilon)=\begin{cases}
                              +\infty, & \mbox{if } s<h_{top}(F,Z,\omega,\epsilon) \\
                              0, & \mbox{if } s>h_{top}(F,Z,\omega,\epsilon) .
                            \end{cases}
  \end{equation} Note that $m(Z,h_{top}(F,Z,\omega,\epsilon),\omega,\epsilon)$ could be $+\infty,\ 0$ or positive finite number.
\begin{lemma}\label{lemma htopZ exist}
  The value $h_{top}(F,Z,\omega,\epsilon)$ does not decrease as $\epsilon$ decreases. Therefore, the following limit exists
  \begin{equation}\label{def htopZ}
    h_{top}(F,Z,\omega)=\lim_{\epsilon\rightarrow 0} h_{top}(F,Z,\omega,\epsilon)=\sup_{\epsilon>0}h_{top}(F,Z,\omega,\epsilon).
  \end{equation}
\end{lemma}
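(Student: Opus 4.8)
The plan is to establish monotonicity of the auxiliary quantity $m(Z,s,\omega,N,\epsilon)$ in the parameter $\epsilon$, and then transfer this to the critical exponent $h_{top}(F,Z,\omega,\epsilon)$ through the jump characterization \eqref{def m(z,s,omega,epsilon)}. This is the standard monotonicity step in the construction of a Carathéodory dimension characteristic.

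First I would observe that, directly from the definition of the fiber Bowen metric $d_\omega^n$, for every $x\in M$, $n\in\mathbb{N}$ and every $0<\epsilon_1<\epsilon_2$ one has the inclusion $B_n(\omega,x,\epsilon_1)\subseteq B_n(\omega,x,\epsilon_2)$. Consequently, given any admissible collection $\Gamma_\omega^{\epsilon_1}=\{B_{n_i}(\omega,x_i,\epsilon_1)\}$ covering $Z$ with $\min_i n_i\ge N$ as in \eqref{def M(Z,s,w,N,epsilon)}, the collection $\{B_{n_i}(\omega,x_i,\epsilon_2)\}$ with the same centers and the same indices $n_i$ is again a cover of $Z$ with $\min_i n_i\ge N$, and it carries exactly the same weight $\sum_i e^{-sn_i}$. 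Taking the infimum over all such $\Gamma_\omega^{\epsilon_1}$ yields
\begin{equation*}
m(Z,s,\omega,N,\epsilon_2)\le m(Z,s,\omega,N,\epsilon_1),
\end{equation*}
and letting $N\to\infty$ in \eqref{def m(z,s,omega,N,epsilon)} gives $m(Z,s,\omega,\epsilon_2)\le m(Z,s,\omega,\epsilon_1)$ for all $s\in\mathbb{R}$.

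Next I would feed this inequality into \eqref{def m(z,s,omega,epsilon)}. If $s>h_{top}(F,Z,\omega,\epsilon_1)$, then $m(Z,s,\omega,\epsilon_1)=0$, hence $m(Z,s,\omega,\epsilon_2)=0$, which by \eqref{def m(z,s,omega,epsilon)} forces $s\ge h_{top}(F,Z,\omega,\epsilon_2)$. Since this holds for every $s>h_{top}(F,Z,\omega,\epsilon_1)$, we conclude $h_{top}(F,Z,\omega,\epsilon_2)\le h_{top}(F,Z,\omega,\epsilon_1)$ whenever $\epsilon_1<\epsilon_2$; equivalently, $\epsilon\mapsto h_{top}(F,Z,\omega,\epsilon)$ does not decrease as $\epsilon$ decreases. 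A monotone function of $\epsilon$ has a limit as $\epsilon\to 0^+$ in $[0,+\infty]$, and that limit equals $\sup_{\epsilon>0}h_{top}(F,Z,\omega,\epsilon)$, which is precisely \eqref{def htopZ}.

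I do not expect a serious obstacle: the inclusion $B_n(\omega,x,\epsilon_1)\subseteq B_n(\omega,x,\epsilon_2)$ is immediate and uses nothing beyond the already-assumed continuity, and the cover transfer is purely formal. The only point requiring (minimal) care is the passage through the critical-value characterization \eqref{def m(z,s,omega,epsilon)}: one must argue via the jump of $m$ from $+\infty$ to $0$ across $h_{top}$, rather than attempting to compare the boundary values $m(Z,h_{top}(F,Z,\omega,\epsilon),\omega,\epsilon)$, which may be $+\infty$, $0$, or a positive finite number and therefore do not by themselves yield the monotonicity of the exponent.
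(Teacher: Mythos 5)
Your proposal is correct and follows essentially the same route as the paper: the inclusion of Bowen balls under enlargement of $\epsilon$ transfers any admissible cover with the same centers and lengths, giving the monotonicity of $m(Z,s,\omega,N,\epsilon)$ and hence of the critical exponent, after which the limit in \eqref{def htopZ} exists by monotonicity. Your explicit passage through the jump of $m$ from $+\infty$ to $0$ is a slightly more detailed write-up of the step the paper compresses into ``which implies,'' but it is the same argument.
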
The proof of Lemma \ref{lemma htopZ exist} is addressed in Sec. \ref{5.1}. We call $h_{top}(F,Z,\omega)$ the fiber Bowen's  topological entropy of $F$ restricted on $Z$ and the fiber $\{\omega\}\times M$. The following lemma is a corollary of \cite[Theorem 1.1]{Pesinbook}.
\begin{lemma}\label{lemma property htop}
  The fiber Bowen's topological entropy has the following properties:
  \begin{enumerate}
    \item $h_{top}(F,Z_1,\omega)\leq h_{top}(F,Z_2,\omega)$ if $\emptyset \subsetneq Z_1\subset Z_2\subset M$.
    \item $h_{top}(F,Z,\omega)=\sup_i h_{top}(F,Z_i,\omega)$ if $Z$ is finite or countable union of nonempty sets $ Z_i\subset M$.
  \end{enumerate}
\end{lemma}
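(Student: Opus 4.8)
The plan is to read off Lemma~\ref{lemma property htop} from the general machinery of the Carath\'eodory dimension structure (C-structure). Fix $\omega\in\Omega$ and a Bowen-ball radius $\epsilon>0$. The ambient set $M$, the covering family consisting of $\emptyset$ together with all Bowen balls $\{B_n(\omega,x,\epsilon):n\in\mathbb{N},\,x\in M\}$, the ``size'' $e^{-n}$ and the ``fineness'' $1/n$ of a ball $B_n(\omega,x,\epsilon)$ constitute a C-structure on $M$ in the sense of \cite[Chapter~1]{Pesinbook}: with these choices the weight of a cover in the definition of $m(Z,s,\omega,N,\epsilon)$ is $\sum_i e^{-sn_i}$ and the requirement $\min\{n_i\}\ge N$ is exactly the fineness constraint, so that $m(Z,s,\omega,N,\epsilon)$, $m(Z,s,\omega,\epsilon)$ and $h_{top}(F,Z,\omega,\epsilon)$ are, respectively, the restricted Carath\'eodory measure at scale $N$, the Carath\'eodory $s$-measure, and the Carath\'eodory dimension $\dim_C Z$ of this structure. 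Then \cite[Theorem~1.1]{Pesinbook} applies and gives, for each fixed $\epsilon$, the monotonicity $h_{top}(F,Z_1,\omega,\epsilon)\le h_{top}(F,Z_2,\omega,\epsilon)$ whenever $\emptyset\ne Z_1\subset Z_2$, and the countable stability $h_{top}(F,\bigcup_i Z_i,\omega,\epsilon)=\sup_i h_{top}(F,Z_i,\omega,\epsilon)$.

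Before invoking \cite[Theorem~1.1]{Pesinbook} one must verify its hypotheses, i.e.\ the C-structure axioms. All but one are formal consequences of the definitions; the substantive one is the covering axiom: for every $N\in\mathbb{N}$ there should exist a finite or countable subfamily of Bowen balls of fineness $\le 1/N$ covering $M$. This holds because, for fixed $N$, each $B_N(\omega,x,\epsilon)$ is open and contains $x$, so $\{B_N(\omega,x,\epsilon):x\in M\}$ is an open cover of the compact manifold $M$ and therefore admits a finite subcover, every member of which has fineness exactly $1/N$. This one-line compactness step is the only non-formal ingredient, and it is the mild substitute, at the level of this lemma, for the orbit-gluing arguments that occupy the rest of the paper.

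Finally I would remove the radius $\epsilon$ using Lemma~\ref{lemma htopZ exist}, which gives $h_{top}(F,Z,\omega)=\sup_{\epsilon>0}h_{top}(F,Z,\omega,\epsilon)$. For (1), taking $\sup_{\epsilon>0}$ on both sides of the scale-$\epsilon$ monotonicity preserves the inequality, yielding $h_{top}(F,Z_1,\omega)\le h_{top}(F,Z_2,\omega)$. For (2), $h_{top}(F,\bigcup_i Z_i,\omega)=\sup_{\epsilon>0}\sup_i h_{top}(F,Z_i,\omega,\epsilon)=\sup_i\sup_{\epsilon>0}h_{top}(F,Z_i,\omega,\epsilon)=\sup_i h_{top}(F,Z_i,\omega)$, the interchange of the two suprema being harmless since each is an increasing limit. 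If one prefers not to quote \cite[Theorem~1.1]{Pesinbook}, (1) is immediate because any cover of $Z_2$ covers $Z_1$, hence $m(Z_1,s,\omega,N,\epsilon)\le m(Z_2,s,\omega,N,\epsilon)$ for every $s$ and the critical exponents compare; and (2) reduces to the countable subadditivity $m(\bigcup_i Z_i,s,\omega,N,\epsilon)\le\sum_i m(Z_i,s,\omega,N,\epsilon)$, obtained by concatenating, for each $i$, a cover of $Z_i$ whose weight is within $\delta 2^{-i}$ of the infimum, then letting $\delta\to 0$. I do not anticipate a genuine obstacle: the argument is a matter of matching definitions, with the covering-axiom compactness check the only point requiring attention.
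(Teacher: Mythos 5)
Your proposal is correct and takes essentially the same route as the paper, which disposes of this lemma in one line by declaring it a corollary of \cite[Theorem 1.1]{Pesinbook}, i.e.\ exactly the C-structure argument you describe. Your verification of the covering axiom via compactness, the passage from fixed $\epsilon$ to $h_{top}(F,Z,\omega)$ through Lemma \ref{lemma htopZ exist}, and the elementary fallback (monotonicity of $m(\cdot,s,\omega,N,\epsilon)$ under inclusion and countable subadditivity via $\delta 2^{-i}$-almost-optimal covers) are all sound.
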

For any subset $\{\omega\}\times Z\subset \Omega\times M$, denote $h_{top}(\Theta,\{\omega\}\times Z)$ to be the Bowen's topological entropy of topological dynamical system $(\Omega\times M,\Theta)$ on $\{\omega\}\times Z$ defined by Bowen \cite{Bowen1973Topentropy} or equivalently by using $C$-structure in \cite{TakensVerbitsky2003variationalprinciple} on noncompact sets. Note that for any $(\omega,x)\in \Omega\times M$, $n\in\mathbb{N}$, one has
\begin{equation*}
  \{\omega\}\times B_n(\omega,x,\epsilon)\subset B_n^\Theta((\omega,x),\epsilon):=\{(\beta,y)\in\Omega\times M:\ \max_{0\leq i<n}d(\Theta^i(\omega,x),\Theta^i(\beta,y))<\epsilon\}.
\end{equation*}Therefore, any covering $\{B_{n_i}(\omega,x_i,\epsilon)\}$ of $Z$ is corresponding to a covering $\{B_{n_i}^\Theta((\omega,x_i),\epsilon)\}$ of $\{\omega\}\times Z$. As a consequence, $h_{top}(\Theta,\{\omega\}\times Z)\leq h_{top}(F,Z,\omega)$ for all $\omega\in\Omega$.

\subsection{Conditional level sets of Birkhoff average}
In this section, we assume that $(\Omega\times M,\Theta)$ is Anosov and topological mixing on fibers system driven by a uniquely ergodic homeomorphism system $(\Omega,\mathcal{B}_P(\Omega),P,\theta)$.
For any continuous observable $\varphi:\Omega\times M\to \mathbb{R}$ and $\alpha\in\mathbb{R}$, denote the classical level set of Birkhoff average
\begin{equation*}
  K_{\varphi,\alpha}=\{(\omega,x)\in\Omega\times M:\ \lim_{n\rightarrow\infty}\frac{1}{n}\sum_{i=0}^{n-1}\varphi(\Theta^i(\omega,x))=\alpha\}
\end{equation*}and conditional level set of Birkhoff average $K_{\varphi,\alpha}(\omega)=\{x\in M: (\omega,x)\in K_{\varphi,\alpha}\}$. The proof of the following two lemmas are addressed in Sec. \ref{5.1}. 
\begin{lemma}\label{lemma Kalpha nonempty}
  For each $\alpha\in\mathbb{R}$, it is clear that $K_{\varphi,\alpha}\subset \Omega\times M$ is a $\Theta-$invariant Borel subset. Denote $\Omega_\alpha=\{\omega\in\Omega:\ K_{\varphi,\alpha}(\omega)\not=\emptyset\}$, then $\Omega_\alpha$ is a measurable set and either $P(\Omega_\alpha)=1$ or $P(\Omega_{\alpha})=0$.
\end{lemma}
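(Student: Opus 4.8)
I would prove the three assertions of Lemma~\ref{lemma Kalpha nonempty} separately. The Borel $\Theta$-invariance of $K_{\varphi,\alpha}$ is routine; the only point that requires a genuine tool is the measurability of $\Omega_\alpha$, and this is where I expect the main obstacle to be; the dichotomy $P(\Omega_\alpha)\in\{0,1\}$ is then immediate from ergodicity.

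\emph{Step 1: $K_{\varphi,\alpha}$ is a $\Theta$-invariant Borel set.} For $n\ge 1$ put $g_n:=\frac1n\sum_{i=0}^{n-1}\varphi\circ\Theta^i$, which is continuous on the compact metric space $\Omega\times M$. Since $\varphi$ is bounded, the functions $\overline g:=\limsup_{n\to\infty}g_n$ and $\underline g:=\liminf_{n\to\infty}g_n$ are Borel, and
\[
  K_{\varphi,\alpha}=\{\,z\in\Omega\times M:\ \overline g(z)\le\alpha\,\}\cap\{\,z:\ \underline g(z)\ge\alpha\,\},
\]
so $K_{\varphi,\alpha}$ is Borel. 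For the invariance, the identity $g_n(\Theta z)=\tfrac{n+1}{n}g_{n+1}(z)-\tfrac1n\varphi(z)$ together with $\|\varphi\|_\infty<\infty$ shows that $\lim_n g_n(\Theta z)$ exists and equals $\alpha$ if and only if $\lim_n g_n(z)$ does; hence $\Theta^{-1}K_{\varphi,\alpha}=K_{\varphi,\alpha}$, and since $\Theta$ is a bijection also $\Theta K_{\varphi,\alpha}=K_{\varphi,\alpha}$.

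\emph{Step 2: measurability of $\Omega_\alpha$.} Observe that $\Omega_\alpha=\pi_\Omega(K_{\varphi,\alpha})$, where $\pi_\Omega\colon\Omega\times M\to\Omega$ is the coordinate projection. Both $\Omega$ and $M$ are Polish spaces (being compact metric), so the image under $\pi_\Omega$ of the Borel set $K_{\varphi,\alpha}$ is an analytic (Souslin) subset of $\Omega$; every analytic subset of a Polish space is universally measurable, hence belongs to the $P$-completion $\mathcal{B}_P(\Omega)$ of the Borel $\sigma$-algebra. This measurable-projection fact is the crux of the lemma, and it is precisely why the statement is phrased with $\mathcal{B}_P(\Omega)$ rather than the Borel $\sigma$-algebra; an alternative would be to invoke a measurable selection theorem for the closed-valued multifunction $\omega\mapsto K_{\varphi,\alpha}(\omega)$, but the analytic-set argument is shorter.

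\emph{Step 3: the zero-one law.} The set $\Omega_\alpha$ is $\theta$-invariant: if $(\omega,x)\in K_{\varphi,\alpha}$ then $\Theta(\omega,x)=(\theta\omega,F_\omega x)\in K_{\varphi,\alpha}$ by Step~1, so $\theta\omega\in\Omega_\alpha$; conversely, if $(\theta\omega,y)\in K_{\varphi,\alpha}$ then, $F_\omega$ being a diffeomorphism of $M$, the point $(\omega,(F_\omega)^{-1}y)$ maps under $\Theta$ to $(\theta\omega,y)\in K_{\varphi,\alpha}$ and hence lies in $K_{\varphi,\alpha}$, so $\omega\in\Omega_\alpha$. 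Thus $\theta^{-1}\Omega_\alpha=\Omega_\alpha$. Since $(\Omega,\mathcal{B}_P(\Omega),P,\theta)$ is uniquely ergodic it is in particular ergodic, and an ergodic measure assigns mass $0$ or $1$ to every invariant set in its completed $\sigma$-algebra; therefore $P(\Omega_\alpha)\in\{0,1\}$.
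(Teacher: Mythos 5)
Your proposal is correct and follows essentially the same route as the paper: you identify $\Omega_\alpha=\pi_\Omega(K_{\varphi,\alpha})$ and obtain its measurability from a measurable-projection fact (the paper cites Crauel's Projection Theorem, you use the equivalent analytic-set/universal-measurability argument, both relying on the completeness of $\mathcal{B}_P(\Omega)$), then conclude with $\theta$-invariance of $\Omega_\alpha$ and ergodicity. The only difference is that you spell out the Borel-ness and $\Theta$-invariance of $K_{\varphi,\alpha}$, which the paper treats as clear.
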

Denote $L_\varphi:=\{\alpha\in\mathbb{R}:\ P(\Omega_\alpha)=1\}$, and denote $I_\Theta(\Omega\times M)$ and $I_\Theta^e(\Omega\times M)$ to be the collection of all $\Theta$-invariant  and $\Theta$-invariant ergodic Borel probability measures on $\Omega\times M$ respectively.
\begin{lemma}\label{lemma I alpha nonemptyset}
 For any $\alpha\in L_\varphi$, the set $I_\Theta(\Omega\times M,\varphi,\alpha):=\{\mu\in I_\Theta(\Omega\times M):\ \int\varphi d\mu=\alpha\}$ is a nonempty, convex and closed subset of $I_\Theta(\Omega\times M).$ As a consequence, $L_\varphi\subset \{\int \varphi d\mu: \ \mu\in I_\Theta(\Omega\times M)\}$. Moreover, $L_\varphi$ is nonempty bounded subset of $\mathbb{R}.$
\end{lemma}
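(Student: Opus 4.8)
The plan is to dispatch the structural properties of $I_\Theta(\Omega\times M,\varphi,\alpha)$ first, then produce an element by an empirical-measure argument, and finally handle the two statements about $L_\varphi$.

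\textbf{Convexity and closedness.} Since $\Omega\times M$ is compact and $\varphi$ is continuous, I would first note that the functional $\mu\mapsto\int\varphi\,d\mu$ is affine and continuous on the space of Borel probability measures on $\Omega\times M$ with the weak$^*$ topology, and that $I_\Theta(\Omega\times M)$ is convex and weak$^*$-compact. Then $I_\Theta(\Omega\times M,\varphi,\alpha)$ is the intersection of $I_\Theta(\Omega\times M)$ with the preimage of the single point $\{\alpha\}$ under this functional, hence convex and closed (in fact weak$^*$-compact).

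\textbf{Nonemptiness for $\alpha\in L_\varphi$.} Fix $\alpha\in L_\varphi$; then $P(\Omega_\alpha)=1$, so in particular $\Omega_\alpha\neq\emptyset$. I would pick $\omega\in\Omega_\alpha$ and $x\in K_{\varphi,\alpha}(\omega)$, so $\frac1n\sum_{i=0}^{n-1}\varphi(\Theta^i(\omega,x))\to\alpha$, form the empirical measures $\mu_n=\frac1n\sum_{i=0}^{n-1}\delta_{\Theta^i(\omega,x)}$, and pass to a weak$^*$-convergent subsequence $\mu_{n_k}\to\mu$. The standard Krylov--Bogolyubov computation shows $\mu\in I_\Theta(\Omega\times M)$, and continuity of $\varphi$ gives $\int\varphi\,d\mu=\lim_k\frac1{n_k}\sum_{i=0}^{n_k-1}\varphi(\Theta^i(\omega,x))=\alpha$. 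Hence $\mu\in I_\Theta(\Omega\times M,\varphi,\alpha)\neq\emptyset$, and the inclusion $L_\varphi\subset\{\int\varphi\,d\mu:\ \mu\in I_\Theta(\Omega\times M)\}$ is then immediate.

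\textbf{$L_\varphi$ nonempty and bounded.} Since $\Theta$ is a continuous self-map of the compact metric space $\Omega\times M$, Krylov--Bogolyubov gives a $\Theta$-invariant Borel probability measure, and an extreme point of the nonempty compact convex set $I_\Theta(\Omega\times M)$ furnishes an ergodic one, say $\mu$; put $\alpha_0=\int\varphi\,d\mu$. By Birkhoff's ergodic theorem, $\frac1n\sum_{i=0}^{n-1}\varphi(\Theta^i(\omega,x))\to\alpha_0$ for $\mu$-a.e.\ $(\omega,x)$, i.e.\ $\mu(K_{\varphi,\alpha_0})=1$. Disintegrating $\mu=\int\mu_\omega\,dP(\omega)$ over its $\Omega$-marginal, which equals $P$ because $\theta$ is uniquely ergodic by \ref{C1}, yields $\int\mu_\omega(K_{\varphi,\alpha_0}(\omega))\,dP(\omega)=1$, hence $K_{\varphi,\alpha_0}(\omega)\neq\emptyset$ for $P$-a.e.\ $\omega$; since $\Omega_{\alpha_0}$ is measurable by Lemma \ref{lemma Kalpha nonempty}, this forces $P(\Omega_{\alpha_0})=1$, so $\alpha_0\in L_\varphi$. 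Boundedness is then immediate from the inclusion already proved: every $\mu\in I_\Theta(\Omega\times M)$ satisfies $|\int\varphi\,d\mu|\le\sup_{\Omega\times M}|\varphi|<\infty$, so $L_\varphi\subset[-\sup_{\Omega\times M}|\varphi|,\ \sup_{\Omega\times M}|\varphi|]$.

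\textbf{Main obstacle.} No step is genuinely deep; the only points needing care are the routine verification that a weak$^*$-limit of empirical measures is $\Theta$-invariant, and the disintegration step in the last part, where I must ensure that the fiberwise sections $K_{\varphi,\alpha_0}(\omega)$ depend measurably on $\omega$ (this measurability is already established in the course of Lemma \ref{lemma Kalpha nonempty}) so that Fubini/disintegration applies.
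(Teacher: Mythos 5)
Your proposal is correct and follows essentially the same route as the paper: the Krylov--Bogolyubov empirical-measure argument along an orbit in $K_{\varphi,\alpha}(\omega_0)$ for nonemptiness, the observation that $I_\Theta(\Omega\times M,\varphi,\alpha)$ is a closed convex slice of the compact convex set $I_\Theta(\Omega\times M)$, and an ergodic measure plus Birkhoff's theorem to show $L_\varphi\neq\emptyset$, with boundedness from $\|\varphi\|_{C^0}$. The only difference is that you spell out the disintegration step showing $\mu(K_{\varphi,\alpha_0})=1$ forces $P(\Omega_{\alpha_0})=1$, which the paper leaves implicit; this is a correct and welcome elaboration rather than a different method.
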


For a topological dynamical system $(X,T)$ with the specification property, it is pointed out in \cite[Lemma 2.5]{Thompson2009Variationalprinciplepressure} that for any continuous observable $\psi:X\to\mathbb{R}$,
\begin{equation*}
  \left\{\alpha\in\mathbb{R}:\ \{\alpha\in\mathbb{R}:\ \lim_{n\to\infty}\frac{1}{n}\sum_{i=0}^{n-1}\psi(T^i(x))=\alpha\}\not=\emptyset\right\}=\left\{\int\psi d\mu:\ \mu\mbox{ is }T\mbox{-invariant}\right\},
\end{equation*}due to the fact that every $T$-invariant probability measure has a generic point. In this paper, it is not sure whether $\Theta$-invariant measure has generic points. Moreover, the existence of generic points is not enough to prove that $L_\varphi$ coincides $\{\int\varphi d\mu:\ \mu\in I_{\Theta}(\Omega\times M)\}$, since we need $P(\Omega_\alpha)=1$ for $\alpha\in L_\varphi$. However, our proof of Theorem \ref{thm variational principle} indicates that $L_\varphi=\{\int\varphi d\mu:\ \mu\in I_{\Theta}(\Omega\times M)\}$, see Remark \ref{remark final}.

\subsection{Main results}
We state our main results in this section.
In the first theorem, the unique ergodicity of the metric dynamical system $(\Omega,\theta)$ is not necessary.
\begin{theorem}\label{thm random specification}
  Assume that $(\Omega\times M,\Theta)$ is Anosov on fibers and topological mixing on fibers system driven by a homeomorphism $(\Omega,\theta)$, then $\Theta$ has the fiber specification property. Moreover, RDS $F$ corresponding to $\Theta$ has the specification property given by Gundlach and Kifer as in Remark \ref{remark kifer gundlach}.
\end{theorem}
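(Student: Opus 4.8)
The plan is to transplant Bowen's proof of specification for Anosov diffeomorphisms to the fibered setting: a \emph{shadowing} mechanism from hyperbolicity on fibers, and an \emph{orbit--bridging} mechanism from topological mixing on fibers. The first and main step is to establish a fiber shadowing lemma. By \ref{C2} the splitting $T_xM_\omega=E^s_{(\omega,x)}\oplus E^u_{(\omega,x)}$ varies continuously over the compact space $\Omega\times M$ and the rate $e^{\pm\lambda_0}$ is uniform, so $\Theta$ is a uniformly hyperbolic bundle map; the Hadamard--Perron/graph--transform construction carried out in the fibers then yields fiber local stable and unstable manifolds $W^{s}_{\mathrm{loc}}(\omega,x)$, $W^{u}_{\mathrm{loc}}(\omega,x)$ of a uniform size $r_0>0$, depending continuously on $(\omega,x)$, together with a uniform local product structure. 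From this one deduces the fiber shadowing lemma: for every $\epsilon\in(0,r_0)$ there is $\delta(\epsilon)>0$ such that for every $\omega\in\Omega$, every $a\le b$ in $\mathbb Z$, and every $\delta(\epsilon)$-pseudo-orbit along the $\omega$-fiber, i.e. a sequence $(y_t)_{a\le t\le b}$ with $d_M(F_{\theta^t\omega}y_t,y_{t+1})<\delta(\epsilon)$ for $a\le t<b$, there exists $z\in M$ with $d_M(F_\omega^t z,y_t)<\epsilon$ for all $a\le t\le b$. The proof is the classical one --- iteratively correcting the pseudo-orbit by sliding along local stable and unstable leaves and summing the corrections via the uniform hyperbolic estimates --- and I expect this step, in particular verifying that \emph{every} constant is genuinely independent of $\omega$, to be the main obstacle; the rest is soft and uses \ref{C1} and \ref{C3} only through compactness.

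Second, I would upgrade \ref{C3} to a uniform form. Fix $\delta>0$, cover $M$ by finitely many balls $B(p_1,\delta),\dots,B(p_r,\delta)$, let $N_{ij}$ be the constant given by \ref{C3} for $U=B(p_i,\delta)$, $V=B(p_j,\delta)$, and set $N(\delta)=\max_{i,j}N_{ij}$. Then for every $\omega$, all $x,y\in M$ and all $n\ge N(\delta)$ there is $u\in M$ with $d_M(u,x)<2\delta$ and $d_M(F_\omega^n u,y)<2\delta$: choose $i,j$ with $x\in B(p_i,\delta)$, $y\in B(p_j,\delta)$ and apply $\Theta^n(\{\omega\}\times B(p_i,\delta))\cap(\{\theta^n\omega\}\times B(p_j,\delta))\neq\emptyset$.

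Third, the gluing. Given $\epsilon>0$, put $\delta=\delta(\epsilon)$ and $m=m(\epsilon):=N(\delta/2)+1$, and let $S_\omega=(\omega,\tau,P_\omega)$ be an $m$-spaced $\omega$-specification with $\tau=\{I_1,\dots,I_k\}$, $I_i=[a_i,b_i]$. Define $(y_t)_{a_1\le t\le b_k}$ by $y_t=P_\omega(t)$ for $t\in I_i$, which is a genuine orbit segment by the compatibility relation in Definition \ref{def fiber specification} (so the pseudo-orbit error vanishes on each $I_i$); and on each gap, writing $g_i=a_{i+1}-b_i-1\ge m>N(\delta/2)$, apply the uniform mixing above at base point $\theta^{b_i+1}\omega$ and time $g_i$ to $q:=F_{\theta^{b_i}\omega}(P_\omega(b_i))$ and $P_\omega(a_{i+1})$ to get $u_i\in M$ with $d_M(u_i,q)<\delta$ and $d_M(F_{\theta^{b_i+1}\omega}^{\,g_i}u_i,P_\omega(a_{i+1}))<\delta$, and set $y_{b_i+1+j}=F_{\theta^{b_i+1}\omega}^{\,j}u_i$ for $0\le j\le g_i-1$. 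A direct check shows $(y_t)$ is a $\delta$-pseudo-orbit along the $\omega$-fiber whose only nonzero errors, both $<\delta$, sit at the two seams of each gap. The fiber shadowing lemma then produces $z\in M$ with $d_M(F_\omega^t z,y_t)<\epsilon$ for all $a_1\le t\le b_k$, hence $d_M(F_\omega^t z,P_\omega(t))<\epsilon$ for every $t\in I\in\tau$; so $z$ is $(\omega,\epsilon)$-shadowing $S_\omega$, proving that $\Theta$ has the fiber specification property.

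Finally, for the specification property of the RDS $F$ in the sense of Gundlach and Kifer (Remark \ref{remark kifer gundlach}): the constant $m(\epsilon)$ obtained above is independent of $\omega$, so the only additional ingredient needed is a measurable choice of shadowing point. For fixed $\epsilon$ and measurably varying specification data, the set of $(\omega,\epsilon)$-shadowing points is a nonempty closed subset of $M$ depending measurably on $\omega$, so the Kuratowski--Ryll-Nardzewski measurable selection theorem furnishes a measurable selection; this yields the Gundlach--Kifer specification and completes the proof of Theorem \ref{thm random specification}.
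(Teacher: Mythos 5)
Your proof of the fiber specification property itself is sound and takes a mildly different route from the paper: you upgrade \ref{C3} to a uniform mixing statement by compactness (as the paper does in Proposition \ref{N intersection of su}), but then you glue via a fiber shadowing lemma applied to a mixing-bridged pseudo-orbit, whereas the paper never invokes a shadowing lemma: it builds the shadowing point directly by intersecting forward images of local unstable manifolds with local stable manifolds of the prescribed segments (equations \eqref{x a 1}--\eqref{construction of x_a_k}) and controls the errors by an induction along the unstable direction. Your route is legitimate provided you actually prove the uniform fiber shadowing lemma, which does hold here because the constants in Lemmas \ref{lemma stable unstable manifolds} and \ref{local product structure} are uniform in $(\omega,x)$ (a random shadowing lemma of exactly this kind is available in the literature the paper builds on).

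The genuine gap is in the ``Moreover'' part. The Gundlach--Kifer specification recalled in Remark \ref{remark kifer gundlach} consists of \emph{two} conditions: \eqref{kifer s1}, which your finite-interval fiber specification does give, and \eqref{kifer s2}, which demands that the same point $z$ track the orbit of $x_0$ for \emph{all} times $j\geq a_{k+1}$ and \emph{all} times $j\leq b_0$ --- an infinite, two-sided requirement that simply does not follow from shadowing finitely many finite orbit segments. Your proposal never addresses \eqref{kifer s2}; instead it identifies ``a measurable choice of shadowing point'' as the missing ingredient, but the Gundlach--Kifer definition only asks for the \emph{existence} of $z$ (the only random object is $L_c$, which in your construction is already a constant), so the Kuratowski--Ryll-Nardzewski selection argument is both unnecessary and irrelevant to what is actually missing. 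The paper closes this gap by exploiting the structure of its construction: appending the degenerate intervals $\{b_0\}$ and $\{a_{k+1}\}$ carrying the orbit points of $x_0$, the constructed point satisfies $F_\omega^{a_{k+1}}(z)\in W_\gamma^s(\theta^{a_{k+1}}\omega,F_\omega^{a_{k+1}}(x_0))$ and $F_\omega^{b_0}(z)\in W_{2\gamma}^u(\theta^{b_0}\omega,F_\omega^{b_0}(x_0))$, so the forward and backward contraction of Lemma \ref{lemma stable unstable manifolds}(2) yields \eqref{kifer s2} for all $j\geq a_{k+1}$ and $j\leq b_0$. Within your framework the natural repair is to apply the \emph{bi-infinite} shadowing lemma to a pseudo-orbit that follows the genuine orbit of $x_0$ on $(-\infty,b_0]$ and on $[a_{k+1},+\infty)$, the segments $F_\omega^j(x_i)$ on $[a_i,b_i]$, and mixing bridges on the gaps; as written, however, the claim that fiber specification plus a measurable selection yields the Gundlach--Kifer property is not a proof of \eqref{kifer s2}.
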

Denote $Pr(\Omega\times M)$ and $Pr(M)$ to be the space of Borel probability measures on $\Omega\times M$ and $M$ respectively, which are compact spaces with respect to the weak$^*$ topology, and $Pr_P(\Omega\times M)\subset Pr(\Omega\times M)$ to be the Borel probability measure with marginal $P$ on $\Omega$, i.e. for any $\mu\in Pr_P(\Omega\times M)$, $\mu\circ \pi_\Omega^{-1}=P$, where $\pi_\Omega:\Omega\times M\to \Omega$ denotes the projection into the first coordinate. For any $\mu\in Pr_P(\Omega\times M)$, by the disintegration theorem (see \cite[Proposition 3.6]{Hans02}), there exists a measurable mapping called disintegration $\mu_\cdot:\Omega\to Pr(M)$ by $\omega\mapsto\mu_\omega$ such that $\int_{\Omega\times M} h(\omega,x)d\mu=\int_\Omega\int_Mh(\omega,x)d\mu_\omega(x) dP(\omega)$ for any continuous function $h:\Omega\times M\rightarrow \mathbb{R}$. Moreover, the disintegration is $P-$a.s. unique.

 When the uniquely ergodicity of $(\Omega,\theta)$ is present, one must have
\begin{equation}\label{invariant marginal P}
  I_\Theta(\Omega\times M)\subset Pr_P(\Omega\times M),
\end{equation}since $\mu\circ \pi_\Omega^{-1}$ is $\theta$-invariant for any $\mu\in I_\Theta(\Omega\times M)$. Relation \eqref{invariant marginal P} enable us to use lots of results in ergodic theory of RDS.

For a continuous potential $\varphi\in C(\Omega\times M,\mathbb{R})$, let $\pi_F(\varphi)$ be the fiber  topological pressure defined in \eqref{eq fiber top pressure} by using separated set, which is related to fiber measure theoretic entropy by the  following fiber (or relative) variational principle (see \cite[Theorem 1.2.13]{KL06})
\begin{equation}\label{fiber vp}
\begin{split}
  \pi_F(\varphi)=&\sup\left\{h_\mu(F)+\int \varphi d\mu:\ \mu\in I_\Theta(\Omega\times M)\cap Pr_P(\Omega\times M)\right\}\\
  \overset{\eqref{invariant marginal P}}=&\sup\left\{h_\mu(F)+\int \varphi d\mu:\ \mu\in I_\Theta(\Omega\times M)\right\},
\end{split}
\end{equation}where $h_\mu(F)=h_{\mu}^{(r)}(\Theta)$ is the fiber measure theoretic entropy of $F$ or relative measure theoretic entropy of $\Theta$ (see section \ref{section measure entropy}). When $\varphi=0$, $h_{top}(F):=\pi_F(0)$ is the classical fiber topological entropy. We also note that the generator of RDS $F_\omega\in$Diff$^2(M)$ and continuous depending on $\omega$. By the Margulis-Ruelle inequality for RDS \cite[Theorem 1]{Rulleinequality}, in the settings of this paper, one has
\begin{equation*}
  h_{top}(F)<\infty.
\end{equation*}

 With the help of fiber specification property in Theorem \ref{thm random specification}, we obtain the following variational principle, which can be viewed as a generalisation of variational principle established in \cite{TakensVerbitsky2003variationalprinciple}.
\begin{theorem}\label{thm variational principle}
  Assume $(\Omega\times M,\Theta)$ is Anosov and topological mixing on fibers system driven by a uniquely ergodic homeomorphism system $(\Omega,\mathcal{B}_P(\Omega),P,\theta)$. Let $\varphi\in C(\Omega\times M,\mathbb{R})$, for $\alpha\in L_\varphi$, then the following holds for $P$-a.s. $\omega\in\Omega$:
  \begin{equation*}
    h_{top}(F,K_{\varphi,\alpha}(\omega),\omega)=\max\{h_\mu(F):\ \mu\in I_\Theta(\Omega\times M,\varphi,\alpha)\}.
  \end{equation*}
\end{theorem}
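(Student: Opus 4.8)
The plan is to establish the two inequalities separately, following the classical template of Takens–Verbitskiy but systematically relativizing every step over the base $\Omega$ and using the fiber specification property of Theorem \ref{thm random specification} in place of ordinary specification. For the \emph{upper bound} $h_{top}(F,K_{\varphi,\alpha}(\omega),\omega)\le \sup\{h_\mu(F):\mu\in I_\Theta(\Omega\times M,\varphi,\alpha)\}$, I would first observe that $\{\omega\}\times K_{\varphi,\alpha}(\omega)\subset K_{\varphi,\alpha}$, so by the remark at the end of the ``fiber Bowen's topological entropy'' subsection it suffices to bound $h_{top}(\Theta,\{\omega\}\times K_{\varphi,\alpha}(\omega))$ from above; but actually, since we need the sharper fiber quantity, I would instead run a direct entropy-distribution argument on the fiber: given $s<h_{top}(F,K_{\varphi,\alpha}(\omega),\omega)$, one produces for each small $\epsilon$ a sequence of $(\omega,n,\epsilon)$-separated sets inside $K_{\varphi,\alpha}(\omega)$ whose cardinalities grow like $e^{sn}$ (this is the standard passage from the Carathéodory/covering definition to separated sets, using Lemma \ref{lemma property htop} and a Frostman-type argument exactly as in \cite{TakensVerbitsky2003variationalprinciple}), form the empirical measures $\mu_n=\frac1n\sum_{i=0}^{n-1}(\Theta^i)_*(\text{uniform measure on the separated set})$, extract a weak$^*$ limit $\mu$, check $\mu\in I_\Theta(\Omega\times M)$ with $\int\varphi\,d\mu=\alpha$ (the Birkhoff condition defining $K_{\varphi,\alpha}(\omega)$ forces this), and finally use the relative/fiber version of the Misiurewicz-style entropy upper-semicontinuity — i.e.\ that $h_\mu^{(r)}(\Theta)\ge \limsup \frac1n H$-type bounds — to get $h_\mu(F)\ge s$; since $\mu$ automatically lies in $Pr_P(\Omega\times M)$ by unique ergodicity of $(\Omega,\theta)$ and \eqref{invariant marginal P}, this yields the bound. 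The supremum is attained (giving $\max$) because $I_\Theta(\Omega\times M,\varphi,\alpha)$ is compact by Lemma \ref{lemma I alpha nonemptyset} and $\mu\mapsto h_\mu(F)$ is upper semicontinuous in our $C^2$, Anosov-on-fibers setting.

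For the \emph{lower bound} $h_{top}(F,K_{\varphi,\alpha}(\omega),\omega)\ge h_\mu(F)$ for every $\mu\in I_\Theta(\Omega\times M,\varphi,\alpha)$, fix such a $\mu$. By the ergodic decomposition relative to the base it is enough to treat $\mu$ ergodic (the entropy is affine and the level-set Birkhoff condition is preserved on the ergodic pieces, which still project to $P$). Then I would build, for any $h<h_\mu(F)$ and any $\epsilon>0$, a Moran-type Cantor subset $\mathcal{F}(\omega)\subset K_{\varphi,\alpha}(\omega)$ of fiber Bowen entropy $\ge h$. The construction is the orbit-gluing one: using a relative Katok-entropy / Shannon–McMillan–Breiman statement for RDS one gets, for a large set of $\omega$ and for suitable $N_k\uparrow\infty$, exponentially many $(\theta^{?}\omega,N_k,\epsilon)$-separated fiber pieces along which the Birkhoff sums of $\varphi$ are $\approx N_k\alpha$; then the fiber specification property with gap $m=m(\epsilon)$ (Theorem \ref{thm random specification}) lets me concatenate, along the $\theta$-orbit, one chosen piece from stage $1$, then a gap, then a piece from stage $2$, etc., producing a shadowing point $x$ whose $F_\omega$-orbit realizes a prescribed sequence of labels. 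Choosing the stage lengths $N_k$ to grow fast enough that the gaps $m$ are negligible, the resulting Cantor set sits inside $K_{\varphi,\alpha}(\omega)$ (the Birkhoff averages along the shadowing orbit converge to $\alpha$ because on each long block they are close to $\alpha$ and the blocks dominate), and a standard mass-distribution (Frostman) argument on the natural Bernoulli-like measure supported on $\mathcal{F}(\omega)$ — using the $m$-spacing to guarantee distinct labels give $(\omega,\epsilon)$-separated points — gives $h_{top}(F,\mathcal{F}(\omega),\omega)\ge h - o(1)$. Letting $h\uparrow h_\mu(F)$ and $\epsilon\downarrow 0$ finishes it.

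A few technical points need care and these are where I expect the real work. First, everything must hold for $P$-a.s.\ $\omega$ rather than all $\omega$: the separated-set counts coming from the relative Katok formula and the Birkhoff-sum estimates are only guaranteed on a full-$P$-measure set, so I must choose the $\omega$ at which I do the construction inside the intersection of countably many full-measure sets (one per rational $\epsilon$, per rational entropy level, and for the Birkhoff ergodic theorem applied to $\varphi$ and to the relative information functions); the ``$\limsup$ over a measurable family of finite sets'' bookkeeping is routine but must be done honestly. Second, and this is the main obstacle, the orbit-gluing must be performed \emph{along the base orbit} — the fiber specification point $x$ that shadows a specification $S_\omega$ depends on $\omega$, and I must verify that the stagewise-defined sequence of such shadowing points converges (in $M$) to a single $x\in M$ whose full forward orbit simultaneously shadows all stages; this is where uniform hyperbolicity on fibers (the contraction/expansion constant $\lambda_0$ of (C.2)) is used to get the nested fiber Bowen balls to shrink and the Cantor construction to close up, and it is the delicate heart of the argument, analogous to but more involved than the deterministic case because the metrics $d_\omega^n$ vary with $\omega$. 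Third, verifying $\int\varphi\,d\mu=\alpha$ for the limit measure in the upper bound requires knowing the empirical averages on the separated set are genuinely $\approx\alpha$, which is exactly the defining property of $K_{\varphi,\alpha}(\omega)$ combined with uniform continuity of $\varphi$ on the compact space $\Omega\times M$. As a byproduct of the lower-bound construction one sees $L_\varphi=\{\int\varphi\,d\mu:\mu\in I_\Theta(\Omega\times M)\}$, matching Remark \ref{remark final}.
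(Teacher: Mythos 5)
Your overall architecture (two inequalities; empirical measures plus conditional entropy for the upper bound; Katok-type counting plus fiber specification plus a Moran fractal and an entropy distribution principle for the lower bound) is the same as the paper's, but there are two genuine gaps. The serious one is your reduction of the lower bound to ergodic $\mu$: the statement that ``the level-set Birkhoff condition is preserved on the ergodic pieces'' is false. If $\mu\in I_\Theta(\Omega\times M,\varphi,\alpha)$ is not ergodic, its ergodic components $\nu_i$ satisfy only $\sum\lambda_i\int\varphi\,d\nu_i=\alpha$; individually $\int\varphi\,d\nu_i\neq\alpha$ in general, so proving the lower bound for each $\nu_i$ bounds the entropy of \emph{different} level sets $K_{\varphi,\int\varphi d\nu_i}(\omega)$ and gives nothing for $K_{\varphi,\alpha}(\omega)$. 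Consequently your construction, which at every stage uses Katok-separated blocks whose Birkhoff sums are $\approx N_k\alpha$ for a single ergodic measure, only yields $\sup\{h_\mu(F):\mu\ \text{ergodic},\ \int\varphi\,d\mu=\alpha\}$, which can be strictly smaller than the right-hand side of the theorem (there may be no ergodic measure with integral exactly $\alpha$ at all). The missing idea is the one the paper uses: approximate the maximizing measure $\mu_0$ by a finite convex combination $\nu=\sum_{i=1}^k\lambda_i\nu_i$ of ergodic measures with $h_\nu\geq h_{\mu_0}-\delta$ and $\int\varphi\,d\nu\approx\alpha$ (Lemma \ref{lemma measure approxi}), and then, inside each stage of the Moran construction, glue via fiber specification orbit blocks drawn from the $k$ components in time-proportions $\lambda_i$, so that each glued super-block lies in the approximate level set $P(\alpha,4\delta,\hat n(\omega),\omega)$ while carrying $\approx e^{\hat n(\omega)(h_{\mu_0}(F)-4\gamma)}$ separated points (this is exactly the content of Lemma \ref{Lemma lower estimates}); only then does the fractal $\mathfrak{X}(\omega)$ land in $K_{\varphi,\alpha}(\omega)$ with the right entropy.

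The second gap is in your upper bound: you take $(\omega,n,\epsilon)$-separated sets \emph{inside} $K_{\varphi,\alpha}(\omega)$ and assert that the weak$^*$ limit of the empirical measures satisfies $\int\varphi\,d\mu=\alpha$ ``because of the defining property of $K_{\varphi,\alpha}(\omega)$''. Membership in $K_{\varphi,\alpha}(\omega)$ only gives pointwise convergence of Birkhoff averages, with no uniformity over the $n$-dependent separated sets at the finite time $n$ used to build the empirical measure, so the limit measure's integral need not be $\alpha$; uniform continuity of $\varphi$ does not repair this. The standard (and the paper's) fix is to work with the approximate level sets $P(\alpha,\delta,n,\omega)$ and the sets $G(\alpha,\delta,k,\omega)=\bigcap_{n\geq k}P(\alpha,\delta,n,\omega)$, i.e.\ to introduce the intermediate quantity $\Lambda_{\varphi,\alpha}(\omega)$: one first shows $h_{top}(F,K_{\varphi,\alpha}(\omega),\omega)\leq\Lambda_{\varphi,\alpha}(\omega)$ by the countable-union property, and then builds the empirical measures from separated sets in $P(\alpha,\delta_k,m_k,\omega)$, whose $m_k$-step averages are within $\delta_k$ of $\alpha$ by construction, which forces $\int\varphi\,d\mu=\alpha$ in the limit. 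With that modification your conditional-entropy (Ledrappier--Walters type) passage to the limit, using partitions with $\mu$-null boundaries and separation scale larger than the partition diameter, does go through as in the paper; note also the paper's caveat (fixing one scale $\epsilon_1$ and using separated sets rather than covering centers) needed to make this step rigorous.
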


  Theorem \ref{thm variational principle} and \eqref{fiber vp} imply that the fiber Bowen's topological entropy of $M$ coincides the classical fiber topological entropy neglecting a $P$-zero measure set for systems under consideration, i.e., $ h_{top}(F,M,\omega)=h_{top}(F)$ for $P$-a.s. $\omega\in\Omega$. Theorem \ref{thm variational principle} also reveals new phenomenon for hyperbolic toral automorphism, see Sec. \ref{subsection 2.1} and Corollary \ref{corollary 3}.

For $\varphi\in  C(\Omega\times M,\mathbb{R})$ and $q\in\mathbb{R}$, define $\pi_{F,\varphi}:\mathbb{R}\to\mathbb{R}$ by $\pi_{F,\varphi}(q)=\pi_F(q\varphi)$, which is the fiber topological pressure of $q\varphi$. The Legendre transform $\pi_{F,\varphi}^*$ is defined by
\begin{equation*}
  \pi_{F,\varphi}^*(\alpha)=\inf_{q\in\mathbb{R}}\{\pi_{F,\varphi}(q)-q\alpha\}\mbox{ for }\alpha\in\mathbb{R}.
\end{equation*}
The following theorem is consequence of Theorem \ref{thm variational principle} and Lemma \ref{lemma int subset inteq}.
\begin{theorem}\label{thm 3}
   Assume $(\Omega\times M,\Theta)$ is Anosov and topological mixing on fibers system driven by a uniquely ergodic homeomorphism system $(\Omega,\mathcal{B}_P(\Omega),P,\theta)$. Let $\varphi\in C(\Omega\times M,\mathbb{R})$, then
   \begin{enumerate}
     \item[(a)] for any $\alpha\in L_\varphi$, one has
         \begin{equation}\label{eq leq Legendre}
           h_{top}(F,K_{\varphi,\alpha}(\omega),\omega)\leq \pi_{F,\varphi}^*(\alpha)\mbox{ for }P-a.s.\ \omega\in\Omega.
         \end{equation}
     \item[(b)] for $\alpha\in int(L_\varphi),$
         one has
         \begin{equation}\label{eq eq Legendre}
           h_{top}(F,K_{\varphi,\alpha}(\omega),\omega)= \pi_{F,\varphi}^*(\alpha)\mbox{ for }P-a.s.\ \omega\in\Omega,
         \end{equation}where $int$ denotes the interior.
   \end{enumerate}
\end{theorem}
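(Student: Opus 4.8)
The plan is to derive Theorem \ref{thm 3} from the conditional variational principle of Theorem \ref{thm variational principle} by convex duality, the only substantial input being the production of an equilibrium state with a prescribed $\varphi$-average, which is the role of Lemma \ref{lemma int subset inteq}.

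For part (a), fix $\alpha\in L_\varphi$ and an arbitrary $\mu\in I_\Theta(\Omega\times M,\varphi,\alpha)$, which is nonempty by Lemma \ref{lemma I alpha nonemptyset}. For every $q\in\mathbb{R}$, the fiber variational principle \eqref{fiber vp} gives $\pi_{F,\varphi}(q)=\pi_F(q\varphi)\geq h_\mu(F)+q\int\varphi\,d\mu=h_\mu(F)+q\alpha$, so $h_\mu(F)\leq\pi_{F,\varphi}(q)-q\alpha$; taking the infimum over $q$ yields $h_\mu(F)\leq\pi_{F,\varphi}^*(\alpha)$. Taking the supremum over $\mu\in I_\Theta(\Omega\times M,\varphi,\alpha)$ and then invoking Theorem \ref{thm variational principle} gives, for $P$-a.s.\ $\omega$, that $h_{top}(F,K_{\varphi,\alpha}(\omega),\omega)=\max\{h_\mu(F):\mu\in I_\Theta(\Omega\times M,\varphi,\alpha)\}\leq\pi_{F,\varphi}^*(\alpha)$, which is \eqref{eq leq Legendre}.

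For part (b), by (a) and Theorem \ref{thm variational principle} it suffices to produce, for each $\alpha\in int(L_\varphi)$, a measure $\mu_\alpha\in I_\Theta(\Omega\times M,\varphi,\alpha)$ with $h_{\mu_\alpha}(F)\geq\pi_{F,\varphi}^*(\alpha)$. First I would note that $q\mapsto\pi_{F,\varphi}(q)$ is convex and everywhere finite (since $0\leq h_\mu(F)\leq h_{top}(F)<\infty$ and $\varphi$ is bounded), being by \eqref{fiber vp} a supremum of affine functions of $q$, with asymptotic slopes $\lim_{q\to+\infty}\pi_{F,\varphi}(q)/q=\sup\{\int\varphi\,d\mu:\mu\in I_\Theta(\Omega\times M)\}$ and $\lim_{q\to-\infty}\pi_{F,\varphi}(q)/q=\inf\{\int\varphi\,d\mu:\mu\in I_\Theta(\Omega\times M)\}$. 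Since $L_\varphi=\{\int\varphi\,d\mu:\mu\in I_\Theta(\Omega\times M)\}$ (Remark \ref{remark final}), for $\alpha\in int(L_\varphi)$ the convex function $q\mapsto\pi_{F,\varphi}(q)-q\alpha$ is coercive, hence its infimum is attained at some $q_\alpha$ with $\alpha\in\partial\pi_{F,\varphi}(q_\alpha)$. Lemma \ref{lemma int subset inteq} then provides an equilibrium state $\mu_\alpha$ of $q_\alpha\varphi$, i.e.\ $h_{\mu_\alpha}(F)+q_\alpha\int\varphi\,d\mu_\alpha=\pi_{F,\varphi}(q_\alpha)$, satisfying $\int\varphi\,d\mu_\alpha=\alpha$; the existence of an equilibrium state uses upper semicontinuity of $\mu\mapsto h_\mu(F)$ together with compactness of $I_\Theta(\Omega\times M)$, while convexity of the set of equilibrium states and affineness of $\mu\mapsto\int\varphi\,d\mu$ show that the attainable $\varphi$-averages fill out the whole interval $\partial\pi_{F,\varphi}(q_\alpha)\ni\alpha$. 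For this $\mu_\alpha$ one has $h_{\mu_\alpha}(F)=\pi_{F,\varphi}(q_\alpha)-q_\alpha\alpha=\pi_{F,\varphi}^*(\alpha)$ and $\mu_\alpha\in I_\Theta(\Omega\times M,\varphi,\alpha)$, so Theorem \ref{thm variational principle} gives $h_{top}(F,K_{\varphi,\alpha}(\omega),\omega)\geq h_{\mu_\alpha}(F)=\pi_{F,\varphi}^*(\alpha)$ for $P$-a.s.\ $\omega$; with \eqref{eq leq Legendre} this is \eqref{eq eq Legendre}.

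I expect the main obstacle to be exactly the statement behind Lemma \ref{lemma int subset inteq}: that every interior point of $L_\varphi$ is the $\varphi$-average of a genuine equilibrium state of some $q\varphi$. This needs upper semicontinuity of the fiber measure-theoretic entropy — which in the Anosov-on-fibers setting should follow from a fiber expansiveness property — and the identification of $L_\varphi$ with $\{\int\varphi\,d\mu:\mu\in I_\Theta(\Omega\times M)\}$ and with the interval of asymptotic slopes of $\pi_{F,\varphi}$, so that an interior point of $L_\varphi$ is a point at which the Legendre infimum is attained. Boundary points of $L_\varphi$ are genuinely excluded, since there the infimum in the Legendre transform may fail to be attained and $\pi_{F,\varphi}^*$ may jump, which is why only \eqref{eq leq Legendre} persists in general.
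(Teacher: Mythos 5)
Your proposal is correct and follows essentially the same route as the paper: part (a) is the identical convex-duality inequality combined with Theorem \ref{thm variational principle}, and part (b) rests, exactly as in the paper, on Lemma \ref{lemma int subset inteq} producing an equilibrium state of some $q\varphi$ with $\int\varphi\,d\mu=\alpha$. The extra convex-analysis scaffolding you add (coercivity, attainment of the Legendre infimum at $q_\alpha$, subdifferentials) is harmless but unnecessary, since for any $q^*$ and $\mu^*\in ES_{q^*\varphi}$ with $\int\varphi\,d\mu^*=\alpha$ one automatically gets $h_{\mu^*}(F)=\pi_{F,\varphi}(q^*)-q^*\alpha\geq\pi_{F,\varphi}^*(\alpha)$, which together with (a) already forces equality.
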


 The last result is about the conditional level sets of local entropy for equilibrium states. We recall the local entropy formula, which is a corollary of \cite[Theorem 2.1]{zhutwonotes}.
\begin{lemma}\label{lemma local entropy formula}
 Assume $(\Omega\times M,\Theta)$ is Anosov and topological mixing on fibers system driven by a uniquely ergodic homeomorphism system $(\Omega,\mathcal{B}_P(\Omega),P,\theta)$.
  For $\mu\in I^e_\Theta(\Omega\times M)$, note that $h_{\mu}(F)\leq h_{top}(F)<\infty$, then
  \begin{align*}
    h_\mu(F)&=\underbar{h}_\mu(F;\omega,x) :=\lim_{\epsilon\to 0}\liminf_{n\to\infty}-\frac{1}{n}\log\mu_\omega(B_n(\omega,x,\epsilon))\\
    &=\bar{h}_\mu(F;\omega,x):=\lim_{\epsilon\to 0}\limsup_{n\to\infty}-\frac{1}{n}\log\mu_\omega(B_n(\omega,x,\epsilon))
  \end{align*}for $\mu$-a.s. $(\omega,x)\in\Omega\times M$, where $\omega \mapsto\mu_\omega$ is the disintegration of
 $\mu$ with respect to $P$ by noticing \eqref{invariant marginal P}.
\end{lemma}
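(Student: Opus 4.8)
The plan is to deduce the lemma from the Brin--Katok type local entropy formula for random dynamical systems of \cite[Theorem 2.1]{zhutwonotes}: I would verify that the present framework meets its hypotheses, quote it, and then upgrade its conclusion to the constant $h_\mu(F)$ via ergodicity.

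First I would collect the structural facts. Since $(\Omega,\theta)$ is uniquely ergodic, \eqref{invariant marginal P} gives $\mu\in Pr_P(\Omega\times M)$, so $\mu$ disintegrates over $P$ as a $P$-a.s.\ unique measurable family $\omega\mapsto\mu_\omega$ and every quantity $\mu_\omega(B_n(\omega,x,\epsilon))$ in the statement is well defined. The generators $F_\omega\in\mathrm{Diff}^2(M)$ depend continuously on $\omega$ over the compact base $\Omega$, and $M$ is a compact Riemannian manifold; this provides the regularity and compactness required in \cite{zhutwonotes}. Moreover, as already observed, $h_\mu(F)\le h_{top}(F)<\infty$ by the Margulis--Ruelle inequality \cite[Theorem 1]{Rulleinequality}, supplying the finiteness needed to run the argument. (The Anosov and topological mixing on fibers hypotheses are used here only insofar as they guarantee $h_{top}(F)<\infty$.) Invoking \cite[Theorem 2.1]{zhutwonotes}, for $\mu$-a.e.\ $(\omega,x)$ the limits defining $\underbar{h}_\mu(F;\omega,x)$ and $\bar{h}_\mu(F;\omega,x)$ exist, coincide, and equal a measurable function $g(\omega,x)$ with $\int g\,d\mu=h_\mu(F)$.

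It remains to identify $g$ with the constant $h_\mu(F)$, for which I would check that $g$ is $\Theta$-invariant. Writing out the fiber Bowen balls, one gets, for every $\delta\le\epsilon$,
\begin{equation*}
  F_\omega^{-1}\bigl(B_{n-1}(\theta\omega,F_\omega x,\delta)\bigr)\subset B_n(\omega,x,\epsilon)\subset F_\omega^{-1}\bigl(B_{n-1}(\theta\omega,F_\omega x,\epsilon)\bigr),
\end{equation*}
the right inclusion being immediate and the left one using that $\{F_\omega^{-1}\}_{\omega\in\Omega}$ is uniformly continuous, so that a suitable $\delta=\delta(\epsilon)$ absorbs the extra $i=0$ constraint in $B_n(\omega,x,\epsilon)$. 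Since $(F_\omega)_*\mu_\omega=\mu_{\theta\omega}$ for $P$-a.e.\ $\omega$, applying $-\tfrac1n\log\mu_\omega(\cdot)$ to this chain, letting $n\to\infty$ (a routine reindexing $n\mapsto n-1$) and then $\epsilon\to0$ gives $\underbar{h}_\mu(F;\Theta(\omega,x))=\underbar{h}_\mu(F;\omega,x)$, and likewise for $\bar{h}_\mu$. Hence $g$ is a $\Theta$-invariant measurable function, so by $\Theta$-ergodicity of $\mu$ it equals its integral $h_\mu(F)$ at $\mu$-a.e.\ point, which is the claimed formula.

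I expect the only delicate points to be bookkeeping ones: matching the exact hypotheses of \cite[Theorem 2.1]{zhutwonotes} with this setting (in particular whether that theorem is phrased for a compact metric phase space, which the manifold case covers, and whether it is already stated for ergodic $\mu$, in which case the last paragraph is superfluous and $g\equiv h_\mu(F)$ is read off directly), and the uniform-continuity estimate underlying the $\Theta$-invariance of the local entropy functions. No new idea is required, consistent with the result being recorded as a corollary.
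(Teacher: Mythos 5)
Your proposal is correct and matches the paper, which gives no independent argument for this lemma and simply records it as a corollary of \cite[Theorem 2.1]{zhutwonotes} after noting $h_\mu(F)\leq h_{top}(F)<\infty$ and the disintegration guaranteed by \eqref{invariant marginal P}. Your additional $\Theta$-invariance/ergodicity upgrade (via $(F_\omega)_*\mu_\omega=\mu_{\theta\omega}$ and the two Bowen-ball inclusions) is a correct safeguard in case the cited theorem is phrased for general invariant measures, but it is not needed beyond what the paper already relies on.
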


Theorem 3.9 in \cite{Kifer2000specification} and Theorem \ref{thm random specification} imply that the target systems has unique equilibrium state for H\"older potentials. Here, we recall the definition of equilibrium state. If there exists a maximizing measure in the fiber variational principle \eqref{fiber vp}, then this measure is called an fiber equilibrium state for $\varphi$. We note that the fiber equilibrium states in the fiber variational principle also obtain the maximal in the classical variation principle since
\begin{equation*}
  \sup\{h_\mu(\Theta)+\int \varphi d\mu: \mu\in I_\Theta(\Omega\times M)\}=\sup\{h_\mu(F)+\int\varphi d\mu:\ \mu\in I_\Theta(\Omega\times M)\}+h_P(\theta),
\end{equation*}where the equality due to \eqref{invariant marginal P} and the Abramov-Rohlin formula
\begin{equation}\label{AR formula}
  h_\mu(\Theta)=h_\mu(F)+h_{P}(\theta) \mbox{ for } \mu\in I_\Theta(\Omega\times M).
\end{equation} Therefore, we can identify equilibrium states and fiber equilibrium states in this paper.
\begin{corollary}\label{corollary eq}
  Assume that $(\Omega\times M,\Theta)$ is Anosov on fibers and topological mixing on fibers system driven by a uniquely ergodic homeomorphism $(\Omega,\mathcal{B}_P(\Omega),P,\theta)$, then for any H\"older continuous potential $\varphi:\Omega\times M\to\mathbb{R}$, there exists a unique equilibrium state for $\varphi$, named $\mu=\mu_\varphi\in Pr_P(\Omega\times M)$. Moreover, such unique equilibrium state is $\Theta$-invariant, ergodic and satisfies the Gibbs property, i.e., for any $\epsilon\in (0,\frac{\eta}{4})$, there exist positive constants $A_\epsilon, B_\epsilon$ such that for $P$-a.s. $\omega\in\Omega$,
  \begin{equation}\label{Aepsilon Bepsilon}
    A_\epsilon\leq \mu_\omega(B_n(\omega,x,\epsilon))\cdot\pi_F(\varphi)(\omega,\epsilon,n)\cdot e^{-\sum_{i=0}^{n-1}\varphi(\Theta^i(\omega,x))}\leq B_\epsilon\mbox{ for all }x\in M,
  \end{equation} where $\pi_F(\varphi)(\omega,\epsilon,n)$ is defined by using separated set in \eqref{eq pi F varphi}, and $\eta>0$ is a fiber expansive constant in Lemma \ref{Lemma expansive}.
\end{corollary}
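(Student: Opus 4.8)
The plan is to obtain the corollary as a direct consequence of the abstract thermodynamic formalism for random dynamical systems with specification developed by Gundlach and Kifer \cite{Kifer2000specification}, once the hypotheses of their existence-and-uniqueness theorem are verified in the present setting. Concretely, \cite[Theorem 3.9]{Kifer2000specification}, together with the Gibbs statement accompanying it, asserts that an expansive RDS over an ergodic measure-preserving base, satisfying the specification property in their sense, admits for every sufficiently regular potential a unique invariant measure with the prescribed marginal that maximizes the fiber pressure, is ergodic, and obeys a Gibbs inequality expressed through fiber Bowen balls. So the task reduces to (i) checking these structural hypotheses, (ii) checking that a H\"older potential meets the required regularity, and (iii) translating the output into the language used here.

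For (i): assumption \ref{C1} makes $(\Omega,\mathcal B_P(\Omega),P,\theta)$ an ergodic (indeed uniquely ergodic) measure-preserving homeomorphism, the continuity of the generator $\omega\mapsto F_\omega$ supplies all measurability required of the RDS and of the potential, Theorem \ref{thm random specification} provides the specification property exactly in the Gundlach--Kifer form recalled in Remark \ref{remark kifer gundlach}, and Lemma \ref{Lemma expansive} provides fiber expansiveness with expansive constant $\eta>0$. Moreover $\pi_F(\varphi)<\infty$, so the pressure is well defined; this follows from $h_{top}(F)<\infty$ (Margulis--Ruelle) and boundedness of $\varphi$. For (ii): using the Anosov-on-fibers assumption \ref{C2} one shows there is a constant $C=C(\varphi,\epsilon)$ with $|\sum_{i=0}^{n-1}\varphi(\Theta^i(\omega,x))-\sum_{i=0}^{n-1}\varphi(\Theta^i(\omega,y))|\le C$ whenever $d_\omega^n(x,y)<\epsilon$ and $\epsilon<\eta/4$; that is, Birkhoff sums of a H\"older potential have uniformly bounded oscillation along fiber Bowen balls of scale below $\eta/4$, which is precisely the distortion hypothesis needed in \cite{Kifer2000specification}. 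The scale $\eta/4$ is dictated by the fiber expansiveness constant, which is why the final statement is restricted to $\epsilon\in(0,\eta/4)$.

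Given (i) and (ii), \cite[Theorem 3.9]{Kifer2000specification} produces a $\Theta$-invariant $\mu=\mu_\varphi\in Pr_P(\Omega\times M)$ that is the unique maximizer of the fiber variational principle \eqref{fiber vp}, is ergodic, and satisfies a Gibbs bound; rewriting that bound with the normalizing factor taken to be the separated-set fiber pressure $\pi_F(\varphi)(\omega,\epsilon,n)$ of \eqref{eq pi F varphi} gives \eqref{Aepsilon Bepsilon}, the constants $A_\epsilon,B_\epsilon$ absorbing $\eta$, the H\"older data of $\varphi$, and the distortion constant $C$ above. It remains to see that $\mu_\varphi$ is the unique equilibrium state in the sense of this paper. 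But by \eqref{invariant marginal P} every $\Theta$-invariant measure has marginal $P$, and the Abramov--Rohlin formula \eqref{AR formula} shows that maximizing $h_\mu(\Theta)+\int\varphi\,d\mu$ over $I_\Theta(\Omega\times M)$ differs from maximizing $h_\mu(F)+\int\varphi\,d\mu$ only by the constant $h_P(\theta)$; hence equilibrium states and fiber equilibrium states coincide and uniqueness is inherited. Ergodicity could alternatively be read off from the fact that, the set of equilibrium states being a Choquet simplex, a unique element must be extreme.

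The main obstacle I expect is not the invocation of \cite{Kifer2000specification} itself but the careful matching of normalizations: Gundlach and Kifer phrase their pressure and Gibbs property through their own spanning-set / transfer-operator quantities, and one must verify that replacing these by the separated-set fiber pressure $\pi_F(\varphi)(\omega,\epsilon,n)$ used here alters the Gibbs constants only by a multiplicative factor depending on $\epsilon$, $\eta$ and the H\"older norm of $\varphi$, which can then be folded into $A_\epsilon$ and $B_\epsilon$. The secondary technical point is pinning down the precise regularity condition imposed on potentials in \cite{Kifer2000specification} and confirming that the fiber shadowing estimate coming from \ref{C2} delivers exactly that condition for H\"older $\varphi$; both are routine but must be carried out with the constants kept explicit enough that the dependence on $\eta/4$ remains visible.
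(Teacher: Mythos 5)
Your proposal follows essentially the same route as the paper: invoke \cite[Theorem 3.9]{Kifer2000specification} after verifying its hypotheses, namely the specification property from Theorem \ref{thm random specification} (in the form of Remark \ref{remark kifer gundlach}), expansiveness from Lemma \ref{Lemma expansive} (the paper additionally checks the slightly stronger \emph{strong} expansivity of Definition 2.4 in \cite{Kifer2000specification}, which follows from the same hyperbolic contraction estimate), and membership of the H\"older potential in the class $V^\pm(F)$ via the uniformly summable variations $var_m(\varphi,F,\omega)\leq C'e^{-m\lambda r}$ coming from \ref{C2}. The paper then makes the constants $A_\epsilon,B_\epsilon$ explicit following Definition 3.1 and Proposition 3.2 of \cite{Kifer2000specification}, exactly the normalization-matching step you flagged as the remaining technical point.
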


Combining Corollary \ref{corollary eq}, Theorem \ref{thm variational principle} and Theorem \ref{thm 3}, we give the multifractal analysis of local entropy for equilibrium states.
\begin{corollary}\label{corollary 2}
   Assume $(\Omega\times M,\Theta)$ is Anosov and topological mixing on fibers system driven by a uniquely ergodic homeomorphism system $(\Omega,\mathcal{B}_P(\Omega),P,\theta)$. Let $\varphi:\Omega\times M\to\mathbb{R}$ be a H\"older potential and $\mu=\mu_\varphi$ be the equilibrium state given in Corollary \ref{corollary eq}. For $\alpha\in\mathbb{R}$, we denote
   \begin{equation*}
     E_\alpha=\{(\omega,x)\in\Omega\times M:\ \bar{h}_\mu(F;\omega,x)=\underbar{h}_\mu(F;\omega,x)=\pi_F(\varphi)-\alpha\},
   \end{equation*}where $\bar{h}_\mu(F;\omega,x)$ and $\underbar{h}_\mu(F;\omega,x)$ are local entropy defined in Lemma \ref{lemma local entropy formula}. Then
   \begin{enumerate}
     \item for $\mu$-a.s. $(\omega,x)\in\Omega\times M$,
         \begin{equation*}
           \bar{h}_\mu(F;\omega,x)=\underbar{h}_\mu(F;\omega,x)=h_\mu(F)=\pi_F(\varphi)-\int\varphi d\mu;
         \end{equation*}
     \item for $\alpha\in L_\varphi$,
     \begin{equation*}
       h_{top}(F,E_\alpha(\omega),\omega)=\max\{h_\mu(F):\ \mu\in I_\Theta(\Omega\times M,\varphi,\alpha)\}\mbox{ for }P-a.s.\ \omega\in\Omega;
     \end{equation*}
     \item for any $\alpha\in L_\varphi$,
         \begin{equation*}
           h_{top}(F,E_\alpha(\omega),\omega)\leq \pi_{F,\varphi}^*(\alpha)\mbox{ for }P-a.s.\ \omega\in\Omega;
         \end{equation*}
     \item for $\alpha\in int( L_\varphi),$
         \begin{equation*}
           h_{top}(F,E_\alpha(\omega),\omega)= \pi_{F,\varphi}^*(\alpha)\mbox{ for }P-a.s.\ \omega\in\Omega.
         \end{equation*}
   \end{enumerate}
\end{corollary}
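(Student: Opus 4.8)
The plan is to deduce all four items from the Gibbs property of the equilibrium state (Corollary \ref{corollary eq}) together with Theorem \ref{thm variational principle} and Theorem \ref{thm 3}, the key point being that the local-entropy level set $E_\alpha$ agrees fiberwise, off a $P$-null set of $\omega$'s, with the Birkhoff level set $K_{\varphi,\alpha}$.

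First I would settle item (1). The equilibrium state $\mu=\mu_\varphi$ of Corollary \ref{corollary eq} is $\Theta$-invariant and ergodic, so Lemma \ref{lemma local entropy formula} applies and gives $\underbar{h}_\mu(F;\omega,x)=\bar{h}_\mu(F;\omega,x)=h_\mu(F)$ for $\mu$-a.s.\ $(\omega,x)$; since $\mu$ is a maximiser in the fiber variational principle \eqref{fiber vp}, $h_\mu(F)=\pi_F(\varphi)-\int\varphi\,d\mu$, which is item (1).

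The crux is the identity: for $P$-a.s.\ $\omega\in\Omega$ and \emph{every} $x\in M$,
\[
 \bar{h}_\mu(F;\omega,x)=\pi_F(\varphi)-\liminf_{n\to\infty}\frac1n\sum_{i=0}^{n-1}\varphi(\Theta^i(\omega,x)),\qquad
 \underbar{h}_\mu(F;\omega,x)=\pi_F(\varphi)-\limsup_{n\to\infty}\frac1n\sum_{i=0}^{n-1}\varphi(\Theta^i(\omega,x)).
\]
To obtain this, fix $\epsilon\in(0,\eta/4)$ and take $-\frac1n\log$ of the middle term of the Gibbs estimate \eqref{Aepsilon Bepsilon}, which gives, for $P$-a.s.\ $\omega$ and \emph{uniformly} in $x\in M$,
\[
 -\frac1n\log\mu_\omega(B_n(\omega,x,\epsilon))=\frac1n\log\pi_F(\varphi)(\omega,\epsilon,n)-\frac1n\sum_{i=0}^{n-1}\varphi(\Theta^i(\omega,x))+O(1/n),
\]
the error term being controlled by $A_\epsilon,B_\epsilon$. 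I would then analyse $\frac1n\log\pi_F(\varphi)(\omega,\epsilon,n)$ as $n\to\infty$: by the submultiplicativity in $n$ of $\pi_F(\varphi)(\omega,\epsilon,n)$ along the $\theta$-orbit and the ergodicity of $\theta$ (indeed, unique ergodicity yields convergence uniform in $\omega$), this tends for $P$-a.s.\ $\omega$ to a constant $p(\epsilon)$, with $p(\epsilon)\to\pi_F(\varphi)$ as $\epsilon\to0$ by the definition of the fiber topological pressure. Substituting, and letting $\epsilon\to0$ along a sequence (the Birkhoff $\liminf$ and $\limsup$ being $\epsilon$-free), yields the displayed formulas. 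In particular $(\omega,x)\in E_\alpha$ if and only if $\lim_{n\to\infty}\frac1n\sum_{i=0}^{n-1}\varphi(\Theta^i(\omega,x))=\alpha$, i.e.\ $(\omega,x)\in K_{\varphi,\alpha}$; hence $E_\alpha(\omega)=K_{\varphi,\alpha}(\omega)$ for $P$-a.s.\ $\omega$ (for each fixed $\alpha$).

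Items (2)--(4) then follow by intersecting finitely many full-$P$-measure sets of $\omega$ and invoking the level-set statements: (2) is Theorem \ref{thm variational principle} for $K_{\varphi,\alpha}(\omega)$, (3) is Theorem \ref{thm 3}(a), and (4) is Theorem \ref{thm 3}(b). I expect the main obstacle to lie in the third paragraph, specifically in two places: justifying the $n\to\infty$ limit of $\frac1n\log\pi_F(\varphi)(\omega,\epsilon,n)$ (and, should clean convergence be unavailable, carefully tracking the $\liminf$/$\limsup$ interplay between the partition function and the Birkhoff sums), and securing the formulas for $\bar{h}_\mu$ and $\underbar{h}_\mu$ simultaneously for \emph{all} $x\in M$ on a single full-$P$-measure set of $\omega$ --- which is exactly what the $x$-uniformity of the Gibbs constants in \eqref{Aepsilon Bepsilon} is designed to deliver.
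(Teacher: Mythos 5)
Your proposal is correct and follows essentially the same route as the paper: Gibbs property of $\mu_\varphi$ plus Lemma \ref{lemma fiber top pressure} and Lemma \ref{lemma local entropy formula} to identify $E_\alpha(\omega)$ with $K_{\varphi,\alpha}(\omega)$ for $P$-a.s.\ $\omega$, then Theorems \ref{thm variational principle} and \ref{thm 3} for items (2)--(4), and ergodicity for item (1). The one caveat is that the exact identities you state for $\bar{h}_\mu$ and $\underbar{h}_\mu$ in terms of $\liminf/\limsup$ of Birkhoff sums require $\lim_{n\to\infty}\frac1n\log\pi_F(\varphi)(\omega,\epsilon,n)$ to exist for \emph{fixed} $\epsilon$, which Lemma \ref{lemma fiber top pressure} does not give and which your submultiplicativity argument does not cleanly deliver for separated-set partition functions; however, the fallback you yourself name --- keeping only the one-sided $\limsup/\liminf$ inequalities in each direction --- is exactly what the paper does, and it already yields the equivalence ``Birkhoff limit exists and equals $\alpha$'' iff ``$\bar{h}_\mu=\underbar{h}_\mu=\pi_F(\varphi)-\alpha$'', which is all that items (2)--(4) need.
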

The paper is organised as follows. In Sec. \ref{section example}, we give several examples under consideration. In Sec. \ref{section preliminary}, we introduce definitions and state several preliminary lemmas. The proof of our main theorems is addressed in Sec. \ref{section proofs}, and we collect all proof of lemmas in Sec. \ref{section proof of lemmas}.

\section{Examples}\label{section example}
In this section, we give some examples of Anosov and topological mixing on fibers system driven by a uniquely ergodic homeomorphism system.
\subsection{Fiber Anosov maps on 2-d tori}\label{subsection 2.1}
In section 8.1.2 of \cite{HLL}, Huang, Lian and Lu proved the following skew product system is Anosov and topological mixing on fibers.

The skew product $\Theta:\mathbb{T}\times \mathbb{T}^2 \to\mathbb{T} \times\mathbb{T}^2$ is given by
\begin{equation*}
  \Theta(\omega,x)=(\theta\omega,F_\omega(x)):=(\omega+\alpha,Tx+h(\omega)),
\end{equation*}where $\alpha\in\mathbb{R}\backslash\mathbb{Q}$, $T$ is any hyperbolic toral automorphism, and $h:\mathbb{T}\to\mathbb{T}^2$ is a continuous map. Note that the irrational rotation on circle $\theta:\omega\mapsto \omega+\alpha$ is a uniquely ergodic system with the unique $\theta$-invariant measure $P=$Lebesgue measure. Therefore, this system is one of target systems.

Since that the classical specification property implies topological mixing and the factor system of topological mixing system is also topological mixing, but $(\mathbb{T},\theta)$ as factor of $(\mathbb{T}\times \mathbb{T}^2,\Theta)$ is not topological mixing. Therefore, the system $(\mathbb{T}\times\mathbb{T}^2,\Theta)$ does not have classical specification property. As a consequence, the variational principle in \cite{TakensVerbitsky2003variationalprinciple} can not apply to $\Theta$.

Note that $T$ is linear map, for any $\omega\in \mathbb{T}$ and $n\in\mathbb{N}$, the fiber Bowen's metric
\begin{align*}
  d_\omega^n(x,y) & =\max_{1\leq i\leq n-1}\{d_{\mathbb{T}^2}(F_\omega^i(x),F_\omega^i(y))\}\\
  &=\max_{1\leq i\leq n-1}\{d_{\mathbb{T}^2}(T^i(x),T^i(y))\}=d_n^T(x,y),
\end{align*}where $d_n^T$ is the usual Bowen's metric induced by $T$ on $\mathbb{T}^2$. As a consequence, for all $\omega\in \Omega$, we have $B_n(\omega,x,\epsilon)=B_n^T(x,\epsilon):=\{y\in M:\ d_n^T(x,y)<\epsilon\}$, and therefore,
\begin{equation*}
  h_{top}(F,Z,\omega)=h_{top}(T,Z)
\end{equation*}for any nonempty subset $Z\subset \mathbb{T}^2$, where $h_{top}(T,Z)$ is the Bowen topological entropy defined on noncompact set for deterministic system $(\mathbb{T}^2,T)$ (see \cite{Bowen1973Topentropy, Pesinbook, TakensVerbitsky2003variationalprinciple}). Therefore, for such $\Theta:\mathbb{T}\times \mathbb{T}^2\to \mathbb{T}\times \mathbb{T}^2$, $h_{top}(F,K_{\varphi,\alpha}(\omega),\omega)$ in Theorem \ref{thm variational principle}, \ref{thm 3} and $h_{top}(F,E_\alpha(\omega),\omega)$ in Corollary \ref{corollary 2} can be replace by $h_{top}(T,K_{\varphi,\alpha}(\omega))$ and $h_{top}(T,E_\alpha(\omega))$ respectively. For example:
\begin{corollary}\label{corollary 3}
  Let $\varphi\in C(\mathbb{T}^2\times \mathbb{T})$ and $\alpha\in L_\varphi$. Then for Lebesgue-a.s. $\omega\in\mathbb{T}$,
  \begin{align*}
    h_{top}(T,K_{\varphi,\alpha}(\omega))&=\sup\{h_\mu(F):\mu\in I_\Theta(\mathbb{T}\times\mathbb{T}^2,\varphi,\alpha)\}\\
    &=\sup\{h_\mu(\Theta):\mu\in I_\Theta(\mathbb{T}\times\mathbb{T}^2,\varphi,\alpha)\},
  \end{align*}where $h_\mu(F)=h_\mu(\Theta)$ is due to the Abramov-Rohlin formula $h_\mu(\Theta)=h_\mu(F)+h_P(\theta)$ and the fact $h_P(\theta)=h_{Leb}(\theta)=0$.
\end{corollary}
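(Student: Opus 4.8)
The plan is to obtain Corollary \ref{corollary 3} as an immediate specialization of Theorem \ref{thm variational principle}, exploiting the two structural features of this particular skew product that were recorded just above the statement. First, the driven system here is the irrational rotation $(\mathbb{T},\theta)$, $\theta\omega=\omega+\alpha$, which is uniquely ergodic with unique invariant measure $P=\mathrm{Leb}$; and by the verification in Section \ref{subsection 2.1} (following \cite{HLL}), $(\mathbb{T}\times\mathbb{T}^2,\Theta)$ is Anosov and topologically mixing on fibers, so Theorem \ref{thm variational principle} applies verbatim to the given $\varphi\in C(\mathbb{T}\times\mathbb{T}^2,\mathbb{R})$ and $\alpha\in L_\varphi$. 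Second, since $F_\omega=T(\cdot)+h(\omega)$ differs from the fixed linear automorphism $T$ only by a translation, one has $d_\omega^n=d_n^T$ and hence $B_n(\omega,x,\epsilon)=B_n^T(x,\epsilon)$ for \emph{every} $\omega\in\mathbb{T}$; consequently $h_{top}(F,Z,\omega)=h_{top}(T,Z)$ for all $\omega$ and all nonempty $Z\subset\mathbb{T}^2$. This identification is uniform in $\omega$, so the only place where ``$P$-a.s.'' enters is through the exceptional $P$-null set supplied by Theorem \ref{thm variational principle} itself.

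For the first equality I would invoke Theorem \ref{thm variational principle}, which produces a Lebesgue-full set of $\omega\in\mathbb{T}$ on which
\[
  h_{top}(F,K_{\varphi,\alpha}(\omega),\omega)=\max\{h_\mu(F):\ \mu\in I_\Theta(\mathbb{T}\times\mathbb{T}^2,\varphi,\alpha)\},
\]
the maximum being over the nonempty set $I_\Theta(\mathbb{T}\times\mathbb{T}^2,\varphi,\alpha)$ by Lemma \ref{lemma I alpha nonemptyset}. On this same full set, substituting the pointwise identity $h_{top}(F,K_{\varphi,\alpha}(\omega),\omega)=h_{top}(T,K_{\varphi,\alpha}(\omega))$ gives $h_{top}(T,K_{\varphi,\alpha}(\omega))=\sup\{h_\mu(F):\ \mu\in I_\Theta(\mathbb{T}\times\mathbb{T}^2,\varphi,\alpha)\}$ (replacing $\max$ by $\sup$, which is weaker), i.e.\ the first claimed equality.

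For the second equality I would use the Abramov--Rohlin formula \eqref{AR formula}, $h_\mu(\Theta)=h_\mu(F)+h_P(\theta)$ for every $\mu\in I_\Theta(\mathbb{T}\times\mathbb{T}^2)$, together with the fact that the irrational rotation on the circle has zero topological, hence zero measure-theoretic, entropy, so $h_P(\theta)=h_{\mathrm{Leb}}(\theta)=0$. Thus $h_\mu(\Theta)=h_\mu(F)$ for every $\mu\in I_\Theta(\mathbb{T}\times\mathbb{T}^2)$, and in particular the two suprema over $I_\Theta(\mathbb{T}\times\mathbb{T}^2,\varphi,\alpha)$ coincide term by term, completing the chain of equalities. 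There is no substantive obstacle here: the content is entirely carried by Theorem \ref{thm variational principle} and the identification $B_n(\omega,x,\epsilon)=B_n^T(x,\epsilon)$; the only care needed is the bookkeeping of which $P$-null set and which set $L_\varphi$ are meant (those of Theorem \ref{thm variational principle}, since we are literally applying it) and the nonemptiness of $I_\Theta(\mathbb{T}\times\mathbb{T}^2,\varphi,\alpha)$ for $\alpha\in L_\varphi$, which is Lemma \ref{lemma I alpha nonemptyset}.
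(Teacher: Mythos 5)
Your proof is correct and is exactly the argument the paper intends: the corollary follows by specializing Theorem \ref{thm variational principle} to this skew product, using the pointwise (in fact uniform in $\omega$) identification $B_n(\omega,x,\epsilon)=B_n^T(x,\epsilon)$ established in Section \ref{subsection 2.1}, and then converting $h_\mu(F)$ to $h_\mu(\Theta)$ via the Abramov--Rohlin formula together with $h_P(\theta)=h_{\mathrm{Leb}}(\theta)=0$. No gaps.
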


\subsection{Random composition of $2\times 2$ area-preserving positive matrices driven by uniquely ergodic subshift}

Let
  \begin{equation*}
    \left\{B_i=\begin{pmatrix}
            a_i & b_i  \\
            c_i & d_i  \\
          \end{pmatrix}\right\}_{1\leq i\leq k}
  \end{equation*}be $2\times2$ matrices with $a_i,b_i,c_i,d_i\in\mathbb{Z}^+$, and $|a_id_i-c_ib_i|=1$ for any $i\in\{1,...,k\}$.  Let $\Omega=\{1,...,k\}^\mathbb{Z}$ with the left shift operator $\theta$ be the symbolic dynamical system with $k$ symbols.
  For any $\omega=(...,\omega_{-1},\omega_0,\omega_1,...)\in \Omega$, we define $F_\omega=B_{\omega_0}.$ Then the skew product $\tilde{\Theta}:\Omega\times\mathbb{T}^2\rightarrow \Omega\times\mathbb{T}^2$ defined by
  \begin{equation*}
    \tilde{\Theta}(\omega, x)=\big(\theta\omega, F_\omega (x)\big)
  \end{equation*}
  is an Anosov and topological mixing on fibers system by Proposition 8.2 and Theorem 8.2 in \cite{HLL}.

  Now we let $\Omega_s\subset \Omega$ be any uniquely ergodic subshift. For instance, when $k\geq 2$, let $(\Omega_s,\theta)$ be the Arnoux-Rauzy subshift, which is uniquely ergodic and minimal (see Section 2 in \cite{Strictlyergodicsubshift}). Especially, when $k=2$, the Arnoux-Rauzy subshift is the Sturmian subshift. Then the following system is one of target system:
  \begin{equation*}
    \Theta:\Omega_s\times \mathbb{T}^2\to \Omega_s\times \mathbb{T}^2\mbox{ by }\Theta=\tilde{\Theta}|_{\Omega_s\times \mathbb{T}^2}.
  \end{equation*}
\section{Preliminary Definitions and Lemmas}\label{section preliminary}
In this section, we introduce some preliminary lemmas for systems $(\Omega\times M,\Theta)$ satisfying \ref{C1} and \ref{C2}. Note that for lemmas in Sec. \ref{section hyperbolic dynamics}-\ref{subsection specification}, the uniquely ergodic condition in \ref{C1} is not needed. When the uniquely ergodic condition \ref{C1} is present, we will employ the fact \eqref{invariant marginal P} to apply the ergodic theory of RDS.
\subsection{Hyperbolic dynamics}\label{section hyperbolic dynamics}
 The local stable and unstable manifolds are defined as the following:
\begin{align*}
  W_\epsilon^s(\omega,x) &=\{y\in M|\ d_M(F_\omega^n(x),F_\omega^n(y))\leq \epsilon\mbox{ for all }n\geq 0\},  \\
   W_\epsilon^u(\omega,x)& =\{y\in M|\ d_M(F_\omega^n(x),F_\omega^n(y))\leq \epsilon\mbox{ for all }n\leq 0\}.
\end{align*}
The following lemmas can be found in \cite{HLL}, and it is special version of Lemmas in \cite{Gund99}.
\begin{lemma}\cite[Lemma 3.1]{HLL}\label{lemma stable unstable manifolds}
For any $\lambda\in(0,\lambda_0),$ there exists $\epsilon_0>0$ such that for any $\epsilon\in(0,\epsilon_0],$ the followings hold:
\begin{enumerate}
  \item $W_\epsilon^\tau(\omega,x)$ are $C^2$ embedded discs for all $(\omega,x)\in \Omega\times M$ with $T_xW^\tau(\omega,x)=E^\tau(\omega,x)$ for $\tau=u,s$. 
  \item For $n\geq 0$ and $y\in W^s_\epsilon(\omega,x)$, $d_M(f_\omega^nx,f_\omega^ny)\leq e^{-n\lambda}d_M(x,y)$, and for $y\in W^u_\epsilon(\omega,x)$, $d_M(f_\omega^{-n}x,f_\omega^{-n}y)\leq e^{-n\lambda}d_M(x,y).$
  \item $W^s_\epsilon(\omega,x), W^u_\epsilon(\omega,x)$ vary continuously on $(\omega,x)$ in $C^1$ topology.
\end{enumerate}
\end{lemma}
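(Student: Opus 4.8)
The plan is to prove this the way one proves the fibered (random) Hadamard--Perron theorem, following the scheme of \cite{Gund99} adapted as in \cite{HLL}; because $\theta$ is a homeomorphism of the compact space $\Omega$ and $\omega\mapsto F_\omega$ is continuous into the $C^2$ diffeomorphisms of the compact manifold $M$, every estimate will be \emph{uniform} in $(\omega,x)$. First I would pass to a uniform local model: using the exponential map of $M$, for each $(\omega,x)$ set $\tilde F_{(\omega,x)}=\exp_{F_\omega x}^{-1}\circ F_\omega\circ\exp_x$ on a ball $B_{r_0}(0)\subset T_xM$ of radius $r_0$ independent of $(\omega,x)$; then $\tilde F_{(\omega,x)}(0)=0$, $D\tilde F_{(\omega,x)}(0)=D_xF_\omega$, the family $\{\tilde F_{(\omega,x)}^{\pm1}\}$ is uniformly bounded in $C^2$, and $(\omega,x)\mapsto\tilde F_{(\omega,x)}$ is continuous. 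Writing $\tilde F_{(\omega,x)}$ in block form with respect to the continuous invariant splitting $T_xM=E^s_{(\omega,x)}\oplus E^u_{(\omega,x)}$ of \ref{C2}, and fixing $\lambda<\lambda_1<\lambda_0$, compactness of $\Omega\times M$ together with continuity of $(\omega,x)\mapsto D_xF_\omega$ lets me choose a cone width $\beta_0>0$ and a radius $\epsilon_0\in(0,r_0]$ so that the cone fields $C^u_{(\omega,x)}(\beta)=\{v^s+v^u:|v^s|\le\beta|v^u|\}$ and $C^s_{(\omega,x)}(\beta)$, $\beta\le\beta_0$, are invariant and, respectively, expanded/contracted by a factor $e^{\pm\lambda_1}$ under $D\tilde F_{(\omega,x)}$ (resp. $D\tilde F_{(\omega,x)}^{-1}$) at every point of $B_{\epsilon_0}(0)$.

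Next I would run the graph transform. Let $\mathcal G_{(\omega,x)}$ be the complete metric space, under the $C^0$ distance, of maps $g:E^u_{(\omega,x)}(\epsilon)\to E^s_{(\omega,x)}$ with $g(0)=0$ and Lipschitz constant $\le\beta_0$, i.e.\ whose graphs lie in $C^u_{(\omega,x)}(\beta_0)$. Cone invariance plus expansion in the unstable direction show that pushing a graph in $\mathcal G_{\Theta^{-1}(\omega,x)}$ forward by $\tilde F_{\Theta^{-1}(\omega,x)}$ and restricting to the $\epsilon$-disc produces an element of $\mathcal G_{(\omega,x)}$, and that the resulting bundle map on $\prod_{(\omega,x)}\mathcal G_{(\omega,x)}$ is a contraction with rate uniformly $<1$. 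Its unique fixed section $\{g^u_{(\omega,x)}\}$ is the $C^0$-limit of graph transforms of an arbitrary initial section along the backward $\Theta$-orbit of $(\omega,x)$; I would set $W^u_\epsilon(\omega,x):=\exp_x(\mathrm{graph}\,g^u_{(\omega,x)})$ and check it coincides with the set in the statement, since a point whose backward iterates stay $\epsilon$-close has $\exp$-coordinate in the unstable cone at every step, hence on the fixed graph, and conversely the graph is backward invariant up to size $\epsilon$. The stable case is symmetric, with $\tilde F^{-1}$ and forward orbits.

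For the regularity claim (1) I would upgrade the graph transform to act on sections that are uniformly $C^1$, then $C^2$: because the hyperbolicity gap $\lambda_1>0$ beats the uniform $C^2$-distortion of $\tilde F$ once $\epsilon_0$ is small, the induced actions on the spaces of sections with uniformly bounded first, then second, derivatives are again uniform contractions, so the fixed section is $C^2$; the action induced on first-order data forces $T_0(\mathrm{graph}\,g^u_{(\omega,x)})=E^u_{(\omega,x)}$, as the invariant bundle is the only fixed point, so $W^u_\epsilon(\omega,x)$ is a $C^2$ embedded disc tangent to $E^u$, and likewise for $W^s$. For the contraction estimate (2), after possibly shrinking $\beta_0,\epsilon_0$, continuity of $DF$ and $e^{-\lambda_0}<e^{-\lambda}$ give, for some $\lambda'\in(\lambda,\lambda_0)$, $|D_z\tilde F_{(\omega,x)}v|\le e^{-\lambda'}|v|$ for every $v\in C^s_{(\omega,x)}(\beta_0)$ and every $z\in B_{\epsilon_0}(0)$; since $W^s_\epsilon(\omega,x)$ has tangent spaces in the stable cone and satisfies $F_\omega W^s_\epsilon(\omega,x)\subset W^s_\epsilon(\Theta(\omega,x))$, integrating this bound along a path in $W^s_\epsilon(\omega,x)$ from $x$ to $y$ and using that for $\epsilon_0$ small the intrinsic path metric on $W^s_\epsilon$ agrees with $d_M$ up to a factor arbitrarily close to $1$ yields $d_M(F_\omega^nx,F_\omega^ny)\le e^{-n\lambda}d_M(x,y)$ for all $n\ge0$ (the gap $\lambda'-\lambda$ absorbs the metric-comparison factor); the unstable estimate follows by applying this to $\Theta^{-1}$.

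Finally, for continuity (3): every ingredient above — $\tilde F_{(\omega,x)}$, the splitting $E^{s/u}_{(\omega,x)}$, the cone fields — depends continuously on $(\omega,x)$, and the graph transform contracts with a rate independent of $(\omega,x)$, so the standard ``uniform contraction with continuously varying data'' argument gives that $(\omega,x)\mapsto g^u_{(\omega,x)}$ is $C^0$-continuous, and the $C^1$-version of the same argument (applied to the action on $C^1$ sections above) promotes this to $C^1$-continuity of $W^u_\epsilon(\omega,x)$; the stable discs are identical. I expect the genuine work to be concentrated in this regularity/continuity step: arranging the graph transform to be simultaneously a uniform contraction on $C^0$, $C^1$ and $C^2$ sections of the bundle, controlling the distortion along orbits, and justifying the exchange of limits needed for joint continuity in the $C^1$ topology — by contrast the cone estimates and the exponential bound in (2) are routine consequences of the uniform hyperbolicity over the compact space $\Omega\times M$.
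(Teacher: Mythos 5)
The paper offers no proof of this lemma: it is quoted verbatim from \cite[Lemma 3.1]{HLL}, which in turn is a specialization of the graph-transform construction of \cite{Gund99}. Your outline — uniform local linearization via the exponential map, invariant cone fields, the graph transform as a uniform fiber contraction on Lipschitz sections upgraded to $C^2$ by the $C^r$-section theorem, the derivative bound integrated along paths in the leaf (with the gap $\lambda'-\lambda$ absorbing the path-metric comparison) for item (2), and ``uniform contraction with continuously varying data'' for item (3) — is exactly that standard argument, and your observation that compactness of $\Omega\times M$ together with continuity of $\omega\mapsto F_\omega$ makes $\epsilon_0$ and all rates independent of $(\omega,x)$ is precisely what the fibered setting requires; I see no gap.
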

\begin{lemma}\cite[Lemma 3.2]{HLL}\label{local product structure}
  For any $\epsilon\in(0,\epsilon_0]$, where $\epsilon_0$ comes from Lemma \ref{lemma stable unstable manifolds}, there is a $\delta\in(0,\epsilon)$ such that for any $x,y\in M$ with $d_M(x,y)<\delta$, $W_\epsilon^s(\omega,x)\cap W_\epsilon^u(\omega,y) $ consists of a single point.
\end{lemma}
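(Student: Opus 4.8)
The statement is the local product structure of uniformly hyperbolic dynamics in the fibered setting, and the plan is to realize the intersection point as the unique fixed point of a contraction assembled from the graph representations of the two local discs. First I would record the uniform data. Because $\Omega\times M$ is compact and, by \ref{C2}, the splitting $T_xM_\omega=E^s_{(\omega,x)}\oplus E^u_{(\omega,x)}$ depends continuously on $(\omega,x)$, the angle between $E^s_{(\omega,x)}$ and $E^u_{(\omega,x)}$ is bounded below by some $\gamma_0>0$ uniformly, and both sub-bundles are uniformly continuous in $(\omega,x)$; the injectivity radius of $M$ is also bounded below, so after shrinking the $\epsilon_0$ of Lemma \ref{lemma stable unstable manifolds} if necessary, for $\epsilon\in(0,\epsilon_0]$ every disc $W^s_\epsilon(\omega,x)$, $W^u_\epsilon(\omega,x)$ sits inside a single normal chart $\exp_x$. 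By Lemma \ref{lemma stable unstable manifolds} these discs are $C^2$, tangent at the base point to $E^s_{(\omega,x)}$, $E^u_{(\omega,x)}$ respectively, and vary continuously in the $C^1$ topology with $(\omega,x)$; so, read in $\exp_x$, $W^s_\epsilon(\omega,x)$ is the graph of a $C^1$ map from a ball of uniform radius $r=r(\epsilon)$ in $E^s_{(\omega,x)}$ into $E^u_{(\omega,x)}$ that vanishes at the origin and has Lipschitz constant at most $1/3$ (the slope bound being uniform, since the tangent spaces along $W^s_\epsilon(\omega,x)$ stay $C^0$-close to $E^s_{(\omega,x)}$), and symmetrically for $W^u_\epsilon(\omega,x)$.

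Next I would fix $\omega$ and $x,y$ with $d_M(x,y)<\delta$ for a $\delta$ to be chosen, and pass to the chart $\exp_x$, so $x\mapsto 0$ and $y\mapsto w_0:=\exp_x^{-1}(y)$ with $|w_0|\le C_0\delta$. There $W^s_\epsilon(\omega,x)$ is the graph $\{a+g(a):a\in E^s_{(\omega,x)},\ |a|\le r\}$ with $g(0)=0$ and $\mathrm{Lip}(g)\le 1/3$. Using the $C^1$-continuity of the splitting and of $W^u_\epsilon(\cdot,\cdot)$ together with $d((\omega,y),(\omega,x))<\delta$, the disc $W^u_\epsilon(\omega,y)$ read in $\exp_x$ is the graph over a ball in $E^u_{(\omega,x)}$ of a $C^1$ map $k$ through $w_0$ whose Lipschitz constant is $1/3+o(1)$ as $\delta\to 0$; shrinking $\delta$, take $\mathrm{Lip}(k)\le 1/2$. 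Letting $\Pi^s,\Pi^u$ be the projections onto $E^s_{(\omega,x)},E^u_{(\omega,x)}$ along the complementary factor (of norm controlled by $\gamma_0$) and writing a candidate point as $p=a+b$ with $a=\Pi^s p$, $b=\Pi^u p$, the conditions $p\in W^s_\epsilon(\omega,x)$ and $p\in W^u_\epsilon(\omega,y)$ translate into $b=g(a)$ and $a=\phi(b)$, where $\phi$ is built from $k$ and $w_0$, is Lipschitz with constant $\le 1/2$ on a ball of radius $\asymp r$, and has $|\phi(0)|\le C_1\delta$.

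Then the intersection is governed by $a=\phi(g(a))$; the right-hand side is a contraction with constant $\le 1/6$, so it has a unique fixed point $a^\ast$, yielding the single candidate $p^\ast=a^\ast+g(a^\ast)$. Since that map sends $0$ to a point of norm $\le C_1\delta$ and is $(1/6)$-Lipschitz, $|a^\ast|\le \tfrac{6}{5}C_1\delta$ and hence $|p^\ast|\le C_2\delta$; choosing $\delta=\delta(\epsilon)$ small enough that $C_2\delta<r(\epsilon)$ and $C_2\delta+|w_0|<r(\epsilon)$ keeps $p^\ast$ inside the domains of both graphs, so $\exp_x p^\ast$ belongs to $W^s_\epsilon(\omega,x)\cap W^u_\epsilon(\omega,y)$, and uniqueness of the fixed point makes this intersection a single point. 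Every constant used ($\gamma_0$, the slope bounds, $r(\epsilon)$, the chart size, the moduli of continuity of the splitting and of the local manifolds) is uniform over $(\omega,x)\in\Omega\times M$, so $\delta$ ends up depending on $\epsilon$ alone, as required.

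The only genuinely delicate step is the passage in the second paragraph from $W^u_\epsilon(\omega,y)$ --- a graph over $E^u_{(\omega,y)}$ anchored at $y$ --- to an approximate graph over $E^u_{(\omega,x)}$ in the chart $\exp_x$ with Lipschitz constant kept uniformly below $1$; this is precisely where the uniform transversality from \ref{C2} and the $C^1$-continuous dependence of the local manifolds supplied by Lemma \ref{lemma stable unstable manifolds} are essential. Once the two transverse small-slope graphs are in hand, the contraction argument and the calibration of $\delta$ against $\epsilon$ are routine.
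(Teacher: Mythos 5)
Your argument is correct in substance, but note that the paper does not prove this lemma at all: it is imported verbatim from \cite[Lemma 3.2]{HLL} (itself a special case of the local product structure lemmas in \cite{Gund99}), so there is no in-paper proof to compare against. Your route --- uniform transversality of $E^s\oplus E^u$ from \ref{C2}, graph representations of the two local discs in a normal chart at $x$ with slopes uniformly below $1$, and the intersection realized as the unique fixed point of a contraction $a\mapsto\phi(g(a))$ --- is exactly the standard hyperbolic-dynamics proof, and the uniformity of $\delta(\epsilon)$ over $(\omega,x)$ does follow from compactness of $\Omega\times M$ together with the $C^1$-continuous dependence of $W^{s/u}_\epsilon(\omega,x)$ furnished by Lemma \ref{lemma stable unstable manifolds}. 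Two small caveats. First, the slope bound you invoke (Lipschitz constant $\le 1/3$ for the graphs) is not part of Lemma \ref{lemma stable unstable manifolds} as stated here; it is part of the fuller statement in \cite{HLL}, so either cite that or justify it as you sketch, via equicontinuity of the tangent planes along the discs. Second, be careful with ``after shrinking $\epsilon_0$ if necessary'': the lemma is asserted for every $\epsilon\in(0,\epsilon_0]$ with the $\epsilon_0$ of Lemma \ref{lemma stable unstable manifolds}, and shrinking $\epsilon_0$ only gives existence (by monotonicity of the discs), not uniqueness, for the larger values of $\epsilon$; uniqueness for those $\epsilon$ can be recovered either from the slope bound valid on the full discs (as in \cite{HLL}) or from fiber expansivity (Lemma \ref{Lemma expansive}), since two intersection points would stay within $2\epsilon$ of each other along the whole orbit. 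With these points patched, your proof is complete.
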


\begin{lemma}\cite[Lemma 3.3]{HLL}\label{Lemma expansive}
  The system $\Theta$ is fiber-expansive, i.e. there exists a constant $\eta>0$ such that for any $\omega\in\Omega$, if $d_M(F_\omega^n(x),F_\omega^n(y))<\eta$ for all $n\in\mathbb{Z}$, then $x=y$.
\end{lemma}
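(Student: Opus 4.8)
The plan is to take for the fiber-expansive constant $\eta$ the number $\epsilon_0$ furnished by Lemma \ref{lemma stable unstable manifolds} (having fixed once and for all some $\lambda\in(0,\lambda_0)$), and then to read off the conclusion directly from the local product structure of Lemma \ref{local product structure}. The feature to exploit is that $W^s_\epsilon(\omega,x)$ and $W^u_\epsilon(\omega,x)$ were \emph{defined} as the sets of $y\in M$ whose forward, respectively backward, $F_\omega$-orbit stays within distance $\epsilon$ of the $F_\omega$-orbit of $x$. Hence a point $y$ that $\eta$-shadows $x$ in both time directions automatically lies in the intersection of a local stable leaf and a local unstable leaf \emph{based at the same point} $x$, and such an intersection is forced to be a single point.

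Concretely, I would argue as follows. Suppose $\omega\in\Omega$ and $x,y\in M$ satisfy $d_M(F_\omega^n(x),F_\omega^n(y))<\eta=\epsilon_0$ for every $n\in\mathbb{Z}$. Restricting attention to $n\geq 0$ gives $y\in W^s_{\epsilon_0}(\omega,x)$ by the very definition of the local stable manifold, and restricting to $n\leq 0$ gives $y\in W^u_{\epsilon_0}(\omega,x)$ by the definition of the local unstable manifold. Now apply Lemma \ref{local product structure} with $\epsilon=\epsilon_0$ to the degenerate pair of points $x$ and $x$: since $d_M(x,x)=0<\delta$, the set $W^s_{\epsilon_0}(\omega,x)\cap W^u_{\epsilon_0}(\omega,x)$ consists of a single point. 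Because $x$ itself trivially belongs to both $W^s_{\epsilon_0}(\omega,x)$ and $W^u_{\epsilon_0}(\omega,x)$, that single point must be $x$; therefore $y=x$, which is exactly fiber-expansiveness.

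Finally I would note that $\eta=\epsilon_0$ is independent of $\omega$, since both $\epsilon_0$ in Lemma \ref{lemma stable unstable manifolds} and $\delta$ in Lemma \ref{local product structure} are chosen uniformly over $\Omega\times M$, so this really delivers the uniform constant claimed in the statement. There is no genuine obstacle here once Lemmas \ref{lemma stable unstable manifolds} and \ref{local product structure} are available; the only points deserving a word of justification are that the dynamically-defined local manifolds coincide with the ``stay-$\epsilon$-close'' sets, so that no additional work is needed to place $y$ on them, and that the local product structure may legitimately be invoked at the pair $(x,x)$ since $0<\delta$. Alternatively, one could bypass Lemma \ref{local product structure} and instead argue by hand from the contraction/expansion estimates of Lemma \ref{lemma stable unstable manifolds}(2), but the route above is shorter and more transparent.
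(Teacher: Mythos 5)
Your argument is correct. Note first that the paper itself gives no proof of Lemma \ref{Lemma expansive}: it is imported verbatim as \cite[Lemma 3.3]{HLL}, so there is no in-paper argument to match against. Your derivation is a legitimate, self-contained one from the two quoted lemmas: since the paper \emph{defines} $W^s_\epsilon(\omega,x)$ and $W^u_\epsilon(\omega,x)$ as the ``stay-$\epsilon$-close'' sets in forward and backward time, the hypothesis $d_M(F_\omega^n x,F_\omega^n y)<\eta=\epsilon_0$ for all $n\in\mathbb{Z}$ really does place $y$ in $W^s_{\epsilon_0}(\omega,x)\cap W^u_{\epsilon_0}(\omega,x)$ with no extra work, and applying Lemma \ref{local product structure} at the degenerate pair $(x,x)$ (legitimate, since $d_M(x,x)=0<\delta$) forces that intersection to be the single point $x$, hence $y=x$; the constant $\eta=\epsilon_0$ is manifestly independent of $\omega$. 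The source \cite{HLL} proves expansiveness essentially by the ``by hand'' route you mention at the end: one uses the invariance and contraction estimates of Lemma \ref{lemma stable unstable manifolds}(2) in both time directions to show directly that a point lying on both the local stable and local unstable leaf through $x$ must coincide with $x$, without passing through the product-structure statement. The two routes buy slightly different things: yours is shorter and reuses Lemma \ref{local product structure} as a black box, while the direct contraction argument is more elementary in that it does not presuppose nonemptiness/uniqueness of stable–unstable intersections and makes the mechanism (exponential contraction forward and backward forces $d_M(x,y)=0$) explicit. Either way the statement follows, so there is no gap.
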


Let $\eta>0$ be a fiber expansive constant of the system $\Theta$. The following lemma is a corollary of Lemma 3.6 in \cite{HLL}.
\begin{lemma}\label{lemma pick L epsilon}
  For any $\epsilon\in (0,\eta)$, there exists $L(\epsilon)\in\mathbb{N}$ such that for any $x,y\in M$ and $\omega\in\Omega$, we have
  \begin{equation*}
    \max_{|n|\leq L(\epsilon)}d_M(F_\omega^n(x),F_\omega^n(y))\leq \eta\mbox{ implies }d_M(x,y)<\epsilon.
  \end{equation*}
\end{lemma}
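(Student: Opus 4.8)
\emph{Proof strategy.} The plan is to upgrade the qualitative fiber-expansivity of Lemma~\ref{Lemma expansive} to a quantitative, $\omega$-uniform statement by a standard compactness-and-contradiction argument. Suppose the conclusion fails. Then there is a fixed $\epsilon\in(0,\eta)$ such that for every $L\in\mathbb{N}$ one can find $\omega_L\in\Omega$ and $x_L,y_L\in M$ with
\begin{equation*}
\max_{|n|\leq L}d_M\big(F_{\omega_L}^n(x_L),F_{\omega_L}^n(y_L)\big)\leq\eta
\qquad\text{but}\qquad
d_M(x_L,y_L)\geq\epsilon .
\end{equation*}

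Since $\Omega\times M\times M$ is a compact metric space, pass to a subsequence along which $\omega_L\to\omega_*$, $x_L\to x_*$ and $y_L\to y_*$; then $d_M(x_*,y_*)\geq\epsilon>0$, so in particular $x_*\neq y_*$. The key input is the joint continuity, for each fixed $n\in\mathbb{Z}$, of the finite-time map $(\omega,x)\mapsto F_\omega^n(x)$ on $\Omega\times M$. This holds because $\theta$ is a homeomorphism of $\Omega$, the map $F\colon\Omega\to\mathrm{diff}^2(M)$ is continuous, evaluation $\mathrm{diff}^2(M)\times M\to M$ and inversion $\mathrm{diff}^2(M)\to\mathrm{diff}^2(M)$ are continuous, and $F_\omega^n$ is (by the inductive formula for $\Theta^n$) a finite composition of such maps along the $\theta$-orbit of $\omega$.

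Now fix $n\in\mathbb{Z}$. For every $L\geq|n|$ in the chosen subsequence the index $n$ occurs in the maximum above, so $d_M\big(F_{\omega_L}^n(x_L),F_{\omega_L}^n(y_L)\big)\leq\eta$. Letting $L\to\infty$ and using the continuity just described, together with the fact that a non-strict inequality passes to the limit, we get $d_M\big(F_{\omega_*}^n(x_*),F_{\omega_*}^n(y_*)\big)\leq\eta$. Since $n\in\mathbb{Z}$ was arbitrary, this holds for all $n\in\mathbb{Z}$, and Lemma~\ref{Lemma expansive} forces $x_*=y_*$, contradicting $d_M(x_*,y_*)\geq\epsilon$. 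Therefore such an $L(\epsilon)$ exists, and by the way the contradiction hypothesis ranged over all $\omega,x,y$, the resulting $L(\epsilon)$ works uniformly in $\omega\in\Omega$ and $x,y\in M$.

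There is no substantive obstacle here beyond careful bookkeeping: the quantifier order (fix $n$, take the limit over $L$, then let $n$ range over $\mathbb{Z}$), and the appearance of $\leq\eta$ rather than $<\eta$ in the hypothesis, which is precisely what makes the closed condition survive the passage to the limit. (Alternatively, one may simply invoke \cite[Lemma 3.6]{HLL}, of which this statement is an immediate corollary.)
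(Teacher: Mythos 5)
Your route is genuinely different from the paper's: the paper gives no argument for this lemma at all, simply declaring it a corollary of Lemma 3.6 in \cite{HLL}, which is a quantitative hyperbolicity estimate (the same input resurfaces later as \eqref{eq contraction}, giving $d_M(x,y)\leq Ce^{-m\lambda}$ whenever $d_M(F_\omega^i(x),F_\omega^i(y))\leq\eta$ for $|i|\leq m$, from which one reads off $L(\epsilon)$ explicitly). Your compactness-and-contradiction argument is softer and produces no rate for $L(\epsilon)$, but it needs only fiber-expansivity plus the joint continuity of $(\omega,x)\mapsto F_\omega^n(x)$ for each fixed $n$, and your justification of that continuity (including the inversion step for $n<0$) is correct, as is your handling of the quantifiers and the uniformity in $\omega$.

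There is one genuine mismatch to repair at the final step. Passing to the limit yields, as you note, only the closed inequality $d_M(F_{\omega_*}^n(x_*),F_{\omega_*}^n(y_*))\leq\eta$ for all $n\in\mathbb{Z}$, whereas Lemma \ref{Lemma expansive} as stated demands the strict inequality $<\eta$ for all $n$ before it delivers $x_*=y_*$; so it does not apply verbatim. This is not purely cosmetic for an abstract expansive system: a constant can be expansive in the strict sense while some pair $x\neq y$ satisfies $\sup_{n}d_M(F_\omega^n(x),F_\omega^n(y))=\eta$ with the supremum attained, in which case your limit points would survive the contradiction. The gap closes using the hyperbolic structure: since the local stable and unstable manifolds in Lemma \ref{lemma stable unstable manifolds} are defined with non-strict inequalities, the closed condition for $n\geq 0$ (resp.\ $n\leq 0$) places $y_*$ in $W^s_\eta(\omega_*,x_*)$ (resp.\ $W^u_\eta(\omega_*,x_*)$), and provided $\eta\leq\epsilon_0$ (which is how the expansive constant is extracted from the local product structure in \cite{HLL}), Lemma \ref{local product structure} applied to the pair $(x_*,x_*)$ shows that $W^s_\eta(\omega_*,x_*)\cap W^u_\eta(\omega_*,x_*)$ is the single point $x_*$, forcing $y_*=x_*$. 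With that one observation inserted, your proof is complete; alternatively, the quantitative estimate \eqref{eq contraction} accepts the closed hypothesis directly and avoids the issue altogether.
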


\subsection{Specification for RDS}\label{subsection specification}
We compare the fiber specification property and the specification for RDS $F$ given by Gundlach and Kifer \cite{Kifer2000specification} in this section.

Let us first recall the definition of specification for RDS $F$ in \cite{Kifer2000specification}.
  Let $\epsilon_0:\Omega\to\mathbb{R}^+$ be the expansive characteristic for RDS $F$, i.e., for any $\omega\in\Omega$
   \begin{equation*}
     \mbox{if $d_M(F_\omega^n(x),F_\omega^n(y))\leq \epsilon_0(\theta^n\omega)$ for $n\in\mathbb{Z}$, then $x=y$.}
   \end{equation*}
 The RDS $F$ has $k-$specification if for each constant $c>0$ and $P-$a.s. $\omega\in\Omega$, there exists an $\mathbb{N}$-valued random variable $L_c=L_c(\omega)\geq 1$ such that for any points $\{x_i\}_{i=0}^k\in M$ and integers $-\infty\leq b_0<a_1<b_1<\cdots a_k<b_k<a_{k+1}\leq \infty$ satisfying $a_{i+1}\geq b_i+L_c(\theta^{b_i}\omega),$ $i=0,1,...,k$ one can find $z\in M$ such that
  \begin{align}
    \max_{a_i\leq j\leq b_i}d_M(F_\omega^j(x_i),F_\omega^j(z))&\leq c\epsilon_0(\theta^j\omega)\ \ \forall i=1,...,k\label{kifer s1}\\
    d_M(F_\omega^j(x_0),F_\omega^j(z))&\leq c\epsilon_0(\theta^j\omega)\ \ \forall j\geq a_{k+1} \mbox{ and }\forall j\leq b_0.\label{kifer s2}
  \end{align}If $F$ satisfies $k-$specification for any $k\in \mathbb{N}_+$ with $L_c$ independent of $k$, then we say that $F$ has specification property.
\begin{remark}\label{remark kifer gundlach}
  In the settings of this present paper, the expansive characteristic for RDS $ F $ is a constant (see Lemma \ref{Lemma expansive}). If we define $P_\omega(t)=F_\omega^t(x_i)$ if $t\in[a_i,b_i]$ for $i\in\{1,...,k\}$, then $(\omega,\cup_{i=1}^k[a_i,b_i],P_\omega)$ is an $\omega$-specification as in Def. \ref{def fiber specification}. Therefore, the fiber specification property defined in this paper is equivalent to only \eqref{kifer s1} holds for all $\omega\in\Omega$, any $k\in\mathbb{N}_+$ and $L_c$ independent of $k$ and $\omega$.
\end{remark}
\subsection{Fiber measure theoretic entropy}\label{section measure entropy}
For Lemmas in Sec. \ref{section measure entropy}-\ref{section 3.6}, the condition \ref{C1} is present, and  \eqref{invariant marginal P} is employed.
For any $\mu\in I_\Theta(\Omega\times M)$, the classical measure theoretic entropy of dynamical system $(\Omega\times M,\Theta,\mu)$ is denoted by $h_\mu(\Theta)$.
Denote $\pi_\Omega:\Omega\times M\rightarrow\Omega$ to be the projection into the first coordinate. The conditional entropy of a finite measurable partition $\mathcal{R}$ of $\Omega\times M$ given $\sigma$-algebra $\pi_\Omega^{-1}(\mathcal{B}_P(\Omega))$ is defined by
\begin{equation*}
  H_\mu(\mathcal{R}|\pi_\Omega^{-1}(\mathcal{B}_P(\Omega)))=\int H_{\mu_\omega}(\mathcal{R}(\omega))dP(\omega),
\end{equation*}where $\omega\mapsto \mu_\omega$ is the measure disintegration of $\mu$ with respect to $P$ and $H_{\mu_\omega}(\mathcal{A})$ denotes the usual entropy of a finite measurable partition $\mathcal{A}$ of $M$.
The fiber entropy of $F$ (or the relative entropy of $\Theta$) with respect to $\mu\in I_\Theta(\Omega\times M)$ is defined by
\begin{equation*}
  h_\mu(F)=h_{\mu}^{(r)}(\Theta)=\sup_{Q}h_{\mu}(F,Q),
\end{equation*}where
\begin{equation*}
  h_{\mu}(F,Q)=\lim_{n\rightarrow\infty}\frac{1}{n}H_\mu(\bigvee_{i=0}^{n-1}(\Theta)^{-i}Q|\pi^{-1}(\mathcal{B}_P(\Omega))),
\end{equation*}and the supreme is taken over all finite measurable partitions $Q$ of $\Omega\times M$.
Note that $h_\mu(F)$ remains the same by taking the supreme only over partition of $\Omega\times M$ into finite measurable partition $Q_i$ of the form $Q_i=\Omega\times P_i$, where $\{P_i\}$ is a finite measurable partition of $M$.
\subsection{Measure Approximation}\label{section 3.4}


 The following lemma can be found in \cite{Young1990} p.535.
\begin{lemma}[Measure approximation]\label{lemma measure approxi}
 For topological dynamical system $(\Omega\times M,\Theta)$, given $\varphi\in  C(\Omega\times M,\mathbb{R})$, for any $\mu\in I_\Theta(\Omega\times M)$, and $\delta>0$, there exist $\nu\in I_\Theta(\Omega\times M)$ and $\{\nu_i\}_{i=1}^k\subset I^e_\Theta(\Omega\times M)$ with the following properties:
  \begin{enumerate}
    \item $\nu=\sum_{i=1}^{k}\lambda_i\nu_i$, where $\lambda_i>0$, $\sum_{i=1}^{k}\lambda_i=1$;
    \item $h_\nu(\Theta)\geq h_\mu(\Theta)-\delta$, which implies $h_\nu(F)\geq h_\mu(F)-\delta$ by the Abramov-Rohlin formula $h_\mu(\Theta)=h_\mu(F)+h_P(\theta)$ and $h_\nu(\Theta)=h_\nu(F)+h_P(\theta)$;
    \item $|\int_{\Omega\times M} \varphi d\mu-\int_{\Omega\times M}\varphi d\nu|<\delta.$
  \end{enumerate}
\end{lemma}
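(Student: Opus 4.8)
The plan is to deduce the lemma from the ergodic decomposition together with the affinity of the entropy map, with no input from the specification or hyperbolicity hypotheses. Write the ergodic decomposition of $\mu$ as $\mu=\int_{\mathcal E}m\,d\tau(m)$, where $\mathcal E=I^e_\Theta(\Omega\times M)$ is the (Borel) set of ergodic measures, $\tau$ is a Borel probability measure on $\mathcal E$, and $\int_{\Omega\times M}\psi\,d\mu=\int_{\mathcal E}\big(\int_{\Omega\times M}\psi\,dm\big)\,d\tau(m)$ for every $\psi\in C(\Omega\times M,\mathbb R)$; moreover the entropy function $m\mapsto h_m(\Theta)$ is Borel on $\mathcal E$ and satisfies $h_\mu(\Theta)=\int_{\mathcal E}h_m(\Theta)\,d\tau(m)$ (Jacobs' theorem). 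We may and do assume $h_\mu(\Theta)<\infty$; if $h_P(\theta)=\infty$ one runs the argument below verbatim with $h_m(F)=h_m^{(r)}(\Theta)$ in place of $h_m(\Theta)$, which is always finite here by the Margulis--Ruelle inequality, and recovers the $h(\Theta)$-statement through the Abramov--Rohlin formula \eqref{AR formula}. Put $a(m)=\int\varphi\,dm$ and $b(m)=h_m(\Theta)$: then $a$ is continuous and bounded on the compact space $Pr(\Omega\times M)$, and $b$ is Borel and $\tau$-integrable.

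Next I would discretise. Since the range of $a$ is contained in a bounded interval, cutting that interval into pieces of length $<\delta/2$ and pulling back by $a$ produces a finite Borel partition $\{E_1,\dots,E_k\}$ of $\mathcal E$; discarding the $\tau$-null pieces (which affects none of the integrals above) we may assume $\lambda_j:=\tau(E_j)>0$ and $\sup_{m,m'\in E_j}|a(m)-a(m')|\le\delta/2$. For each $j$ set $c_j=\lambda_j^{-1}\int_{E_j}b\,d\tau$, the conditional average of the entropy over $E_j$. Because $\int_{E_j}(b-c_j)\,d\tau=0$ and $\lambda_j>0$, the set $\{m\in E_j:\ b(m)\ge c_j\}$ cannot be $\tau$-null, hence is nonempty; choose $\nu_j$ in it, and define $\nu=\sum_{j=1}^k\lambda_j\nu_j$.

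It remains to verify the three properties. Property (1) holds by construction. For (3), $\int\varphi\,d\nu=\sum_j\lambda_j a(\nu_j)$ and $\int\varphi\,d\mu=\sum_j\int_{E_j}a\,d\tau$, so
\begin{equation*}
\Big|\int\varphi\,d\mu-\int\varphi\,d\nu\Big|\ \le\ \sum_j\int_{E_j}|a(m)-a(\nu_j)|\,d\tau(m)\ \le\ \frac{\delta}{2}\ <\ \delta .
\end{equation*}
For (2), by affinity of the entropy map on $I_\Theta(\Omega\times M)$ and the choice of the $\nu_j$,
\begin{equation*}
h_\nu(\Theta)=\sum_j\lambda_j\,b(\nu_j)\ \ge\ \sum_j\lambda_j c_j\ =\ \sum_j\int_{E_j}b\,d\tau\ =\ \int_{\mathcal E}b\,d\tau\ =\ h_\mu(\Theta)\ \ge\ h_\mu(\Theta)-\delta ,
\end{equation*}
and the inequality $h_\nu(F)\ge h_\mu(F)-\delta$ then follows from \eqref{AR formula} exactly as in the statement.

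The main technical points — and where I expect to have to be careful — are the two ``black box'' ingredients: the Borel measurability of $m\mapsto h_m(\Theta)$ on $\mathcal E$ and the ergodic-decomposition identity $h_\mu(\Theta)=\int_{\mathcal E}h_m(\Theta)\,d\tau(m)$ (equivalently, their relative analogues for $h_\cdot^{(r)}(\Theta)$). These are classical (Jacobs' theorem and its relative version, see \cite{Young1990} and \cite{KL06}), so citing them is legitimate; everything else is a routine averaging argument over a finite partition of the space of ergodic measures.
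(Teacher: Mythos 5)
Your argument is correct and is the standard proof of this lemma: the paper itself does not prove it but cites Young \cite{Young1990}, where the same route (ergodic decomposition, Jacobs' theorem $h_\mu(\Theta)=\int h_m(\Theta)\,d\tau(m)$, affinity of the entropy map, and a finite partition of the ergodic measures on which $\int\varphi\,dm$ oscillates by less than $\delta$) is taken. Your choice of representatives with above-average entropy on each cell even gives $h_\nu(\Theta)\geq h_\mu(\Theta)$ with no entropy loss, and the reduction to the finite relative entropy $h_\cdot(F)$ in the degenerate case $h_P(\theta)=\infty$ is handled correctly via the Abramov--Rohlin formula.
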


\subsection{Random Katok entropy theorem}
For $\epsilon>0$, $\delta\in(0,1)$, $\omega\in\Omega$ and $\mu\in I_\Theta(\Omega\times M)$, denotes
\begin{equation*}
  S(\omega,n,\epsilon,\delta)=\min\{S(\omega,n,\epsilon,K)|\ K\subset M,\ \mu_\omega(K)\geq 1-\delta\},
\end{equation*}where $S(\omega,n,\epsilon,K)$ denotes the smallest cardinality of any $(\omega,\epsilon,n)$-spanning set of $K$, and a subset $G\subset K$ is called an $(\omega,\epsilon,n)$-spanning set of $ K$ if for any $y\in K$, there exists a $x\in G$ such that $d_{\omega}^n(x,y)\leq  \epsilon$. The following is the random version of Katok entropy theorem. The first two equality can be found in  \cite[Theorem 3.1]{zhutwonotes} and \cite[Theorem A]{Lizhimingdelta} respectively. The third and fourth equalities are due to the fiber-expansive property, and we supply a proof in Sec. \ref{5.2} for the sake of completeness.
\begin{lemma}\label{lemma katok entropy}
   For any $\mu\in I^e_\Theta(\Omega\times M)$ and $\delta\in(0,1)$. The mapping $\omega\mapsto S(\omega,n,\epsilon,\delta)$ is measurable. Note that $h_{\mu}(F)\leq h_{top}(F)<\infty$, then
  \begin{align}\label{eq katok delta}
    h_\mu(F)&=\lim_{\epsilon\rightarrow 0}\limsup_{n\rightarrow\infty}\frac{1}{n}\log S(\omega,n,\epsilon,\delta)=\lim_{\epsilon\rightarrow 0}\liminf_{n\rightarrow\infty}\frac{1}{n}\log S(\omega,n,\epsilon,\delta) \\
    &=\limsup_{n\rightarrow\infty}\frac{1}{n}\log S(\omega,n,\eta,\delta)=\liminf_{n\rightarrow\infty}\frac{1}{n}\log S(\omega,n,\eta,\delta) \mbox{ for } P-a.s.\  \omega\in\Omega,\nonumber
  \end{align}where $\eta>0$ is a fiber-expansive constant as in Lemma \ref{Lemma expansive}.
\end{lemma}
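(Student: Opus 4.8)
The plan is to derive the third and fourth equalities of \eqref{eq katok delta} from the first two (which, together with the measurability of $\omega\mapsto S(\omega,n,\epsilon,\delta)$, we take from \cite{zhutwonotes,Lizhimingdelta}) by a sandwich argument. Write $\overline{h}(\omega)=\limsup_{n\to\infty}\frac{1}{n}\log S(\omega,n,\eta,\delta)$ and $\underline{h}(\omega)=\liminf_{n\to\infty}\frac{1}{n}\log S(\omega,n,\eta,\delta)$; these are measurable and nonnegative, and the goal is to show that for $P$-a.s. $\omega\in\Omega$ one has
\[
  h_\mu(F)\le \underline{h}(\omega)\le \overline{h}(\omega)\le h_\mu(F),
\]
which forces all four quantities in \eqref{eq katok delta} to equal $h_\mu(F)$.

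The rightmost inequality is the easy half: enlarging the radius only makes an $(\omega,\cdot,n)$-spanning set cheaper, so $S(\omega,n,\eta,\delta)\le S(\omega,n,\epsilon,\delta)$ for every $0<\epsilon\le\eta$, every $\omega$ and every $n$; taking $\limsup_n$, then $\epsilon\to0$, and invoking the first equality of \eqref{eq katok delta} gives $\overline h(\omega)\le h_\mu(F)$ for $P$-a.s. $\omega$. Since $h_\mu(F)<\infty$, both $\underline h$ and $\overline h$ are then $P$-a.s. finite, so $\underline h$ is $P$-integrable.

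The leftmost inequality is where the fiber-expansive property is used, through Lemma \ref{lemma pick L epsilon}. The key comparison I would prove is that for $\epsilon\in(0,\eta)$ with $L=L(\epsilon)$ as in that lemma, $S(\omega,n,\epsilon,\delta)\le S(\theta^{-L}\omega,n+2L,\eta,\delta)$ for $P$-a.s. $\omega$ and all $n$. For this, take a near-optimal $(\theta^{-L}\omega,\eta,n+2L)$-spanning set $G'$ of a set $K'\subset M$ with $\mu_{\theta^{-L}\omega}(K')\ge1-\delta$, and push $K'$ and $G'$ forward by the diffeomorphism $F_{\theta^{-L}\omega}^{L}$. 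Since $\mu$ is $\Theta$-invariant, uniqueness of the disintegration gives $(F_\omega^{j})_*\mu_\omega=\mu_{\theta^j\omega}$ for every $j\in\mathbb{N}$ and $P$-a.s. $\omega$, so the pushed-forward set keeps $\mu_\omega$-measure $\ge1-\delta$; and if $u,v\in K'$ have all $n+2L$ orbit coordinates along $\theta^{-L}\omega$ pairwise $\eta$-close, then for each $0\le i\le n-1$ Lemma \ref{lemma pick L epsilon} applied at the point $F_{\theta^{-L}\omega}^{L+i}(\cdot)$ (its $j$-th neighbour, $|j|\le L$, is $F_{\theta^{-L}\omega}^{L+i+j}(\cdot)$ with $L+i+j\in\{0,\dots,n+2L-1\}$) forces $d_M(F_\omega^{i}(F_{\theta^{-L}\omega}^{L}u),F_\omega^{i}(F_{\theta^{-L}\omega}^{L}v))<\epsilon$; thus the pushed-forward set is an $(\omega,\epsilon,n)$-spanning set of the pushed-forward $K'$, giving the comparison. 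Dividing by $n$, letting $n\to\infty$ and using $(n+2L)/n\to1$ together with the finiteness of $\overline h(\theta^{-L}\omega)$, this yields $\liminf_n\frac{1}{n}\log S(\omega,n,\epsilon,\delta)\le\underline h(\theta^{-L}\omega)$ for $P$-a.s. $\omega$.

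It then remains to absorb the shift $\theta^{-L}$. Running the same pushforward argument with $F_\omega$ and radius $\eta$ throughout gives $S(\theta\omega,n,\eta,\delta)\le S(\omega,n+1,\eta,\delta)$, hence $\underline h\circ\theta\le\underline h$ $P$-a.s.; since $\theta$ preserves $P$ and $0\le\underline h$ is $P$-integrable, $\int(\underline h-\underline h\circ\theta)\,dP=0$ forces $\underline h\circ\theta=\underline h$ $P$-a.s., so $\underline h(\theta^{-L}\omega)=\underline h(\omega)$ $P$-a.s. for each fixed $L\in\mathbb{N}$. Thus $\liminf_n\frac{1}{n}\log S(\omega,n,\epsilon,\delta)\le\underline h(\omega)$ for $P$-a.s. $\omega$; taking $\epsilon=\epsilon_k\downarrow0$ (discarding the countably many $P$-null exceptional sets produced along the way), using that $\epsilon\mapsto\liminf_n\frac{1}{n}\log S(\omega,n,\epsilon,\delta)$ is monotone, and applying the second equality of \eqref{eq katok delta}, one gets $h_\mu(F)\le\underline h(\omega)$, which closes the sandwich. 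I expect the main obstacle to be precisely this passage from the single scale $\eta$ to all small scales $\epsilon$: it requires both the quantitative fiber-expansiveness of Lemma \ref{lemma pick L epsilon} and the measure-preserving argument to neutralize the orbit-segment drift $\theta^{-L(\epsilon)}$, as well as some bookkeeping of the fact that $(F_\omega^{j})_*\mu_\omega=\mu_{\theta^j\omega}$ only holds $P$-a.s.
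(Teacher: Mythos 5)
Your argument is correct, and its core estimate is the same as the paper's: push a near-optimal spanning set forward by the fiber maps, use $(F_\omega^j)_*\mu_\omega=\mu_{\theta^j\omega}$ ($P$-a.s.) to keep the measure bound, and use the quantitative expansiveness of Lemma \ref{lemma pick L epsilon} to convert $\eta$-closeness over a window enlarged by $2L(\epsilon)$ into $\epsilon$-closeness, which is exactly the paper's inequality $S(\theta^{k}\omega^*,n,\epsilon_N,\delta)\leq S(\omega^*,n+2k,\eta,\delta)$ written with the shift on the other side. Where you genuinely diverge is in how the shift $\theta^{-L(\epsilon)}$ is absorbed: the paper applies Egorov's theorem to get a set $\Omega_\zeta$ on which $\liminf_n\frac1n\log S(\cdot,n,\epsilon_N,\delta)\geq h_\mu(F)-\gamma$ uniformly, and then Birkhoff's ergodic theorem to find, for a.e.\ $\omega^*$, a forward time $k>L(\epsilon_N)$ with $\theta^k\omega^*\in\Omega_\zeta$; you instead prove the subinvariance $S(\theta\omega,n,\eta,\delta)\leq S(\omega,n+1,\eta,\delta)$, deduce $\underline h\circ\theta\leq\underline h$ a.s., and upgrade this to a.s.\ $\theta$-invariance of $\underline h$ via integrability (guaranteed by the easy inequality $\underline h\leq h_{\mu}(F)<\infty$ a.s.) and $\theta$-invariance of $P$, which lets you use the fixed shift $L(\epsilon)$ directly. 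Your route is arguably cleaner — it avoids Egorov and Birkhoff entirely and replaces the orbit-hitting argument by a standard "subinvariant integrable function is invariant" step — while the paper's route avoids having to integrate $\underline h$ and hence uses measurability of $S$ only qualitatively. One caveat: the measurability of $\omega\mapsto S(\omega,n,\epsilon,\delta)$ is itself part of the lemma, and the paper proves it (via the Projection theorem for the completed $\sigma$-algebra $\mathcal{B}_P(\Omega)$ and a monotone class argument for $(\omega,x_1,\dots,x_L)\mapsto\mu_\omega(\cup_i B_n(\omega,x_i,\epsilon))$); you defer it to \cite{zhutwonotes,Lizhimingdelta}, but your invariance step genuinely needs $\underline h$ measurable, so you should either check those references actually supply this or reproduce the projection-theorem argument.
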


\subsection{Upper semi-continuity of entropy map and equilibrium states}\label{section 3.6}
  A subset $Q\subset M$ is called an $(\omega,\epsilon,n)$-separated set of $M$ if for any two different points $x,y\in Q$, $d_\omega^n(x,y)>\epsilon$.
For any $\varphi\in C(\Omega\times M)$, $n\geq 1$ and constant $\epsilon>0$, set
\begin{equation}\label{eq pi F varphi}
  \pi_F(\varphi)(\omega,\epsilon,n)=\sup\{Z_n(\omega,\varphi,Q):\ Q\mbox{ is an maximal $(\omega,\epsilon,n)$-separated set of $M$}\},
\end{equation}where
\begin{equation*}
  Z_n(\omega,\varphi,Q)=\sum_{x\in Q}\exp(S_n\varphi(\omega,x))=\sum_{x\in Q}\exp\left(\sum_{i=0}^{n-1}\varphi(\Theta^i(\omega,x))\right).
\end{equation*} The fiber topological pressure of $F$ (or the relative topological pressure of $\Theta$) with respect to $\varphi\in C(\Omega\times M,\mathbb{R})$ is
\begin{equation}\label{eq fiber top pressure}
  \pi_F(\varphi)=\lim_{\epsilon\to0}\limsup_{n\to \infty}\frac{1}{n}\int\log\pi_F(\varphi)(\omega,\epsilon,n)dP(\omega).
\end{equation}
The following lemma is a direct corollary of \cite[Proposition 1.2.6]{KL06} by noticing that $P$ is an ergodic measure with respect to $\theta$.
\begin{lemma}\label{lemma fiber top pressure}
  For any $\varphi\in  C(\Omega\times M,\mathbb{R})$,
  \begin{align*}
    \pi_F(\varphi)&=\lim_{\epsilon\to 0}\limsup_{n\to \infty}\frac{1}{n}\log\pi_F(\varphi)(\omega,\epsilon,n)\\
    &=\lim_{\epsilon\to 0}\liminf_{n\to \infty}\frac{1}{n}\log\pi_F(\varphi)(\omega,\epsilon,n)\mbox{ for $P$-a.s. $\omega\in\Omega$.}
  \end{align*}
\end{lemma}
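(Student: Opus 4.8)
The plan is to deduce the lemma from the general theory of relative (fibered) topological pressure of random bundle transformations, specifically from \cite[Proposition 1.2.6]{KL06}. The data here --- $(\Omega\times M,\Theta)$ with generator $F$, compact metric fibers $M$, continuous dependence $\omega\mapsto F_\omega$, and continuous potential $\varphi$ --- fit that framework, and the only additional input one needs is that the base measure $P$ be $\theta$-ergodic; this holds because $(\Omega,\mathcal{B}_P(\Omega),P,\theta)$ is uniquely ergodic by \ref{C1}, and unique ergodicity implies ergodicity. I would fix $\epsilon>0$, abbreviate $g_n^\epsilon(\omega):=\log\pi_F(\varphi)(\omega,\epsilon,n)$ with $\pi_F(\varphi)(\omega,\epsilon,n)$ as in \eqref{eq pi F varphi}, and first record that $\omega\mapsto g_n^\epsilon(\omega)$ is measurable --- this is part of the setup (the supremum over maximal $(\omega,\epsilon,n)$-separated sets of $Z_n(\omega,\varphi,Q)$ can be realized through a measurable selection, as in \cite{KL06}).

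The mechanism underlying \cite[Proposition 1.2.6]{KL06} is Kingman's subadditive ergodic theorem, and I would run it as follows. Integrability of $g_n^\epsilon$ is immediate from compactness of $M$: the cardinality of an $(\omega,\epsilon,n)$-separated set of $M$ is bounded by a finite constant $C(\epsilon,n)$ depending only on $\epsilon$, $n$, and $M$ (uniformly in $\omega$), so $0\le g_n^\epsilon(\omega)\le\log C(\epsilon,n)+n\sup_{\Omega\times M}|\varphi|$. For the cocycle inequality I would use the Birkhoff identity $S_{n+m}\varphi(\omega,x)=S_n\varphi(\omega,x)+S_m\varphi(\theta^n\omega,F_\omega^nx)$ together with the standard comparison in which an $(\omega,\epsilon,n+m)$-separated set is organized by an $(\omega,\epsilon,n)$-spanning set of $M$ followed by $(\theta^n\omega,\epsilon,m)$-spanning sets along the fibers, which gives the approximate subadditivity
\[
g_{n+m}^{2\epsilon}(\omega)\le g_n^{\epsilon}(\omega)+g_m^{\epsilon}(\theta^n\omega)+(n+m)\,\mathrm{var}(\varphi,\epsilon),
\]
where $\mathrm{var}(\varphi,\epsilon)\to0$ as $\epsilon\to0$. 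Passing to a genuinely subadditive modification (absorbing the scale change and the $\mathrm{var}$ term, exactly as in \cite{KL06}) and invoking Kingman together with ergodicity of $P$, one obtains that for each fixed $\epsilon$, $\tfrac1n g_n^\epsilon(\omega)$ converges $P$-a.s. (and in $L^1(P)$) to $c(\epsilon):=\lim_{n\to\infty}\tfrac1n\int g_n^\epsilon\,dP$; in particular $\limsup_{n\to\infty}\tfrac1n g_n^\epsilon(\omega)=\liminf_{n\to\infty}\tfrac1n g_n^\epsilon(\omega)=c(\epsilon)$ for $P$-a.s. $\omega$.

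To finish, I would take a sequence $\epsilon_k\downarrow0$ and note that the union over $k$ of the exceptional $P$-null sets is still $P$-null, so for $P$-a.s. $\omega$ the previous conclusion holds simultaneously for all $\epsilon_k$. Since $\pi_F(\varphi)(\omega,\epsilon,n)$ is monotone in $\epsilon$ up to the usual $\epsilon$-versus-$2\epsilon$ comparison (harmless after $\epsilon\to0$), the limits $\lim_{\epsilon\to0}\limsup_n\tfrac1n g_n^\epsilon(\omega)$ and $\lim_{\epsilon\to0}\liminf_n\tfrac1n g_n^\epsilon(\omega)$ exist and both equal $\lim_{k\to\infty}c(\epsilon_k)$, which by the definition \eqref{eq fiber top pressure} of $\pi_F(\varphi)$ (using that $\limsup_n\tfrac1n\int g_n^\epsilon\,dP=c(\epsilon)$) is exactly $\pi_F(\varphi)$. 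I expect the only genuinely delicate points to be the measurability of $\omega\mapsto\pi_F(\varphi)(\omega,\epsilon,n)$ and the $\epsilon$-versus-$2\epsilon$ mismatch in the subadditivity estimate; both are already dealt with in \cite{KL06}, which is why a direct appeal to that proposition is the cleanest route.
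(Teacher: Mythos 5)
Your proposal is correct and follows exactly the route the paper takes: the paper offers no written proof beyond observing that the statement is a direct corollary of \cite[Proposition 1.2.6]{KL06} once one notes that unique ergodicity of $(\Omega,\theta)$ makes $P$ ergodic. Your additional unpacking of the mechanism behind that proposition (measurability, integrability, approximate subadditivity with the $\epsilon$-versus-$2\epsilon$ comparison, and Kingman's theorem) is consistent and accurate, but it is material the paper deliberately delegates to the cited reference.
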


Recall that the system $\Theta$ is fiber-expansive, see Lemma \ref{Lemma expansive}, then the following lemma is a corollary of \cite[Theorem 1.3.5]{KL06}.
\begin{lemma}\label{lemma upper semicontinuity}
  The entropy map
  \begin{equation*}
    \mu \mapsto h_\mu(F)\mbox{ for }\mu\in I_\Theta(\Omega\times M)
  \end{equation*} is upper semi-continuous with respect to the weak$^*$ topology on $I_\Theta(\Omega\times M).$ In this case, the equilibrium state for any continuous $\varphi:M\to\mathbb{R}$ exists. We denote $ES_\varphi$ to be the collection  of all equilibrium states for $\varphi\in C(\Omega\times M,\mathbb{R})$.
\end{lemma}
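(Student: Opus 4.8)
The plan is to deduce the upper semi-continuity from the general relative thermodynamic formalism of Kifer and Liu, and then to obtain the existence of equilibrium states by a soft compactness argument. Since the statement is asserted to be a corollary of \cite[Theorem 1.3.5]{KL06}, the first task is to check that the hypotheses of that theorem hold here: the base $(\Omega,\mathcal{B}_P(\Omega),P,\theta)$ is a probability-preserving system as required by \ref{C1}, the fibers are the fixed compact manifold $M$, the generators $F_\omega\in\mathrm{Diff}^2(M)$ depend continuously on $\omega$ (so $F$ is a continuous bundle RDS over the compact base), and the system is fiber-expansive with some constant $\eta>0$, which is exactly Lemma \ref{Lemma expansive}. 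Invoking \cite[Theorem 1.3.5]{KL06} then yields that $\mu\mapsto h_\mu(F)$ is upper semi-continuous on $I_\Theta(\Omega\times M)$ for the weak$^*$ topology.

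For transparency I would also recall the mechanism behind this reduction, namely the familiar generator-plus-null-boundary argument carried over to the fiber entropy. Fix $\mu\in I_\Theta(\Omega\times M)$; recall from \eqref{invariant marginal P} that $\mu$ has marginal $P$, so its disintegration $\omega\mapsto\mu_\omega$ is available. Using that $\eta$ is a fiber-expansive constant, choose a finite measurable partition $Q=\{\Omega\times P_i\}$ with $\mathrm{diam}(P_i)<\eta$ and with $\mu$ giving zero mass to $\Omega\times\bigcup_i\partial P_i$ (both are standard to arrange). By \cite[Theorem 1.1.2]{KL06} such a $Q$ is a fiber generator, hence $h_\nu(F)=h_\nu(F,Q)$ for every $\nu\in I_\Theta(\Omega\times M)$. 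Since $h_\nu(F,Q)=\inf_n\frac1n H_\nu\big(\bigvee_{i=0}^{n-1}\Theta^{-i}Q\,\big|\,\pi_\Omega^{-1}(\mathcal{B}_P(\Omega))\big)$ and each map $\nu\mapsto H_\nu\big(\bigvee_{i=0}^{n-1}\Theta^{-i}Q\,\big|\,\pi_\Omega^{-1}(\mathcal{B}_P(\Omega))\big)$ is continuous at $\mu$ (the relevant partition has $\mu$-null boundary), the function $\nu\mapsto h_\nu(F,Q)$ is an infimum of functions continuous at $\mu$ and is therefore upper semi-continuous at $\mu$. Hence $\limsup_{\nu\to\mu}h_\nu(F)=\limsup_{\nu\to\mu}h_\nu(F,Q)\le h_\mu(F,Q)=h_\mu(F)$, which is the asserted upper semi-continuity at the arbitrary point $\mu$.

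To conclude the existence of equilibrium states, observe that $I_\Theta(\Omega\times M)$ is nonempty, convex and weak$^*$-compact since $\Omega\times M$ is compact, and that $\mu\mapsto\int\varphi\,d\mu$ is weak$^*$-continuous for $\varphi\in C(\Omega\times M,\mathbb{R})$. Thus $\mu\mapsto h_\mu(F)+\int\varphi\,d\mu$ is upper semi-continuous on a compact set and attains its supremum; by the fiber variational principle \eqref{fiber vp} this supremum equals $\pi_F(\varphi)$, so every maximizer is an equilibrium state and $ES_\varphi\ne\emptyset$.

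The only genuinely delicate point lives inside the first two steps: the continuity at $\mu$ of the \emph{relative} conditional entropies $\nu\mapsto H_\nu(\,\cdot\,|\,\pi_\Omega^{-1}(\mathcal{B}_P(\Omega)))$, which — in contrast to the absolute case — requires control of the disintegrations $\omega\mapsto\nu_\omega$ and of the joint measurability and continuity of $(\nu,\omega)\mapsto H_{\nu_\omega}(\cdot)$. This is precisely the content already packaged in \cite[Theorem 1.3.5]{KL06}, so once that result is invoked the remaining argument is the routine compactness step above.
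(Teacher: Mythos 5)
Your proposal matches the paper exactly: the paper offers no separate proof of this lemma, deducing it directly from the fiber-expansiveness of $\Theta$ (Lemma \ref{Lemma expansive}) together with \cite[Theorem 1.3.5]{KL06}, and obtaining existence of equilibrium states from weak$^*$-compactness of $I_\Theta(\Omega\times M)$ and upper semi-continuity of $\mu\mapsto h_\mu(F)+\int\varphi\,d\mu$. Your additional sketch of the generator-plus-null-boundary mechanism is a correct and consistent elaboration of what the cited theorem packages, so there is nothing to fix.
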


By  Lemma \ref{lemma upper semicontinuity} and the Abramov-Rohlin formula , we obtain the upper semi-continuity of entropy map $h_\cdot(\Theta):I_\Theta(\Omega\times M)\to\mathbb{R}$ defined by $\mu\mapsto h_\mu(\Theta)$. The following lemma only needs the upper semi-continuity of entropy map $\mu\mapsto h_\mu(\Theta)$, and it is a consequence of \cite[Corollary 2]{Jenkinson01}.
\begin{lemma}\label{lemma int subset inteq}
  For any $\varphi\in  C(\Omega\times M,\mathbb{R})$, we have
  \begin{equation*}
    int\left\{\int\varphi d\mu:\ \mu\in I_\Theta(\Omega\times M)\right\}\subset \left\{\int\varphi d\mu:\ \mu\in\cup_{q\in\mathbb{R}}ES_{q\varphi}\right\},
  \end{equation*}where $int$ denotes the interior of subset of $\mathbb{R}$.
\end{lemma}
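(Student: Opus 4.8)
The plan is to realize $\alpha$ as a subgradient of the fiber pressure function $q\mapsto\pi_F(q\varphi)$, and then to identify the set of subgradients at a point with the set of values $\int\varphi\,d\mu$ taken over equilibrium states of the corresponding potential. Set $g(q):=\pi_F(q\varphi)$ for $q\in\mathbb{R}$. By the fiber variational principle \eqref{fiber vp}, $g(q)=\sup\{h_\mu(F)+q\int\varphi\,d\mu:\ \mu\in I_\Theta(\Omega\times M)\}$, so $g$ is a supremum of affine functions of $q$, hence convex; moreover $0\le h_\mu(F)\le h_{top}(F)<\infty$ and $\varphi$ is bounded, so $g$ is finite-valued, and therefore continuous, with one-sided derivatives $g'(q^-)\le g'(q^+)$ at every $q$ and $\partial g(q)=[g'(q^-),g'(q^+)]\neq\emptyset$.

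First I would note that $\{\int\varphi\,d\mu:\ \mu\in I_\Theta(\Omega\times M)\}$ is a compact interval $[\alpha_{\min},\alpha_{\max}]$, being the image of the weak$^*$-compact convex set $I_\Theta(\Omega\times M)$ under the continuous affine map $\mu\mapsto\int\varphi\,d\mu$; if $\alpha_{\min}=\alpha_{\max}$ the assertion is vacuous, so suppose $\alpha\in(\alpha_{\min},\alpha_{\max})$. Comparing $g(q)$ with $q\alpha_{\max}$ for $q>0$ and with $q\alpha_{\min}$ for $q<0$ and using $0\le h_\mu(F)\le h_{top}(F)$ gives $g(q)/q\to\alpha_{\max}$ as $q\to+\infty$ and $g(q)/q\to\alpha_{\min}$ as $q\to-\infty$. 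Consequently the finite convex function $q\mapsto g(q)-\alpha q$ tends to $+\infty$ at both ends, hence attains a global minimum at some $q_0\in\mathbb{R}$, and minimality of a convex function at $q_0$ forces $\alpha\in\partial g(q_0)=[g'(q_0^-),g'(q_0^+)]$.

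It then suffices to show that every point of $\partial g(q_0)$ equals $\int\varphi\,d\mu$ for some $\mu\in ES_{q_0\varphi}$. For $\mu\in ES_{q_0\varphi}$ and any $q'$ one has $g(q')\ge h_\mu(F)+q'\int\varphi\,d\mu=g(q_0)+(q'-q_0)\int\varphi\,d\mu$, so $\int\varphi\,d\mu\in\partial g(q_0)$; thus $\{\int\varphi\,d\mu:\ \mu\in ES_{q_0\varphi}\}\subset[g'(q_0^-),g'(q_0^+)]$. To see that the endpoint $g'(q_0^+)$ is attained, take $q_n\downarrow q_0$ and, using Lemma \ref{lemma upper semicontinuity}, pick $\mu_n\in ES_{q_n\varphi}$; passing to a weak$^*$-convergent subsequence $\mu_n\to\mu$, upper semicontinuity of $\mu\mapsto h_\mu(F)$ together with continuity of $\mu\mapsto\int\varphi\,d\mu$ and of $g$ yields $g(q_0)=\lim_n g(q_n)=\lim_n\big(h_{\mu_n}(F)+q_n\!\int\varphi\,d\mu_n\big)\le h_\mu(F)+q_0\!\int\varphi\,d\mu\le g(q_0)$, so $\mu\in ES_{q_0\varphi}$; since $\int\varphi\,d\mu_n\in[g'(q_n^-),g'(q_n^+)]$ and $g'(q_n^-)\to g'(q_0^+)$, we get $\int\varphi\,d\mu=\lim_n\int\varphi\,d\mu_n\ge g'(q_0^+)$, forcing $\int\varphi\,d\mu=g'(q_0^+)$; symmetrically some element of $ES_{q_0\varphi}$ integrates $\varphi$ to $g'(q_0^-)$. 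Finally, since the fiber entropy map $\mu\mapsto h_\mu(F)$ is affine on $I_\Theta(\Omega\times M)$, the set $ES_{q_0\varphi}$ is convex, so convex combinations of these two extremal measures realize every value in $[g'(q_0^-),g'(q_0^+)]$; in particular $\alpha=\int\varphi\,d\mu$ for some $\mu\in ES_{q_0\varphi}\subset\bigcup_{q\in\mathbb{R}}ES_{q\varphi}$, as required.

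I expect the main obstacle to be the identification of $\partial g(q_0)$ with $\{\int\varphi\,d\mu:\ \mu\in ES_{q_0\varphi}\}$, that is, the fact that the boundary slopes $g'(q_0^{\pm})$ are realized by equilibrium states. This is precisely where the upper semicontinuity of the fiber entropy map (Lemma \ref{lemma upper semicontinuity}) is indispensable: it both guarantees that the relevant equilibrium states exist and makes the weak$^*$ limiting argument go through. The remaining ingredients — finiteness and convexity of $g$, the asymptotic slopes $\alpha_{\min},\alpha_{\max}$, and affineness of fiber entropy — are standard, and indeed once $g$ is known to be finite and convex with those asymptotic slopes the conclusion is exactly \cite[Corollary 2]{Jenkinson01}.
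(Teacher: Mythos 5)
Your proposal is correct, and it is essentially the paper's approach: the paper does not prove the lemma directly but deduces it from \cite[Corollary 2]{Jenkinson01} using only the upper semi-continuity of the entropy map, and your argument is precisely the standard convex-analysis proof of that corollary (realizing $\alpha$ as a subgradient of the convex, finite pressure function $q\mapsto\pi_F(q\varphi)$ and identifying the subdifferential with the integrals over the convex set of equilibrium states, with upper semi-continuity used to capture the endpoint slopes). The only difference is that you have written out in full what the paper delegates to the citation.
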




\section{Proof of Main results}\label{section proofs}
\subsection{Proof of Theorem \ref{thm random specification}}
In this section, we prove Theorem \ref{thm random specification}.  We start with the following proposition.

\begin{proposition}\label{N intersection of su}
  For any $\epsilon\in(0,\epsilon_0]$, where $\epsilon_0$ comes from Lemma \ref{lemma stable unstable manifolds}, there exists an integer $N\in\mathbb{N}$ depending on $\epsilon$, such that for any $x,y\in M$, $\omega\in\Omega$, $n\geq N$,
  \begin{equation*}
    F_\omega^n(W_\epsilon^u(\omega,x))\cap W_\epsilon^s(\theta^n\omega,y)\not=\emptyset.
  \end{equation*}
\end{proposition}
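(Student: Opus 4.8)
The plan is to combine the uniform local product structure of Lemma~\ref{local product structure}, the exponential contraction along local stable manifolds from Lemma~\ref{lemma stable unstable manifolds}(2), and topological mixing on fibers \ref{C3}. Fix $\epsilon\in(0,\epsilon_0]$ and let $\delta=\delta(\epsilon/2)\in(0,\epsilon/2)$ be the constant furnished by Lemma~\ref{local product structure} for $\epsilon/2$; throughout, all intermediate local stable and unstable manifolds will be taken of size $\epsilon/2$, which is what will leave enough room for the final error estimate to close.

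The first step is to upgrade \ref{C3} to a statement uniform in the base points (it is already uniform in $\omega$). Cover $M$ by finitely many balls $B(\xi_i,\delta/4)$, apply \ref{C3} to each pair of balls $B(\xi_i,\delta/4)$ and $B(\xi_j,\delta/4)$ to get thresholds $N_{ij}$, and set $N_1=\max_{i,j}N_{ij}$. Then for every $n\ge N_1$, every $\omega\in\Omega$ and all $x,y\in M$ there is a point $w\in M$ with $d_M(w,x)<\delta/2$ and $d_M(F_\omega^n w,y)<\delta/2$: choose $i,j$ with $x\in B(\xi_i,\delta/4)$ and $y\in B(\xi_j,\delta/4)$, use mixing to find $w\in B(\xi_i,\delta/4)$ with $F_\omega^n w\in B(\xi_j,\delta/4)$, and apply the triangle inequality. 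This is the only place mixing is used, and it fixes the final $N$ as a function of $\epsilon$ alone.

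Next, since $d_M(w,x)<\delta$, Lemma~\ref{local product structure} gives a unique point $p\in W^s_{\epsilon/2}(\omega,w)\cap W^u_{\epsilon/2}(\omega,x)$, so in particular $p\in W^u_\epsilon(\omega,x)$. Because $p\in W^s_{\epsilon/2}(\omega,w)$ we have $d_M(p,w)\le\epsilon/2$, hence by Lemma~\ref{lemma stable unstable manifolds}(2), $d_M(F_\omega^k p,F_\omega^k w)\le e^{-k\lambda}\epsilon/2$ for $k\ge 0$; thus $d_M(F_\omega^n p,y)\le e^{-n\lambda}\epsilon/2+\delta/2<\delta$ once $n\ge N_2$, where $N_2$ depends only on $\epsilon$. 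Put $N=\max(N_1,N_2)$ and $\zeta:=F_\omega^n p$. For $n\ge N$ we have $d_M(\zeta,y)<\delta$, so Lemma~\ref{local product structure} again produces a point $q\in W^s_{\epsilon/2}(\theta^n\omega,y)\cap W^u_{\epsilon/2}(\theta^n\omega,\zeta)$; in particular $q\in W^s_\epsilon(\theta^n\omega,y)$.

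The hard part will be verifying $q\in F_\omega^n(W^u_\epsilon(\omega,x))$. Since each $F_\omega^n$ is a diffeomorphism of $M$ and $F_\omega^{j}=F_{\theta^n\omega}^{j-n}\circ F_\omega^n$, one checks that $F_\omega^n(W^u_\epsilon(\omega,x))=\{z\in M:\ d_M(F_{\theta^n\omega}^{-k}(F_\omega^n x),F_{\theta^n\omega}^{-k}z)\le\epsilon\ \text{for all }k\ge n\}$ — the ``$n$-extended'' unstable disc, governed by backward orbits only from time $n$ on. From $p\in W^u_{\epsilon/2}(\omega,x)$ one gets $d_M(F_{\theta^n\omega}^{-k}(F_\omega^n x),F_{\theta^n\omega}^{-k}\zeta)=d_M(F_\omega^{n-k}x,F_\omega^{n-k}p)\le\epsilon/2$ for $k\ge n$, and from $q\in W^u_{\epsilon/2}(\theta^n\omega,\zeta)$ one gets $d_M(F_{\theta^n\omega}^{-k}\zeta,F_{\theta^n\omega}^{-k}q)\le\epsilon/2$ for $k\ge 0$; the triangle inequality then yields $d_M(F_{\theta^n\omega}^{-k}(F_\omega^n x),F_{\theta^n\omega}^{-k}q)\le\epsilon$ for all $k\ge n$, i.e.\ $q\in F_\omega^n(W^u_\epsilon(\omega,x))$. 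Hence $q\in F_\omega^n(W^u_\epsilon(\omega,x))\cap W^s_\epsilon(\theta^n\omega,y)$, as required. I expect the bookkeeping in this last step — keeping the correct ``extended'' description of the forward image, and arranging that the two error terms of size $\epsilon/2$ (how far $p$ sits from $x$ along the unstable disc, and the stable-holonomy correction producing $q$) add to at most $\epsilon$ — to be the only genuinely delicate point, and it is exactly why one works with $\epsilon/2$-sized intermediate manifolds from the start.
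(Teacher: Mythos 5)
Your proof is correct: the uniformization of \ref{C3} over a finite $\delta/4$-cover of $M$, the two applications of Lemma \ref{local product structure} at scale $\epsilon/2$, the forward-contraction estimate from Lemma \ref{lemma stable unstable manifolds}(2), and the key identification
$F_\omega^n(W_\epsilon^u(\omega,x))=\bigl\{z\in M:\ d_M\bigl(F_{\theta^n\omega}^{-k}(F_\omega^n x),F_{\theta^n\omega}^{-k}z\bigr)\le\epsilon\ \text{for all }k\ge n\bigr\}$
(which follows from the cocycle property) all hold, so the final triangle inequality legitimately places $q$ in $F_\omega^n(W_\epsilon^u(\omega,x))\cap W_\epsilon^s(\theta^n\omega,y)$. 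The ingredients coincide with the paper's, but the two arguments are mirror images. The paper glues at the $\theta^n\omega$-end first: it takes a mixing point $p\in F_\omega^n(B_M(x,\epsilon_2))\cap B_M(y,\epsilon_2)$, applies the product structure at $\theta^n\omega$ between $y$ and $p$ at a smaller scale $\epsilon_1/2$, pulls the resulting point back to time $0$ (backward contraction along unstables, which only costs a factor $e^{-n\lambda}<1$ and hence needs no extra largeness of $n$ beyond the mixing threshold), and only then applies the product structure at $\omega$ between $x$ and the pulled-back point at scale $\epsilon/2$. The connecting point is then $F_\omega^n(z)$ with $z\in W^u_{\epsilon/2}(\omega,x)$, so membership in $F_\omega^n(W^u_\epsilon(\omega,x))$ is immediate and the remaining work is on the stable side, via the concatenation $W^s_{\epsilon/2}(\theta^n\omega,q)\subset W^s_\epsilon(\theta^n\omega,y)$; the price is the nested choice of scales $\epsilon_2<\epsilon_1/2$, $\epsilon_1<\epsilon/2$. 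Your version works at the single scale $\epsilon/2$, gets the stable membership for free, and pays instead with the second threshold $N_2$ (forward contraction along stables to bring $F_\omega^n p$ within $\delta$ of $y$) and with the backward-orbit description of the image of the unstable disc. Both sets of bookkeeping close, and in either case $N$ depends only on $\epsilon$ (and the system constants), as required.
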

\begin{proof}[Proof of Proposition \ref{N intersection of su}]
 Given $\epsilon>0$, by Lemma \ref{local product structure}, there is $\epsilon_1\in (0,\epsilon/2)$ such that
   \begin{equation}\label{eq pick epsilon1}
     \mbox{$W_{\epsilon/2}^s(\omega,x)\cap W_{\epsilon/2}^u(\omega,y)$ consists of a single point for all $\omega\in\Omega$ whenever $d_M(x,y)<\epsilon_1$.}
   \end{equation}Applying Lemma \ref{local product structure} again, there is a $\epsilon_2\in (0,\epsilon_1/2)$ such that
   \begin{equation}\label{pick epsilon2}
     \mbox{$W_{\epsilon_1/2}^s(\omega,x)\cap W_{\epsilon_1/2}^u(\omega,y)$ consists of a single point for all $\omega\in\Omega$ whenever $d_M(x,y)<\epsilon_2$.}
   \end{equation}

  Now let $\{x_i\}_{i=1}^n$ be a $\frac{\epsilon_2}{4}$-dense subset of $M$. Then by topological mixing on fibers property, there exists a number $N\in\mathbb{N}$ such that for any $k\geq N$ and $i,j\in\{1,...,n\}$, we have
  \begin{equation}\label{intersection nonempty}
    F_\omega^k\left(B_M(x_i,\frac{\epsilon_2}{4})\right)\cap B_M(x_j,\frac{\epsilon_2}{4})\not=\emptyset\mbox{ for all }\omega\in\Omega.
  \end{equation}
  We fix this $N$. Next, we prove this $N$ is the desired number in the statement of Proposition \ref{N intersection of su}.

  For any $k\geq N$, and $x,y\in M$, since $\{x_i\}_{i=1}^n$ is $\frac{\epsilon_2}{4}$-dense, there exists $x^\prime,y^\prime\in \{x_i\}_{i=1}^n$ such that $d_M(x,x^\prime)<\frac{\epsilon_2}{4}$ and $d_M(y,y^\prime)<\frac{\epsilon_2}{4}$. Therefore, $B_M(x^\prime,\frac{\epsilon_2}{4})\subset B_M(x,\epsilon_2)$ and $B_M(y^\prime,\frac{\epsilon_2}{4})\subset B_M(y,\epsilon_2)$. Moreover, equation $\eqref{intersection nonempty}$ implies
   $F_\omega^k(B_M(x,\epsilon_2))\cap B_M(y,\epsilon_2)\not=\emptyset.$
We pick $p\in F_\omega^k(B_M(x,\epsilon_2))\cap B_M(y,\epsilon_2)$. According to $\eqref{pick epsilon2}$, there exists a point $q$ such that
  \begin{equation}\label{pick q}
   q\in W^s_{\epsilon_1/2}(\theta^k\omega,y)\cap W_{\epsilon_1/2}^u(\theta^k\omega,p).
  \end{equation} Since $q\in W^u_{\epsilon_1/2}(\theta^k\omega,p)$, we have
  \begin{equation*}
    d_M(F_{\theta^k\omega}^{-k}p,F_{\theta^k\omega}^{-k}q)\leq e^{-k\lambda}\frac{\epsilon_1}{2}< \frac{\epsilon_1}{2}.
  \end{equation*}Therefore, we have
  \begin{align*}
    d_M(x,F_{\theta^k\omega}^{-k}q)&\leq d_M(x,F_{\theta^k\omega}^{-k}p)+d_M(F_{\theta^k\omega}^{-k}p,F_{\theta^k\omega}^{-k}q)\\
    &\leq \epsilon_2+\frac{\epsilon_1}{2}< \epsilon_1.
  \end{align*}According to \eqref{eq pick epsilon1}, $z\in W^u_{\epsilon/2}(\omega,x)\cap W^s_{\epsilon/2}(\omega,F_{\theta^k\omega}^{-k}q)$ exists. Note that
  \begin{align*}
    F_\omega^k(z)&\in F_\omega^k(W^u_{\epsilon/2}(\omega,x))\cap F_\omega^k(W^s_{\epsilon/2}(\omega,F_{\theta^k\omega}^{-k}q))\subset F_\omega^k(W^u_{\epsilon/2}(\omega,x))\cap W_{\epsilon/2}^s(\theta^k\omega,q)\\
    &\overset{\eqref{pick q}}\subset F_\omega^k(W^u_{\epsilon/2}(\omega,x))\cap W_{\epsilon}^s(\theta^k\omega,y).
  \end{align*} Therefore,   $F_\omega^k(W_\epsilon^u(\omega,x))\cap W_\epsilon^s(\theta^k\omega,y)\not=\emptyset.$
\end{proof}

Now we are ready to prove Theorem \ref{thm random specification}.
 For any fixed $\epsilon>0$, we first define $N=N(\epsilon)$, the desired space of the fiber specification property corresponding to $\epsilon$. Let $\beta=\frac{1}{2} \min\{\epsilon/2,\epsilon_0\}$, where $\epsilon_0$ comes from Lemma \ref{lemma stable unstable manifolds}. Define $\gamma=\beta/8$, and let $N $ be in Proposition \ref{N intersection of su} such that for any $x,y\in M$, $n\geq N$,
\begin{equation}
  F_\omega^n(W_{\gamma}^u(\omega,x))\cap W_\gamma^s(\theta^n\omega,y)\not=\emptyset\mbox{ for any $\omega\in\Omega$.}\label{choice of N specification}
\end{equation} Moreover, we pick $N$ sufficiently large such that $e^{-N\lambda_0}\leq \frac{1}{2}$. From now on, we fix this $N$.

Next, we prove that any $N$ spaced $\omega$-specification $S_\omega=(\omega,\tau,P_\omega)$ as in Definition \ref{def fiber specification} is $(\omega,\epsilon)$-shadowed by a point in $M$. We define
\begin{equation}\label{x a 1}
  x_{a_1}=P_\omega(a_1).
\end{equation}For $k\in\{1,2,...,m\}$, we define $x_{a_k}$ inductively: once $x_{a_k}$ is defined for $k\in\{1,2,...,m-1\}$, pick
\begin{equation}\label{construction of x_a_k}
  x_{a_{k+1}}\in F_{\theta^{b_k}\omega}^{a_{k+1}-b_k}\left(W_\gamma^u(\Theta^{b_k-a_k}(\theta^{a_k}\omega,x_{a_k}))\right)\cap W_\gamma^s(\theta^{a_{k+1}}\omega,P_\omega(a_{k+1})),
\end{equation}where $x_{a_{k+1}}$ exists due to $a_{k+1}-b_k>N$ and $(\ref{choice of N specification})$.
Define $x:=F_{\theta^{a_m}\omega}^{-a_m}(x_{a_m})$, and we are going to show $x$ is $(\omega,\beta/2)-$shadowing the $\omega-$specification $S_\omega$, i.e.,
\begin{equation}\label{shadowingspe}
  d_M(F_\omega^t(x),P_\omega(t))<\beta/2\ \mbox{ for }t\in\cup_{i=1}^m I_i.
\end{equation}
For any fixed $t\in \cup_{i=1}^m I_i$, there exists a $j\in\{1,2,...,m\}$ such that $a_j\leq t\leq b_j$. Then
\begin{equation}\label{eq 4.9}
  d_M(F_\omega^t(x),P_\omega(t))\leq d_M(F_\omega^t(x),F^{t-a_j}_{\theta^{a_j}\omega}(x_{a_j}))+d_M(F^{t-a_j}_{\theta^{a_j}\omega}(x_{a_j}), P_\omega(t)).
\end{equation}By \eqref{construction of x_a_k}, $x_{a_j}\in W^s_\gamma(\theta^{a_j}\omega,P_\omega(a_j))$, and therefore,
\begin{equation}\label{estimate1}
  d_M(F^{t-a_j}_{\theta^{a_j}\omega}(x_{a_j}), P_\omega(t))=d_M(F^{t-a_j}_{\theta^{a_j}\omega}(x_{a_j}), F^{t-a_j}_{\theta^{a_j}\omega}(P_\omega(a_j)))\leq \gamma<\beta/4.
\end{equation}To estimate $ d_M(F_\omega^t(x),F^{t-a_j}_{\theta^{a_j}\omega}(x_{a_j}))$, we are going to show that
\begin{equation}\label{phixinu}
  F_\omega^{b_j}(x)\in W_{2\gamma}^u(\Theta^{b_j-a_j}(\theta^{a_j}\omega,x_{a_j}))=W^u_{2\gamma}(\theta^{b_j}\omega,F_{\theta^{a_j}\omega}^{b_j-a_j}(x_{a_j})).
\end{equation}We only consider the case that $j\in\{1,2,...,m-1\}$ since when $j=m$, it is clear that
\begin{equation*}
  F_\omega^{b_m}(x)=F_\omega^{b_m}(F_{\theta^{a_m}\omega}^{-a_m}(x_{a_m}))=F_{\theta^{a_m}\omega}^{b_m-a_m}(x_{a_m})\in W_{2\gamma}^u(\Theta^{b_m-a_m}(\theta^{a_m}\omega,x_{a_m})).
\end{equation*}
Now for $j\in\{1,2,...,m-1\}$, we show that for any $p\in\{1,2,...,m-j\}$
\begin{equation}\label{eq induction}
  F^{b_j}_\omega \circ F^{-a_{j+p}}_{\theta^{a_{j+p}}\omega}(x_{a_{j+p}})\in W_{\gamma+\gamma\cdot e^{-\lambda N}+\cdots+\gamma\cdot e^{-\lambda(p-1) N}}^u(\Theta^{b_j-a_{j}}(\theta^{a_{j}}\omega,x_{a_{j}})).
\end{equation}by using induction. Notice that \eqref{phixinu} is a corollary of \eqref{eq induction} when $p=m-j$. In fact, for $p=1$, directly by \eqref{construction of x_a_k}, we obtain
\begin{equation*}
  F_{\omega}^{b_j}\circ F_{\theta^{a_{j+1}}\omega}^{-a_{j+1}}(x_{a_{j+1}})\in W_\gamma^u(\Theta^{b_j-a_j}(\theta^{a_j}\omega,x_{a_j})).
\end{equation*}Suppose now \eqref{eq induction} holds for $p=l\in\{1,2,...,m-j-1\}$. By the construction \eqref{construction of x_a_k}, we have
\begin{equation}\label{EQ in W U2}
    F_{\omega}^{b_{j+l}}\circ F_{\theta^{a_{j+l+1}}\omega}^{-a_{j+l+1}}(x_{a_{j+l+1}})\in W_\gamma^u(\Theta^{b_{j+l}-a_{j+l}}(\theta^{a_{j+l}}\omega,x_{a_{j+l}})).
\end{equation}Equation \eqref{EQ in W U2} implies that
\begin{equation*}
 F^{b_j-b_{j+l}}_{\theta^{b_{j+l}}\omega} \circ F_{\omega}^{b_{j+l}}\circ F_{\theta^{a_{j+l+1}}\omega}^{-a_{j+l+1}}(x_{a_{j+l+1}})\in W_{\gamma\cdot e^{-\lambda l N}}^u(\Theta^{b_j-a_{j+l}}(\theta^{a_{j+l}}\omega,x_{a_{j+l}})).
\end{equation*}Therefore,
\begin{align*}
  F^{b_j}_\omega \circ F^{-a_{j+l+1}}_{\theta^{a_{j+l+1}}\omega}(x_{a_{j+l+1}})&\in W_{\gamma\cdot e^{-\lambda l N}}^u(\Theta^{-(a_{j+l}-b_j)}(\theta^{a_{j+l}}\omega,x_{a_{j+l}}))\\
  &\subset W_{\gamma+\gamma\cdot e^{-\lambda N}+\cdots+\gamma e^{-\lambda lN}}^u(\Theta^{b_j-a_{j}}(\theta^{a_{j}}\omega,x_{a_{j}})),
\end{align*}where the last subset is due to the induction step
\begin{equation*}
   F^{b_j}_\omega \circ F^{-a_{j+l}}_{\theta^{a_{j+l}}\omega}(x_{a_{j+l}})=F^{-(a_{j+l}-b_j)}_{\theta^{a_{j+l}}\omega}(x_{a_{j+l}})\in W_{\gamma+\gamma\cdot e^{-\lambda N}+\cdots+\gamma\cdot e^{-\lambda(l-1) N}}^u(\Theta^{b_j-a_{j}}(\theta^{a_{j}}\omega,x_{a_{j}})).
\end{equation*} Therefore, equation \eqref{eq induction} and in particular \eqref{phixinu} hold.
As a consequence of \eqref{phixinu}, we have
\begin{align}
  d_M(F_\omega^t(x),F^{t-a_j}_{\theta^{a_j}\omega}(x_{a_j}))&=d_M(F^{-(b_j-t)}_{\theta^{b_j}\omega}\circ F^{b_j}_\omega(x),F^{-(b_j-t)}_{\theta^{b_j}\omega}\circ F^{b_j-a_j}_{\theta^{a_j}\omega}(x_{a_j}))\nonumber\\
 &< 2\gamma=\beta/4.\label{estimate2}
\end{align}Combining \eqref{eq 4.9}, $(\ref{estimate1})$ and $(\ref{estimate2})$, we conclude
\begin{equation*}
  d_M(F^t_\omega(x),P_\omega(t))\leq \beta/4+\beta/4=\beta/2<\epsilon.
\end{equation*}The proof of the fiber specification property is complete.

Next, let us prove the RDS corresponding to $\Theta$ has the specification property given by Gundlach and Kifer for all $\omega\in\Omega$. By Remark \ref{remark kifer gundlach}, for any $\epsilon>0$, given point $\{x_i\}_{i=0}^k\subset M$ and integers $-\infty\leq b_0<a_1<b_1<\cdots a_k<b_k<a_{k+1}\leq \infty$ satisfying $a_{i+1}\geq b_i+ N(\epsilon)$, where $N(\epsilon)$ is the number $N$ in fiber specification property corresponding to $\epsilon$. We need to prove that there exists a point $z\in M$ satisfying both
 \begin{align}
    \max_{a_i\leq j\leq b_i}d_M(F_\omega^j(x_i),F_\omega^j(z))&\leq \epsilon,\ \ \forall i=1,...,k\label{kifer 1 p}\\
    d_M(F_\omega^j(x_0),F_\omega^j(z))&\leq \epsilon,\ \ \forall j\geq a_{k+1} \mbox{ and }\forall j\leq b_0.\label{kifer 2 p}
  \end{align}
 Denote $a_0:=b_0$ and $b_{k+1}:=a_{k+1}$ and define $P_\omega:\cup_{i=0}^{k+1}[a_i,b_i]\rightarrow M$ by
 \begin{equation*}
   P_\omega(t)=\begin{cases}
                 F_\omega^{b_0}(x_0), & \mbox{if } t=a_0=b_0; \\
                 F_\omega^j(x_i) , & \mbox{if } j\in[a_i,b_i]\mbox{ for }i\in\{1,...,k\}; \\
                 F_\omega^{a_{k+1}}(x_0), & \mbox{if }t=a_{k+1}=b_{k+1}.
               \end{cases}
 \end{equation*}Then $(\omega,\cup_{i=0}^{k+1}[a_i,b_i],P_\omega)$ is a $N(\epsilon)$-spaced $\omega-$specification. Therefore, by the previous proof, there exists a point $z\in M$, which is $(\omega,\epsilon)$-shadowing this $\omega$-specification. Therefore, \eqref{kifer 1 p} is proved. Following the construction \eqref{construction of x_a_k}, we have
 \begin{equation*}
   F_\omega^{a_{k+1}}(z)=F_\omega^{b_{k+1}}(z)\in W_\gamma^s(\theta^{a_{k+1}}\omega,P_\omega(a_{k+1}))= W_\gamma^s(\theta^{a_{k+1}}\omega,F_\omega^{a_{k+1}}(x_0))\subset W_\epsilon^s(\theta^{a_{k+1}}\omega,F_\omega^{a_{k+1}}(x_0)).
 \end{equation*}As a consequence, for all $j\geq a_{k+1}$, we have
 \begin{equation*}
   d_M(F_\omega^j(x_0),F_\omega^j(z))\leq e^{-\lambda(j-a_{k+1})}d_M(F_\omega^{a_{k+1}}(x_0),F_\omega^{a_{k+1}}(z))\leq \epsilon.
 \end{equation*}Similar as \eqref{phixinu},
 we have
 \begin{equation*}
   F_\omega^{b_0}(z)\in W^u_{2\gamma}(\Theta^{b_0-a_0}(\theta^{a_0}\omega,P_\omega(a_0)))=W_{2\gamma}^u(\theta^{b_0}\omega,F_\omega^{b_0}(x_0))\subset W_{\epsilon}^u(\theta^{b_0}\omega,F_\omega^{b_0}(x_0)).
 \end{equation*}Hence, if $j\leq b_0$, we have
 \begin{equation*}
   d_M(F_\omega^j(x_0),F_\omega^j(z))\leq e^{-\lambda(b_0-j)}d_M(F_\omega^{b_0}(z),F_\omega^{b_0}(x_0))\leq \epsilon.
 \end{equation*}\eqref{kifer 2 p} is proved. The proof of Theorem \ref{thm random specification} is complete.

\subsection{Proof of Theorem \ref{thm variational principle}}
In this section, we prove Theorem \ref{thm variational principle}. In order not to obscure the main structure of the proof, we address the proof of lemmas in Sec. \ref{5.3}.
\subsubsection{Upper estimates on fiber Bowen's  topological entropy of $K_{\varphi,\alpha}$}
To give upper estimates on the fiber Bowen's  topological entropy, we need an intermediate value.
Given $\varphi\in C(\Omega\times M,\mathbb{R})$, for any $\alpha\in L_\varphi$, $\delta>0$, and $n\in \mathbb{N}$, we let
\begin{equation}
  P(\alpha,\delta,n)=\left\{(\omega,x)\in\Omega\times M:\ \left|\frac{1}{n}\sum_{i=0}^{n-1}\varphi(\Theta^i(\omega,x))-\alpha\right|<\delta\right\},
\end{equation}and denote
\begin{equation*}
  P(\alpha,\delta,n,\omega)=\{x\in M:\ (\omega,x)\in P(\alpha,\delta,n)\}.
\end{equation*}Clearly, for $\alpha\in L_\varphi$, $\delta>0$ and $\omega\in\Omega_\alpha$, $P(\alpha,\delta,n,\omega)\not=\emptyset$ for sufficiently large $n$.

For any $\epsilon>0$, we let $N(\alpha,\delta,n,\epsilon,\omega)$ be the minimal number of balls $B_n(\omega,x,\epsilon)=\{y\in M:\ d_\omega^n(x,y)<\epsilon\}$ which is necessary for covering the set $P(\alpha,\delta,n,\omega)$. If $P(\alpha,\delta,n,\omega)=\emptyset$, we let $N(\alpha,\delta,n,\epsilon,\omega)=1$. It is clear that $N(\alpha,\delta,n,\epsilon,\omega)$ does not increase as $\delta$ decreases, and $N(\alpha,\delta,n,\epsilon,\omega)$ does not decrease as $\epsilon$ decreases.
We let $M(\alpha,\delta,n,\epsilon,\omega)$ be the largest cardinality of maximal $(\omega,\epsilon,n)$-separated sets of $P(\alpha,\delta,n,\omega)$. We put $M(\alpha,\delta,n,\epsilon,\omega)=1$ if $P(\alpha,\delta,n,\omega)=\emptyset.$ It is clear that for all $\omega\in\Omega$,
\begin{equation*}
  N(\alpha,\delta,n,\epsilon,\omega)\leq M(\alpha,\delta,n,\epsilon,\omega)\leq  N(\alpha,\delta,n,\frac{\epsilon}{2},\omega).
\end{equation*}
Therefore, for $\alpha\in L_\varphi$, the following limit exists
\begin{align}
  \Lambda_{\varphi,\alpha}(\omega)&= \lim_{\epsilon\rightarrow 0}\lim_{\delta\rightarrow 0}\liminf_{n\rightarrow \infty}\frac{1}{n}\log N(\alpha,\delta,n,\epsilon,\omega) \\
  &=\lim_{\epsilon\rightarrow 0}\lim_{\delta\rightarrow 0}\liminf_{n\rightarrow \infty}\frac{1}{n}\log M(\alpha,\delta,n,\epsilon,\omega)\mbox{ for all }\omega\in\Omega.\nonumber
\end{align}According to the definition of fiber topological entropy $h_{top}(F)=\pi_F(0)$ and Lemma \ref{lemma fiber top pressure}, we can see that $\Lambda_{\varphi,\alpha}(\omega)\leq h_{top}(F)<\infty$ for $\omega$ in a  $P-$full measure set, named $\Omega_\Lambda$. The next lemma is about the measurability, and its proof is addressed in Sec. \ref{5.3}.
\begin{lemma}\label{lemma infsup}
  For each $\alpha\in L_\varphi$, $\delta>0$, $\epsilon>0$ and fixed $n$, the mapping $\omega\mapsto M(\alpha,\delta,n,\epsilon,\omega)$ is measurable. 
  As a consequence, the mapping $\omega\mapsto\Lambda_{\varphi,\alpha}(\omega)$ is measurable.
  \end{lemma}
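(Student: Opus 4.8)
The plan is to show that for each $k\in\mathbb{N}$ the superlevel set $\{\omega\in\Omega:\ M(\alpha,\delta,n,\epsilon,\omega)\geq k\}$ is open in $\Omega$; this gives Borel, hence $\mathcal{B}_P(\Omega)$-, measurability of $\omega\mapsto M(\alpha,\delta,n,\epsilon,\omega)$, and the measurability of $\Lambda_{\varphi,\alpha}$ then follows from the stability of measurability under countable $\liminf$'s and monotone limits.

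First I would record the relevant continuity. Since $\theta$ is a homeomorphism and $F:\Omega\to\mathcal{H}=\mathrm{diff}^2(M)$ is continuous (so that the evaluation $\mathcal{H}\times M\to M$ is jointly continuous), the map $\Theta$ and, being its compositions, the iterates $\Theta^0,\dots,\Theta^{n-1}$ are continuous on $\Omega\times M$. Hence $(\omega,x)\mapsto\frac1n\sum_{i=0}^{n-1}\varphi(\Theta^i(\omega,x))$ is continuous, so $P(\alpha,\delta,n)$ is an \emph{open} subset of $\Omega\times M$; likewise $(\omega,x,y)\mapsto d_\omega^n(x,y)=\max_{0\leq i\leq n-1}d_M(F_\omega^ix,F_\omega^iy)$ is continuous on $\Omega\times M\times M$. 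Now fix $k\geq2$ and set, inside $\Omega\times M^k$,
\[
A_k=\Big\{(\omega,x_1,\dots,x_k):\ (\omega,x_j)\in P(\alpha,\delta,n)\ \text{for }1\leq j\leq k,\ \ d_\omega^n(x_i,x_j)>\epsilon\ \text{for }i\neq j\Big\}.
\]
Each defining condition is the preimage of an open set under a continuous map, so $A_k$ is open. Since $k$ points with pairwise $d_\omega^n$-distance exceeding $\epsilon$ are automatically distinct, $\omega$ belongs to the coordinate projection $\pi_\Omega(A_k)$ precisely when $P(\alpha,\delta,n,\omega)$ contains an $(\omega,\epsilon,n)$-separated subset of cardinality at least $k$, i.e. precisely when $M(\alpha,\delta,n,\epsilon,\omega)\geq k$; the convention $M(\alpha,\delta,n,\epsilon,\omega)=1$ on $\{P(\alpha,\delta,n,\omega)=\emptyset\}$ causes no trouble because there $M(\alpha,\delta,n,\epsilon,\omega)<k$. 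As coordinate projections are open maps, $\{\omega:\ M(\alpha,\delta,n,\epsilon,\omega)\geq k\}=\pi_\Omega(A_k)$ is open; for $k\leq1$ it is all of $\Omega$. Since $M$ is compact the separated sets are finite, so $M(\alpha,\delta,n,\epsilon,\cdot)$ is an $\mathbb{N}$-valued Borel function, in particular $\mathcal{B}_P(\Omega)$-measurable.

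Finally, I would build $\Lambda_{\varphi,\alpha}$ out of these functions by measurability-preserving operations: $\omega\mapsto\frac1n\log M(\alpha,\delta,n,\epsilon,\omega)$ is measurable, hence so is $\omega\mapsto\liminf_{n\to\infty}\frac1n\log M(\alpha,\delta,n,\epsilon,\omega)$; and since $M(\alpha,\delta,n,\epsilon,\omega)$ does not increase as $\delta$ decreases and does not decrease as $\epsilon$ decreases, the limits $\delta\to0$ and then $\epsilon\to0$ are monotone, so they may be taken along fixed sequences $\delta_j\downarrow0$ and $\epsilon_i\downarrow0$ as monotone pointwise limits of measurable functions. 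Thus $\omega\mapsto\Lambda_{\varphi,\alpha}(\omega)$ is measurable. I do not expect a genuine obstacle here; the only two points deserving care are the openness of $P(\alpha,\delta,n)$ — which rests on the joint continuity of evaluation on the $C^2$ bundle of diffeomorphisms — and the remark that coordinate projections are open maps, which turns the projection of the open set $A_k$ directly into an open, hence measurable, set, so that no appeal to analytic sets or universal measurability is needed.
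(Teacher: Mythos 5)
Your argument is correct, and it reaches the conclusion by a genuinely more elementary route than the paper. The decomposition is the same: you and the paper both describe the superlevel set $\{\omega:\ M(\alpha,\delta,n,\epsilon,\omega)\geq k\}$ as the projection to $\Omega$ of the set of tuples $(\omega,x_1,\dots,x_k)$ with all $x_j\in P(\alpha,\delta,n,\omega)$ and pairwise $d_\omega^n$-distances exceeding $\epsilon$ (the paper's $P^k(\alpha,\delta,n)\cap E^{n,\epsilon}_k$ is exactly your $A_k$). The difference is in how the projection is handled: the paper only records that this tuple set is Borel and then invokes the measurable projection theorem (\cite[Theorem 2.12]{Hans02}), which is why it needs the completed $\sigma$-algebra $\mathcal{B}_P(\Omega)$; you instead observe that, because both defining conditions are strict inequalities of jointly continuous functions (joint continuity of $(\omega,x)\mapsto F_\omega^i x$ coming from continuity of $F$ into $\mathrm{diff}^2(M)$), the set $A_k$ is open, and since a coordinate projection is an open map, the superlevel set is open. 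This buys you genuine Borel measurability of $\omega\mapsto M(\alpha,\delta,n,\epsilon,\omega)$ with no appeal to analytic sets, universal measurability, or completeness of the $\sigma$-algebra, at the (harmless) cost of using the specific strict-inequality form of the definitions; the paper's projection-theorem argument is more robust (it would survive if $<\delta$ or $>\epsilon$ were replaced by non-strict inequalities, or if $\varphi$ were merely measurable in $\omega$), which is presumably why the authors phrase it that way, since they reuse the same projection argument elsewhere (Lemmas \ref{lemma Kalpha nonempty} and \ref{lemma katok entropy}). Your implicit identification of ``largest cardinality of maximal separated sets $\geq k$'' with ``there exists a separated $k$-tuple'' is legitimate, since any separated set extends to a maximal one of at least the same cardinality and all such sets are finite by compactness of $M$; and your treatment of $\Lambda_{\varphi,\alpha}$ via $\liminf_n$ followed by monotone limits along fixed sequences $\delta_j\downarrow0$, $\epsilon_i\downarrow0$ is exactly the step the paper leaves implicit in ``as a consequence,'' justified by the monotonicity in $\delta$ and $\epsilon$ noted before the definition of $\Lambda_{\varphi,\alpha}$.
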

 In this section, we prove the following proposition.
\begin{proposition}\label{lemma direction less than}
  For any $\alpha\in L_\varphi$, 
  for any $\omega\in\Omega_\alpha\cap \Omega_\Lambda$, one has
  \begin{equation*}
    h_{top}(F,K_{\varphi,\alpha}(\omega),\omega)\leq \Lambda_{\varphi,\alpha}(\omega)\leq \sup\{h_\mu(F):\ \mu\in I_\Theta(\Omega\times M,\varphi,\alpha)\}.
  \end{equation*}We note that $P(\Omega_\alpha)=1$ by the definition of $L_\varphi$.
\end{proposition}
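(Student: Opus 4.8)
The assertion is the conjunction of two bounds: the covering estimate $h_{top}(F,K_{\varphi,\alpha}(\omega),\omega)\le\Lambda_{\varphi,\alpha}(\omega)$ and the variational estimate $\Lambda_{\varphi,\alpha}(\omega)\le\sup\{h_\mu(F):\mu\in I_\Theta(\Omega\times M,\varphi,\alpha)\}$. The first is soft; the second is the heart of the matter, and the plan is to manufacture an invariant measure from empirical measures built on $(\omega,\epsilon,n)$-separated subsets of $P(\alpha,\delta,n,\omega)$ and estimate its fiber entropy from below in the spirit of Misiurewicz and Katok.

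For the first bound, fix $\delta,\epsilon>0$. Any $x\in K_{\varphi,\alpha}(\omega)$ lies in $P(\alpha,\delta,n,\omega)$ for all large $n$, so $K_{\varphi,\alpha}(\omega)\subseteq\bigcup_{N\ge1}Z_N$ with $Z_N:=\bigcap_{n\ge N}P(\alpha,\delta,n,\omega)$. For $n\ge N'\ge N$ the set $Z_N$ is covered by $N(\alpha,\delta,n,\epsilon,\omega)$ balls $B_n(\omega,\cdot,\epsilon)$, hence $m(Z_N,s,\omega,N',\epsilon)\le\inf_{n\ge N'}N(\alpha,\delta,n,\epsilon,\omega)e^{-sn}$, which vanishes as soon as $s>\liminf_n\frac1n\log N(\alpha,\delta,n,\epsilon,\omega)$; thus $h_{top}(F,Z_N,\omega,\epsilon)\le\liminf_n\frac1n\log N(\alpha,\delta,n,\epsilon,\omega)$. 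By the countable stability of $h_{top}(F,\cdot,\omega,\epsilon)$ (the fixed-scale version of Lemma \ref{lemma property htop}) the same bound holds for $K_{\varphi,\alpha}(\omega)$; since $\delta>0$ was arbitrary, $h_{top}(F,K_{\varphi,\alpha}(\omega),\omega,\epsilon)\le\inf_{\delta>0}\liminf_n\frac1n\log N(\alpha,\delta,n,\epsilon,\omega)$, and taking the supremum over $\epsilon$ (using $N\le M$) gives $h_{top}(F,K_{\varphi,\alpha}(\omega),\omega)\le\Lambda_{\varphi,\alpha}(\omega)$. (It is important to send $\delta\to0$ at fixed scale $\epsilon$ before taking the supremum over $\epsilon$, so as to land on $\sup_\epsilon\inf_\delta$, which is exactly how $\Lambda_{\varphi,\alpha}$ is defined.)

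For the second bound, fix $\epsilon>0$. For each $\delta>0$ choose $n_j\to\infty$ realising $a(\delta):=\liminf_n\frac1n\log M(\alpha,\delta,n,\epsilon,\omega)$ and a maximal $(\omega,\epsilon,n_j)$-separated set $E_{n_j}\subseteq P(\alpha,\delta,n_j,\omega)$ with $\#E_{n_j}=M(\alpha,\delta,n_j,\epsilon,\omega)$; put $\sigma_{n_j}=(\#E_{n_j})^{-1}\sum_{x\in E_{n_j}}\delta_{(\omega,x)}$, $\mu_{n_j}=\frac1{n_j}\sum_{i=0}^{n_j-1}\Theta^i_*\sigma_{n_j}$, and pass to a weak-$*$ limit $\mu_{n_j}\to\mu_\delta$ along a subsequence. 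A standard telescoping shows $\mu_\delta$ is $\Theta$-invariant, and $\mu_{n_j}\circ\pi_\Omega^{-1}=\frac1{n_j}\sum_{i<n_j}\delta_{\theta^i\omega}\to P$ by unique ergodicity of $(\Omega,\theta)$, so $\mu_\delta\in I_\Theta(\Omega\times M)$ by \eqref{invariant marginal P}; moreover $|\int\varphi\,d\mu_\delta-\alpha|\le\delta$ since every $x\in E_{n_j}$ satisfies $|\frac1{n_j}\sum_{i=0}^{n_j-1}\varphi(\Theta^i(\omega,x))-\alpha|<\delta$. The key estimate is $h_{\mu_\delta}(F)\ge a(\delta)$. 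Choose a finite partition $\mathcal{P}$ of $M$ with $\mathrm{diam}\,\mathcal{P}<\epsilon$ and $\mu_\delta(\Omega\times\partial\mathcal{P})=0$; the separation of $E_{n_j}$ forces distinct points into distinct atoms of $\bigvee_{i=0}^{n_j-1}(F_\omega^i)^{-1}\mathcal{P}$, so $H_{\sigma_{n_j}}\big(\bigvee_{i=0}^{n_j-1}\Theta^{-i}(\Omega\times\mathcal{P})\big)=\log\#E_{n_j}$. A block decomposition of length $q$ together with concavity of conditional entropy then yields, for every increasing sequence of finite Borel partitions $\mathcal{A}_\ell$ of $\Omega$ with $P(\partial\mathcal{A}_\ell)=0$ and $\bigvee_\ell\mathcal{A}_\ell=\mathcal{B}_P(\Omega)$,
\[
\frac{q}{n_j}\log\#E_{n_j}\ \le\ H_{\mu_{n_j}}\!\Big(\bigvee_{i=0}^{q-1}\Theta^{-i}(\Omega\times\mathcal{P})\ \Big|\ \pi_\Omega^{-1}\mathcal{A}_\ell\Big)+\frac{Cq^2\log\#\mathcal{P}}{n_j}.
\]
Letting $j\to\infty$ (weak-$*$ continuity of the entropy of partitions with $\mu_\delta$-null boundary), then $\ell\to\infty$, then $q\to\infty$, gives $a(\delta)\le h_{\mu_\delta}(F,\Omega\times\mathcal{P})\le h_{\mu_\delta}(F)$. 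Finally, along $\delta=\delta_k\to0$ one has $h_{\mu_{\delta_k}}(F)\ge a(\delta_k)\ge\lim_{\delta\to0}a(\delta)$, so by compactness of $I_\Theta(\Omega\times M)$ and upper semi-continuity of $\mu\mapsto h_\mu(F)$ (Lemma \ref{lemma upper semicontinuity}) a weak-$*$ limit $\mu$ of $\{\mu_{\delta_k}\}$ satisfies $\int\varphi\,d\mu=\alpha$ and $h_\mu(F)\ge\lim_{\delta\to0}a(\delta)$; hence $\sup\{h_\nu(F):\nu\in I_\Theta(\Omega\times M,\varphi,\alpha)\}\ge\lim_{\delta\to0}\liminf_n\frac1n\log M(\alpha,\delta,n,\epsilon,\omega)$ for every $\epsilon$, and letting $\epsilon\to0$ finishes the argument.

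The main obstacle is the fiber-entropy lower bound $h_{\mu_\delta}(F)\ge a(\delta)$. In the classical Misiurewicz argument one conditions on the invariant sub-$\sigma$-algebra from the outset, but here the approximating measures $\mu_{n_j}$ carry marginal $\frac1{n_j}\sum_{i<n_j}\delta_{\theta^i\omega}\ne P$ on $\Omega$, so $\pi_\Omega^{-1}\mathcal{B}_P(\Omega)$ is not a sub-$\sigma$-algebra adapted to them. Conditioning instead on the finite algebras $\pi_\Omega^{-1}\mathcal{A}_\ell$ (legitimate because each $\mathcal{A}_\ell$ is Borel, hence lies in the completion of $\mathcal B(\Omega)$ with respect to any measure), passing to the weak-$*$ limit first, and only afterwards exhausting $\mathcal{B}_P(\Omega)$ by the $\mathcal{A}_\ell$, is precisely what delivers $h_{\mu_\delta}(F)$ rather than the strictly larger $h_{\mu_\delta}(\Theta)=h_{\mu_\delta}(F)+h_P(\theta)$, which would be too weak. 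The accompanying bookkeeping — the block-decomposition inequality with its $O(q^2/n_j)$ error, the identity $H_{\sigma_{n_j}}\big(\bigvee_{i=0}^{n_j-1}\Theta^{-i}(\Omega\times\mathcal{P})\big)=\log\#E_{n_j}$, and the verification that $\bigvee_{i=0}^{q-1}\Theta^{-i}(\Omega\times\mathcal{P})\vee\pi_\Omega^{-1}\mathcal{A}_\ell$ has $\mu_\delta$-null boundary (using $\Theta$-invariance of $\mu_\delta$ and $\mu_\delta\circ\pi_\Omega^{-1}=P$) — is routine but must be carried out with care.
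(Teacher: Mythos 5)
Your proposal is correct and follows the same overall architecture as the paper's proof: a covering argument at fixed scale $\epsilon$ (sending $\delta\to 0$ before $\epsilon\to 0$, exactly as required by the definition of $\Lambda_{\varphi,\alpha}$) for the first inequality, and a Misiurewicz/Katok-type construction — uniform measures on maximal $(\omega,\epsilon,n)$-separated subsets of $P(\alpha,\delta,n,\omega)$ concentrated on the single fiber $\{\omega\}\times M$, averaged along the orbit, with a block decomposition of conditional entropy — for the second. You differ from the paper in two technical devices. First, you run a two-stage limit: at fixed $\delta$ you produce $\mu_\delta$ with $h_{\mu_\delta}(F)\geq a(\delta)$ and $|\int\varphi\,d\mu_\delta-\alpha|\leq\delta$, and only then send $\delta\to 0$, invoking upper semi-continuity of $\mu\mapsto h_\mu(F)$ (Lemma \ref{lemma upper semicontinuity}) to pass the entropy bound to a limit measure lying exactly in $I_\Theta(\Omega\times M,\varphi,\alpha)$; the paper instead chooses $\delta_k\searrow 0$ and $m_k\to\infty$ simultaneously so that a single weak$^*$ limit already satisfies both constraints, which avoids any appeal to semi-continuity in this proposition (the paper reserves that lemma for the lower bound). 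Second, where the paper cites Ledrappier--Walters Lemma 3.2(ii) to pass $H_{\mu_{k_l}}(\cdot\,|\,\pi_\Omega^{-1}\mathcal{B}_P(\Omega))$ to the limit, you condition on finite Borel partitions $\mathcal{A}_\ell$ of $\Omega$ with $P$-null boundaries, take the weak$^*$ limit at fixed $\ell$, and then exhaust $\mathcal{B}_P(\Omega)$ by martingale convergence; this essentially reproves the cited lemma in the special case at hand, at the cost of some extra bookkeeping (note that since $\theta^{-m}\mathcal{A}_\ell\neq\mathcal{A}_\ell$ the conditioning algebra is not $\Theta$-invariant, but this is harmless precisely because each $(\Theta^m)_*\sigma_{n_j}$ has point-mass marginal on $\Omega$, so the conditioning is vacuous for those measures). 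Both substitutions are legitimate; the paper's version is shorter where it leans on the cited lemma, yours is more self-contained.
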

\begin{proof}[Proof of Proposition \ref{lemma direction less than}]
Given $\alpha\in L_\varphi$, we first prove $h_{top}(F,K_{\varphi,\alpha}(\omega),\omega)\leq \Lambda_{\varphi,\alpha}(\omega)$ for each $\omega\in\Omega_\alpha\cap \Omega_\Lambda$. From now on, we fix any $\omega\in\Omega_\alpha\cap \Omega_\Lambda$. For any $\delta>0$ and $k\in\mathbb{N}$, we denote
  \begin{align*}
    G(\alpha,\delta,k,\omega)&=\bigcap_{n=k}^\infty P(\alpha,\delta,n,\omega)=\bigcap_{n=k}^\infty\left\{x\in X:\ |\frac{1}{n}\sum_{i=0}^{n-1}\varphi(\Theta^i(\omega,x))-\alpha|<\delta\right\}.
  \end{align*}
  Recall that $K_{\varphi,\alpha}(\omega)=\{x\in M:\ \lim_{n\rightarrow\infty}\frac{1}{n}\sum_{i=0}^{n-1}\varphi(\Theta^i(\omega,x))=\alpha\}$.
  Therefore, for any $\delta>0$, we have
  \begin{equation*}
    K_{\varphi,\alpha}(\omega)\subset \cup_{k=1}^\infty G(\alpha,\delta,k,\omega).
  \end{equation*} If
    \begin{equation}\label{mid less}
     h_{top}(F,G(\alpha,\delta,k,\omega),\omega,\epsilon)\leq \Lambda_{\varphi,\alpha}(\omega)
    \end{equation}holds true for those $k\in\mathbb{N}$ with $G(\alpha,\delta,k,\omega)\not=\emptyset$, for sufficient small $\epsilon>0$ and $\delta>0$, then Lemma \ref{def htopZ} and Lemma \ref{lemma property htop} implies that
    \begin{align*}
        & h_{top}(F,K_{\varphi,\alpha}(\omega),\omega,\epsilon)  \\ &\leq h_{top}(F,\cup_{k=1}^\infty G(\alpha,\delta,k,\omega),\omega,\epsilon)\\
     &=\sup\{h_{top}(F,G(\alpha,\delta,k,\omega),\omega,\epsilon):\ k\in\mathbb{N}\mbox{ with }G(\alpha,\delta,k,\omega)\not=\emptyset\}\\
     &\leq \Lambda_{\varphi,\alpha}(\omega).
    \end{align*}Therefore, $h_{top}(F,K_{\varphi,\alpha}(\omega),\omega)\leq \Lambda_{\varphi,\alpha}(\omega)$. Now, we are going to prove \eqref{mid less}.

    To prove \eqref{mid less}, by \eqref{def m(z,s,omega,epsilon)}, it is sufficient to prove that for any sufficiently small $\epsilon,\delta>0$, and $s>\Lambda_{\varphi,\alpha}(\omega)$, $m(G(\alpha,\delta,k,\omega),s,\omega,\epsilon )=0$ for those $k\in\mathbb{N}$ with $G(\alpha,\delta,k,\epsilon)\not=\emptyset$. Denote $\gamma(\omega)=\frac{s-\Lambda_{\varphi,\alpha}(\omega)}{2}>0$.
Fix any $k\in\mathbb{N}$ such that $G(\alpha,\delta,k,\omega)\not=\emptyset$. Recall that $N(\alpha,\delta,n,\epsilon,\omega)$ is the minimal number of balls $B_n(\omega,x,\epsilon)$ which is necessary for covering the set $P(\alpha,\delta,n,\omega)$. When $n\geq k$, these balls covers $G(\alpha,\delta,k,\omega)$ as well since $G(\alpha,\delta,k,\omega)\subset P(\alpha,\delta,n,\omega)$ for $n\geq k$. Hence, we have
    \begin{equation}\label{mG leq Ne}
      m(G(\alpha,\delta,k,\omega),s,\omega,n,\epsilon)
      \leq N(\alpha,\delta,n,\epsilon,\omega)e^{-sn},
    \end{equation}where $ m(G(\alpha,\delta,k,\omega),s,\omega,n,\epsilon)$ is defined as in \eqref{def m(z,s,omega,N,epsilon)}. Recall that
    \begin{equation*}
      \Lambda_{\varphi,\alpha}(\omega)=\lim_{\epsilon\to 0}\lim_{\delta\to 0}\liminf_{n\to \infty}\frac{1}{n}\log N(\alpha,\delta,n,\epsilon,\omega).
    \end{equation*}There exists $\epsilon_0(\omega)>0$ and $\delta_0(\omega)>0$ such that for all $\epsilon\in(0,\epsilon_0(\omega))$ and $\delta\in(0,\delta_0(\omega))$, there exists a monotone sequence of integers $n_l=n_l(\epsilon,\delta,\gamma(\omega))\to\infty$ as $l\to \infty$ satisfying
    \begin{equation}\label{N leq exp}
      N(\alpha,\delta,n_l,\epsilon,\omega)\leq \exp(n_l(\Lambda_{\varphi,\alpha}(\omega)+\gamma(\omega))).
    \end{equation}We fix any $\epsilon\in (0,\epsilon_0(\omega))$ and $\delta\in (0,\delta_0(\omega))$, then \eqref{mG leq Ne} and \eqref{N leq exp} imply that
    \begin{equation*}
      m(G(\alpha,\delta,k,\omega),s,\omega,n_l,\epsilon)\leq  e^{n_l(\Lambda_{\varphi,\alpha}(\omega)+\gamma(\omega))}\cdot e^{-sn_l}=e^{-n_l\gamma(\omega)}\to0\mbox{ as $n_l\to \infty$.}
    \end{equation*} By \eqref{def m(z,s,omega,N,epsilon)}, we have
     \begin{equation*}
        m(G(\alpha,\delta,k,\omega),s,\omega,\epsilon)=\lim_{n\to \infty} m(G(\alpha,\delta,k,\omega),s,\omega,n,\epsilon)=\lim_{l\to \infty} m(G(\alpha,\delta,k,\omega),s,\omega,n_l,\epsilon)=0.
     \end{equation*}
     Therefore, \eqref{mid less} holds. The proof of the first inequality is complete.

    Next, we prove $\Lambda_{\varphi,\alpha}(\omega)\leq \sup\{h_\mu(F):\ \mu\in I_\Theta(\Omega\times M,\varphi,\alpha)\}$ for any $\omega\in\Omega_\alpha\cap \Omega_\Lambda.$   Fix any $\omega^*\in\Omega_\alpha\cap\Omega_\Lambda$, and we only consider the case that $\Lambda_{\varphi,\alpha}(\omega^*)>0$. It is sufficient to show that for any $r\in(0,\Lambda_{\varphi,\alpha}(\omega^*))$, there exists a measure $\mu\in I_\Theta(\Omega\times M,\varphi,\alpha)$ satisfying
     \begin{equation}\label{h mu geq lambda}
       h_\mu(F)\geq \Lambda_{\varphi,\alpha}(\omega^*)-r.
     \end{equation}

     Fix any $r\in (0,\Lambda_{\varphi,\alpha}(\omega^*))$. By the definition of $\Lambda_{\varphi,\alpha}(\omega^*)$, there exists $\epsilon_1=\epsilon_1(\omega^*)>0$ such that for any $\epsilon\in (0,\epsilon_1(\omega^*)]$, one has
    \begin{equation}\label{pick epsilon1}
      \lim_{\delta\to 0}\liminf_{n\rightarrow\infty}\frac{1}{n}\log M(\alpha,\delta,n,\epsilon,\omega^*)\geq \Lambda_{\varphi,\alpha}(\omega^*)-\frac{1}{3}r.
    \end{equation}
    For any $k\geq 1$, we can find a sequence $\delta_k(\omega^*)>0$ depending on $ \epsilon_1(\omega^*)$ with $\delta_k(\omega^*)\searrow 0$ so that
    \begin{equation}\label{eq pick deltakomegastar}
      \liminf_{n\rightarrow\infty}\frac{1}{n}\log M(\alpha,\delta_k(\omega^*),n,\epsilon_1(\omega^*),\omega^*)\geq \Lambda_{\varphi,\alpha}(\omega^*)-\frac{2}{3}r.
    \end{equation}For $k\geq 1$, we pick a strictly increasing sequence $m_k(\omega^*)\in\mathbb{N} $ depending on $\delta_k(\omega^*)$ and $\epsilon_1(\omega^*)$ such that
    \begin{equation*}
      P(\alpha,\delta_k(\omega^*),m_{k}(\omega^*),\omega^*)\not=\emptyset
    \end{equation*}    and
    \begin{equation}\label{pick of mk}
      M_k(\omega^*):=M(\alpha,\delta_k(\omega^*),m_k(\omega^*),\epsilon_1(\omega^*),\omega^*)\geq \exp\left(m_k(\omega^*)\cdot(\Lambda_{\varphi,\alpha}(\omega^*)-r)\right).
    \end{equation}Recall $M(\alpha,\delta_k(\omega^*),m_k(\omega^*),\epsilon_1(\omega^*),\omega^*)$ is the largest cardinality of a maximal $(\omega^*,\epsilon_1(\omega^*),m_k(\omega^*))$ separated set of $P(\alpha,\delta_k(\omega^*),m_k(\omega^*),\omega^*)$, and pick $C_k(\omega^*)$ to be one of such $(\omega^*,\epsilon_1(\omega^*),m_k(\omega^*))$-separated set with $\#C_k(\omega^*)=M_k(\omega^*)$. We define a sequence of probability measures $\sigma_k\in Pr(\Omega\times M)$ by
    \begin{equation}\label{eq construction of sigma k}
      \sigma_k=\frac{1}{M_k(\omega^*)}\sum_{x\in C_k(\omega^*)}\delta_{(\omega^*,x)}.
    \end{equation}
    Let
    \begin{equation*}
      \sigma_{k,\omega^*}:=\frac{1}{M_k(\omega^*)}\sum_{x\in C_k(\omega^*)}\delta_{x}\in Pr(M).
    \end{equation*}Then $\sigma_k$ has disintegration
    \begin{equation}\label{disintegration of sigma k}
      \sigma_k= \delta_{\omega^*}\times \sigma_{k,\omega^*}.
    \end{equation}
     Define $\mu_k\in Pr(\Omega\times M)$ by
    \begin{align}\label{construct measure muk}
      \mu_k&:=\frac{1}{m_k(\omega^*)}\sum_{i=0}^{m_k(\omega^*)-1}(\Theta^i)_*\sigma_k=\frac{1}{M_k(\omega^*)}\sum_{x\in C_k(\omega^*)}\frac{1}{m_k(\omega^*)}\sum_{i=0}^{m_k(\omega^*)-1}\delta_{\Theta^{i}(\omega^*,x)}.
    \end{align}Keep in mind that this $\omega^*\in\Omega_\alpha\cap\Omega_\Lambda$ is fixed throughout this proof. By the compactness of $Pr(\Omega\times M)$ with respect to the weak$^*$ topology, a subsequence of $\mu_k$ weak$^*$ converges to a limit in $Pr(\Omega\times M)$, named $\mu_{k_l}\to\mu$ as $l\to\infty.$
    \begin{claim}\label{claim Ipalpha}
      We claim that $\mu\in I_\Theta(\Omega\times M,\varphi,\alpha)$.
    \end{claim}
\begin{proof}[Proof of Claim \ref{claim Ipalpha}]
Firstly, by using Krylov-Bogolyubov type argument, $\mu$ is $\Theta-$invariant. It is left to show that $\int_{\Omega\times M}\varphi(\omega,x)d\mu(\omega,x)=\alpha$. For every $l\geq 1$, one has
\begin{align}
  \left|\int_{\Omega\times M}\varphi(\omega,x) d\mu_{k_l}-\alpha\right|&\leq \frac{1}{M_{k_l}(\omega^*)}\sum_{x\in C_{k_l}(\omega^*)}\left|\frac{1}{m_{k_l}(\omega^*)}\sum_{i=0}^{m_{k_l}(\omega)-1}\varphi(\Theta^i(\omega^*,x))-\alpha\right|\nonumber\\
  &\leq \delta_{k_l}(\omega^*)\label{use separated set}
\end{align}since $x\in C_{k_l}(\omega^*)\subset P(\alpha,\delta_{k_l}(\omega^*),m_{k_l}(\omega^*),\omega^*)$. Therefore, $\lim_{l\rightarrow\infty}\int \varphi d\mu_{k_l}=\alpha$ since $\delta_{k_l}(\omega^*)\to 0$ as $l\to\infty$. By the weak$^*$ convergence, we also have $\lim_{l\rightarrow\infty}\int\varphi d\mu_{k_l}=\int\varphi d\mu$. Hence $\int\varphi d\mu=\alpha.$
\end{proof}

Next, we prove that such $\mu$ satisfies \eqref{h mu geq lambda} by using the same strategy as the proof of \cite[Proposition 2.2]{Kifervariationalprinciple}. Note that $\omega^*$ is fixed, and $\epsilon_1(\omega^*)$ and $m_k(\omega^*)$ are positive numbers chosen satisfying \eqref{eq pick     deltakomegastar} and \eqref{pick of mk} respectively. We can choose a finite measurable partition $\mathcal{Q}=\{\Omega\times P_i\}_{i=1}^{l}$ of $\Omega\times M$ with $\max_{1\leq i\leq l}$diam$(P_i)< \epsilon_1(\omega^*)$, such that
\begin{equation}\label{boundary 0}
  \mu(\partial\mathcal{Q})=\sum_{i=1}^{l}\int_\Omega\mu_\omega(\partial P_i)dP(\omega)=0.
\end{equation}
We denote $\mathcal{P}=\{P_i\}_{i=1}^l$. Due to \eqref{disintegration of     sigma k}, we have the following equation
\begin{equation*}
  H_{\sigma_k}(\vee_{i=0}^{m_k(\omega^*)-1}(\Theta^i)^{-1}\mathcal{Q}|\pi^{-1}(\mathcal{B}_P(\Omega)))=H_{\sigma_{k,\omega^*}}(\vee_{i=0}^{m_k(\omega^*)-1}(F_{\omega^*}^i)^{-1}\mathcal{P}).
\end{equation*}We note that the measure $\sigma_{k,\omega^*}$ is weighted on $M_k(\omega^*)=M(\alpha,\delta_k(\omega^*),m_k(\omega^*),\epsilon_1(\omega^*),\omega^*)$ numbers of $(\omega^*,\epsilon_1(\omega^*),m_k(\omega^*))$-separated points and no member of $\vee_{i=0}^{m_k(\omega^*)-1}(F_{\omega^*}^i)^{-1}\mathcal{P}$ contains more than one such point. 
Therefore, we have
\begin{align}
  H_{\sigma_k}(\vee_{i=0}^{m_k(\omega^*)-1}(\Theta^i)^{-1}\mathcal{Q}|\pi^{-1}(\mathcal{B}_P(\Omega)))&=H_{\sigma_{k,\omega^*}}(\vee_{i=0}^{m_k(\omega^*)-1}(F_{\omega^*}^i)^{-1}\mathcal{P})\nonumber \\
&=\log M(\alpha,\delta_k(\omega^*),m_k(\omega^*),\epsilon_1(\omega^*),\omega^*).\label{H=logM}
\end{align}Now let $q\in\mathbb{N}$ with $1<q<m_k(\omega^*)$ and $n\in\{0,1,...,q-1\}$. Let $a(n)$ be the greatest integer smaller than $(m_k(\omega^*)-n)q^{-1}$ so that $m_k(\omega^*)=n+a(n)q+r$ with $0\leq r<q.$ Then
\begin{equation*}
  \bigvee_{i=0}^{m_k(\omega^*)-1}(\Theta^i)^{-1}\mathcal{Q}=\left(\bigvee_{i=0}^{n-1}(\Theta^i)^{-1}\mathcal{Q}\right)\vee\left(\bigvee_{j=0}^{a(n)-1}(\Theta^{n+jq})^{-1}(\vee_{i=0}^{q-1}(\Theta^i)^{-1}\mathcal{Q})\right)\vee\left(\bigvee_{i=0}^{r-1}(\Theta^{i+n+a(n)q})^{-1}\mathcal{Q}\right).
\end{equation*}Taking into account the subadditivity of conditional entropy (see, e.g., \cite[Theorem 4.3]{Waltersergodic}), we obtain
\begin{align*}
 H_{\sigma_k}(\vee_{i=0}^{m_k(\omega^*)-1}(\Theta^i)^{-1}\mathcal{Q}|\pi^{-1}(\mathcal{B}_P(\Omega)))  &\leq \sum_{j=0}^{a(n)-1}H_{(\Theta^{n+jq})_*\sigma_k}(\vee_{i=0}^{q-1}(\Theta^i)^{-1}\mathcal{Q}|\pi^{-1}(\mathcal{B}_P(\Omega)))+(n+r)\log l\\
  &\leq \sum_{j=0}^{a(n)-1}H_{(\Theta^{n+jq})_*\sigma_k}(\vee_{i=0}^{q-1}(\Theta^i)^{-1}\mathcal{Q}|\pi^{-1}(\mathcal{B}_P(\Omega)))+2q\log l.
\end{align*}Summing over $n=0,1,...,q-1$, we get
\begin{align*}
  q H_{\sigma_k}(\vee_{i=0}^{m_k(\omega^*)-1}(\Theta^i)^{-1}\mathcal{Q}|\pi^{-1}(\mathcal{B}_P(\Omega)))   & \leq \sum_{j=0}^{m_k(\omega^*)-1}H_{(\Theta^j)_*\sigma_k}(\vee_{i=0}^{q-1}(\Theta^i)^{-1}\mathcal{Q}|\pi^{-1}(\mathcal{B}_P(\Omega)))+2q^2\log l\\
  &\leq m_k(\omega^*)H_{\mu_k}(\vee_{i=0}^{q-1}\Theta^i)^{-1}\mathcal{Q}|\pi^{-1}(\mathcal{B}_P(\Omega)))+2q^2\log l.
\end{align*}The above inequality and \eqref{H=logM} imply that
\begin{equation*}
  q\log M(\alpha,\delta_k(\omega^*),m_k(\omega^*),\epsilon_1(\omega^*),\omega^*)\leq m_k(\omega^*)H_{\mu_k}(\vee_{i=0}^{q-1}\Theta^i)^{-1}\mathcal{Q}|\pi^{-1}(\mathcal{B}_P(\Omega)))+2q^2\log l.
\end{equation*}Dividing by $m_k(\omega^*)$ and sending $k\rightarrow\infty$ along the subsequence $k_l\rightarrow\infty$, we have
\begin{align}
  q\limsup_{k_l\rightarrow\infty}\frac{1}{m_{k_l}(\omega^*)}\log M(\alpha,\delta_{k_l}(\omega^*),m_{k_l}(\omega^*),\epsilon_1(\omega^*),\omega^*)&\leq \limsup_{k\rightarrow\infty}H_{\mu_{k_l}}(\vee_{i=0}^{q-1}\Theta^i)^{-1}\mathcal{Q}|\pi^{-1}(\mathcal{B}_P(\Omega)))\nonumber\\
  &\leq H_\mu(\vee_{i=0}^{q-1}(\Theta^i)^{-1}\mathcal{Q}|\pi^{-1}(\mathcal{B}_P(\Omega))),\label{qlisuoleqH}
\end{align}where the second inequality is due to \cite[Lemma 3.2 (ii)]{LedrWalter}. To apply Lemma 3.2 (ii) in \cite{LedrWalter}, we are using the equation \eqref{boundary 0} and the assumption that $\Omega$ is a compact metric space. Now
\begin{align*}
 \Lambda_{\varphi,\alpha}(\omega^*)-r  &\overset{\eqref{pick of mk}}\leq \liminf_{k\rightarrow\infty}\frac{1}{m_k(\omega^*)}\log M(\alpha,\delta_k(\omega^*),m_k(\omega^*),\epsilon_1(\omega^*),\omega^*)\\
 &\leq \limsup_{l\rightarrow\infty}\frac{1}{m_{k_l}(\omega^*)}\log M(\alpha,\delta_{k_l}(\omega^*),m_{k_l}(\omega^*),\epsilon_1(\omega^*),\omega^*)\\
 &\overset{\eqref{qlisuoleqH}}\leq \frac{1}{q}H_\mu(\vee_{i=0}^{q-1}(\Theta^i)^{-1}\mathcal{Q}|\pi^{-1}(\mathcal{B}_P(\Omega))).
\end{align*}Sending $q\rightarrow\infty$, we arrive
\begin{equation*}
  \Lambda_{\varphi,\alpha}(\omega^*)-r\leq h_\mu(F,Q)\leq h_\mu(F),
\end{equation*}i.e., \eqref{h mu geq lambda} is proved. The proof of Proposition \ref{lemma direction less than} is complete.
\end{proof}
\begin{remark}
When the external force space $(\Omega,\theta)$ is trivial, our method is a modification of method in \cite{TakensVerbitsky2003variationalprinciple}.  As Daniel Thompson in \cite{Thompson2009Variationalprinciplepressure} pointed out, the argument in the second part of proof of Theorem 4.1 in \cite{TakensVerbitsky2003variationalprinciple} need to be corrected.  We correct Takens and Verbitskiy's proof by using the following treatments: in \eqref{pick epsilon1}, we pick $\epsilon_1(\omega)$ instead of picking $\epsilon_k(\omega)\to 0$, and we use the separated set instead of centers of covering set to construct measure $\sigma_k$ in \eqref{eq construction of sigma k}.
\end{remark}

\subsubsection{Lower estimates on fiber Bowen's topological entropy of $K_{\varphi,\alpha}$}
In this subsection, we are going to show that for $P-$a.s. $\omega\in\Omega$, :
	\begin{equation}\label{eq second goal}
		h_{top}(F,K_{\varphi,\alpha}(\omega),\omega)\geq \sup\{h_{\mu}(F):\ \mu\in I_\Theta(\Omega\times M,\varphi,\alpha)\}.
	\end{equation}
Note that the fiber entropy mapping $\mu\mapsto h_\mu(F)$ is upper semi-continuous by Lemma \ref{lemma upper semicontinuity}. By Lemma \ref{lemma I alpha nonemptyset}, $I_\Theta(\Omega\times M,\varphi,\alpha)$ is closed. Hence, there exists $\mu_0\in I_\Theta(\Omega\times M,\varphi,\alpha)$ such that
    \begin{equation}\label{def mu0}
      h_{\mu_0}(F)=\sup\{h_{\mu}(F):\ \mu\in I_\Theta(\Omega\times M,\varphi,\alpha)\}\leq h_{top}(F)<\infty.
    \end{equation}Inequality \eqref{eq second goal} is trivial true if $h_{\mu_0}(F)=0$, therefore we  only consider the case that $h_{\mu_0}(F)>0$.

    To prove \eqref{eq second goal}, it is sufficient to show for any sufficiently small $\gamma\in(0,\frac{1}{10}h_{\mu_0}(F))$, there exists a $P-$full measure set $\tilde{\Omega}_\gamma$ such that
 \begin{equation}\label{eq geq h-5gamma}
   h_{top}(F,K_{\varphi,\alpha}(\omega),\omega)\geq h_{\mu_0}(F)-5\gamma,\ \forall\omega\in\tilde{\Omega}_{\gamma}
 \end{equation}
 The proof is a 4-step process.   We start with the following technical Lemma, whose proof is given in Sec. \ref{5.3}.
    \begin{lemma}\label{Lemma lower estimates}
	For any $\gamma>0$ and  positive number $\delta<\frac{1}{2}\{\gamma,\eta\}$, there exists a $P$-full measure set $\Omega_{\delta}$ such that for any $N^\prime\in\mathbb{N}$, there exists a measurable function $\hat{n}:\Omega_{\delta}\to\mathbb{N}$ satisfying for all $\omega\in\Omega_\delta$, $\hat{n}(\omega)\geq N^\prime$, $P(\alpha,4\delta,\hat{n}(\omega),\omega)\not=\emptyset$ and
\begin{equation}\label{eq 1 hatn M geq}
  		\frac{1}{\hat{n}(\omega)}\log M(\alpha,4\delta,\hat{n}(\omega),\frac{\eta}{2},\omega)\geq 
h_{\mu_0}(F)-4\gamma,
\end{equation}where $\eta>0$ is the fiber-expansive constant in Lemma \ref{Lemma expansive} and $M(\alpha,4\delta,\hat{n}(\omega),\frac{\eta}{2},\omega)$ is the largest cardinality of maximal $(\omega,\frac{\eta}{2},\hat{n}(\omega))$-separated sets of $P(\alpha,4\delta,\hat{n}(\omega),\omega)$.
    \end{lemma}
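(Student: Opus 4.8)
The plan is to transfer the entropy $h_{\mu_0}(F)$ down to fiberwise $(\omega,\tfrac\eta2,n)$-separated sets lying inside $P(\alpha,4\delta,n,\omega)$, by concatenating ergodic orbit segments along the forward orbit of $\omega$. Three ingredients are needed: the measure approximation lemma (Lemma \ref{lemma measure approxi}) to replace $\mu_0$ by a convex combination of ergodic measures; the random Katok entropy theorem (Lemma \ref{lemma katok entropy}) together with Birkhoff's ergodic theorem to produce, fiber by fiber, large separated sets whose orbit segments have Birkhoff average close to the corresponding ergodic mean; and the fiber specification property of Theorem \ref{thm random specification} to glue these segments. For Step~1, I apply Lemma \ref{lemma measure approxi} to $\mu_0$ with parameter $\min\{\gamma,\delta\}$, obtaining $\nu=\sum_{i=1}^k\lambda_i\nu_i$ with $\nu_i\in I^e_\Theta(\Omega\times M)$, $\lambda_i>0$, $\sum_i\lambda_i=1$, $h_\nu(F)=\sum_i\lambda_i h_{\nu_i}(F)\ge h_{\mu_0}(F)-\gamma$ (affinity of the fiber entropy map via \eqref{AR formula}), and, writing $\alpha_i=\int\varphi\,d\nu_i$, $\bigl|\sum_i\lambda_i\alpha_i-\alpha\bigr|<\delta$.

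For Step~2, fix a small $\delta_0\in(0,1)$. For each $i$ let $\Omega^{(i)}$ be the $\theta$-invariant $P$-full set on which Lemma \ref{lemma katok entropy} for $\nu_i$ holds at scale $\eta$, i.e. $\liminf_{n\to\infty}\tfrac1n\log S(\omega,n,\eta,\delta_0)=h_{\nu_i}(F)$, and on which $\nu_{i,\omega}\bigl(\{x:\tfrac1m\sum_{j=0}^{m-1}\varphi(\Theta^j(\omega,x))\to\alpha_i\}\bigr)=1$ for the disintegration of $\nu_i$. For $\omega\in\Omega^{(i)}$ the sets $D_i(n_0,\omega)=\{x:|\tfrac1m\sum_{j=0}^{m-1}\varphi(\Theta^j(\omega,x))-\alpha_i|<\delta\text{ for all }m\ge n_0\}$ increase with $n_0$ to a set of full $\nu_{i,\omega}$-measure, so there is a measurable $n_0^i(\omega)$ with $\nu_{i,\omega}(D_i(n_0^i(\omega),\omega))>1-\delta_0$, and a measurable Katok threshold $N_i(\omega)$ beyond which $S(\omega,n,\eta,\delta_0)\ge e^{n(h_{\nu_i}(F)-\gamma)}$. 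Since a maximal $(\omega,\eta,n)$-separated subset of a set is $(\omega,\eta,n)$-spanning for it, for every $n\ge\max\{n_0^i(\omega),N_i(\omega)\}$ the set $D_i(n_0^i(\omega),\omega)\subseteq P(\alpha_i,\delta,n,\omega)$ contains an $(\omega,\eta,n)$-separated set $E_i(\omega,n)$ with $\#E_i(\omega,n)\ge e^{n(h_{\nu_i}(F)-\gamma)}$.

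For Step~3, pick $\epsilon^*<\tfrac\eta4$ so small that $d_M(p,q)<\epsilon^*$ forces $|\varphi(\cdot,p)-\varphi(\cdot,q)|<\delta$ (uniform continuity of $\varphi$ on the compact $\Omega\times M$), and let $m^*=m(\epsilon^*)$ be the spacing constant of Theorem \ref{thm random specification}. For $\omega$ in a $\theta$-invariant $P$-full set $\Omega_\delta\subseteq\bigcap_i\Omega^{(i)}$ and any $N'\in\mathbb N$, I build an $m^*$-spaced $\omega$-specification whose consecutive blocks have types $\nu_1,\dots,\nu_k$ with length-frequencies $\approx\lambda_i$, the block of type $i$ sitting on some fiber $\theta^s\omega$ with length $n_s\ge\max\{n_0^i(\theta^s\omega),N_i(\theta^s\omega)\}$ chosen large, and carrying one point of $E_i(\theta^s\omega,n_s)$; the total length $\hat n(\omega)$ is taken $\ge N'$. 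Each choice of one point per block yields, by the fiber specification property, a point $z\in M$ that $\epsilon^*$-shadows the specification; distinct choices give $(\omega,\tfrac\eta2,\hat n(\omega))$-separated $z$'s, because two blocks differing in a coordinate are $\eta$-separated there and $2\epsilon^*<\tfrac\eta2$ absorbs the shadowing error. Hence $M(\alpha,4\delta,\hat n(\omega),\tfrac\eta2,\omega)\ge\prod_s\#E_{i(s)}(\theta^s\omega,n_s)\ge e^{\sum_s n_s(h_{\nu_{i(s)}}(F)-\gamma)}$, and since the length-frequencies are $\approx\lambda_i$ and gaps occupy a negligible fraction of $\hat n(\omega)$, this gives $\tfrac1{\hat n(\omega)}\log M(\alpha,4\delta,\hat n(\omega),\tfrac\eta2,\omega)\ge h_\nu(F)-2\gamma\ge h_{\mu_0}(F)-4\gamma$. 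Moreover each $z$ has length-$\hat n(\omega)$ Birkhoff average within $4\delta$ of $\alpha$: each block contributes $n_s\alpha_{i(s)}$ up to $2n_s\delta$ (from membership in $D_i$ and from $\epsilon^*$-shadowing), the weighted total is within $3\delta$ of $\alpha$ by Step~1, and the gaps together with the length discretization contribute $o(1)$, absorbed into the fourth $\delta$ once the $n_s$ are large; in particular $P(\alpha,4\delta,\hat n(\omega),\omega)\ne\emptyset$. Measurability of $\hat n$ holds because at each stage $n_s$ is a ``least $n$ with a measurable property'' evaluated at the measurable fiber $\theta^s\omega$.

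I expect the main obstacle to be the uniformization in Step~3: the thresholds $n_0^i(\cdot)$ and $N_i(\cdot)$ are only $P$-a.e.\ finite and need not be integrable, so they cannot be bounded along the orbit $\{\theta^s\omega\}_{s\ge 0}$, yet I need block lengths that are simultaneously above those thresholds and in the prescribed proportions $\lambda_i$, for arbitrarily large total length. I would handle this with Egorov's theorem — restrict to a set $\Omega_*$ with $P(\Omega_*)>1-\tau$ on which all thresholds are $\le T$ — combined with Birkhoff's theorem for $\mathbf 1_{\Omega_*}$, so that the orbit of a typical $\omega$ spends more than a $1-\tau$ fraction of its time in $\Omega_*$: I start a controlled block of length $\approx T$ only when the current fiber lies in $\Omega_*$, and absorb the remaining low-density times into enlarged gaps of the specification, which cost nothing in the separated-set count and only $o(1)$ in the Birkhoff average. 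Choosing $T$ large, $\tau$ and $\delta_0$ small, and the block lengths large compared with $m^*$ — all relative to $\gamma$ and $\delta$ — is the bookkeeping that keeps the three error budgets within tolerance; all the hyperbolic input is already packaged in Theorem \ref{thm random specification}, so no further dynamics enters at this point.
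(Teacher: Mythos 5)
Your proposal is correct and follows essentially the same route as the paper's proof: measure approximation into ergodic components, the random Katok entropy theorem combined with Birkhoff's theorem and Egorov's theorem to uniformize the (only a.e.\ finite) thresholds on a good set $\Omega^2$ of measure close to $1$, and the fiber specification property to glue one block per ergodic component with lengths in proportion $\lambda_i$, with the off-$\Omega^2$ times absorbed into gaps controlled by the visit frequency. The error bookkeeping you outline (separation $\eta-2\epsilon^*>\eta/2$, Birkhoff deviation $\le 4\delta$, entropy loss $\le 4\gamma$) matches the paper's estimates.
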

\begin{remark}\label{remark 4.2}
  In fact, the Lemma \ref{Lemma lower estimates} still holds if $\mu_0$ is replaced by any $\Theta$-invariant measure $\mu$ and $\alpha$ is replaced by $\int \varphi d\mu$ correspondingly.
\end{remark}
Given $\gamma\in (0,\frac{1}{10}h_{\mu_0}(F))$, we first construct the desired $P$-full measure set $\tilde{\Omega}_\gamma$ in \eqref{eq geq h-5gamma} by using Lemma \ref{Lemma lower estimates}. For $k\in\mathbb{N}$, by Theorem \ref{thm random specification}, there exists $m_k=m(\frac{\eta}{2^{4+k}})$ such that any $m_k$-spaced $\omega-$specification is $(\omega,\frac{\eta}{2^{4+k}})$-shadowed by a point in $M$. Let $\{\delta_k\}_{k=1}^\infty\searrow 0$ be a strictly decreasing sequence of positive numbers with $\delta_1<\frac{1}{2}\min\{\gamma,\eta\}$. By Lemma \ref{Lemma lower estimates},  there exist a $P$-full measure set $\Omega_{\delta_{k}}$, such that there exists a measurable function $\hat{n}_k:\Omega_{\delta_k}\to\mathbb{N}$ satisfying for all $\omega\in\Omega_{\delta_k}$, $\hat{n}_k(\omega)\geq 2^{m_k}$, $P(\alpha,4\delta_k,\hat{n}_k(\omega),\omega)\not=\emptyset$, and
\begin{equation}\label{constrcution Omega k}
  \frac{1}{\hat{n}_k(\omega)}\log M(\alpha,4\delta_k,\hat{n}_k(\omega),\frac{\eta}{2},\omega)\geq h_{\mu_0}(F)-4\gamma.
\end{equation}

 Note that $\hat{n}_k(\cdot):\cap_{j\geq 1}\Omega_{\delta_j}\to\mathbb{N}$ is measurable and $P(\cap_{j\geq 1}\Omega_{\delta_j})=1$. Let $\{\xi_k\}_{k=1}^\infty$ be a sequence of positive numbers strictly decreasing to 0 with $\xi_1<\frac{1}{2}$.
 Now for any $k\in\mathbb{N},$ by using Lusin's theorem, there exists a compact set  $\Omega_k\subset\cap_{j\geq 1}\Omega_{\delta_j}$ with $P(\Omega_k)\geq 1-\frac{\xi_k}{2}$, such that $\hat{n}_k(\omega)$ is continuous on $\Omega_k.$ Denote
 \begin{equation}\label{def hat n k M}
   \hat{n}_k^M=\max_{\omega\in\Omega_k}\hat{n}_k(\omega).
 \end{equation}
 By Birkhoff's Ergodic theorem, there exists a $P$-full measure set $\Omega_k^\prime,$ such that for any $\omega\in\Omega_k^\prime$, we have
\begin{equation}\label{eq lim 1-xik}
  \lim_{n\to\infty}\frac{1}{n}\sum_{i=0}^{n-1}1_{\Omega_k}(\theta^i\omega)=P(\Omega_k)\geq 1-\frac{\xi_k}{2}.
\end{equation}
We define $\tilde{\Omega}_\gamma:=\cap_{k\geq1}\Omega_k^\prime$, and in the left of this section, we are going to show \eqref{eq geq h-5gamma} by constructing Moran-like fractal on each fiber $\{\omega\}\times M$ for $\omega\in\tilde{\Omega}_\gamma$.
From now on, we fix any $\omega\in\tilde{\Omega}_\gamma$.

\emph{Step 1. Construction of intermediate sets.} We start by choosing two sequences of positive integers depending on $\omega$, named $\{L_k(\omega)\}_{k\in\mathbb{N}}$ and $\{N_k(\omega)\}_{k\in\mathbb{N}}.$

We define $\{L_k(\omega)\}_{k\in\mathbb{N}}$ inductively. For $k=1,$ by \eqref{eq lim 1-xik}, we can pick $L_1(\omega)\in\mathbb{N}$ such that
\begin{equation}\label{omega1,xi1}	\frac{1}{n}\sum_{i=0}^{n-1}1_{\Omega_1}(\theta^i\omega)\geq 1-\xi_1,\ \forall n\geq L_1(\omega).
\end{equation}
Once $L_k(\omega)$ is defined, by \eqref{eq lim 1-xik}, we can pick $L_{k+1}(\omega)$ satisfying $L_{k+1}(\omega)> L_k(\omega),$ and \begin{equation}\label{pick L k+1}
 \dfrac{1}{n}\displaystyle\sum_{i=0}^{n-1}1_{\Omega_{k+1}}(\theta^i\omega)\geq 1-\xi_{k+1},\forall n\geq L_{k+1}(\omega).
 \end{equation}
Now, let $N_1(\omega)=L_2(\omega)$, and we pick $N_k(\omega)\in\mathbb{N}$ such that
\begin{equation}\label{choose of Nk}
	\begin{cases}
		N_k(\omega)\geq \max\{2^{\hat{n}_{k+1}^M+m_{k+1}},L_{k+1}(\omega)\}, \forall k\geq 2,\\ N_{k+1}(\omega)\geq 2^{\sum_{j=1}^k(N_j(\omega)\cdot(\hat{n}_j^M+m_j)\cdot\prod_{i=j}^{k}(1-\xi_i)^{-1})},\forall k\geq 1.
	\end{cases}
\end{equation}The \eqref{choose of Nk} ensures that
\begin{align*}
  &\lim_{k\to\infty}\frac{\hat{n}_{k+1}^M+m_{k+1}}{N_k(\omega)} =0,\ \lim_{k\to \infty}\frac{\sum_{j=1}^k(N_j(\omega)\cdot(\hat{n}_j^M+m_j)\cdot\prod_{i=j}^{k}(1-\xi_i)^{-1})}{N_{k+1}(\omega)}=0.
\end{align*}

First, we construct intermediate set $D_1(\omega)$. Let $l^{0,1}(\omega)$ be the first integer $i\geq 0$ such that $\theta^i\omega\in\Omega_1.$ We denote $T_0^1(\omega)=l^{0,1}(\omega).$ Let $l_1^1(\omega)$ be the first integer $i\geq 0$ such that
\begin{displaymath}
	\theta^{T^1_0(\omega)+\hat{n}_1(\theta^{T^1_0(\omega)}\omega)+m_1+i}\omega\in\Omega_1,
\end{displaymath}
where $\hat{n}_1(\theta^{T_0^1(\omega)}\omega)$ is well defined and bounded by $\hat{n}_1^M$ since $\theta^{T_0^1(\omega)}\omega\in\Omega_1.$ Denote $T_1^1(\omega)=T_0^1(\omega)+\hat{n}_1(\theta^{T_0^1}\omega)+m_1+l_1^{1}(\omega)$. For $k\in\{1,\dots,N_1(\omega)-2\}$, suppose $T_k^1(\omega)$ is already defined and $\theta^{T_k^1(\omega)}\omega\in\Omega_1$, we let $l_{k+1}^1(\omega)$ be the first integer $i\geq 0$ such that
\begin{displaymath} \theta^{T_k^1(\omega)+\hat{n}_1(\theta^{T_k^1(\omega)}\omega)+m_1+i}\in\Omega_1
\end{displaymath}
and denote $$T_{k+1}^1(\omega)=T_k^1(\omega)+\hat{n}_1({\theta^{T_k^1}\omega})+m_1+l_{k+1}^1(\omega).$$ Finally, we define $$T_{N_1(\omega)}^1(\omega)=T_{N_1(\omega)-1}^1(\omega)+\hat{n}_1(\theta^{T^1_{N_1-1}}\omega).$$
We point out that  in orbit $\{\omega,\cdots,\theta^{T_{N_1(\omega)}^1(\omega)}\omega\}$ , there are at least $T_0^1(\omega)+l_1^1(\omega)+\cdots+l_{N_1(\omega)-1}^1(\omega)$ points not lying in $\Omega_1.$ We note that $T_{N_1(\omega)}^1(\omega)\geq N_1(\omega)\geq L_2(\omega)\geq L_1(\omega)$, therefore, by (\ref{omega1,xi1}),
\begin{equation}\label{estimate 1 bad point}
  	T_0^1(\omega)+l_1^1(\omega)+\cdots+l_{N_1(\omega)-1}^1(\omega)\leq \xi_1 T_{N_1(\omega)}^1(\omega).
\end{equation}

By our construction, we have $\theta^{T_k^1(\omega)}\omega\in\Omega_1\subset\cap_{j\geq 1}\Omega_{\delta_j}$ for $k\in\{0,\dots,N_1(\omega)-1\}$. Therefore, our construction \eqref{constrcution Omega k}, $\Omega_1\subset\cap_{j\geq 1}\Omega_{\delta_j}$ and \eqref{def hat n k M} imply that $P(\alpha,4\delta_1,\hat{n}_1(\theta^{T_k^1(\omega)}\omega),\theta^{T_k^1(\omega)}\omega)\neq \emptyset;$ $2^{m_1}\leq \hat{n}_1(\theta^{T_k^1(\omega)}\omega)\leq\hat{n}_1^M$, and
\begin{equation*}
  \frac{1}{\hat{n}_1(\theta^{T_k^1}\omega)}\log M(\alpha,4\delta_1,\hat{n}_1(\theta^{T_k^1}\omega),\frac{\eta}{2},\theta^{T_k^1}\omega)\geq h_{\mu_0}(F)-4\gamma.
\end{equation*}
Let $C_1(\theta^{T_k^1(\omega)}\omega)\subset P(\alpha,4\delta_1,\hat{n}_1(\theta^{T_k^1(\omega)}\omega),\theta^{T_k^1(\omega)}\omega)$ be a maximal $(\theta^{T_k^1(\omega)}\omega,\frac{\eta}{2},\hat{n}_1(\theta^{T_k^1(\omega)}\omega))$-separated set with $\# C_1(\theta^{T_k^1(\omega)}\omega)=M(\alpha,4\delta_1,\hat{n}_1(\theta^{T_k^1(\omega)}\omega),\frac{\eta}{2},\theta^{T_k^1(\omega)}\omega).$

Now for any $N_1(\omega)$-tuple $(x_0^1,\dots,x_{N_1(\omega)-1}^1)\in C_1(\theta^{T_0^1(\omega)}\omega)\times\cdots\times C_1(\theta^{T_{N_1(\omega)-1}^1(\omega)}\omega)$, by Theorem \ref{thm random specification}, there exists a point $y=y(x_0^1,\dots, x^1_{N_1(\omega)-1})\in M$, which is $(\omega,\frac{\eta}{2^{4+1}})$-shadowing pieces of orbits
\begin{displaymath}
	\begin{split}
&\Big\{x_0^1,F_{\theta^{T_0^1(\omega)}\omega}(x_0^1),\dots,F^{\hat{n}_1(\theta^{T_0^1(\omega)}\omega)-1}_{\theta^{T_0^1(\omega)}\omega}(x_0^1)\Big\},\\
&\dots, \Big\{x_{N_1(\omega)-1}^1,F_{\theta^{T_{N_1(\omega)-1}^1(\omega)}\omega}(x_{N_1(\omega)-1}^1),\dots,F^{\hat{n}_1(\theta^{T_{N_1(\omega)-1}^1(\omega)}\omega)-1}_{\theta^{T_{N_1(\omega)-1}^1(\omega)}\omega}(x_{N_1(\omega)-1}^1)\Big\}
	\end{split} \end{displaymath}
 with gaps $m_1+l_1^1(\omega),m_1+l_2^1(\omega),\dots,m_1+l_{N_1-1}^1(\omega)$, i.e., for any $k\in\{0,\dots,N_1(\omega)-1\}$,
 \begin{equation}\label{eq shadowingproperty y}
d_{\theta^{T_k^1(\omega)}\omega}^{\hat{n}_1(\theta^{T_k^1(\omega)}\omega)}\big(x_k^1, F_{\omega}^{T_k^1(\omega)}y\big)<\frac{\eta}{2^{4+1}}.
 \end{equation}

 Let $D_1(\omega)=\{y=y(x_0^1,\dots,x_{N_1(\omega)-1}^1):x_k^1\in C(\theta^{T_k^1(\omega)}\omega),k\in \{0,\dots,N_1(\omega)-1\}\}$, which is the first intermediate set. The following lemma says that different tuples give different shadowing points, and its proof is given in Sec. \ref{5.3}.
 \begin{lemma}\label{lemma separated metric}
 	 If $(x_0^1,\dots,x_{N_1(\omega)-1}^1), (z_0^1,\dots,z_{N_1(\omega)-1}^1)\in C_1(\theta^{T_0^1(\omega)}\omega)\times\cdots\times C_1(\theta^{T_{N_1(\omega)-1}^1(\omega)}\omega)$ are different tuples, then
 	 \begin{displaymath}
 d_{\omega}^{T_{N_1(\omega)}^1(\omega)}\left(y((x_0^1,\dots,x_{N_1(\omega)-1}^1)),y((z_0^1,\dots,z_{N_1(\omega)-1}^1))\right)> \frac{\eta}{2}-\frac{\eta}{2^{4+1}}\times 2\geq \frac{3\eta}{8}.
 	 \end{displaymath}
  Hence $\# D_1(\omega)=\prod_{i=0}^{N_1(\omega)-1}\# C_1(\theta^{T_i^1(\omega)}\omega).$
 \end{lemma}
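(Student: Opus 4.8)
The plan is to exploit that two distinct $N_1(\omega)$-tuples must disagree in at least one coordinate, say the $k$-th one, and that there the two points $x_k^1\neq z_k^1$ already lie in a maximal $(\theta^{T_k^1(\omega)}\omega,\frac{\eta}{2},\hat n_1(\theta^{T_k^1(\omega)}\omega))$-separated set, hence are $\frac{\eta}{2}$-apart in the corresponding fiber Bowen metric. The shadowing inequality \eqref{eq shadowingproperty y}, which is just the fiber specification property of Theorem \ref{thm random specification} applied to the chosen orbit pieces, then transfers this separation to the shadowing points $y$ and $y'$, at the cost of an error $2\cdot\frac{\eta}{2^{4+1}}$ produced by two triangle-inequality steps.

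First I would fix distinct tuples $(x_0^1,\dots,x^1_{N_1(\omega)-1})$ and $(z_0^1,\dots,z^1_{N_1(\omega)-1})$ in $\prod_{i=0}^{N_1(\omega)-1}C_1(\theta^{T_i^1(\omega)}\omega)$, choose $k\in\{0,\dots,N_1(\omega)-1\}$ with $x_k^1\neq z_k^1$, and write $y,y'\in M$ for the corresponding shadowing points furnished by Theorem \ref{thm random specification}. Since $x_k^1$ and $z_k^1$ both belong to the $(\theta^{T_k^1(\omega)}\omega,\frac{\eta}{2},\hat n_1(\theta^{T_k^1(\omega)}\omega))$-separated set $C_1(\theta^{T_k^1(\omega)}\omega)$, we have $d^{\hat n_1(\theta^{T_k^1(\omega)}\omega)}_{\theta^{T_k^1(\omega)}\omega}(x_k^1,z_k^1)>\frac{\eta}{2}$. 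Combining this with \eqref{eq shadowingproperty y} applied to $y$ and to $y'$ at the $k$-th block, and the triangle inequality for the metric $d^{\hat n_1(\theta^{T_k^1(\omega)}\omega)}_{\theta^{T_k^1(\omega)}\omega}$, yields
\[
 \frac{\eta}{2}<d^{\hat n_1(\theta^{T_k^1(\omega)}\omega)}_{\theta^{T_k^1(\omega)}\omega}(x_k^1,z_k^1)\le d^{\hat n_1(\theta^{T_k^1(\omega)}\omega)}_{\theta^{T_k^1(\omega)}\omega}\bigl(F_\omega^{T_k^1(\omega)}y,\,F_\omega^{T_k^1(\omega)}y'\bigr)+\frac{2\eta}{2^{4+1}},
\]
and hence $d^{\hat n_1(\theta^{T_k^1(\omega)}\omega)}_{\theta^{T_k^1(\omega)}\omega}(F_\omega^{T_k^1(\omega)}y,F_\omega^{T_k^1(\omega)}y')>\frac{\eta}{2}-\frac{2\eta}{2^{4+1}}\ge\frac{3\eta}{8}$.

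It then remains to dominate this fiber Bowen metric by $d^{T^1_{N_1(\omega)}(\omega)}_\omega(y,y')$. Using the cocycle identity $F^i_{\theta^{T_k^1(\omega)}\omega}\circ F^{T_k^1(\omega)}_\omega=F^{T_k^1(\omega)+i}_\omega$, the left-hand quantity above equals $\max_{0\le i<\hat n_1(\theta^{T_k^1(\omega)}\omega)}d_M(F^{T_k^1(\omega)+i}_\omega y,\,F^{T_k^1(\omega)+i}_\omega y')$; and by the recursive definition of the times $T_j^1(\omega)$ --- the increments are $\hat n_1(\theta^{T_j^1(\omega)}\omega)+m_1+l^1_{j+1}(\omega)$ with $m_1+l^1_{j+1}(\omega)\ge 0$, while the terminal one is $T^1_{N_1(\omega)}(\omega)=T^1_{N_1(\omega)-1}(\omega)+\hat n_1(\theta^{T^1_{N_1-1}(\omega)}\omega)$ with no trailing gap --- the window $[T_k^1(\omega),\,T_k^1(\omega)+\hat n_1(\theta^{T_k^1(\omega)}\omega)-1]$ is contained in $[0,\,T^1_{N_1(\omega)}(\omega)-1]$, so this maximum is at most $d^{T^1_{N_1(\omega)}(\omega)}_\omega(y,y')$. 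Therefore $d^{T^1_{N_1(\omega)}(\omega)}_\omega(y,y')>\frac{3\eta}{8}>0$; in particular $y\neq y'$, the assignment from tuples to shadowing points is injective, and $\#D_1(\omega)=\prod_{i=0}^{N_1(\omega)-1}\#C_1(\theta^{T_i^1(\omega)}\omega)$.

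I expect the only genuinely delicate point to be the index bookkeeping in the last paragraph: one has to verify that each shadowing window $[T_k^1(\omega),T_k^1(\omega)+\hat n_1(\theta^{T_k^1(\omega)}\omega)-1]$ really sits inside the full block $[0,T^1_{N_1(\omega)}(\omega)-1]$, which is precisely where the recursive construction of the $T^1_j(\omega)$ --- and the fact that the terminal increment carries no trailing gap $m_1$ --- is used. Everything else reduces to two applications of the triangle inequality together with the cocycle property of $F$.
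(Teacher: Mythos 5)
Your argument is correct and is essentially the paper's own proof: pick a coordinate $k$ where the tuples differ, use that $x_k^1,z_k^1$ are $(\theta^{T_k^1(\omega)}\omega,\frac{\eta}{2},\hat n_1(\theta^{T_k^1(\omega)}\omega))$-separated, and transfer this to $y,y'$ via two applications of \eqref{eq shadowingproperty y} and the triangle inequality, noting that the $k$-th shadowing window sits inside $[0,T^1_{N_1(\omega)}(\omega)-1]$. The index bookkeeping you flag is exactly right (the terminal increment $T^1_{N_1(\omega)}=T^1_{N_1(\omega)-1}+\hat n_1(\theta^{T^1_{N_1-1}}\omega)$ closes the last window flush with the end), so there is nothing to add.
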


Next, we inductively define $D_k(\theta^{T_0^k(\omega)}\omega)$ for $k\geq 2$.
Suppose now that $T_{N_k(\omega)}^k(\omega)> N_k(\omega)$ is already defined for $k\geq 1,$ let $l^{k,k+1}(\omega)$ be the first integer $i\geq 0$ such that
\begin{equation}\label{gap larger}
	\theta^{T_{N_k(\omega)}^k(\omega)+m_{k+1}+i}\omega\in\Omega_{k+1}.
\end{equation}
Denote $T_0^{k+1}(\omega)=T_{N_k(\omega)}^k(\omega)+m_{k+1}+l^{k,k+1}(\omega)$. Let $l_1^{k+1}(\omega)$ be the first integer $i\geq 0$ such that
\begin{displaymath}
	\theta^{T_0^{k+1}(\omega)+\hat{n}_{k+1}(\theta^{T_0^{k+1}(\omega)}\omega)+m_{k+1}+i}\omega\in \Omega_{k+1}
\end{displaymath}
and denote
\begin{displaymath}
	T_1^{k+1}(\omega)=T_0^{k+1}(\omega)+\hat{n}_{k+1}(\theta^{T_0^k}\omega)+m_{k+1}+l_{1}^{k+1}(\omega).
\end{displaymath}
 Once $T_{j}^{k+1}(\omega)$ is defined for $j\in\{1,\dots,N_{k+1}(\omega)-2\}$, we let $l_{j+1}^{k+1}(\omega)$ be the first integer $i\geq 0$ such that
 \begin{displaymath}
 	\theta^{T_j^{k+1}(\omega)+\hat{n}_{k+1}(\theta^{T_j^{k+1}(\omega)}\omega)+m_{k+1}+i}\omega\in\Omega_{k+1},
 \end{displaymath}
 and denote
 \begin{displaymath}
 	T_{j+1}^{k+1}(\omega)=T_j^{k+1}(\omega)+\hat{n}_{k+1}(\theta^{T_j^{k+1}}\omega)+m_{k+1}+l_{j+1}^{k+1}(\omega).
 \end{displaymath}For $j=N_{k+1}(\omega)$, we define
 \begin{equation*}
 T_{N_{k+1}(\omega)}^{k+1}(\omega)=T_{N_{k+1}(\omega)-1}^{k+1}(\omega)+\hat{n}_{k+1}(\theta^{T_{N_{k+1}(\omega)-1}^{k+1}(\omega)}\omega).
 \end{equation*}It is clear that $T_{N_{k+1}(\omega)}^{k+1}(\omega)>N_{k+1}(\omega)$ since $\hat{n}_{k+1}\geq 1$ and $m_{k+1}>0$. By construction,
  in the orbit $\{\theta^{T_{N_k(\omega)}^k(\omega)}\omega,...,\theta^{T_{N_{k+1}(\omega)-1}^{k+1}-1}\omega\}$, there are at least $l^{k,k+1}(\omega)+\sum_{i=1}^{N_{k+1}(\omega)-1}l_{i}^{k+1}(\omega)$ points not lying in $\Omega_{k+1}$ and $T_{N_k(\omega)}^k(\omega)\geq N_k(\omega)\geq L_{k+1}(\omega).$ Therefore \eqref{pick L k+1} implies
\begin{equation}\label{estimate of k bad point} \Big(l^{k,k+1}(\omega)+\sum_{i=1}^jl_{i}^{k+1}(\omega)\Big)\leq\xi_{k+1}T_{j}^{k+1}(\omega),\forall j\in\{1,\dots,N_{k+1}(\omega)-1\}.
\end{equation}
 By our construction, $\theta^{T^{k+1}_j(\omega)}\omega\in\Omega_{k+1}\subset\cap_{l\geq 1}\Omega_{\delta_l}$ for $j\in\{0,\dots,N_{k+1}(\omega)-1\}$, therefore our construction \eqref{constrcution Omega k} and \eqref{def hat n k M} implies that
$P(\alpha,4\delta_{k+1},\hat{n}_{k+1}(\theta^{T_j^{k+1}(\omega)}\omega),\theta^{T_j^{k+1}(\omega)}\omega)\neq \emptyset,$ $2^{m_{k+1}}\leq \hat{n}_{k+1}(\theta^{T_j^{k+1}(\omega)}\omega)\leq \hat{n}_{k+1}^M,$ and
\begin{equation}\label{frac 1 hat n M entropy}
  \frac{1}{\hat{n}_{k+1}(\theta^{T_j^{k+1}(\omega)}\omega)}\log M(\alpha,4\delta_{k+1},\hat{n}_{k+1}(\theta^{T_j^{k+1}(\omega)}\omega),\frac{\eta}{2},\theta^{T_j^{k+1}(\omega)}\omega)\geq h_{\mu_0}(F)-4\gamma.
\end{equation}
For $j\in\{0,1,...,N_{k+1}(\omega)-1\}$, let $C_{k+1}(\theta^{T^{k+1}_j(\omega)}\omega)$ be a maximal $(\theta^{T_j^{k+1}(\omega)}\omega,\frac{\eta}{2},\hat{n}_{k+1}(\theta^{T_j^{k+1}(\omega)}\omega))$-separated set of $P(\alpha,4\delta_{k+1},\hat{n}_{k+1}(\theta^{T_j
^{k+1}(\omega)}\omega),\theta^{T_j^{k+1}(\omega)}\omega)$ with $$\# C_{k+1}(\theta^{T_j^{k+1}(\omega)}\omega)=M(\alpha,4\delta_{k+1},\hat{n}_{k+1}(\theta^{T_j^{k+1}(\omega)}\omega),\frac{\eta}{2},\theta^{T_j^{k+1}(\omega)}\omega).$$ For any $N_{k+1}(\omega)$-tuple $(x_0^{k+1},\dots,x_{N_{k+1}(\omega)-1}^{k+1})\in C_{k+1}(\theta^{T_0^{k+1}(\omega)}\omega)\times\cdots\times C_{k+1}(\theta^{T_{N_{k+1}(\omega)-1}^{k+1}(\omega)}\omega)$, by Theorem \ref{thm random specification}, there exists a point $y=y(x_0^{k+1},\dots,x_{N_{k+1}(\omega)-1}^{k+1})\in M$, which is  $(\theta^{T_0^{k+1}(\omega)}\omega,\frac{\eta}{2^{4+k+1}})$-shadowing the pieces of orbits
\begin{displaymath}
	\begin{split}
	&\Big\{x_0^{k+1}, F_{\theta^{T_0^{k+1}(\omega)}\omega}(x_0^{k+1}),\dots,F^{\hat{n}_{k+1}(\theta^{T_0^{k+1}(\omega)}\omega)-1}_{\theta^{T_0^{k+1}(\omega)}\omega}(x_0^{k+1})\Big\},\dots,\\
&\Big\{x_{N_{k+1}(\omega)-1}^{k+1}, F_{\theta^{T_{N_{k+1}(\omega)-1}^{k+1}(\omega)}\omega}(x_{N_{k+1}(\omega)-1}^{k+1}),\dots,F^{\hat{n}_{k+1}(\theta^{T_{N_{k+1}(\omega)-1}^{k+1}(\omega)}\omega)-1}_{\theta^{T_{N_{k+1}(\omega)-1}^{k+1}(\omega)}\omega}(x_{N_{k+1}(\omega)-1}^{k+1})\Big\}
	\end{split}
\end{displaymath}
with gaps $m_{k+1}+l_1^{k+1}(\omega),\dots,m_{k+1}+l_{N_{k+1}(\omega)-1}^{k+1}(\omega)$, respectively, i.e. for any $j\in\{0,\dots,N_{k+1}(\omega)-1\},$ we have
\begin{equation} \label{eq y shadowing xk+1} d_{\theta^{T_j^{k+1}(\omega)}\omega}^{\hat{n}_{k+1}(\theta^{T_j^{k+1}(\omega)}\omega)}\Big(x_j^{k+1}, F_{\theta^{T_0^{k+1}(\omega)}\omega}^{T_{j}^{k+1}(\omega)-T_0^{k+1}(\omega)}y\Big)<\frac{\eta}{2^{4+k+1}}.
\end{equation}
We collect all such points into
\begin{equation*}
  D_{k+1}(\theta^{T_0^{k+1}(\omega)}\omega)=\{y(x_0^{k+1},\dots,x_{N_{k+1}(\omega)-1}^{k+1}):\ x_{i}^{k+1}\in C_{k+1}(\theta^{T_{i}^{k+1}(\omega)}\omega)\mbox{ for }i=1,...,N_{k+1}(\omega)-1\}.
\end{equation*}Similar as the proof of Lemma \ref{lemma separated  metric}, we have the following lemma.
\begin{lemma}\label{lemma separated metric 2}
	If $(x_0^{k+1},\dots,x_{N_{k+1}(\omega)-1}^{k+1}), (z_0^{k+1},\dots,z_{N_{k+1}(\omega)-1}^{k+1})\in\prod_{i=0}^{N_{k+1}(\omega)-1} C_{k+1}(\theta^{T_i^{k+1}(\omega)}\omega)$ are different tuples, then
	\begin{displaymath}
		d_{\theta^{T_0^{k+1}(\omega)}\omega}^{T_{N_{k+1}}^{k+1}(\omega)-T_0^{k+1}(\omega)}\left(y(x_0^{k+1},\dots,x_{N_{k+1}(\omega)-1}^{k+1}),y(z_0^{k+1},\dots,z_{N_{k+1}(\omega)-1}^{k+1})\right)> \frac{\eta}{2}-\frac{\eta}{2^{4+k+1}}\times 2.
	\end{displaymath}
Hence
\begin{equation}\label{number Dk+1}
  \# D_{k+1}(\theta^{T_0^{k+1}(\omega)}\omega)=\prod_{i=0}^{N_{k+1}(\omega)-1}\# C_{k+1}(\theta^{T_i^{k+1}(\omega)}\omega).
\end{equation}
\end{lemma}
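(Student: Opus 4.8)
The plan is to replay the proof of Lemma~\ref{lemma separated metric} almost verbatim, with the base fibre shifted from $\omega$ to $\theta^{T_0^{k+1}(\omega)}\omega$ and the shadowing precision tightened from $\frac{\eta}{2^{4+1}}$ to $\frac{\eta}{2^{4+k+1}}$. Given two distinct tuples $(x_0^{k+1},\dots,x_{N_{k+1}(\omega)-1}^{k+1})$ and $(z_0^{k+1},\dots,z_{N_{k+1}(\omega)-1}^{k+1})$ in $\prod_{i=0}^{N_{k+1}(\omega)-1}C_{k+1}(\theta^{T_i^{k+1}(\omega)}\omega)$, first pick an index $j\in\{0,\dots,N_{k+1}(\omega)-1\}$ with $x_j^{k+1}\neq z_j^{k+1}$. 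Both points lie in $C_{k+1}(\theta^{T_j^{k+1}(\omega)}\omega)$, which by construction is a $(\theta^{T_j^{k+1}(\omega)}\omega,\frac{\eta}{2},\hat{n}_{k+1}(\theta^{T_j^{k+1}(\omega)}\omega))$-separated set, hence
\[
 d_{\theta^{T_j^{k+1}(\omega)}\omega}^{\hat{n}_{k+1}(\theta^{T_j^{k+1}(\omega)}\omega)}\bigl(x_j^{k+1},z_j^{k+1}\bigr)>\frac{\eta}{2}.
\]

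Next I invoke the shadowing estimate \eqref{eq y shadowing xk+1} for both $y(x_0^{k+1},\dots,x_{N_{k+1}(\omega)-1}^{k+1})$ and $y(z_0^{k+1},\dots,z_{N_{k+1}(\omega)-1}^{k+1})$ at the block indexed by $j$: writing $s_j:=T_j^{k+1}(\omega)-T_0^{k+1}(\omega)$ and $\ell_j:=\hat{n}_{k+1}(\theta^{T_j^{k+1}(\omega)}\omega)$, the orbit segment $\{F^{s_j+i}_{\theta^{T_0^{k+1}(\omega)}\omega}\,y(x_0^{k+1},\dots,x_{N_{k+1}(\omega)-1}^{k+1})\}_{i=0}^{\ell_j-1}$ stays within $\frac{\eta}{2^{4+k+1}}$ of $\{F^{i}_{\theta^{T_j^{k+1}(\omega)}\omega}(x_j^{k+1})\}_{i=0}^{\ell_j-1}$, and likewise with $z$ in place of $x$. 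By the triangle inequality applied coordinate by coordinate along these $\ell_j$ iterates,
\[
 d_{\theta^{T_j^{k+1}(\omega)}\omega}^{\ell_j}\bigl(F^{s_j}_{\theta^{T_0^{k+1}(\omega)}\omega}\,y(x_0^{k+1},\dots),\ F^{s_j}_{\theta^{T_0^{k+1}(\omega)}\omega}\,y(z_0^{k+1},\dots)\bigr)>\frac{\eta}{2}-2\cdot\frac{\eta}{2^{4+k+1}}.
\]
Since $T_0^{k+1}(\omega)<T_1^{k+1}(\omega)<\dots<T_{N_{k+1}(\omega)}^{k+1}(\omega)$ with $T_{j+1}^{k+1}(\omega)\geq T_j^{k+1}(\omega)+\ell_j$ for $j<N_{k+1}(\omega)-1$ and $T_{N_{k+1}(\omega)}^{k+1}(\omega)=T_{N_{k+1}(\omega)-1}^{k+1}(\omega)+\ell_{N_{k+1}(\omega)-1}$ at the last block, the index range $[s_j,s_j+\ell_j-1]$ is contained in $[0,T_{N_{k+1}}^{k+1}(\omega)-T_0^{k+1}(\omega)-1]$, so the displayed quantity is a lower bound for $d_{\theta^{T_0^{k+1}(\omega)}\omega}^{T_{N_{k+1}}^{k+1}(\omega)-T_0^{k+1}(\omega)}\bigl(y(x_0^{k+1},\dots),y(z_0^{k+1},\dots)\bigr)$, which is exactly the asserted inequality. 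In particular the assignment $(x_0^{k+1},\dots,x_{N_{k+1}(\omega)-1}^{k+1})\mapsto y(x_0^{k+1},\dots,x_{N_{k+1}(\omega)-1}^{k+1})$ is injective on $\prod_{i=0}^{N_{k+1}(\omega)-1}C_{k+1}(\theta^{T_i^{k+1}(\omega)}\omega)$, which yields \eqref{number Dk+1}.

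The step I expect to need the most care is the index bookkeeping in the last paragraph: one must verify that the time shift $s_j$ together with the block length $\ell_j$ never runs past the total length $T_{N_{k+1}}^{k+1}(\omega)-T_0^{k+1}(\omega)$, so that the short Bowen ball along the $j$-th block genuinely sits inside the long Bowen ball over the fibre $\theta^{T_0^{k+1}(\omega)}\omega$. This is immediate from the recursive definition of the $T_j^{k+1}(\omega)$ (each intermediate step adds $\hat{n}_{k+1}+m_{k+1}+l^{k+1}_{\cdot}(\omega)\geq\ell_j$, and the terminal step adds exactly $\ell_{N_{k+1}(\omega)-1}$), but it is the one place where the argument genuinely uses the structure of the construction rather than being a formal copy of Lemma~\ref{lemma separated metric}; everything else is identical to that proof.
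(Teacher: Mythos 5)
Your proposal is correct and follows exactly the route the paper intends: the paper gives no separate proof of Lemma \ref{lemma separated metric 2}, stating only that it is ``similar as the proof of Lemma \ref{lemma separated metric}'', and your argument is precisely that proof transplanted to the fibre $\theta^{T_0^{k+1}(\omega)}\omega$ with precision $\frac{\eta}{2^{4+k+1}}$, together with the (correct) verification that the $j$-th block $[s_j,s_j+\ell_j-1]$ sits inside $[0,T_{N_{k+1}}^{k+1}(\omega)-T_0^{k+1}(\omega)-1]$. No gaps.
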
In this way, we have constructed intermediate sets $D_1(\omega)$ and $D_k(\theta^{T_0^k(\omega)}\omega)$ for $k\geq 2$.

\emph{Step 2. Construction of $H_k(\omega),$ centers of balls forming the $k$-th level of fiber Moran-like fractal.} Let $H_1(\omega)=D_1(\omega).$ Once $H_k(\omega)$ for $k\geq 1$ is constructed, we can construct $H_{k+1}(\omega)$ inductively as follows. Pick any $x\in H_k(\omega)$ and $y\in D_{k+1}(\theta^{T_0^{k+1}}\omega)$. We note that $$T_{0}^{k+1}(\omega)-T_{N_k(\omega)}^k(\omega)=m_{k+1}+l^{k,k+1}(\omega)\geq m_{k+1}$$ by construction \eqref{gap larger}. Therefore, by Theorem \ref{thm random specification}, there exists a point $z=z(x,y)\in M$, which is  $(\omega,\frac{\eta}{2^{4+k+1}})$-shadowing pieces of orbits
\begin{displaymath}
	\Big\{x,F_{\omega}(x),\dots,F_{\omega}^{T_{N_k(\omega)}^k(\omega)-1}(x)\Big\},\Big\{y,F_{\theta^{T_0^{k+1}(\omega)}\omega}(y)\dots,F_{\theta^{T_0^{k+1}(\omega)}\omega}^{T_{N_{k+1}(\omega)}^{k+1}(\omega)-T_0^{k+1}(\omega)-1}(y)\Big\}
\end{displaymath}
with the space $m_{k+1}+l^{k,k+1}(\omega),$ i.e.,
\begin{equation}\label{z shadow xy}
	d_{\omega}^{T_{N_k(\omega)}^k(\omega)}(x,z)<\frac{\eta}{2^{4+k+1}},\quad d_{\theta^{T_0^{k+1}(\omega)}\omega}^{T_{N_{k+1}(\omega)}^{k+1}(\omega)-T_0^{k+1}(\omega)}\big(y,F_{\omega}^{T_0^{k+1}(\omega)}z\big)<\frac{\eta}{2^{4+k+1}}.
\end{equation}
 Collect all these points into the set $H_{k+1}(\omega)=\{z=z(x,y):x\in H_k(\omega),y\in D_{k+1}(\theta^{T_0^{k+1}}\omega)\}.$ The proof of the following lemma is given in Sec. \ref{5.3}.

\begin{lemma}\label{lemma Hk separated}
 For any $k\in\mathbb{N}$, $x\in H_k(\omega),y,y^{\prime}\in D_{k+1}(\theta^{T_0^{k+1}}\omega),$ if $y\neq y^{\prime},$ one has
\begin{align}
&d_{\omega}^{T_{N_k(\omega)}^k(\omega)}(z(x,y),z(x,y^{\prime}))<\frac{\eta}{2^{4+k+1}}\times 2=\frac{\eta}{2^{4+k}},\label{eq 1 lemma 6.7}\\
&d_{\omega}^{T_{N_{k+1}(\omega)}^{k+1}(\omega)}(z(x,y),z(x,y^{\prime}))> \frac{\eta}{2}-\frac{\eta}{2^{4+k+1}}\times 4\geq \frac{3\eta}{8}.\label{eq 2 lemma 6.7}
\end{align}
\end{lemma}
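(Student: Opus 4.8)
The plan is to derive both inequalities directly from the defining shadowing property \eqref{z shadow xy} of the points $z(x,y)$ and $z(x,y')$, combined with Lemma \ref{lemma separated metric 2} and the triangle inequality for the fiber Bowen metrics. Throughout, the only mechanism at work is that both $z(x,y)$ and $z(x,y')$ are glued from the \emph{same} initial block (the one determined by $x\in H_k(\omega)$) and from \emph{different} terminal blocks (the ones determined by $y\ne y'$ in $D_{k+1}(\theta^{T_0^{k+1}}\omega)$).

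For the first inequality \eqref{eq 1 lemma 6.7}: by construction (see \eqref{z shadow xy}), both $z(x,y)$ and $z(x,y')$ $(\omega,\tfrac{\eta}{2^{4+k+1}})$-shadow the segment $\{x,F_\omega(x),\dots,F_\omega^{T_{N_k(\omega)}^k(\omega)-1}(x)\}$, i.e.
\[
d_{\omega}^{T_{N_k(\omega)}^k(\omega)}(x,z(x,y))<\tfrac{\eta}{2^{4+k+1}},\qquad d_{\omega}^{T_{N_k(\omega)}^k(\omega)}(x,z(x,y'))<\tfrac{\eta}{2^{4+k+1}}.
\]
The triangle inequality for $d_{\omega}^{T_{N_k(\omega)}^k(\omega)}$ then gives $d_{\omega}^{T_{N_k(\omega)}^k(\omega)}(z(x,y),z(x,y'))<\tfrac{\eta}{2^{4+k}}$.

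For the second inequality \eqref{eq 2 lemma 6.7}: first apply Lemma \ref{lemma separated metric 2} to $y\ne y'$ in $D_{k+1}(\theta^{T_0^{k+1}}\omega)$ to get
\[
d_{\theta^{T_0^{k+1}(\omega)}\omega}^{T_{N_{k+1}}^{k+1}(\omega)-T_0^{k+1}(\omega)}(y,y')>\tfrac{\eta}{2}-\tfrac{\eta}{2^{4+k+1}}\times 2.
\]
Next, the second part of \eqref{z shadow xy} states that $F_\omega^{T_0^{k+1}(\omega)}z(x,y)$ is within $\tfrac{\eta}{2^{4+k+1}}$ of $y$, and $F_\omega^{T_0^{k+1}(\omega)}z(x,y')$ within $\tfrac{\eta}{2^{4+k+1}}$ of $y'$, both in the metric $d_{\theta^{T_0^{k+1}(\omega)}\omega}^{T_{N_{k+1}}^{k+1}(\omega)-T_0^{k+1}(\omega)}$; a second triangle inequality yields
\[
d_{\theta^{T_0^{k+1}(\omega)}\omega}^{T_{N_{k+1}}^{k+1}(\omega)-T_0^{k+1}(\omega)}\!\bigl(F_\omega^{T_0^{k+1}(\omega)}z(x,y),\,F_\omega^{T_0^{k+1}(\omega)}z(x,y')\bigr)>\tfrac{\eta}{2}-\tfrac{\eta}{2^{4+k+1}}\times 4.
\]
Then I would use the cocycle identity $F_{\theta^{T_0^{k+1}(\omega)}\omega}^{i}\circ F_\omega^{T_0^{k+1}(\omega)}=F_\omega^{T_0^{k+1}(\omega)+i}$ to rewrite the left-hand side as $\max_{T_0^{k+1}(\omega)\le j\le T_{N_{k+1}}^{k+1}(\omega)-1}d_M(F_\omega^{j}z(x,y),F_\omega^{j}z(x,y'))$, which is bounded above by $d_\omega^{T_{N_{k+1}}^{k+1}(\omega)}(z(x,y),z(x,y'))$ since the latter maximizes over the larger index range $0\le j\le T_{N_{k+1}}^{k+1}(\omega)-1$. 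This gives the strict lower bound, and the final estimate $\ge \tfrac{3\eta}{8}$ follows because $\tfrac{\eta}{2^{4+k+1}}\times 4=\tfrac{\eta}{2^{k+3}}\le\tfrac{\eta}{16}$ for $k\ge 1$, so $\tfrac{\eta}{2}-\tfrac{\eta}{2^{k+3}}\ge\tfrac{7\eta}{16}\ge\tfrac{3\eta}{8}$.

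There is essentially no deep obstacle in this lemma; it is a bookkeeping argument. The only point demanding care is tracking the base points and time windows when translating between the metric $d_\omega^{\,\cdot}$ anchored at $\omega$ and the metric $d_{\theta^{T_0^{k+1}(\omega)}\omega}^{\,\cdot}$ anchored at the shifted point, which is exactly what the cocycle identity and the "restricting to a subinterval of indices only decreases the Bowen distance" observation take care of.
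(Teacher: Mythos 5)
Your proposal is correct and follows essentially the same route as the paper: the first bound is the same triangle inequality through the common shadowed point $x$, and the second is the same reverse triangle inequality combining Lemma \ref{lemma separated metric 2} with the two shadowing estimates from \eqref{z shadow xy}, together with the (implicit in the paper, explicit in your write-up) observation that restricting the Bowen metric to the subwindow $[T_0^{k+1}(\omega),T_{N_{k+1}(\omega)}^{k+1}(\omega)-1]$ only decreases the distance. The numerical bookkeeping $\frac{\eta}{2}-4\cdot\frac{\eta}{2^{4+k+1}}\geq\frac{7\eta}{16}\geq\frac{3\eta}{8}$ is also right.
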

As a consequence of Lemma \ref{lemma Hk separated}, we have
\begin{equation}\label{number Hk}
  \# H_{k+1}(\omega)=\# H_{k}(\omega)\cdot \# D_{k+1}(\theta^{T_0^{k+1}(\omega)}\omega)=\#D_1(\omega)\cdot\prod_{i=2}^{k+1}D_{i}(\theta^{T_0^i(\omega)}\omega).
\end{equation}
Note that points in $H_1(\omega)=D_1(\omega)$ are $(\omega,\frac{3\eta}{8},T_{N_1(\omega)}^1(\omega))$-separated by Lemma \ref{lemma separated  metric}. As a consequence of \eqref{eq 2 lemma 6.7}, for any $k\in\mathbb{N}$, points in $H_k(\omega)$ are $(\omega,\frac{3\eta}{8},T_{N_{k+1}(\omega)}^{k+1}(\omega))$-separated, i.e.
\begin{equation}\label{eq 3 lemma 6.7}
  d_{\omega}^{T_{N_{k}(\omega)}^{k}(\omega)}(z,z^\prime)> \frac{3\eta}{8},\ \forall z,z^\prime\in H_k(\omega).
\end{equation}

\emph{Step 3. Construction of fiber Moran-like fractal $\mathfrak{X}=\mathfrak{X}(\omega).$} For any $k\geq 1$, we let the $k$-th level of Moran-like fractal
\begin{equation}\label{def k-th mf}
  \mathfrak{X}_k(\omega)=\bigcup_{x\in H_k(\omega)}\overline{B}_{T_{N_k(\omega)}^k(\omega)}\Big(\omega,x,\frac{\eta}{2^{4+k}}\Big),
\end{equation}
where
\begin{displaymath}
	\overline{B}_{T_{N_k(\omega)}^k(\omega)}\Big(\omega,x,\frac{\eta}{2^{4+k}}\Big)=\Big\{y\in M:d_{\omega}^{T_{N_k(\omega)}^k(\omega)}(x,y)\leq\frac{\eta}{2^{4+k}}\Big\}.
\end{displaymath}
The structure the $k$-th level of Moran-like fractal is given in the next lemma, whose proof is placed in Sec. \ref{5.3}.
\begin{lemma}\label{lemma decreased Xk}
For every $k\geq 1$, the following statements hold
\begin{enumerate}
	\item for any $x,x^{\prime}\in H_{k}(\omega), x\neq x^{\prime},$ we have
$\overline{B}_{T_{N_k(\omega)}^k(\omega)}\Big(\omega,x,\frac{\eta}{2^{4+k}}\Big)\cap \overline{B}_{T_{N_k(\omega)}^k(\omega)}\Big(\omega,x^{\prime},\frac{\eta}{2^{4+k}}\Big)=\emptyset.$
	\item  if $z=z(x,y)\in H_{k+1}(\omega)$ for $x\in H_k(\omega)$ and $y\in D_{k+1}(\theta^{T_0^{k+1}(\omega)}\omega),$ then
	\begin{displaymath}
		\overline{B}_{T_{N_{k+1}(\omega)}^{k+1}(\omega)}\Big(\omega,z,\frac{\eta}{2^{4+k+1}}\Big)\subseteq \overline{B}_{T_{N_k(\omega)}^k(\omega)}\Big(\omega,x,\frac{\eta}{2^{4+k}}\Big).
	\end{displaymath}
As a consequence, $\mathfrak{X}_{k+1}(\omega)\subseteq\mathfrak{X}_k(\omega).$
\end{enumerate}
\end{lemma}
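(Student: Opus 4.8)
The plan is to obtain both parts of the lemma directly from the triangle inequality for the fiber Bowen metric $d_\omega^n$, its monotonicity $d_\omega^m(a,b)\leq d_\omega^n(a,b)$ for $m\leq n$ (immediate from $d_\omega^n(a,b)=\max_{0\leq i\leq n-1}d_M(F_\omega^i a,F_\omega^i b)$), together with the separation estimate \eqref{eq 3 lemma 6.7} established in Step 2 and the shadowing estimate \eqref{z shadow xy} coming from the construction of $H_{k+1}(\omega)$ via Theorem \ref{thm random specification}. No new input beyond these is needed: the lemma is essentially a bookkeeping of the geometric radii $\eta/2^{4+k}$.

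For part (1), I would suppose some $w$ lies in both $\overline{B}_{T_{N_k(\omega)}^k(\omega)}(\omega,x,\tfrac{\eta}{2^{4+k}})$ and $\overline{B}_{T_{N_k(\omega)}^k(\omega)}(\omega,x',\tfrac{\eta}{2^{4+k}})$ for distinct $x,x'\in H_k(\omega)$. The triangle inequality for $d_\omega^{T_{N_k(\omega)}^k(\omega)}$ then gives
\[
d_\omega^{T_{N_k(\omega)}^k(\omega)}(x,x')\leq \frac{\eta}{2^{4+k}}+\frac{\eta}{2^{4+k}}=\frac{\eta}{2^{3+k}}\leq\frac{\eta}{16}<\frac{3\eta}{8},
\]
contradicting \eqref{eq 3 lemma 6.7}. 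Hence the two closed Bowen balls are disjoint.

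For part (2), write $z=z(x,y)$ with $x\in H_k(\omega)$ and $y\in D_{k+1}(\theta^{T_0^{k+1}(\omega)}\omega)$. By \eqref{gap larger}, $T_0^{k+1}(\omega)=T_{N_k(\omega)}^k(\omega)+m_{k+1}+l^{k,k+1}(\omega)\geq T_{N_k(\omega)}^k(\omega)+m_{k+1}$, and since $\hat{n}_{k+1}\geq 1$ and all gaps are nonnegative one has $T_{N_{k+1}(\omega)}^{k+1}(\omega)>T_0^{k+1}(\omega)>T_{N_k(\omega)}^k(\omega)$. Thus for any $w\in\overline{B}_{T_{N_{k+1}(\omega)}^{k+1}(\omega)}(\omega,z,\tfrac{\eta}{2^{4+k+1}})$, monotonicity of the Bowen metric yields $d_\omega^{T_{N_k(\omega)}^k(\omega)}(w,z)\leq d_\omega^{T_{N_{k+1}(\omega)}^{k+1}(\omega)}(w,z)\leq \tfrac{\eta}{2^{4+k+1}}$, and combining this with the first inequality of \eqref{z shadow xy}, namely $d_\omega^{T_{N_k(\omega)}^k(\omega)}(x,z)<\tfrac{\eta}{2^{4+k+1}}$, I get
\[
d_\omega^{T_{N_k(\omega)}^k(\omega)}(w,x)\leq \frac{\eta}{2^{4+k+1}}+\frac{\eta}{2^{4+k+1}}=\frac{\eta}{2^{4+k}},
\]
so $w\in\overline{B}_{T_{N_k(\omega)}^k(\omega)}(\omega,x,\tfrac{\eta}{2^{4+k}})$, which is the claimed inclusion.

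For the consequence, every $z\in H_{k+1}(\omega)$ equals $z(x,y)$ for some $x\in H_k(\omega)$, so by the inclusion just proved its level-$(k+1)$ ball lies in $\overline{B}_{T_{N_k(\omega)}^k(\omega)}(\omega,x,\tfrac{\eta}{2^{4+k}})\subseteq\mathfrak{X}_k(\omega)$; taking the union over $z\in H_{k+1}(\omega)$ gives $\mathfrak{X}_{k+1}(\omega)\subseteq\mathfrak{X}_k(\omega)$. There is no real obstacle here: the only care needed is to keep the inequality $T_{N_{k+1}(\omega)}^{k+1}(\omega)>T_{N_k(\omega)}^k(\omega)$ straight so that the longer Bowen metric dominates the shorter one, and to note that the radii exactly halve from level $k$ to level $k+1$ so that each triangle inequality closes up (with room to spare against the separation constant $3\eta/8$ in part (1)).
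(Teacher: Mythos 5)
Your proof is correct and follows essentially the same route as the paper's: part (1) is the triangle inequality against the separation bound \eqref{eq 3 lemma 6.7}, and part (2) is the triangle inequality combined with the first inequality of \eqref{z shadow xy} and the monotonicity of the Bowen metric in $n$ (which you make explicit and the paper leaves implicit). No gaps.
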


Finally, we define the Moran-like fractal to be
\begin{equation}\label{construction of hua H}
  \mathfrak{X}(\omega)=\bigcap_{k\geq 1}\mathfrak{X}_k(\omega).
\end{equation}
By the compactness of $M, \mathfrak{X}(\omega)$ is a nonempty closed subset of $M.$ The following lemma shows that the Moran-like fractal is a subset of the conditional level set of Birkhoff average, and its proof is given in Sec. \ref{5.3}.
\begin{lemma}\label{lemma contained}
The fiber fractal $\mathfrak{X}(\omega)\subseteq K_{\varphi,\alpha}(\omega)$, i.e., for any $x\in\mathfrak{X}(\omega),$ one has $$\lim_{n\to\infty}\frac{1}{n}\sum_{k=0}^{n-1}\varphi(\Theta^i(\omega,x))=\alpha.$$
\end{lemma}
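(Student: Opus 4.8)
The plan is to show that for any $x\in\mathfrak X(\omega)$ the Birkhoff averages $\frac1n\sum_{i=0}^{n-1}\varphi(\Theta^i(\omega,x))$ converge to $\alpha$, by exploiting that the $F_\omega$-orbit of $x$ stays uniformly close, on every ``good block'' $[T_i^k(\omega),T_i^k(\omega)+\hat n_k(\theta^{T_i^k(\omega)}\omega))$, to the orbit of a point $x_i^k\in P(\alpha,4\delta_k,\hat n_k(\theta^{T_i^k(\omega)}\omega),\theta^{T_i^k(\omega)}\omega)$ — whose Birkhoff sum over that block is within $4\delta_k\,\hat n_k(\cdot)$ of $\hat n_k(\cdot)\,\alpha$ by the very definition of $P(\cdot)$ — while the total length of the ``bad'' time (inter-block and inter-stage gaps, the incomplete current block, and everything occurring before a fixed stage $K$) is a vanishing fraction of $n$. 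Uniform continuity of $\varphi$ together with the rapid growth of the $N_k(\omega)$ then forces the averages to $\alpha$.

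First I would set up the shadowing bookkeeping. For each $k$, Lemma~\ref{lemma decreased Xk}(1) supplies the unique $z_k=z_k(x)\in H_k(\omega)$ with $x\in\overline B_{T_{N_k(\omega)}^k(\omega)}(\omega,z_k,\eta/2^{4+k})$, and Lemma~\ref{lemma decreased Xk}(2) makes these nested with $z_k=z(z_{k-1},y_k)$, $y_k\in D_k(\theta^{T_0^k(\omega)}\omega)$; in particular $d_\omega^{T_{N_j(\omega)}^j(\omega)}(x,z_j)\le\eta/2^{4+j}$ for every $j$. Chaining the tuple-shadowing bounds \eqref{eq shadowingproperty y}, \eqref{eq y shadowing xk+1} (the $D_j$-point underlying $z_j$ shadows its tuple within $\eta/2^{4+j}$) with the gluing bound \eqref{z shadow xy} (that $D_j$-point is in turn shadowed by $z_j$ within $\eta/2^{4+j}$), and using that the first coordinates of $\Theta^{T_i^j(\omega)+l}(\omega,x)$ and $\Theta^l(\theta^{T_i^j(\omega)}\omega,x_i^j)$ both equal $\theta^{T_i^j(\omega)+l}\omega$, one obtains
\[
d\big(\Theta^{T_i^j(\omega)+l}(\omega,x),\ \Theta^l(\theta^{T_i^j(\omega)}\omega,x_i^j)\big)<\eta/2^{2+j}
\]
for all $j$, all $i\in\{0,\dots,N_j(\omega)-1\}$ and all $l<\hat n_j(\theta^{T_i^j(\omega)}\omega)$. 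Writing $\mathrm{var}(\varphi,\rho)=\sup\{|\varphi(p)-\varphi(q)|:d(p,q)\le\rho\}$ and invoking $x_i^j\in P(\alpha,4\delta_j,\hat n_j(\theta^{T_i^j(\omega)}\omega),\theta^{T_i^j(\omega)}\omega)$, the Birkhoff sum of $\varphi$ along $x$ over that block differs from $\hat n_j(\theta^{T_i^j(\omega)}\omega)\,\alpha$ by at most $\hat n_j(\theta^{T_i^j(\omega)}\omega)\,(\mathrm{var}(\varphi,\eta/2^{2+j})+4\delta_j)$.

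Next I would run the counting argument. Given $\e>0$, uniform continuity of $\varphi$ lets me fix $K$ with $\mathrm{var}(\varphi,\eta/2^{2+K})<\e$, and, shrinking if needed (using $\delta_k\searrow0$), also $4\delta_K<\e$; then the block estimate above is $\le2\e\,\hat n_j(\cdot)$ for every $j\ge K$. For $n$ large let $k=k(n)$ be the largest integer with $T_0^k(\omega)\le n$; then $k(n)\to\infty$, so eventually $k\ge K$. I would split $[0,n)$ into $[0,T_0^K(\omega))$ (of fixed length $C_K=C_K(\omega)$ independent of $n$), the complete stages $K,\dots,k-1$ with their trailing gaps $[T_0^K(\omega),T_0^k(\omega))$, and $[T_0^k(\omega),n)$. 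In the latter two pieces, each level-$j$ good block lying entirely inside contributes mass within $2\e\hat n_j(\cdot)$ of $\alpha\hat n_j(\cdot)$ by Step~1; by \eqref{estimate of k bad point} the gaps accumulated within stage $j$ up to any block are at most $(m_j2^{-m_j}+\xi_j)$ times the elapsed time in stage $j$ (using $\hat n_j\ge2^{m_j}$); and the single incomplete block at the end has length $\le\hat n_k^M$, which is $o(N_{k-1}(\omega))=o(n)$ by \eqref{choose of Nk}. Bounding $\varphi$ by $\|\varphi\|_\infty$ on $[0,T_0^K(\omega))$, on all gaps, and on the incomplete block gives
\[
\frac1n\Big|\sum_{i=0}^{n-1}\varphi(\Theta^i(\omega,x))-n\alpha\Big|\le 2\e+(\|\varphi\|_\infty+|\alpha|)\Big(\tfrac{C_K}{n}+\sup_{j\ge K}(m_j2^{-m_j}+\xi_j)+\tfrac{\hat n_k^M}{n}\Big).
\]
Letting $n\to\infty$ (hence $k\to\infty$), then $K\to\infty$ (using $\xi_j\searrow0$ and $m_j2^{-m_j}\to0$ from the choice of $m_j$ and $\xi_j$), then $\e\to0$, yields $\lim_n\frac1n\sum_{i=0}^{n-1}\varphi(\Theta^i(\omega,x))=\alpha$, i.e. $x\in K_{\varphi,\alpha}(\omega)$.

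The hard part will be Step~1: I must verify that the geometrically decaying per-level precisions $\eta/2^{4+k}$, accumulated across the finitely many nested levels relevant at any fixed time and across the two successive gluings — tuple $\mapsto D_j$-point, and ($H_{j-1}$-point, $D_j$-point)$\mapsto H_j$-point — still leave the orbit of $x$ within a fixed modulus-of-continuity scale of the reference orbits on every good block of level $\ge K$. The nesting provided by Lemma~\ref{lemma decreased Xk}(2) is precisely what keeps this accumulated error a convergent geometric sum, bounded by $\eta/2^{2+k}$. Once that is in place, the counting step is routine, being driven entirely by the explicit growth conditions \eqref{choose of Nk} and the gap estimate \eqref{estimate of k bad point}.
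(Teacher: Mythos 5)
Your proposal follows essentially the same route as the paper's proof: both chain the three shadowing estimates (tuple $\to$ $D_j$-point via \eqref{eq y shadowing xk+1}, $D_j$-point $\to$ $H_j$-point via \eqref{z shadow xy}, $H_j$-point $\to$ $x$ via the definition of $\mathfrak X_j(\omega)$ and Lemma \ref{lemma decreased Xk}) to compare the orbit of $x$ on each good block with a point of $C_j(\theta^{T_i^j(\omega)}\omega)\subset P(\alpha,4\delta_j,\cdot,\cdot)$, and both conclude by showing the bad time (gaps, spacers, incomplete block, initial segment) is a vanishing fraction of $n$. The only organizational difference is that the paper funnels the good-block estimates through the inductive quantity $R_k(\omega)$ and Stolz's theorem, whereas you compare $x$ to the block reference points directly, which is fine.

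One point in your counting step needs repair. The gap estimate \eqref{estimate of k bad point} bounds the waiting times of stage $j$ by $\xi_j$ times the \emph{absolute} time $T_i^j(\omega)$, not times the time elapsed within stage $j$ (indeed the inter-stage gap $l^{j-1,j}(\omega)$ alone can dwarf the within-stage elapsed time at the beginning of stage $j$). So if you apply the per-stage bound with absolute time and sum over stages $j=K,\dots,k$, you obtain a factor $\sum_{j=K}^{k}\xi_j$, not $\sup_{j\ge K}\xi_j$, and since the $\xi_j$ are only assumed to decrease to $0$ (no summability), your displayed inequality does not follow from the one-line justification you give. The statement is nevertheless true up to a harmless constant: by \eqref{choose of Nk} the times $T_{N_j(\omega)}^j(\omega)$ grow superexponentially (cf. \eqref{quotient of T}), so all stages up to $k-2$ occupy a total length $T_{N_{k-2}(\omega)}^{k-2}(\omega)=o(n)$ and may be discarded wholesale, while only the last one or two stages contribute through $\xi_{k-1},\xi_k$ and $m_j2^{-m_j}$, which tend to $0$ as $n\to\infty$. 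This is exactly how the paper's proof absorbs the earlier stages (crudely bounding all pre-stage-$k$ gaps by $T_{N_{k-1}(\omega)}^{k-1}(\omega)$ and dividing by $T_{N_k(\omega)}^k(\omega)$); you need the same device, since bounding by $T_{N_{k-1}(\omega)}^{k-1}(\omega)$ alone is not enough in your indexing, where $n$ may be barely larger than $T_0^k(\omega)$. With that adjustment your argument is complete.
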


\begin{remark}\label{remark final}
  According to Remark \ref{remark 4.2} and the above proof, we can construct a fractal in  $K_{\varphi,\alpha}(\omega)$ for $\alpha= \int\varphi d\mu_0$ for any $\mu_0\in I_\Theta(\Omega\times M)$ and $P$-a.s. $\omega\in\Omega$. This indicates that $L_\varphi$ coincides $\{\int\varphi d\mu:\ \mu\in I_{\Theta}(\Omega\times M)\}$.
\end{remark}

\emph{Step 4. Construct probability measure concentrated on $\mathfrak{X}(\omega)$ to apply the entropy distribution principle type argument.} For every $k\geq 1$, we define a probability measure $\mu_{k,\omega}\in Pr(M)$ by
\begin{displaymath}
	\mu_{k,\omega}=\frac{1}{\# H_k(\omega)}\sum_{x\in H_k(\omega)}\delta_x.
\end{displaymath}
 Since $\mu_{k,\omega}$ is concentrated on $H_k(\omega)$, we have $\mu_{k,\omega}(\mathfrak{X}_k(\omega))=1$ by construction \eqref{def k-th mf}. The proof of the following three lemmas are given in Sec. \ref{5.3}.
\begin{lemma}\label{lemma limit measure}
	For any continuous function $\psi:M\to\mathbb{R}$, the following limit exists
	\begin{displaymath}
		\lim_{k\to\infty}\int_M\psi(x)d\mu_{k,\omega}(x).
	\end{displaymath}
\end{lemma}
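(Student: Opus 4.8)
The plan is to exploit the tree structure of the fiber Moran construction together with the nesting statement in Lemma \ref{lemma decreased Xk}(2), and to show directly that the real sequence $\bigl(\int_M\psi\,d\mu_{k,\omega}\bigr)_{k\ge 1}$ is Cauchy; since $\mathbb{R}$ is complete this gives the claimed limit. Throughout, $\omega\in\tilde\Omega_\gamma$ is the fixed base point from the construction.

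First I would record the combinatorial bookkeeping. The assignment $(x,y)\mapsto z(x,y)$ defining $H_{k+1}(\omega)$ is injective: if $z(x,y)=z(x',y')$ then by the first inequality in \eqref{z shadow xy} one has $d^{T^{k}_{N_k(\omega)}(\omega)}_\omega(x,x')<\tfrac{\eta}{2^{4+k}}<\tfrac{3\eta}{8}$, which forces $x=x'$ by \eqref{eq 3 lemma 6.7} at level $k$, and then $y=y'$ by \eqref{eq 2 lemma 6.7}. Hence there is a well-defined surjection $p_k\colon H_{k+1}(\omega)\to H_k(\omega)$, $p_k(z(x,y))=x$, which is exactly $\#D_{k+1}(\theta^{T_0^{k+1}(\omega)}\omega)$-to-one; see also \eqref{number Hk}. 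For $m>k$ the composition $p_k\circ\cdots\circ p_{m-1}\colon H_m(\omega)\to H_k(\omega)$ is therefore $\bigl(\#H_m(\omega)/\#H_k(\omega)\bigr)$-to-one, \emph{uniformly} over $x\in H_k(\omega)$. Calling $w\in H_m(\omega)$ a descendant of $x\in H_k(\omega)$ when it is sent to $x$ by this composition, and writing $\nu_{m,x}$ for the normalized counting measure on the descendants of $x$ in $H_m(\omega)$, the uniform fiber cardinality yields the convex decomposition
\[
  \mu_{m,\omega}=\frac{1}{\#H_k(\omega)}\sum_{x\in H_k(\omega)}\nu_{m,x}\qquad\text{for all }m\ge k,
\]
with each $\nu_{m,x}$ a probability measure on $M$.

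Next I would localize the support of $\nu_{m,x}$. Iterating Lemma \ref{lemma decreased Xk}(2) along a descent chain $x=x_k,x_{k+1},\dots,x_m=w$ gives the nested inclusions $\overline{B}_{T^m_{N_m(\omega)}(\omega)}\bigl(\omega,w,\tfrac{\eta}{2^{4+m}}\bigr)\subseteq\cdots\subseteq\overline{B}_{T^k_{N_k(\omega)}(\omega)}\bigl(\omega,x,\tfrac{\eta}{2^{4+k}}\bigr)$, and since $w$ lies in the leftmost ball we get $d^{T^k_{N_k(\omega)}(\omega)}_\omega(x,w)\le\tfrac{\eta}{2^{4+k}}$; because $d^n_\omega\ge d_M$ this forces $d_M(x,w)\le\tfrac{\eta}{2^{4+k}}$. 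Thus $\operatorname{supp}(\nu_{m,x})\subseteq\overline{B}_M\!\bigl(x,\tfrac{\eta}{2^{4+k}}\bigr)$ for every $m\ge k$ and every $x\in H_k(\omega)$. Writing $\operatorname{var}(\psi,r)=\sup\{|\psi(a)-\psi(b)|:a,b\in M,\ d_M(a,b)\le r\}$, this gives $\bigl|\int_M\psi\,d\nu_{m,x}-\psi(x)\bigr|\le\operatorname{var}\!\bigl(\psi,\tfrac{\eta}{2^{4+k}}\bigr)$; averaging over $x\in H_k(\omega)$ via the displayed decomposition and using $\int_M\psi\,d\mu_{k,\omega}=\tfrac{1}{\#H_k(\omega)}\sum_{x\in H_k(\omega)}\psi(x)$, I obtain
\[
  \Bigl|\int_M\psi\,d\mu_{m,\omega}-\int_M\psi\,d\mu_{k,\omega}\Bigr|\le\operatorname{var}\!\Bigl(\psi,\tfrac{\eta}{2^{4+k}}\Bigr)\qquad\text{for all }m\ge k.
\]
Since $M$ is compact, $\psi$ is uniformly continuous, so $\operatorname{var}\!\bigl(\psi,\tfrac{\eta}{2^{4+k}}\bigr)\to 0$ as $k\to\infty$; the displayed bound then shows that $\bigl(\int_M\psi\,d\mu_{k,\omega}\bigr)_k$ is Cauchy in $\mathbb{R}$, hence convergent, which is the assertion.

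The only genuinely delicate step is the first one: checking that $p_k$ is well defined and that its iterated preimages have cardinality independent of the chosen endpoint in $H_k(\omega)$, so that $\mu_{m,\omega}$ really decomposes over $H_k(\omega)$ with equal weights $1/\#H_k(\omega)$; this is exactly where the separation estimates \eqref{eq 2 lemma 6.7}, \eqref{z shadow xy} and \eqref{eq 3 lemma 6.7} from the previous lemmas are used. After that, the convergence is a one-line estimate powered by the geometric decay $\eta/2^{4+k}$ of the ball radii in \eqref{def k-th mf}.
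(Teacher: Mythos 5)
Your proposal is correct and follows essentially the same route as the paper: both arguments show the sequence is Cauchy by grouping the points of $H_{m}(\omega)$ according to the ancestor in $H_k(\omega)$ they descend from, using the uniform descendant count (the paper's $\#Z(x)$, your fiberwise decomposition into $\nu_{m,x}$) together with the bound $d_M(x,z)\le \eta/2^{4+k}$ coming from \eqref{z shadow xy} to estimate the difference of integrals by $\mathrm{var}(\psi,\eta/2^{4+k})$. Your use of the nesting in Lemma \ref{lemma decreased Xk}(2) in place of the paper's explicit geometric-series triangle inequality, and your explicit verification that $(x,y)\mapsto z(x,y)$ is injective, are only cosmetic repackagings of the same computation.
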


 By the Riesz representation theorem, there exists some  $\mu_{\omega}\in Pr(M)$ such that
 \begin{displaymath}
 	\int\psi d\mu_{\omega}=\lim_{k\to\infty}\int\psi d\mu_{k,\omega}, \forall \psi\in C(M).
 \end{displaymath}
  \begin{lemma}\label{X full measure}
  	$\mu_{\omega}(\mathfrak{X}(\omega))=1.$
  \end{lemma}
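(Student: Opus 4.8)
The plan is to combine the weak$^*$ convergence $\mu_{k,\omega}\to\mu_\omega$ furnished by Lemma~\ref{lemma limit measure} with the monotonicity $\mathfrak{X}_{k+1}(\omega)\subseteq\mathfrak{X}_k(\omega)$ from Lemma~\ref{lemma decreased Xk}. First I would record that each $\mathfrak{X}_k(\omega)$ is a \emph{closed} subset of $M$: by \eqref{def k-th mf} it is a finite union (over $x\in H_k(\omega)$, which is finite by \eqref{number Hk}) of the Bowen balls $\overline{B}_{T_{N_k(\omega)}^k(\omega)}(\omega,x,\eta/2^{4+k})$, and each such ball is closed because it is defined through the non-strict inequality $d_\omega^{n}(x,\cdot)\le\eta/2^{4+k}$.

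Next I would propagate the concentration property already noted just after the definition of $\mu_{k,\omega}$. For every $j\ge k$ we have $H_j(\omega)\subseteq\mathfrak{X}_j(\omega)$, since each $x\in H_j(\omega)$ lies in the ball centred at itself appearing in \eqref{def k-th mf}; and by the second assertion of Lemma~\ref{lemma decreased Xk} the sets $\mathfrak{X}_j(\omega)$ are nested, so $\mathfrak{X}_j(\omega)\subseteq\mathfrak{X}_k(\omega)$ whenever $j\ge k$. As $\mu_{j,\omega}$ is supported on $H_j(\omega)$, this yields $\mu_{j,\omega}(\mathfrak{X}_k(\omega))=1$ for all $j\ge k$. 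Applying the one-sided portmanteau inequality for the closed set $\mathfrak{X}_k(\omega)$ together with $\mu_{j,\omega}\to\mu_\omega$ in the weak$^*$ topology, we obtain
\[
  \mu_\omega(\mathfrak{X}_k(\omega))\ \ge\ \limsup_{j\to\infty}\mu_{j,\omega}(\mathfrak{X}_k(\omega))\ =\ 1 ,
\]
hence $\mu_\omega(\mathfrak{X}_k(\omega))=1$ for every $k\in\mathbb{N}$.

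Finally, since $\{\mathfrak{X}_k(\omega)\}_{k\ge1}$ is a decreasing sequence of Borel sets with intersection $\mathfrak{X}(\omega)$ (see \eqref{construction of hua H}) and $\mu_\omega$ is a probability measure, continuity of $\mu_\omega$ from above gives $\mu_\omega(\mathfrak{X}(\omega))=\lim_{k\to\infty}\mu_\omega(\mathfrak{X}_k(\omega))=1$. There is no serious obstacle here: the only points requiring a line of care are the closedness of $\mathfrak{X}_k(\omega)$, so that the portmanteau inequality runs in the right direction, and the observation that $\mu_{j,\omega}$ charges $\mathfrak{X}_k(\omega)$ fully for all large $j$, which is precisely where the nesting $\mathfrak{X}_{j}(\omega)\subseteq\mathfrak{X}_k(\omega)$ from Lemma~\ref{lemma decreased Xk} is used.
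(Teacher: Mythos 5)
Your proposal is correct and follows essentially the same route as the paper: the nesting $\mathfrak{X}_{k+p}(\omega)\subseteq\mathfrak{X}_k(\omega)$ gives $\mu_{k+p,\omega}(\mathfrak{X}_k(\omega))=1$, the Portmanteau inequality for the closed set $\mathfrak{X}_k(\omega)$ yields $\mu_\omega(\mathfrak{X}_k(\omega))=1$, and intersecting over $k$ finishes the argument. The only difference is that you spell out the closedness of $\mathfrak{X}_k(\omega)$ and the continuity-from-above step, which the paper leaves implicit.
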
We note that $\omega$ is fixed at the beginning of this proof. We use notation $\mu_\omega$ to denote that this measure is concentrated on $\mathfrak{X}(\omega)$ rather than a random probability measure.

  Now, with the help of the following Lemma \ref{lemma entropy distribution}, we can apply the entropy distribution principle type argument.
  \begin{lemma}\label{lemma entropy distribution}
  	There exists $N(\omega)\in\mathbb{N},$ such that for any $n\geq N(\omega),$
  \begin{equation*}
    \mbox{if $B_n\Big(\omega,x,\frac{\eta}{2^{4}}\Big)\cap\mathfrak{X}(\omega)\neq \emptyset$, then $ \mu_{\omega}\Big(B_n\Big(\omega,x,\frac{\eta}{2^4}\Big)\Big)\leq e^{-n(h_{\mu_0}(F)-5\gamma)}$}.
  \end{equation*}Recall that $\mu_0$ is the measure in \eqref{def mu0}.
  \end{lemma}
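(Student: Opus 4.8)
The plan is to compute the $\mu_\omega$-mass of the Moran pieces exactly, and then to show that a Bowen ball at scale $\eta/2^4$ which meets $\mathfrak{X}(\omega)$ can overlap only ``few'' pieces at an appropriate level. \textbf{Step 1: mass of the Moran pieces.} For $k'\ge k$ the measure $\mu_{k',\omega}$ is uniform on $H_{k'}(\omega)$, and the inductive construction together with \eqref{number Hk}, \eqref{number Dk+1} and Lemma~\ref{lemma decreased Xk}(2) shows that each level-$k$ piece $\overline B_{T^k_{N_k(\omega)}(\omega)}(\omega,x,\eta/2^{4+k})$ ($x\in H_k(\omega)$) contains exactly $\#H_{k'}(\omega)/\#H_k(\omega)$ points of $H_{k'}(\omega)$, and each level-$(k+1)$ piece contains exactly $\#H_{k'}(\omega)/\#H_{k+1}(\omega)$ such points. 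As the level-$k$ pieces are closed and pairwise disjoint (Lemma~\ref{lemma decreased Xk}(1)) with $\mu_\omega(\mathfrak{X}(\omega))=1$ (Lemma~\ref{X full measure}), taking the weak$^*$ limit along Lemma~\ref{lemma limit measure} and comparing with the $\#H_k(\omega)$-fold disjoint decomposition of $\mathfrak{X}_k(\omega)\supseteq\mathfrak{X}(\omega)$ forces $\mu_\omega\big(\overline B_{T^k_{N_k(\omega)}(\omega)}(\omega,x,\eta/2^{4+k})\big)=1/\#H_k(\omega)$ for every $k$ and every $x\in H_k(\omega)$.

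\textbf{Step 2: locating $n$ and counting overlapping pieces.} Given $n$, let $k=k(n)$ be the unique index with $T^k_{N_k(\omega)}(\omega)\le n<T^{k+1}_{N_{k+1}(\omega)}(\omega)$, and $j=j(n)$ the unique index with $T^{k+1}_j(\omega)\le n<T^{k+1}_{j+1}(\omega)$, using the convention $j=0$ when $n<T^{k+1}_0(\omega)$; then the blocks of levels $1,\dots,k$ and the first $j$ blocks of level $k+1$ lie inside $[0,n)$, and I write $\widetilde B_n$ for the sum of their lengths $\hat n_i(\theta^{T^i_l(\omega)}\omega)$. Assume $B_n(\omega,x,\eta/2^4)\cap\mathfrak{X}(\omega)\ne\emptyset$. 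Since $n\ge T^k_{N_k(\omega)}(\omega)$, a triangle inequality with radii $\eta/2^{4+k},\eta/2^4$ and the $(\omega,3\eta/8,T^k_{N_k(\omega)}(\omega))$-separation \eqref{eq 3 lemma 6.7} of $H_k(\omega)$ shows $B_n$ meets at most one level-$k$ piece, centred at some $x^{(k)}\in H_k(\omega)$; every level-$(k+1)$ piece meeting $B_n$ then lies inside it, is centred at some $z(x^{(k)},y)$ with $y=y(x^{k+1}_0,\dots,x^{k+1}_{N_{k+1}(\omega)-1})$, and (using $n<T^{k+1}_{N_{k+1}(\omega)}(\omega)$) its centre is within $d^n_\omega$-distance $3\eta/32$ of $x$. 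Feeding the shadowing estimates \eqref{eq y shadowing xk+1}, \eqref{z shadow xy} — so that $F^{T^{k+1}_i(\omega)}_\omega z(x^{(k)},y)$ tracks $x^{k+1}_i$ on block $i$ within error $\eta/2^{4+k}$ for $i\le j-1$ — into the $(\theta^{T^{k+1}_i(\omega)}\omega,\eta/2,\hat n_{k+1}(\cdot))$-separation of the sets $C_{k+1}(\theta^{T^{k+1}_i(\omega)}\omega)$ forces the coordinates $x^{k+1}_0,\dots,x^{k+1}_{j-1}$ to agree for all such pieces, so at most $\prod_{i=j}^{N_{k+1}(\omega)-1}\#C_{k+1}(\theta^{T^{k+1}_i(\omega)}\omega)$ level-$(k+1)$ pieces meet $B_n$. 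Since $B_n$ is open, weak$^*$ convergence and Step~1 give
\begin{equation*}
  \mu_\omega\!\left(B_n\!\left(\omega,x,\tfrac{\eta}{2^4}\right)\right)\le\frac{\prod_{i=j}^{N_{k+1}(\omega)-1}\#C_{k+1}(\theta^{T^{k+1}_i(\omega)}\omega)}{\#H_{k+1}(\omega)}=\frac{1}{\#H_k(\omega)\prod_{i=0}^{j-1}\#C_{k+1}(\theta^{T^{k+1}_i(\omega)}\omega)}.
\end{equation*}

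\textbf{Step 3: entropy and gap bookkeeping.} By \eqref{constrcution Omega k} and \eqref{frac 1 hat n M entropy} one has $\#C_i(\theta^{T^i_l(\omega)}\omega)\ge\exp\big(\hat n_i(\theta^{T^i_l(\omega)}\omega)(h_{\mu_0}(F)-4\gamma)\big)$, hence the denominator above is $\ge\exp\big(\widetilde B_n(h_{\mu_0}(F)-4\gamma)\big)$. It then suffices to prove $\widetilde B_n\ge(1-\rho_{k(n)})n$ with $\rho_k\to0$, since this yields $\mu_\omega(B_n(\omega,x,\eta/2^4))\le e^{-(1-\rho_{k(n)})n(h_{\mu_0}(F)-4\gamma)}\le e^{-n(h_{\mu_0}(F)-5\gamma)}$ as soon as $\rho_{k(n)}(h_{\mu_0}(F)-4\gamma)\le\gamma$, i.e.\ as soon as $k(n)$ is large, which is secured by choosing $N(\omega)$ large. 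Now $n-\widetilde B_n$ is built of spacing gaps $m_i$, first-return gaps $l^{\cdot}_{\cdot}(\omega)$, and one incomplete block of length $\le\hat n^M_{k+1}$: the first-return gaps are controlled by $\xi_i$-fractions of the relevant $T$'s through \eqref{omega1,xi1}, \eqref{pick L k+1}, \eqref{estimate 1 bad point}, \eqref{estimate of k bad point}; the $m_i$-gaps are controlled by $m_i2^{-m_i}$-fractions of the corresponding block sums using $\hat n_i\ge2^{m_i}$ (enlarging the $m_i$ a priori, if necessary, so that $\sum_i m_i2^{-m_i}$ is as small as required); and the whole of levels $<k$, the between-level gaps, and the incomplete block are $o(N_k(\omega))=o(n)$ by the doubly-exponential growth \eqref{choose of Nk} of the $N_k(\omega)$ together with $N_k(\omega)\le T^k_{N_k(\omega)}(\omega)\le n$. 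Collecting these estimates produces the needed $\rho_k\to0$.

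The hard part is Step~3: making the cumulative contribution of all spacing gaps and incomplete pieces genuinely $o(n)$, uniformly in $n$ as $k(n)\to\infty$ — this is exactly what the layered choices $\delta_k\searrow0$, $\xi_k\searrow0$, $m_k$, $\hat n_k\ge2^{m_k}$ and the fast growth \eqref{choose of Nk} of $N_k(\omega)$ are designed to guarantee. A secondary care point is Step~2, where one must track the nested scales $\eta/2^{4+k}$ precisely when invoking Theorem~\ref{thm random specification}'s shadowing and the $(\omega,\eta/2,\cdot)$-separation of the sets $C_{k+1}(\cdot)$.
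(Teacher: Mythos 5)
Your proposal is correct and follows essentially the same route as the paper's proof: locate $n$ between $T^k_{N_k(\omega)}(\omega)$ and $T^{k+1}_{N_{k+1}(\omega)}(\omega)$, use the $(\omega,3\eta/8,T^k_{N_k(\omega)}(\omega))$-separation of $H_k(\omega)$ together with the shadowing estimates to show that at most one level-$k$ piece and at most $\prod_{i\geq j}\#C_{k+1}(\theta^{T_i^{k+1}(\omega)}\omega)$ level-$(k+1)$ pieces can meet the ball, lower-bound the resulting denominator by $\exp\big(\widetilde B_n(h_{\mu_0}(F)-4\gamma)\big)$ via \eqref{frac 1 hat n M entropy}, and absorb all gaps into an extra $\gamma$ using the $\xi_k$-controls, $\hat n_k\geq 2^{m_k}$, and the fast growth \eqref{choose of Nk} — exactly the roles of the paper's conditions \eqref{estimation 1.9} and \eqref{estimation 1.10}. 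The only (harmless) presentational differences are that you compute the exact mass $1/\#H_k(\omega)$ of each closed level-$k$ piece up front rather than estimating $\mu_{k+p,\omega}(B_n(\omega,x,\eta/2^4))$ for every $p$ and passing to the liminf, and that you merge the paper's Cases 2 and 3 by always forcing only the first $j$ coordinates and dumping the possibly completed $j$-th block of level $k+1$ into the $o(n)$ error.
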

 Let   $\Gamma_{\omega}^{\eta/2^4}=\{B_{n_i}(\omega,x_i,\frac{\eta}{2^4})\}_i$ be any finite cover of $\mathfrak{X}(\omega)$ with $\min\{n_i\}\geq N(\omega)$ as in Lemma \ref{lemma entropy distribution}. Without loss of generality, we assume that $B_{n_i}(\omega,x_i,\frac{\eta}{2^4})\cap\mathfrak{X}(\omega)\neq \emptyset$ for every $i$. It follows by Lemma \ref{lemma entropy distribution} that
 \begin{displaymath}
 	\sum_ie^{-n_i(h_{\mu_0}(F)-5\gamma)}\geq \sum_{i}\mu_{\omega}\Big(B_{n_i}\Big(\omega,x_i,\frac{\eta}{2^{4}}\Big)\Big)\geq \mu_{\omega}(\mathfrak{X}(\omega))=1>0.
 \end{displaymath}
Therefore by definition \eqref{def M(Z,s,w,N,epsilon)} and \eqref{def m(z,s,omega,N,epsilon)},
\begin{displaymath}
	m(\mathfrak{X}(\omega), h_{\mu_0}(F)-5\gamma,\omega,\frac{\eta}{2^4})\geq \inf_{\Gamma_{\omega}^{\eta/2^4}}\sum_ie^{-n_i(h_{\mu_0}(F)-5\gamma)}\geq 1>0.
\end{displaymath}
Hence, by definition \eqref{def m(z,s,omega,epsilon)}, we have
\begin{displaymath}
	h_{top}(F,\mathfrak{X}(\omega),\omega,\frac{\eta}{2^4})\geq h_{\mu_0}(F)-5\gamma.
\end{displaymath}
By Lemma \ref{lemma htopZ exist} and Lemma \ref{lemma contained}, we obtain
\begin{equation}\label{eq aim 5 gamma}
   h_{\mu_0}(F)-5\gamma\leq \sup_{\epsilon>0}h_{top}(F,\mathfrak{X}(\omega),\omega,\epsilon)=h_{top}(F,\mathfrak{X}(\omega),\omega)\leq h_{top}(F,K_{\varphi,\alpha}(\omega),\omega).
\end{equation}

We have showed that \eqref{eq aim 5 gamma} holds for arbitrary $\omega\in\tilde{\Omega}_\gamma$, i.e., \eqref{eq geq h-5gamma} holds. We note that $P(\tilde{\Omega}_\gamma)=1$ since $\tilde{\Omega}_\gamma$ is the intersection of countable $P$-full measure set. Finally, we pick a countable sequence $\gamma_k\to 0,$ and let $\Omega^{\prime}=\cap_{k}\tilde{\Omega}_{\gamma_k}.$ It follows that $P(\Omega^{\prime})=1,$ and for any $\omega\in\Omega^{\prime},$
\begin{displaymath}
	h_{top}(F,K_{\varphi,\alpha}(\omega),\omega)\geq h_{\mu_0}(F)=\sup\{h_{\mu}(F):\mu\in I_\Theta(\Omega\times M,\varphi,\alpha)\}.
\end{displaymath}
The proof of Theorem \ref{thm variational principle} is complete.
\subsection{Proof of Theorem \ref{thm 3}}
We first prove \eqref{eq leq Legendre}. For any given $\alpha\in L_\varphi$, we notice that for any $q\in\mathbb{R}$
\begin{align*}
   &\ \ \sup\{h_\mu(F):\ \mu\in I_\Theta(\Omega\times M,\varphi,\alpha)\}\nonumber\\
   &=\sup\{h_\mu(F)+\int q\varphi d\mu:\ \mu\in I_\Theta(\Omega\times M,\varphi,\alpha)\}-q\alpha\nonumber\\
   &\leq \sup\{h_\mu(F)+\int q\varphi d\mu:\ \mu\in I_\Theta(\Omega\times M)\}-q\alpha\nonumber\\
   &=\pi_{F,\varphi}(q)-q\alpha.\label{pressurelesslegendre}
\end{align*}Therefore, we have
\begin{equation}\label{eq entropy less legendre}
  \sup\{h_\mu(F):\ \mu\in I_\Theta(\Omega\times M,\varphi,\alpha)\}\leq \inf_{q\in\mathbb{R}}\{\pi_{F,\varphi}(q)-q\alpha\}= \pi_{F,\varphi}^*(\alpha).
\end{equation}Then \eqref{eq leq Legendre} is a consequence of \eqref{eq entropy less legendre} and Theorem \ref{thm variational principle}.

Now, for any $\alpha\in int(L_\varphi)=int\{\int \varphi d\mu: \ \mu\in I_\Theta(\Omega\times M)\}$, by Lemma \ref{lemma int subset inteq}, there exists $q^*\in\mathbb{R}$ and $\mu^*\in ES_{q^*\varphi}$ such that $\alpha=\int \varphi d\mu^*$. Therefore,
\begin{equation}\label{eq geq pi q star}
  \begin{split}
\sup\{h_\mu(F):\ \mu\in I_\Theta(\Omega\times M,\varphi,\alpha)\}&\geq  h_{\mu^*}(F)\\
  &= h_{\mu^*}(F)+\int q^*\varphi d\mu^*-q^*\alpha\\
  &=\pi_{F,\varphi}(q^*)-q^*\alpha.
  \end{split}
\end{equation}
Combining \eqref{eq geq pi q star} and \eqref{eq entropy less legendre}, we obtain for any $\alpha\in int(L_\varphi)$,
\begin{equation}\label{pressure eq legendre}
  \sup\{h_\mu(F):\ \mu\in I_\Theta(\Omega\times M,\varphi,\alpha)\}=\pi_{F,\varphi}^*(\alpha).
\end{equation}Then \eqref{eq eq Legendre} is a consequence of \eqref{pressure eq legendre} and Theorem \ref{thm variational principle}.

\subsection{Proof of Corollary \ref{corollary eq}}
  In this proof, we need to verify that any H\"older potential $\varphi$ and our system satisfy conditions of \cite[Theorem 3.9]{Kifer2000specification}, and compute positive constants $A_\epsilon$, $B_\epsilon$ following \cite{Kifer2000specification}. To coincide with the form in \cite{Kifer2000specification}, we denote $\epsilon=c\eta$ for some $c\in(0,\frac{1}{4})$.

  For any H\"older continuous function $\varphi:\Omega\times M\to\mathbb{R}$, we need to verify wether $\varphi\in V^\pm(F)$ as in the statement of Theorem 3.9 in \cite{Kifer2000specification}. $\varphi$ is automatically measurable on $(\omega,x)$ and continuous on $x$, and
  \begin{equation*}
    \int_\Omega\sup_{x\in M}|\varphi(\omega,x)|dP(\omega)\leq \|\varphi\|_{C^0}<\infty.
  \end{equation*}For $n\in\mathbb{Z}^-$, we define the backward Bowen's metric $d_\omega^{n-1}(x,y)=\max\{d_M(F_\omega^{i}(x),F_\omega^i(y)):\ n\leq i\leq 0\}$ for $x,y\in M$. Now let us estimate
  \begin{align*}
    K_\varphi^+(\omega)=\sup\{|\sum_{i=0}^{n}\varphi(\Theta^i(\omega,x))-\sum_{i=0}^{n}\varphi(\Theta^i(\omega,y))|:\ n\in\mathbb{Z}^+,\ d_\omega^{n+1}(x,y)\leq\eta\},\\
    K_\varphi^-(\omega)=\sup\{|\sum_{i=n}^{0}\varphi(\Theta^i(\omega,x))-\sum_{i=n}^{0}\varphi(\Theta^i(\omega,y))|:\ n\in\mathbb{Z}^-,\ d_\omega^{n-1}(x,y)\leq\eta\}.
  \end{align*}where $\eta>0$ is a fiber-expansive constant in Lemma \ref{Lemma expansive}. We estimate $K_\varphi^+(\omega)$ first, and one can compute $K_\varphi^-(\omega)$ similarly. For $ m\in\mathbb{N}$, denote
  \begin{equation*}
    var_m(\varphi,F,\omega)=\sup\{|\varphi(\omega,x)-\varphi(\omega,y)|:\ d_M(F_\omega^i(x),F_\omega^i(y))\leq \eta\mbox{ for all }|i|\leq m\}.
  \end{equation*} Exactly same proof as Lemma 3.6 in \cite{HLL} implies that there exists a positive constant $C$ such that for any $\omega\in\Omega$,
  \begin{equation}\label{eq contraction}
    \mbox{if $d_M(F_\omega^i(x),F_\omega^i(y))\leq \eta\mbox{ for all }|i|\leq m$, then $d_M(x,y)\leq Ce^{-m\lambda}$}.
  \end{equation}
 Therefore, $var_m(\varphi,F,\omega)\leq C^\prime e^{-m\lambda r}$ for some positive constant $C^\prime$ uniformly for all $\omega\in\Omega$, where $r>0$ is the H\"older exponent of $\varphi$. Then for all $\omega\in\Omega$, $$K:=\sum_{m\in\mathbb{N}}\sup_{\omega\in\Omega}var_m(\varphi,F,\omega)<\infty.$$
  Now if $d_\omega^{n+1}(x,y)\leq \eta$, and $0\leq k\leq n$, then $d_M(F_{\theta^k\omega}^j(F_\omega^k(x)),F_{\theta^k\omega}^j(F_\omega^k(y)))\leq \eta$ for $|j|\leq m_k:=\min\{k,n-k\}$ and $|\varphi(\Theta^k(\omega,x))-\varphi(\Theta^k(\omega,y))|\leq var_{m_k}(\varphi,F,\theta^k\omega)$. Thus
  \begin{align*}
|\sum_{i=0}^{n}\varphi(\Theta^i(\omega,x))-\sum_{i=0}^{n}\varphi(\Theta^i(\omega,y))|&
\leq 2\sum_{m=0}^{[\frac{n}{2}]+1}\sup_{\omega\in\Omega}var_m(\varphi,F,\omega)\leq 2K<\infty.
  \end{align*}As a consequence,
  \begin{equation}\label{eq bound K var}
    \sup_{\omega\in\Omega}\max\{K_\varphi^+(\omega),K_\varphi^-(\omega)\}\leq 2K<\infty.
  \end{equation} Therefore, $\varphi\in V^\pm(F)$ by directly checking the definition of $V^\pm(F)$ on P.96 of \cite{Kifer2000specification}.

For $n\in \mathbb{N}$, we define the following metric
  \begin{equation*}
    d_\omega^{\pm n}(x,y)=\max_{-n\leq i<n}\{d_M(F_\omega^i(x),F_\omega^i(y))\}\mbox{ for }x,y\in M.
  \end{equation*}   By \eqref{eq contraction}, we have
  \begin{equation*}
    \lim_{n\rightarrow\pm\infty}\sup\{d_M(x,y):d_{\theta^n\omega}^{\pm[\frac{|n|}{2}]}(x,y)\leq \eta\}\leq \lim_{n\rightarrow\pm\infty}Ce^{-[\frac{|n|}{2}]\lambda}/\eta=0,
  \end{equation*}which implies the strong expansivity of RDS $F$ as Definition  2.4 in \cite{Kifer2000specification}.

  Remark \ref{remark kifer gundlach} and Theorem \ref{thm random specification} imply that the RDS $F$ satisfies specification with $L_c$ independent of $k$ and $\omega$. Combining with \eqref{eq bound K var}, we obtain the equation (3.15) in the statement of Theorem 3.9 in \cite{Kifer2000specification}. Hence, the unique equilibrium state exists and it has the Gibbs property.

  Next, we estimate $A_\epsilon$ and $B_\epsilon$ in \eqref{Aepsilon Bepsilon}. The definition of the following quantity can be found in \cite[Section 2]{Kifer2000specification}. Define
  \begin{equation*}
     \beta(\omega)=\sup\{\delta>0:d_\omega^{\pm L(\frac{1}{2}c\eta)}(x,y)\leq \alpha_{L(\frac{1}{2}c\eta)}^{(2c\eta)}\mbox{ provided }d_M(x,y)\leq \delta\},
  \end{equation*}where $L(\frac{1}{2}c\eta)$ is given in Lemma \ref{lemma pick L epsilon}, and
  \begin{equation*}
\alpha_{L(\frac{1}{2}c\eta)}^{(2c\eta)}=\inf_{\omega\in\Omega}\sup\{d_M(x,y):\ d_\omega^{\pm L(\frac{1}{2}c\eta)}(x,y)\leq 2c\eta\}>0
  \end{equation*}by the continuity of $F_\omega$ on $\omega$. Again, by the continuity of $F_\omega$ on $\omega$, $\beta:=\inf_{\omega\in\Omega}\beta(\omega)>0$. By Definition 3.1 and Proposition 3.2 in \cite{Kifer2000specification}, the unique equilibrium state has Gibbs property
  \begin{equation*}
   A_\epsilon:=e^{-8K}(D_c)^{-2} \leq \mu_\omega(B_n(\omega,x,\epsilon))\cdot\pi_F(\varphi)(\omega,\epsilon,n)\cdot e^{-S_n\varphi(\omega,x)}\leq B_\epsilon:= e^{8K}(D_c)^2D^{(1)}D^{(2)},
  \end{equation*}where
  \begin{align*}
 D_c&=(N_M(2c\eta))^{2L_c}e^{2K}\exp(2L_c\|\varphi\|_{C^0});\\
 D^{(1)}&=N_M(\beta),\ D^{(2)}=e^{4K}N_M(\beta),
  \end{align*}and $ N_M(\beta)$ (resp. $N_M(2c\eta)$)is the minimal number of open balls of diameter $\beta$ (resp. $2c\eta$) covering $M$.

\subsection{Proof of Corollary \ref{corollary 2}}
For any H\"older potential $\varphi$, let $\mu=\mu_\varphi$ be the equilibrium state given in Corollary \ref{corollary eq}. As a consequence of  Corollary \ref{corollary eq}, for any $\epsilon\in (0,\frac{\eta}{4})$, there exist positive constants $A_\epsilon, B_\epsilon$ such that for $P$-a.s. $\omega\in\Omega$,
  \begin{equation}\label{eq gibbs 2}
    A_\epsilon\leq \mu_\omega(B_n(\omega,x,\epsilon))\cdot\pi_F(\varphi)(\omega,\epsilon,n)\cdot e^{-\sum_{i=0}^{n-1}\varphi(\Theta^i(\omega,x)})\leq B_\epsilon\mbox{ for all }x\in M.
  \end{equation}On the one hand, \eqref{eq gibbs 2} implies that for $P-$a.s. $\omega\in\Omega$ and any $x\in M$,
  \begin{align*}
    \lim_{\epsilon\to 0}\liminf_{n\to\infty}-\frac{1}{n}\log\mu_\omega(B_n(\omega,x,\epsilon))&\geq  \lim_{\epsilon\to 0}\liminf_{n\to\infty}\frac{1}{n}\log \pi_F(\varphi)(\omega,\epsilon,n)+\liminf_{n\to\infty}-\frac{1}{n}\sum_{i=0}^{n-1}\varphi(\Theta^i(\omega,x)) \\&= \lim_{\epsilon\to 0}\liminf_{n\to\infty}\frac{1}{n}\log \pi_F(\varphi)(\omega,\epsilon,n)-\limsup_{n\to\infty}\frac{1}{n}\sum_{i=0}^{n-1}\varphi(\Theta^i(\omega,x)),
  \end{align*} and
  \begin{align*}
    \lim_{\epsilon\to 0}\limsup_{n\to\infty}-\frac{1}{n}\log\mu_\omega(B_n(\omega,x,\epsilon))&\leq  \lim_{\epsilon\to 0}\limsup_{n\to\infty}\frac{1}{n}\log \pi_F(\varphi)(\omega,\epsilon,n)+\limsup_{n\to\infty}-\frac{1}{n}\sum_{i=0}^{n-1}\varphi(\Theta^i(\omega,x))  \\&=\lim_{\epsilon\to 0}\limsup_{n\to\infty}\frac{1}{n}\log \pi_F(\varphi)(\omega,\epsilon,n)-\liminf_{n\to\infty}\frac{1}{n}\sum_{i=0}^{n-1}\varphi(\Theta^i(\omega,x)),
  \end{align*}Taking Lemma \ref{lemma fiber top pressure} and Lemma \ref{lemma local entropy formula} into account, we have
  \begin{equation*}
   \underline{h}_{\mu}(F;\omega,x)\geq \pi_F(\varphi)-\limsup_{n\to\infty}\frac{1}{n}\sum_{i=0}^{n-1}\varphi(\Theta^i(\omega,x));
  \end{equation*}and
  \begin{equation*}
     \bar{h}_{\mu}(F;\omega,x)\leq  \pi_F(\varphi)-\liminf_{n\to\infty}\frac{1}{n}\sum_{i=0}^{n-1}\varphi(\Theta^i(\omega,x)).
  \end{equation*}
Therefore, $\liminf_{n\to\infty}\frac{1}{n}\sum_{i=0}^{n-1}\varphi(\Theta^i(\omega,x))=\limsup_{n\to\infty}\frac{1}{n}\sum_{i=0}^{n-1}\varphi(\Theta^i(\omega,x))$ implies $\underbar{h}_{\mu}(F;\omega,x)=\bar{h}_{\mu}(F;\omega,x)$.
On the other hand, \eqref{eq gibbs 2} also implies that for $P-$a.s. $\omega\in\Omega$ and any $x\in M$,
\begin{align*}
        \liminf_{n\to\infty}\frac{1}{n}\sum_{i=0}^{n-1}\varphi(\Theta^i(\omega,x))&\geq   \lim_{\epsilon\to 0}\liminf_{n\to\infty}\frac{1}{n}\log \pi_F(\varphi)(\omega,\epsilon,n)+\lim_{\epsilon\to 0}\liminf_{n\to\infty}\frac{1}{n}\log\mu_\omega(B_n(\omega,x,\epsilon))\\
        &=       \lim_{\epsilon\to 0}\liminf_{n\to\infty}\frac{1}{n}\log \pi_F(\varphi)(\omega,\epsilon,n)-\lim_{\epsilon\to 0}\limsup_{n\to\infty}-\frac{1}{n}\log\mu_\omega(B_n(\omega,x,\epsilon))\\
  \end{align*} and
  \begin{align*}
   \limsup_{n\to\infty}\frac{1}{n}\sum_{i=0}^{n-1}\varphi(\Theta^i(\omega,x))&\leq    \lim_{\epsilon\to 0}\limsup_{n\to\infty}\frac{1}{n}\log \pi_F(\varphi)(\omega,\epsilon,n)+\lim_{\epsilon\to 0}\limsup_{n\to\infty}\frac{1}{n}\log\mu_\omega(B_n(\omega,x,\epsilon))\\
        &=  \lim_{\epsilon\to 0}\limsup_{n\to\infty}\frac{1}{n}\log \pi_F(\varphi)(\omega,\epsilon,n)-\lim_{\epsilon\to 0}\liminf_{n\to\infty}-\frac{1}{n}\log\mu_\omega(B_n(\omega,x,\epsilon)).
  \end{align*}
Taking Lemma \ref{lemma fiber top pressure} and Lemma \ref{lemma local entropy formula} into account, we have
\begin{equation*}
   \liminf_{n\to\infty}\frac{1}{n}\sum_{i=0}^{n-1}\varphi(\Theta^i(\omega,x))\geq \pi_F(\varphi)-\bar{h}_\mu(F;\omega,x)
\end{equation*}and
\begin{equation*}
   \limsup_{n\to\infty}\frac{1}{n}\sum_{i=0}^{n-1}\varphi(\Theta^i(\omega,x))\leq \pi_F(\varphi)-\underbar{h}_\mu(F;\omega,x).
\end{equation*}Then $\underbar{h}_{\mu}(F;\omega,x)=\bar{h}_{\mu}(F;\omega,x)$ implies $\lim_{n\to\infty}\frac{1}{n}\sum_{i=0}^{n-1}\varphi(\Theta^i(\omega,x))$ exists. In a word, for $P-$a.s. $\omega\in\Omega$, any $x\in M$
\begin{equation}\label{eq iff}
  \liminf_{n\to\infty}\frac{1}{n}\sum_{i=0}^{n-1}\varphi(\Theta^i(\omega,x))=\limsup_{n\to\infty}\frac{1}{n}\sum_{i=0}^{n-1}\varphi(\Theta^i(\omega,x)) \mbox{ iff } \underbar{h}_{\mu}(F;\omega,x)=\bar{h}_{\mu}(F;\omega,x).
\end{equation}
Notice that $\mu$ is an ergodic measure by Corollary \ref{corollary eq}, therefore by Birkhoff ergodic theorem and Lemma \ref{lemma local entropy formula}, for $\mu$-a.s. $(\omega,x)\in\Omega\times M$, one has
\begin{equation*}
  h_\mu(F)=\pi_F(\varphi)-\int \varphi d\mu.
\end{equation*}The left statements are just corollary of \eqref{eq iff}, Theorem \ref{thm variational principle} and Theorem \ref{thm 3}. The proof of Corollary \ref{corollary 2} is complete.

\section{Proof of Lemmas}\label{section proof of lemmas} Proof of all lemmas are collected in this section.
\subsection{Proof of Lemmas in section \ref{section 1}}\label{5.1}
\begin{proof}[Proof of Lemma \ref{lemma htopZ exist}]
 For any $\epsilon_1>\epsilon_2>0$, let $\Gamma_\omega^{\epsilon_2}=\{B_{n_i}(\omega,x_i,\epsilon_2)\}$ be any finite or countable covering of $Z$ with $\min\{n_i\}\geq N$. By using same $\{n_i\}$ and $\{x_i\}$ in $\Gamma_\omega^{\epsilon
 _2}$, then $\tilde{\Gamma}_\omega^{\epsilon_1}=\{B_{n_i}(\omega,x_i,\epsilon_1)\}$ is also a cover of $Z$, which implies $m(Z,s,\omega,N,\epsilon_1)\leq m(Z,s,\omega,N,\epsilon_2).$ Let $N\rightarrow \infty$, we  obtain that $    m(Z,s,\omega,\epsilon_1)\leq m(Z,s,\omega,\epsilon_2),$ which implies
$    h_{top}(F,Z,\omega,\epsilon_1)\leq  h_{top}(F,Z,\omega,\epsilon_2).$
\end{proof}
\begin{proof}[Proof of Lemma \ref{lemma Kalpha nonempty}]
Note that $\Omega_\alpha=\{\omega\in\Omega:K_{\varphi,\alpha}(\omega)\not=\emptyset\}=\pi_\Omega(K_{\varphi,\alpha}),$ where $\pi_\Omega:\Omega\times M\rightarrow \Omega$ is the projection to the first coordinate. According to the Projection theorem (see \cite[Theorem 2.12]{Hans02}) and the $\sigma$-algebra $\mathcal{B}_P(\Omega)$ on $\Omega$ is complete with respect to $P$(see section \ref{section settings}), $\pi_\Omega(K_{\varphi,\alpha})$ is $\mathcal{B}_P(\Omega)$-measurable. Therefore, $\Omega_\alpha$ is measurable.
 Since $K_{\varphi,\alpha}$ is $\Theta$-invariant, $\Omega_\alpha$ is $\theta-$invariant. By ergodicity, either $P(\Omega_\alpha)=1$ or $P(\Omega_\alpha)=0$.
\end{proof}

\begin{proof}[Proof of Lemma \ref{lemma I alpha nonemptyset}]
   For $\alpha\in L_\varphi$, then by definition, $K_{\varphi,\alpha}(\omega)\not=\emptyset$ for $\omega\in\Omega_\alpha$ with $P(\Omega_\alpha)=1$. Pick any $\omega_0\in\Omega_\alpha$ and $x_0\in K_{\varphi,\alpha}(\omega_0)$, we consider the following sequence of measures:
  \begin{equation*}
    \delta_n=\frac{1}{n}\sum_{i=0}^{n-1}\delta_{\Theta^i(\omega_0,x_0)}\in Pr(\Omega\times M).
  \end{equation*}By the compactness of $Pr(\Omega\times M)$ with respect to the weak$^*$ topology, $\delta_n$ has a weak$^*$ convergent subsequence $\{\delta_{n_k}\}_{k\in \mathbb{N}}$ converging to a measure in $Pr(\Omega\times M)$, named $\mu.$  It is clear that $\mu$ is $\Theta$-invariant. Hence $\mu\in I_\Theta(\Omega\times M).$ On the one hand, by the definition of weak$^*$ topology, we have
  \begin{equation*}
    \lim_{k\to\infty}\frac{1}{n_k}\sum_{i=0}^{n_k-1}\int_{\Omega\times M} \varphi(\omega,x)d\delta_{\Theta^i(\omega_0,x_0)}=\int_{\Omega\times M}\varphi(\omega,x) d\mu(\omega,x).
  \end{equation*}On the other hand, notice that $(\omega_0,x_0)\in K_{\varphi,\alpha}(\omega_0)$, we have
  \begin{equation*}
    \lim_{k\to\infty}\frac{1}{n_k}\sum_{i=0}^{n_k-1}\int_{\Omega\times M} \varphi(\omega,x)d\delta_{\Theta^i(\omega_0,x_0)}=\lim_{k\to\infty}\frac{1}{n_k}\sum_{i=0}^{n_k-1}\varphi(\Theta^i(\omega_0,x_0))=\alpha.
  \end{equation*}Therefore, $\mu\in I_\Theta(\Omega\times M,\varphi,\alpha).$ Hence, $I_\Theta(\Omega\times M,\varphi,\alpha)\not=\emptyset$ for $\alpha\in L_\varphi$. It is clear that $I_\Theta(\Omega\times M,\varphi,\alpha)$ is convex and closed.

  By Birkhoff ergodic theorem, for any $\mu\in I^e_\Theta(\Omega\times M)$, i.e. ergodic $\Theta$-invariant measure, we have $\int\varphi d\mu\in L_\varphi$. Note that $I^e_\Theta(\Omega\times M)$ are exactly the extremal points of $I_\Theta(\Omega\times M)$. Since $I_\Theta(\Omega\times M)$ is compact, $I^e_\Theta(\Omega\times M)$ is nonempty, and therefore, $L_\varphi$ is nonempty.
  We note that $L_\varphi\subset[-\|\varphi\|_{C^0},\|\varphi\|_{C^0}],$ hence bounded.
\end{proof}
\subsection{Proof of Lemmas in section \ref{section preliminary}}\label{5.2}
\begin{proof}[Proof of Lemma \ref{lemma katok entropy}]
 Given $\delta\in(0,1)$, we first prove that $\omega\mapsto S(\omega,n,\epsilon,\delta)$ is $\mathcal{B}_P(\Omega)$-measurable. For any integer $L\in\mathbb{N}_+$, we notice that following fact
  \begin{equation*}
    \{\omega\in\Omega:\ S(\omega,n,\epsilon,\delta)\leq L\}=Pr_\Omega\{(\omega,x_1,...,x_L):\ \mu_\omega(\cup_{i=1}^LB_{n}(\omega,x_i,\epsilon))\geq 1-\delta\},
  \end{equation*}where $Pr_\Omega:\Omega\times M^L\rightarrow\Omega$ is the projection to the first coordinate. Once
  \begin{equation*}
    \{(\omega,x_1,...,x_L):\ \mu_\omega(\cup_{i=1}^LB_{n}(\omega,x_i,\epsilon))\geq 1-\delta\}\in \mathcal{B}_P(\Omega)\otimes\mathcal{B}(M^L),
  \end{equation*}then $\{\omega\in\Omega:\ S(\omega,n,\epsilon,\delta)\leq L\}$ is $\mathcal{B}_P(\Omega)$-measurable by the Projection theorem (see, e.g., \cite[Theorem 2.12]{Hans02}). Therefore, we need to show the measurability of
  \begin{equation}\label{measurable mapping}
    (\omega,x_1,...,x_L)\mapsto \mu_\omega(\cup_{i=1}^LB_{n}(\omega,x_i,\epsilon))=\int_M1_{\cup_{i=1}^LB_{n}(\omega,x_i,\epsilon)}(y)d\mu_\omega(y).
  \end{equation}We use the same strategy as the proof of Proposition 3.3 (i) in \cite{Hans02}. Put
  \begin{align*}
    \mathcal{H}&=\left\{h:\Omega\times M^{L+1}\to \mathbb{R}:\ h\ \mbox{bounded and measurable, and}\right.\\
    &\ \ \ \ \ \ \ \ \ \ \ \ \ \ \ \ \ \ \ \ \ \left. (\omega,x_1,...,x_L)\mapsto \int_Mh(\omega,x_1,...,x_L,y)d\mu_\omega(y)\mbox{ is measurable}\right\}.
  \end{align*}Then $\mathcal{H}$ is a vector space. For any set $D=A\times B_1\times \cdots \times B_{L+1}\in\mathcal{B}_P(\Omega)\times \mathcal{B}(M)^{L+1}$, we have
   \begin{equation}\label{L+1measurable}
     (\omega,x_1,...,x_L)\mapsto \int_M1_D(\omega,x_1,...,x_L,y)d\mu_\omega(y)=1_A(\omega)\cdot 1_{B_1}(x_1)\cdots 1_{B_L}(x_L)\cdot \mu_\omega(B_{L+1}).
   \end{equation}Note that $\omega\mapsto \mu_\omega(B_{L+1})$ is measurable by Definition 3.1 and Proposition 3.6 in \cite{Hans02}. Therefore, the mapping \eqref{L+1measurable} is measurable. If $0\leq h_n\in\mathcal{H}$, $n\in\mathbb{N}$ with $h_n\uparrow h$ for some bounded $h$, then $h\in\mathcal{H}$ by monotone convergence theorem. By a monotone class argument, $\mathcal{H}$ contains all bounded $\sigma(\mathcal{B}_P(\Omega)\times \mathcal{B}(M)^{L+1})$-measurable functions, and therefore, the function $(\omega,x_1,...,x_L,y)\mapsto 1_{\cup_{i=1}^LB_{n}(\omega,x_i,\epsilon)}(y)$ lies in $\mathcal{H}$. Hence, \eqref{measurable mapping} is measurable.

  In the left, we prove the third and fourth equalities of \eqref{eq katok delta} for $P$-a.s. $\omega\in\Omega$. Since the value $\limsup_{n\rightarrow\infty}\frac{1}{n}\log S(\omega,n,\epsilon,\delta)$ do not decrease as $\epsilon\to 0$, by the first and second equalities of \eqref{eq katok delta}, which can be found in  \cite[Theorem 3.1]{zhutwonotes} and \cite[Theorem A]{Lizhimingdelta} respectively, one has
  \begin{align*}
  h_\mu(F)&=\lim_{\epsilon\to0}\limsup_{n\rightarrow\infty}\frac{1}{n}\log S(\omega,n,\epsilon,\delta)=\lim_{\epsilon\to0}\liminf_{n\rightarrow\infty}\frac{1}{n}\log S(\omega,n,\epsilon,\delta)\\
  &\geq\limsup_{n\rightarrow\infty}\frac{1}{n}\log S(\omega,n,\eta,\delta)\geq
   \liminf_{n\rightarrow\infty}\frac{1}{n}\log S(\omega,n,\eta,\delta)
  \end{align*}for $P$-a.s. $\omega\in\Omega$. Next, we prove another direction. Pick any countable sequence $\epsilon_k\to 0$, then $$h_\mu(F)=\lim_{k\to \infty}\liminf_{n\rightarrow\infty}\frac{1}{n}\log S(\omega,n,\epsilon_k,\delta)$$P-a.s.. Note that $\omega\mapsto\liminf_{n\rightarrow\infty}\frac{1}{n}\log S(\omega,n,\epsilon_k,\delta)$ is measurable for any given $k$. By Egorov's theorem, for any $\zeta\in (0,1)$, there exists a measurable set $\Omega_\zeta\subset \Omega$ with $P(\Omega_\zeta)>1-\zeta$ such that
  \begin{equation*}
    \liminf_{n\rightarrow\infty}\frac{1}{n}\log S(\omega,n,\epsilon_k,\delta)\to h_\mu(F) \mbox{ uniformly for all }\omega\in\Omega_\zeta\mbox{ as }k\to \infty.
  \end{equation*}
By using Birkhorff's ergodic theorem, there exists a $P$-full measure set $\Omega^\prime\subset\Omega$ such that
  \begin{equation}\label{Omega prime lie in zeta}
    \lim_{n\rightarrow\infty}\frac{1}{n}\sum_{i=0}^{n-1}1_{\Omega_\zeta}(\theta^i\omega)=P(\Omega_\zeta)\geq 1-\zeta\mbox{ for all }\omega\in\Omega^\prime.
  \end{equation}Besides, since $\mu\in I_\Theta^e(\Omega\times M)$, then there exists a $\theta$-invariant $P-$full measure set $\Omega_\mu\subset \Omega$ such that
  \begin{equation}\label{eq theta invariant}
    (F_\omega)_*\mu_\omega=\mu_{\theta\omega}\mbox{, for all }\omega\in\Omega_\mu.
  \end{equation}
  It is sufficient to show that for any $\gamma>0$,
  \begin{equation}\label{lininf geq hmu gamma}
    \liminf_{n\rightarrow\infty}\frac{1}{n}\log S(\omega^*,n,\eta,\delta)\geq h_\mu(F)-\gamma,\mbox{ for all $\omega^*\in\Omega^\prime\cap\Omega_\mu$.}
  \end{equation}  From now on, we fix any $\omega^*\in\Omega^\prime\cap\Omega_\mu$.
By uniformly convergence on $\Omega_\zeta$, we pick $N$ large enough such that
  \begin{equation}\label{special Omegazeta}
    \liminf_{n\rightarrow\infty}\frac{1}{n}\log S(\omega,n,\epsilon_N,\delta)\geq h_\mu(F)-\gamma\mbox{ for all }\omega\in\Omega_\zeta.
  \end{equation}By Lemma \ref{lemma pick L epsilon}, there exists $L(\epsilon_N)\in\mathbb{N}$ such that for any $k\geq L(\epsilon_N)$, if $\max_{|n|\leq k}d_M(F_\omega^n(x),F_\omega^n(y))\leq \eta$, then $d_M(x,y)<\epsilon_N$. By \eqref{Omega prime lie in zeta}, we can pick $k>L(\epsilon_N)$ such that $\theta^k\omega^*\in\Omega_\zeta$, then \eqref{special Omegazeta} implies that
  \begin{equation}\label{liminf geq hug}
     \liminf_{n\rightarrow\infty}\frac{1}{n}\log S(\theta^k\omega^*,n,\epsilon_N,\delta)\geq h_\mu(F)-\gamma.
  \end{equation}For any $n\in\mathbb{N}$, we pick any Bowen balls $\{B_{n+2k}(\omega^*,x_i,\eta)\}_{i\in I}$, which covers a set of $\mu_{\omega^*}$ measure at least $1-\delta$, i.e., there exists $A\subset \cup_{i\in I}B_{n+2k}(\omega^*,x_i,\eta)$ and $\mu_{\omega^*}(A)\geq 1-\delta.$ Then it must have that $\{B_{n}(\theta^k\omega^*,F_{\omega^*}^k(x_i),\epsilon_N)\}_{i\in I}$ covers $F_{\omega^*}^kA$. In fact, for any $y\in A$, there exists $x_i$ such that $d_{\omega^*}^{n+2k}(y,x_i)<\eta$. Therefore, by the choice of $k>L(\epsilon_N)$ and Lemma \ref{lemma pick L epsilon}, one has $d_{\theta^k\omega^*}^n(F_{\omega^*}^k(y),F_{\omega^*}^k(x_i))<\epsilon_N$. Now by \eqref{eq theta invariant}, one has $\mu_{\theta^k\omega^*}(F_{\omega^*}^kA)=\mu_{\omega^*}(A)\geq 1-\delta$. By the definition of $S(\theta^k\omega^*,n,\epsilon_N,\delta)$, one has
  \begin{equation*}
    S(\theta^k\omega^*,n,\epsilon_N,\delta)\leq S(\omega^*,n+2k,\eta,\delta)\mbox{ for all }n\in\mathbb{N}.
  \end{equation*}Taking $\log$, dividing by $n$ and sending $n\rightarrow\infty$, we obtain
  \begin{equation*}
  \liminf_{n\rightarrow\infty}\frac{1}{n}\log S(\omega^*,n,\eta,\delta)\geq   \liminf_{n\rightarrow\infty}\frac{1}{n}\log S(\theta^k\omega^*,n,\epsilon_N,\delta)
   \overset{\eqref{liminf geq hug}} \geq h_\mu(F)-\gamma,
    \end{equation*}i.e., \eqref{lininf geq hmu gamma} is proved.
\end{proof}

\subsection{Proof of Lemmas in section \ref{section proofs}}\label{5.3}
\begin{proof}[Proof of Lemma \ref{lemma infsup}]
For $\alpha\in L_\varphi$, we first show that $\omega\mapsto M(\alpha,\delta,n,\epsilon,\omega)$ is measurable for each fixed $\delta,\epsilon>0$ and $n\in\mathbb{N}$. Note that $M(\alpha,\delta,n,\epsilon,\omega)\geq 1$, we only need to check the measurability of $\{\omega: M(\alpha,\delta,n,\epsilon,\omega)\geq l\}$ for $l\geq 2$. For any integer $l\geq 2            $, we let
\begin{align*}
  P^l(\alpha,\delta,n)&=\{(\omega,x_1,x_2,...,x_l)\in \Omega\times M^l:\ x_1,...,x_l\in P(\alpha,\delta,n,\omega)\},\\
&=\left\{(\omega,x_1,x_2,...,x_l)\in \Omega\times M^l:\ \max_{1\leq i\leq l}\left\{|\frac{1}{n}\sum_{i=0}^{n-1}\varphi(\Theta^i(\omega,x_i))-\alpha|\right\}<\delta\right\}.
\end{align*}Then $P^l(\alpha,\delta,n)$ is Borel measurable subset of $\Omega\times M^l$. It is clear that
\begin{equation*}
  E^{n,\epsilon}_{l}:=\{(\omega,x_1,...,x_l)\in \Omega\times M^l|\ d_\omega^n(x_i,x_j)> \epsilon\mbox{ if }1\leq  i\not=j\leq l\}
\end{equation*}is Borel measurable subset of $\Omega\times M^l$. By the definition of $M(\alpha,\delta,n,\epsilon,\omega)$, we have
\begin{equation*}
  \{\omega: M(\alpha,\delta,n,\epsilon,,\omega)\geq l\}=Pr_\Omega(P^l(\alpha,\delta,n)\cap E_l^{n,\epsilon}),
\end{equation*}where $Pr_\Omega:\Omega\times M^l\rightarrow \Omega$ is the projection to the first coordinate. By the Projection theorem (see, e.g., \cite[Theorem 2.12]{Hans02}) and the completion of $\sigma$-algebra $\mathcal{B}_P(\Omega)$, $Pr_\Omega(P^l(\alpha,\delta,n)\cap E_l^{n,\epsilon})$ is measurable. Therefore, $\omega\mapsto M(\alpha,\delta,n,\epsilon,\omega)$ is measurable.
\end{proof}
\begin{proof}[Proof of Lemma \ref{Lemma lower estimates} ]
Fix any $\gamma$ and $\delta$ such that  $0<\gamma< \min\{1,\frac{1}{10}h_{\mu_0}(F)\}$ and  $0<\delta<\frac{1}{2}\min\{\gamma,\eta\}$.

 Step 1, we construct $\Omega_\delta$ in the statement of Lemma \ref{Lemma lower estimates}. By using Lemma \ref{lemma measure approxi}, we can approximate $\mu_0$ by an $\Theta-$invariant measure $\nu$ in the following sense:
	there exist $\nu\in I_\Theta(\Omega\times M)$ satisfying
	\begin{enumerate}
		\item $\nu=\sum_{i=1}^{k}\lambda_i\nu_i$, where $\lambda_i>0$, $\sum_{i=1}^{k}\lambda_i=1$ and $\{\nu_i\}_{i=1}^k\in I^e_\Theta(\Omega\times M)$;
		\item $h_\nu(f)\geq h_{\mu_0}(f)-\delta$;
		\item $|\int_{\Omega\times M} \varphi d\mu_0-\int_{\Omega\times M}\varphi d\nu|=|\alpha-\int_{\Omega\times M}\varphi d\nu|<\delta.$
	\end{enumerate}
	Since $\nu_i$ is ergodic for each $i\in\{1,...,k\}$, the following set has $\nu_i-$full measure
	\begin{equation*}
			\bigg\{(\omega,x)\in \Omega\times M:\lim_{n\rightarrow\infty}\frac{1}{n}\sum_{i=0}^{n-1}\varphi(\Theta^i(\omega,x))=\int\varphi d\nu_i\bigg\}.
		\end{equation*}By measure disintegration, there exists a $P$-full measure set $\Omega_{\nu_i}$ and for $\omega\in\Omega_{\nu_i}$, we have
		\begin{equation*}
			(\nu_i)_\omega\bigg(\bigg\{x\in M:\lim_{n\rightarrow\infty}\frac{1}{n}\sum_{i=0}^{n-1}\varphi(\Theta^i(\omega,x))=\int\varphi d\nu_i\bigg\}\bigg)=1.
		\end{equation*}Therefore, for $i\in\{1,...,k\}$, the following sequence of measurable functions
		\begin{equation*}
			g_N^i(\omega)=(\nu_i)_\omega\bigg(\bigg\{x\in M:\ \bigg|\frac{1}{n}\sum_{i=0}^{n-1}\varphi(\Theta^i(\omega,x))-\int\varphi d\nu_i\bigg|<\delta,\ \forall n\geq N\bigg\}\bigg)\rightarrow 1
		\end{equation*}pointwisely for all $\omega\in\cap_{i=1}^k\Omega_{\nu_i}$ as $N\rightarrow\infty.$ By using Egorov's theorem, for any $\xi>0$ sufficiently small such that
\begin{equation}\label{xi estimation}
			\xi\leq \min\left\{ \frac{\delta}{4(\|\varphi\|_{C^0}+1)}, \ \frac{1}{4}\left(1-\frac{h_{\mu_0}(F)-4\gamma}{h_{\mu_0}(F)-3\gamma}\right)\right\},
		\end{equation}
there exists a measurable set $\Omega^1\subset \cap_{i=1}^k\Omega_{\nu_i}$ satisfying  $P(\Omega^1)\geq 1-\xi/3$ and $g_N^i(\omega)\rightarrow1$ uniformly for all $\omega\in\Omega^1$ and $i\in\{1,\dots,k\}.$  We pick $N_i^1\in\mathbb{N}$ such that
		\begin{equation*}
			g_n^i(\omega)\geq 1-\gamma, \ \forall n\geq N_i^1\mbox{ and } \forall \omega\in\Omega^1.
		\end{equation*}
		
		By Lemma \ref{lemma katok entropy}, for any $i\in\{1,2,...,k\}$, we have
		\begin{equation*}
			h_{\nu_i}(F)=\lim_{n\rightarrow \infty}\frac{1}{n}\log S_i(\omega,n,\eta,\gamma)\ \mbox{for} \ P-a.s. \omega\in\Omega,
		\end{equation*}where $S_i(\omega,n,\eta,\gamma)$ is the minimal cardinality of any $(\omega,\eta,n)-$spanning set which covers a set with $(\nu_i)_{\omega}$-measure at least $1-\gamma$. Using Egorov's theorem again, one can find a set $\Omega^2\subset \Omega^1$ with $P(\Omega^2)\geq 1-2\xi/3$ and
		\begin{equation*}
			\frac{1}{n}\log S_i(\omega,n,\eta,\gamma)\rightarrow h_{\nu_i}(F)\mbox{ uniformly for $\omega\in\Omega^2$ and for all $i\in\{1,...,k\}$.}
		\end{equation*} We pick $N_i^2\in\mathbb{N}$ such that 
		\begin{equation}\label{number of span set}
			S_i(\omega,n,\eta,\gamma)\geq e^{n(h_{\nu_i}(F)-\gamma)}\mbox{ for all $\omega\in\Omega^2$, $i\in\{1,...,k\}$ and $n\geq N_i^2$ }.
		\end{equation}We denote $N_i=\max\{N_i^1,N_i^2\}.$
		For $\omega\in \Omega^2,$  we note that
		\begin{equation}\label{deviation nonempty}
			g_{n}^i(\omega)\geq 1-\gamma,\forall n\geq N_i,
		\end{equation}
		therefore
		\begin{equation}\label{def deviation set}
			\bigg\{x\in M:\bigg|\frac{1}{n}\sum_{i=0}^{n-1}\varphi(\Theta^i(\omega,x))-\int\varphi d\nu_i\bigg|<\delta,\forall n\geq N_i\bigg\}\neq\emptyset.
		\end{equation}
\begin{remark}\label{remark mid step}
			For any $\omega\in\Omega^2$, as long as $n\geq N_i$, let $C(\omega,n,\eta)$ be some maximal $(\omega,\eta,n)$-separated set in $\{x\in M:|\frac{1}{n}\sum_{i=0}^{n-1}\varphi(\Theta^i(\omega,x))-\int\varphi d\nu_i|<\delta,\forall n\geq N_i\}$ with largest cardinality, then by \eqref{deviation nonempty} and the definition of $S_i(\omega,n,\eta,\gamma)$, we have
		\begin{equation}\label{eq number C geq e}
			\# C(\omega,n,\eta)\geq S_i(\omega,n,\eta,\gamma)\overset{\eqref{number of span set}}\geq e^{n(h_{\nu_i}(F)-\gamma)},\forall \omega\in\Omega^2.
		\end{equation}
\end{remark}
	Now by Birkhoff's Ergodic theorem, there exists a $P$-full measure set $\Omega_{\delta}$, such that for any $\omega\in\Omega_{\delta}$, we have
		\begin{equation}\label{measure of omega2}
			\lim_{n\to\infty}\frac{1}{n}\sum_{i=0}^{n-1}1_{\Omega^2}(\theta^i\omega)=P(\Omega^2)\geq 1-\frac{2}{3}\xi.
		\end{equation}
	In the following, we show Lemma \ref{Lemma lower estimates} holds for all $\omega\in\Omega_\delta$.

Step 2, we construct $\hat{n}:\Omega_\delta\to\mathbb{N}$.
		For any $\omega\in{\Omega}_{\delta},$ let  $N_3(\omega)\geq 1$ be the first integer  such that for any $n\geq N_3(\omega)$
		\begin{equation}\label{proportion of good omega}		\frac{1}{n}\sum_{i=0}^{n-1}1_{\Omega^2}(\theta^i\omega)\geq 1-\xi.
		\end{equation}Then $N_3:\Omega_\delta\to\mathbb{N}$ is measurable by noticing the following fact that for any $j\in\mathbb{N}_{\geq 2}$,
\begin{align*}
  &\ \ \ \ \{\omega\in\Omega_\delta:\ N_3(\omega)=j\}\\
  &=\left(\bigcap_{n\geq j}\{\omega\in\Omega_\delta:\ \frac{1}{n}\sum_{i=0}^{n-1}1_{\Omega^2}(\theta^i\omega)\geq 1-\xi\}\right)\cap\{\omega\in\Omega_\delta: \frac{1}{j-1}\sum_{i=0}^{j-2}1_{\Omega^2}(\theta^i\omega)< 1-\xi\}.
\end{align*} By continuity of $\varphi:\Omega\times M\to\mathbb{R}$, we can pick $\epsilon\in (0,\delta/2)$ such that
\begin{equation}\label{eq pick of epsilon lemmaproof}
      \mbox{if $d_M(x,y)<\epsilon,$ then $|\varphi(\omega,x)-\varphi(\omega,y)|<\frac{\delta}{2}$.}
\end{equation}By Theorem \ref{thm random specification},	let $m=m(\epsilon/4)$ to be the space in the fiber specification property corresponding to $\epsilon/4$, i.e., one can find a point $(\omega,\epsilon/4)$-shadowing any $m(\epsilon/4)$-spaced $\omega$-specification.
		Besides, we can choose a sufficiently large integer $N_0\in\mathbb{N}$, such that for any $n\geq N_0$ and $i\in\{1,\dots,k\}$,
		\begin{equation}\label{def of ni}
				n_i=[\lambda_in]\geq N_i,\mbox{ and }
  \frac{(k-1)m(\frac{\epsilon}{4})}{n}\leq \xi.
		\end{equation}where $\{\lambda_i\}_{i=1}^k$ are given at the beginning of proof and $[\lambda_i n]$ denotes the largest integer less than or equal to $\lambda_in$.

Pick $N_4(\delta)\in\mathbb{N}$ such that for any $n\geq N_4(\delta)$, one has
\begin{equation}\label{pick N6}
  \left|n-\sum_{i=1}^{k}[\lambda_i n]\right|\leq \frac{\delta}{2(\|\varphi\|_{C^0}+1)}n.
\end{equation}
We let
\begin{equation}\label{eq pick of n}
  \bar{n}(\omega)=\max\{N_3(\omega),N_0,N_4(\delta)\}\mbox{ for }\omega\in\Omega_\delta.
\end{equation}Then $\bar{n}:\Omega_\delta\rightarrow \mathbb{N}$ is measurable since $N_3(\omega)$ is measurable. Denote $\bar{n}_i(\cdot)=[\lambda_i\bar{n}(\cdot)]:\Omega_\delta\to\mathbb{N}$ for $i=\{1,...,k\}$, which is also measurable.

For any $\omega\in\Omega_\delta$, let $l_1(\omega)$ be the firsr integer $i\geq 0$ such that
		\begin{equation*}
			\theta^i\omega\in\Omega^2,
		\end{equation*}
		such $i$ exists due to (\ref{measure of omega2}).
		Denote $l_0^{\prime}(\omega)=0,$ and let
		\begin{equation*} l_1^{\prime}(\omega)=l_0^\prime(\omega)+l_1(\omega)+\bar{n}_1(\omega)+m(\epsilon/4).
		\end{equation*}
	For $j\in\{1,2,\dots,k-2\},$ once $l_j^{\prime}(\omega)$ is defined, let $l_{j+1}(\omega)$ be the first integer $i\geq 0$ such that $\theta^{l_j^{\prime}(\omega)+i}\omega\in\Omega^2,$ and denote
		\begin{equation*} l_{j+1}^{\prime}(\omega)=l_j^{\prime}(\omega)+l_{j+1}(\omega)+\bar{n}_{j+1}(\omega)+m(\epsilon/4).
		\end{equation*}
		 Finally, let $l_k(\omega)$ be the first integer $i\geq 0$ such that $\theta^{l_{k-1}^{\prime}(\omega)+i}\omega\in\Omega^2$, and denote
		\begin{equation}\label{def of hat n}
			\hat{n}(\omega)=l_{k-1}^{\prime}(\omega)+l_k(\omega)+\bar{n}_k(\omega).
		\end{equation}
	We note that $\hat{n}(\omega)\geq \bar{n}(\omega)$ for $\omega\in\Omega_\delta$ by construction. Note that in orbits $\{\omega,\theta\omega,...,\theta^{\hat{n}(\omega)-1}\omega\}$, there are at least $l_1(\omega)+\cdots l_k(\omega)$ points not lying in $\Omega^2$.	By (\ref{proportion of good omega}) and  \eqref{eq pick of n}, we have
		\begin{equation}\label{proportion of l}
			\frac{l_1(\omega)+\dots+l_k(\omega)}{\hat{n}(\omega)}\leq \xi.
		\end{equation}
Last step, for any $\omega\in\Omega_\delta$, we construct $(\omega,\eta/2,\hat{n}(\omega))$-separated set in $P(\alpha,4\delta,\hat{n}(\omega),\omega)$ satisfying \eqref{eq 1 hatn M geq} by using orbit gluing technique.
By the construction of $l_i(\omega)$ for $i\in\{1,\dots,k\},$  $\theta^{l_{i-1}^{\prime}(\omega)+l_i(\omega)}\omega\in\Omega^2.$ By Remark \ref{remark mid step} and $\bar{n}_i(\omega)\geq N_i$, $C(\theta^{l_{i-1}^{\prime}(\omega)+l_i(\omega)}\omega,\bar{n}_i(\omega),\eta)$ is a maximal $(\theta^{l_{i-1}^{\prime}(\omega)+l_i(\omega)}\omega,\eta,\bar{n}_i(\omega))$-separated set in the deviation set
		\begin{equation*}
			\bigg\{x\in M:\bigg|\frac{1}{n}\sum_{i=0}^{n-1}\varphi(\Theta^i(\theta^{l_{i-1}^{\prime}(\omega)+l_i(\omega)}\omega,x))-\int\varphi d\nu_i\bigg|<\delta,\forall n\geq N_i\bigg\}
		\end{equation*}satisfying that $\# C(\theta^{l_{i-1}^{\prime}(\omega)+l_i(\omega)}\omega,\bar{n}_i(\omega),\eta)\geq e^{\bar{n}_i(\omega)(h_{\nu_i}(F)-\gamma)}$. For every $k$-tuple $(x_1,\dots,x_k)$ with $x_i\in C(\theta^{l_{i-1}^{\prime}(\omega)+l_i(\omega)}\omega,\bar{n}_i(\omega),\eta)$, by Theorem \ref{thm random specification}, there is a point $y=y(x_1,\dots,x_k)\in M$, which is $(\omega,\epsilon/4)$-shadowing the pieces of orbits
		\begin{equation*}
			\{x_1,F_{\theta^{l_1(\omega)}\omega}(x_1),\dots,F^{\bar{n}_1(\omega)-1}_{\theta^{l_1(\omega)}\omega}(x_1)\},\dots,\{x_k,F_{\theta^{l_{k-1}^{\prime}(\omega)+l_k(\omega)}\omega}(x_k),\dots,F_{\theta^{l_{k-1}^{\prime}(\omega)+l_k(\omega)}\omega}^{\bar{n}_k(\omega)-1}(x_k)\}
		\end{equation*}
		with gaps $m(\epsilon/4)+l_2(\omega)$,...,$m(\epsilon/4)+l_k(\omega)$, i.e.,
\begin{equation}\label{eq lemma y shadowing xi}
  d_{\theta^{l_{i-1}^\prime(\omega)+l_i(\omega)}\omega}^{\bar{n}_i(\omega)}(x_i,F_{\omega}^{l_{i-1}^\prime(\omega)+l_i(\omega)}(y))<\frac{\epsilon}{4} \mbox{ for }i=1,...,k.
\end{equation}
 For different tuples $(x_1,\dots,x_k)$, $(x_1^{\prime},\dots,x_k^{\prime})$ with $x_i,x_i^\prime\in C(\theta^{l_{i-1}^{\prime}(\omega)+l_i(\omega)}\omega,\bar{n}_i(\omega),\eta)$, we claim that
		\begin{equation}\label{eq y eta/2 separated}		d_{\omega}^{\hat{n}(\omega)}(y(x_1,\dots,x_k),y(x_1^{\prime},\dots,x_k^{\prime}))> \eta-\frac{\epsilon}{4}\times 2= \frac{\eta}{2}.
		\end{equation}In fact, if $x_i\not=x_i^\prime$ for some $i\in\{1,...,k\}$, and denote $y_1=y(x_1,...,x_k), $ $y_2=y(x_1^\prime,...,x_k^\prime)$, then we have
\begin{align*}
   & d_{\omega}^{\hat{n}(\omega)}(y(x_1,\dots,x_k),y(x_1^{\prime},\dots,x_k^{\prime}))\\
   &\geq d_{\theta^{l_{i-1}^\prime(\omega)+l_i(\omega)}\omega}^{\bar{n}_i(\omega)}(x_i,x_i^\prime)-d_{\theta^{l_{i-1}^\prime(\omega)+l_i(\omega)}\omega}^{\bar{n}_i(\omega)}(x_i,F_\omega^{l_{i-1}^\prime(\omega)+l_i(\omega)}(y_1))-
   d_{\theta^{l_{i-1}^\prime(\omega)+l_i(\omega)}\omega}^{\bar{n}_i(\omega)}(x_i^\prime,F_\omega^{l_{i-1}^\prime(\omega)+l_i(\omega)}(y_2))\\
 &\overset{\eqref{eq lemma y shadowing xi}}>\eta-\frac{\epsilon}{4}-\frac{\epsilon}{4}=\frac{\eta}{2}.
\end{align*}
We also claim that
\begin{equation}\label{eq y subset P}
  \begin{split}
  &\ \ \ \ \{y=y(x_1,...,x_k):\ x_i\in C(\theta^{l_{i-1}^{\prime}(\omega)+l_i(\omega)}\omega,\bar{n}_i(\omega),\eta), i=1,...,k \}\\
  &\subset P(\alpha,4\delta,\hat{n}(\omega),\omega)=\left\{x\in M:\ \left|\frac{1}{\hat{n}(\omega)}\sum_{i=0}^{\hat{n}(\omega)-1}\varphi(\Theta^i(\omega,x))-\alpha\right|<4\delta\right\}.
  \end{split}
\end{equation}	
Next, we prove \eqref{eq   y subset P} for any fixed $\omega\in\Omega_\delta$. Since $\omega$ is fixed, we omit $\omega$ in $l_i(\omega)$, $l_i^\prime(\omega)$, $\hat{n}(\omega)$ and $\bar{n}_i(\omega)$ for short. On interval $\cup_{i=1}^k[l_{i-1}^\prime+l_i,l_{i-1}^\prime+l_i+\bar{n}_i-1]$, we use shadowing property \eqref{eq lemma y shadowing xi} and triangle inequality; while on other intervals, we note that
\begin{align*}
  &\#\left([0,\hat{n}-1]\backslash\cup_{i=1}^k[l_{i-1}^\prime+l_i,l_{i-1}^\prime+l_i+\bar{n}_i-1]\right)\\
  =&\#\left(\left(\cup_{i=1}^k[l_{i-1}^\prime,l_{i-1}^\prime+l_i-1]\right)\cup\left(\cup_{i=1}^{k-1}[l_{i-1}^\prime+l_i+\bar{n}_i,l_{i-1}^\prime+l_i+\bar{n}_i+m(\frac{\epsilon}{4})-1]\right)\right)\\
  =&\sum_{i=1}^{k}l_i+(k-1)m(\frac{\epsilon}{4}),
\end{align*}and we use inequality $|\varphi-\alpha|\leq 2\|\varphi\|_{C^0}$ to obtain
\begin{align*}
   & \ \ \ \bigg|\sum_{p=0}^{\hat{n}-1}\varphi(\Theta^p(\omega,y))-\hat{n}\alpha\bigg|\label{esimation 1}\\
   &\leq \left|\sum_{i=1}^{k}\left(\sum_{p=l_{i-1}^\prime+l_i}^{l_{i-1}^\prime+l_i+\bar{n}_i-1}\varphi(\Theta^p(\omega,y))-\bar{n}_i\alpha\right)\right|\\
   &\quad\quad\quad+\left|\sum_{p\in[0,\hat{n}-1]\backslash\cup_{i=1}^k[l_{i-1}^\prime+l_i,l_{i-1}^\prime+l_i+\bar{n}_i-1]}\varphi(\Theta^p(\omega,y))-\left(\sum_{i=1}^{k}l_i+(k-1)m(\frac{\epsilon}{4})\right)\alpha\right|\\
   &\leq \left|\sum_{i=1}^{k}\left(\sum_{p=l_{i-1}^\prime+l_i}^{l_{i-1}^\prime+l_i+\bar{n}_i-1}\varphi(\Theta^p(\omega,y))-\sum_{p=0}^{\bar{n}_i-1}\varphi(\Theta^p(\theta^{l_{i-1}^\prime+l_i}\omega,x_i))+\sum_{p=0}^{\bar{n}_i-1}\varphi(\Theta^p(\theta^{l_{i-1}^\prime+l_i}\omega,x_i))-\bar{n}_i\int\varphi d\nu_i\right.\right.\\
   &\quad\quad\left.\left.+\bar{n}_i\int\varphi d\nu_i-\bar{n}_i\alpha\right)\right|
   +2\Big((l_1+\cdots+l_k)+(k-1)m\Big(\frac{\epsilon}{4}\Big)\Big)\|\varphi\|_{C^0}\\
&\leq\bigg|\sum_{p=l_1}^{l_1+\bar{n}_1-1}\varphi(\Theta^p(\omega,y))-\sum_{p=0}^{\bar{n}_1-1}\varphi(\Theta^p(\theta^{l_1}\omega,x_1))\bigg|+\bigg|\sum_{p=0}^{\bar{n}_1-1}\varphi(\Theta^p(\theta^{l_1}\omega),x_1)-\bar{n}_1\int\varphi d\nu_1\bigg|+\cdots\nonumber\\
&+\bigg|\sum_{p=l_{k-1}^{\prime}+l_k}^{l_{k-1}^{\prime}+l_k+\bar{n}_k-1}\varphi(\Theta^p(\omega,y))-\sum_{p=0}^{\bar{n}_k-1}\varphi(\Theta^p(\theta^{l_{k-1}^{\prime}+l_k}\omega,x_k))\bigg|+\bigg|\sum_{p=0}^{\bar{n}_k-1}\varphi(\Theta^p(\theta^{l_{k-1}^{\prime}+l_k}\omega),x_k)-\bar{n}_k\int\varphi d\nu_k\bigg|\nonumber\\
&+\bigg|\bar{n}_1\int\varphi d\nu_1+\cdots+\bar{n}_k\int \varphi d\nu_k-(\bar{n}_1+\dots+\bar{n}_k)\alpha\bigg|+2\Big((l_1+\cdots+l_k)+(k-1)m\Big(\frac{\epsilon}{4}\Big)\Big)\|\varphi\|_{C^0}.\nonumber
\end{align*}
Combining the shadowing property and \eqref{eq pick of epsilon lemmaproof}, we have
		\begin{equation}\label{estimation 2}
			\bigg|\sum_{p=l_{j-1}^{\prime}+l_j}^{l_{j-1}^{\prime}+l_j+\bar{n}_j-1}\varphi(\Theta^p(\omega,y))-\sum_{p=0}^{\bar{n}_j-1}\varphi(\Theta^p(\theta^{l_{j-1}^{\prime}+l_j}\omega,x_j))\bigg|\leq \bar{n}_j\cdot\frac{\delta}{2},\ \forall j\in\{1,\dots,k\}.
		\end{equation}
		Besides, since $\theta^{l_{j-1}^{\prime}+l_j}\omega\in\Omega^2$ and $x_j\in C(\theta^{l_{j-1}^{\prime}+l_j}\omega,\bar{n}_j(\omega),\eta)$, we have
		\begin{equation}\label{estimation 3}
			\bigg|\sum_{p=0}^{\bar{n}_j-1}\varphi(\Theta^p(\theta^{l_{j-1}^{\prime}+l_j}\omega,x_j)-\bar{n}_j\int\varphi d\nu_j\bigg|\leq \bar{n}_j\delta,\ \forall j\in\{1,\dots,k\}.
		\end{equation}Moreover, we have
\begin{equation}\label{eq third item}
  \begin{split}
\bigg|\bar{n}_1\int\varphi d\nu_1+\cdots+\bar{n}_k\int \varphi d\nu_k-(\bar{n}_1+\dots+\bar{n}_k)\alpha\bigg|
 &\leq |\bar{n}\int \varphi d\nu-\bar{n}\alpha|+2(\bar{n}-\sum_{i=1}^{k}\bar{n}_i)\|\varphi\|_{C^0}\\
 &\leq \bar{n}\delta+2(\bar{n}-\sum_{i=1}^{k}\bar{n}_i)\|\varphi\|_{C^0}\\
 &\overset{\eqref{pick N6}}\leq 2\bar{n}\delta.
  \end{split}
\end{equation}
		Combining (\ref{proportion of l}), (\ref{estimation 2}), (\ref{estimation 3}) and \eqref{eq third  item} together, we have
		\begin{equation*}
			\bigg|\sum_{p=0}^{\hat{n}-1}\varphi(\Theta^p(\omega,y))-\hat{n}\alpha\bigg|\leq \hat{n}\cdot\frac{7\delta}{2}+2\xi\hat{n}\|\varphi\|_{C^0}+2(k-1)m\Big(\frac{\epsilon}{4}\Big)\|\varphi\|_{C^0}
		\end{equation*}
We conclude that	\begin{equation}
			\frac{1}{\hat{n}}\bigg|\sum_{p=0}^{\hat{n}-1}\varphi(\Theta^p(\omega,y))-\hat{n}\alpha\bigg|\leq \frac{7\delta}{2}+\frac{\delta}{4}+\frac{2(k-1)m(\frac{\epsilon}{4})}{\hat{n}}\|\varphi\|_{C^0}\leq 4\delta,
		\end{equation}provided \eqref{xi estimation} and $N_0$ in  \eqref{eq pick of n} sufficiently large satisfying
	\begin{equation*} \label{eq pick N delta 1epsilon} \frac{2(k-1)m(\frac{\epsilon}{4})}{n}\|\varphi\|_{C^0}\leq \frac{\delta}{4},\ \forall n\geq N_0.
	\end{equation*} Therefore, \eqref{eq   y subset P} is proved.

	Note that by \eqref{eq y eta/2 separated}, different $y=y(x_1,\dots,x_k)$ are $(\omega,\eta/2,\hat{n}(\omega))$ separated. Therefore
\begin{align*}
   M(\alpha,4\delta,\hat{n}(\omega),\frac{\eta}{2},\omega)&\geq\# C(\theta^{l_0^\prime(\omega)+l_1(\omega)}\omega,\bar{n}_1(\omega),\eta)\times\cdots\times\# C(\theta^{l_{k-1}^{\prime}(\omega)+l_k(\omega)}\omega,\bar{n}_k(\omega),\eta)\\
&\overset{\eqref{eq number C geq e}}\geq\exp\big([\lambda_1\bar{n}(\omega)](h_{\nu_1}(F)-\gamma)+\cdots+[\lambda_k\bar{n}(\omega)](h_{\nu_k}(F)-\gamma)\big)\\
&\geq\exp\big(\bar{n}(\omega)(h_{\nu}(F)-2\gamma)\big)\\
&\geq\exp\big(\bar{n}(\omega)(h_{\mu_0}(F)-3\gamma)\big),
\end{align*}provided $N_0$ in \eqref{eq pick of n} sufficiently large satisfying
\begin{equation*}
  [\lambda_1n](h_{\nu_1}(F)-\gamma)+\cdots+[\lambda_kn](h_{\nu_k}(F)-\gamma)\geq n(h_{\nu}(F)-2\gamma),\ \forall n\geq N_0.
\end{equation*} Thus, we obtain
	\begin{equation*}
		\begin{split}
		\frac{1}{\hat{n}(\omega)}\log M(\alpha,4\delta,\hat{n}(\omega),\frac{\eta}{2},\omega)&\geq
\frac{\bar{n}(\omega)(h_{\mu_0}(F)-3\gamma)}{\bar{n}_1(\omega)+\cdots+\bar{n}_k(\omega)}\cdot\frac{\bar{n}_1(\omega)+\cdots+\bar{n}_k(\omega)}{\hat{n}(\omega)}\\
&= \frac{\bar{n}(\omega)(h_{\mu_0}(F)-3\gamma)}{\bar{n}_1(\omega)+\cdots+\bar{n}_k(\omega)}\cdot\frac{\hat{n}(\omega)-(l_1(\omega)+\cdots+l_k(\omega))-(k-1)m(\frac{\epsilon}{4})}{\hat{n}(\omega)}\\
&\geq (h_{\mu_0}(F)-3\gamma)\cdot\bigg(\frac{\hat{n}(\omega)-(l_1(\omega)+\dots+l_k(\omega))}{\hat{n}(\omega)}-\frac{(k-1)m(\frac{\epsilon}{4})}{\hat{n}(\omega)}\bigg)\\
&\overset{\eqref{def of ni},\eqref{proportion of l}}\geq (1-2\xi)(h_{\mu_0}(F)-3\gamma)\\
&\overset{\eqref{xi estimation}}\geq h_{\mu_0}(F)-4\gamma,
		\end{split}
	\end{equation*}i.e., \eqref{eq 1 hatn M geq} is proved.
	It remains to show that $\hat{n}=\hat{n}(\omega):\Omega_{\delta}\to\mathbb{N}$ is measurable. Notice that
	\begin{displaymath}
		\hat{n}(\omega)=\sum_{i=1}^kl_i(\omega)+\sum_{i=1}^k\bar{n}_i(\omega)+(k-1)m\Big(\frac{\epsilon}{4}\Big),
	\end{displaymath} and we already showed the measurability of $\bar{n}_i:\Omega_\delta\to\mathbb{N}$.
    Hence, it suffices to prove that $l_i:\Omega_\delta\rightarrow\mathbb{Z}_{\geq 0}$ is measurable.
For $i=1$, notice that for any $k\geq 1$, we have
\begin{equation*}
  \{\omega\in\Omega_{\delta}:l_1(\omega)=k\}=\{\omega\in\Omega_{\delta}:\theta^k\omega\in \Omega^2\}\cap(\cap_{i=0}^{k-1}\{\omega\in\Omega_{\delta}:\theta^{i}\omega\notin\Omega^2\})
\end{equation*}
hence $l_1:\Omega_{\delta}\to\mathbb{N}$ is measurable. It follows that $l_1^{\prime}=l_1+\bar{n}_1+m(\frac{\epsilon}{4}):\Omega_{\delta}\to\mathbb{N}$ is measurable. Once the measurability of $l_{j}$ and $l_{j}^{\prime}$ is proven, for any $p\geq 1$, we have
\begin{displaymath}
	\begin{split}
		&\{\omega\in\Omega_{\delta}:l_{j+1}(\omega)=p\}
=\{\omega\in\Omega_{\delta}:\theta^{l_j^{\prime}(\omega)+p}\omega\in\Omega^2\}\cap\cap_{i=0}^{p-1}\{\omega\in\Omega_{\delta}:\theta^{l^{\prime}_{j}(\omega)+i}\omega\notin \Omega^2\}.
	\end{split}
\end{displaymath}
In fact, we have the following decomposition
\begin{displaymath}
	\begin{split} \{\omega\in\Omega_{\delta}:\theta^{l_j^{\prime}(\omega)+p}\omega\in \Omega^2\}&=\bigcup_{m\geq 0}\{\omega\in\Omega_{\delta}:l_j^{\prime}(\omega)=m,\theta^{p+m}(\omega)\in \Omega^2\}\\&=\bigcup_{m\geq 0}\left(\{\omega\in\Omega_{\delta}:l_j^{\prime}(\omega)=m\}\cap\{\omega\in\Omega_{\delta}:\theta^{p+m}\omega\in\Omega^2\}\right),
	\end{split}
\end{displaymath}
hence $\{\omega\in\Omega_{\delta}:\theta^{l_j^{\prime}(\omega)+p}\omega\in\Omega^2\}$ is measurable due to the measurability of $l_j^{\prime}.$ Similarly, one can prove that $\{\omega\in\Omega_{\delta}:\theta^{l_j^{\prime}(\omega)+i}\omega\notin\Omega^2\}$ is also measurable for $i\in\{0,\dots,p-1\}.$ Therefore, $l_{j+1}:\Omega_{\delta}\to\mathbb{N}$ is measurable and $l_{j+1}^{\prime}=l_j^{\prime}+l_{j+1}+\bar{n}_{j+1}+m(\frac{\epsilon}{4})$ is also measurable as a sum of measurable functions. By induction, $l_i$ is measurable for $i\in\{1,\dots,k\}$. The proof of Lemma \ref{Lemma lower estimates} is complete.
\end{proof}
\begin{proof}[Proof of Lemma \ref{lemma separated metric}]
	Since  $(x_0^1,\dots,x_{N_1(\omega)-1}^1)\neq (z_0^1,\dots,z_{N_1(\omega)-1}^1)$, there exists $j\in\{0,1,\dots,N_1(\omega)-1\},$ such that $x_j^1\neq z_j^1.$ By \eqref{eq shadowingproperty y}, one has
$$
  d_{\theta^{T_j^1(\omega)}\omega}^{\hat{n}_1(\theta^{T_j^1(\omega)}\omega)}\Big(x_j^1,F_{\omega}^{T_j^1(\omega)}y(x_0^1,\dots,x_{N_1(\omega)-1})\Big)<\frac{\eta}{2^{4+1}},$$ and $$
  d_{\theta^{T_j^1(\omega)}\omega}^{\hat{n}_1(\theta^{T_j^1(\omega)}\omega)}\Big(z_j^1,F_{\omega}^{T_j^1(\omega)}y(z_0^1,\dots,z_{N_1(\omega)-1})\Big)<\frac{\eta}{2^{4+1}}.
$$
Note that $x_j^1,z_j^1\in C_1(\theta^{T_j^1(\omega)}\omega)$ are $(\theta^{T_j^1(\omega)}\omega,\frac{\eta}{2},\hat{n}_1(\theta^{T_j^1(\omega)}\omega))$-separated, hence
\begin{align*}
&\ \ \ \ d_{\omega}^{T_{N_1(\omega)}^1(\omega)}(y(x_0^1,\dots,x_{N_1(\omega)-1}^1),y(z_1^1,\dots,z_{N_1(\omega)-1}^1))\\
&\geq d_{\theta^{T_j^1(\omega)}\omega}^{\hat{n}_1(\theta^{T_j^1(\omega)}\omega)}\Big(F_{\omega}^{T_j^1(\omega)}y(x_0^1,\dots,x_{N_1(\omega)-1}^1),F_{\omega}^{T_j^1(\omega)}y(z_0^1,\dots,z_{N_1(\omega)-1}^1)\Big)\\
&\geq d_{\theta^{T_j^1(\omega)}\omega}^{\hat{n}_1(\theta^{T_j^1(\omega)}\omega)}(x_j^1,z_j^1)- d_{\theta^{T_j^1(\omega)}\omega}^{\hat{n}_1(\theta^{T_j^1(\omega)}\omega)}\Big(x_j^1,F_{\omega}^{T_j^1(\omega)}y(x_0^1,\dots,x_{N_1(\omega)-1}^1)\Big)\\
&\ \ \ \ \ \ \ \ \ \ \ \ \ -d_{\theta^{T_j^1(\omega)}\omega}^{\hat{n}_1(\theta^{T_j^1(\omega)}\omega)}\Big(z_j^1,F_{\omega}^{T_j^1(\omega)}y(z_0^1,\dots,z_{N_1(\omega)-1}^1)\Big)\\
&> \frac{\eta}{2}-\frac{\eta}{2^4+1}\times 2.
\end{align*}
The proof of Lemma \ref{lemma separated  metric} is complete.
\end{proof}
\begin{proof}[Proof of Lemma \ref{lemma Hk separated}]
For any $x\in H_k(\omega),y,y^{\prime}\in D_{k+1}(\theta^{T_0^{k+1}(\omega)}\omega),$ by \eqref{z shadow xy}, one has
\begin{align*}
   d_{\omega}^{T_{N_k(\omega)}^k(\omega)}(z(x,y),z(x,y^{\prime}))&\leq  d_{\omega}^{T_{N_k(\omega)}^k(\omega)}(z(x,y),x)+ d_{\omega}^{T_{N_k(\omega)}^k(\omega)}(x,z(x,y^{\prime}))<\frac{\eta}{2^{4+k+1}}\times 2,
\end{align*}i.e., \eqref{eq 1 lemma 6.7} holds. If $y\not=y^\prime$, again by \eqref{z shadow xy} and Lemma \ref{lemma separated metric 2}, one has
\begin{align*}
 &\ \ \ \ d_{\omega}^{T_{N_{k+1}(\omega)}^{k+1}(\omega)}(z(x,y),z(x,y^{\prime}))\\
  &\geq  d_{\theta^{T_0^{k+1}(\omega)}\omega}^{T_{N_{k+1}(\omega)}^{k+1}(\omega)-T_0^{k+1}(\omega)}(y,y^{\prime})-
  d_{\theta^{T_0^{k+1}(\omega)}\omega}^{T_{N_{k+1}(\omega)}^{k+1}(\omega)-T_0^{k+1}(\omega)}(y^\prime,F_\omega^{T_0^{k+1}(\omega)}z(x,y^{\prime}))\\
  &\ \ \ \ \ \ \ \ \ \ \ -
  d_{\theta^{T_0^{k+1}(\omega)}\omega}^{T_{N_{k+1}(\omega)}^{k+1}(\omega)-T_0^{k+1}(\omega)}(y,F_\omega^{T_0^{k+1}(\omega)}z(x,y))\\
   &> \frac{\eta}{2}-\frac{\eta}{2^{4+k+1}}\times 2-\frac{\eta}{2^{4+k+1}}-\frac{\eta}{2^{4+k+1}}\geq \frac{3\eta}{8},
\end{align*}i.e., \eqref{eq 2 lemma 6.7} holds. The proof of Lemma \ref{lemma Hk separated} is complete.
\end{proof}

\begin{proof}[Proof of Lemma \ref{lemma decreased Xk}]
(1)	By \eqref{eq 3 lemma 6.7}, for any $x\not=x^\prime\in H_k(\omega)$, one has $d_{\omega}^{T_{N_k(\omega)}^k(\omega)}(x,x^{\prime})> \frac{3\eta}{8},$ hence
	\begin{displaymath}
	\overline{B}_{T_{N_k(\omega)}^k(\omega)}\Big(\omega,x,\frac{\eta}{2^{4+k}}\Big)\cap \overline{B}_{T_{N_k(\omega)}^k(\omega)}\Big(\omega,x^{\prime},\frac{\eta}{2^{4+k}}\Big)=\emptyset.
	\end{displaymath}

(2) Let $z=z(x,y)\in H_{k+1}(\omega)$ for $x\in H_k(\omega)$ and $y\in D_{k+1}(\theta^{T_0^{k+1}(\omega)}\omega).$ Pick any point $t\in\overline{B}_{T_{N_{k+1}(\omega)}^{k+1}(\omega)}(\omega,z,\frac{\eta}{2^{4+k+1}})$, we have
\begin{equation*}
  d_{\omega}^{T_{N_k(\omega)}^k(\omega)}(t,x)\leq d_{\omega}^{T_{N_k(\omega)}^k(\omega)}(t,z)+d_{\omega}^{T_{N_k(\omega)}^k(\omega)}(z,x)
  \overset{\eqref{z shadow xy}}<\frac{\eta}{2^{4+k+1}}+\frac{\eta}{2^{4+k+1}}
  =\frac{\eta}{2^{4+k}},
\end{equation*}
 Therefore, $\overline{B}_{T_{N_{k+1}(\omega)}^{k+1}(\omega)}\Big(\omega,z,\frac{\eta}{2^{4+k+1}}\Big)\subseteq \overline{B}_{T_{N_k(\omega)}^k(\omega)}\Big(\omega,x,\frac{\eta}{2^{4+k}}\Big).
$ The proof of Lemma \ref{lemma decreased Xk} is complete.
\end{proof}
\begin{proof}[Proof of Lemma \ref{lemma contained}]
For any $x\in\mathfrak{X}(\omega)$, we need to show $\lim_{n\to\infty}\frac{1}{n}\sum_{k=0}^{n-1}\varphi(\Theta^i(\omega,x))=\alpha$. So we have to estimate $|\sum_{k=0}^{n-1}\varphi(\Theta^i(\omega,x))-n\alpha|$ for $x\in \mathfrak{X}(\omega)$ and $n\in\mathbb{N}$. We are going to employ the shadowing property, triangle inequality and the estimation for points in $C_k(\theta^{T_j^k(\omega)}\omega)$, $D_k(\theta^{T_0^k(\omega)}\omega)$ and $H_k(\omega)$. The estimation is a 4-step process.

	\emph{Step1, estimation on $D_1(\omega)=H_1(\omega)$.} For any $y\in D_1(\omega),$ there exists some $x_j^1$ in $C_1(\theta^{T_j^1(\omega)}\omega)$ for $j\in\{0,\dots,N_1(\omega)-1\},$ such that
\begin{equation}\label{eq shadowing 1 lemma 6.9}
  d_{\theta^{T_j^1}\omega}^{\hat{n}_1(\theta^{T_j^1(\omega)}\omega)}\Big(x_j^1,F_{\omega}^{T_j^1(\omega)}y\Big)<\frac{\eta}{2^{4+1}}.
\end{equation}
we break the interval $[0,T_{N_1(\omega)}^1(\omega)-1]$ into
\begin{align*}
  &[0,T_0^1(\omega)-1]\cup[T_0^1(\omega),T_0^1(\omega)+\hat{n}_1(\theta^{T_0^1(\omega)}\omega)-1]\cup[T_0^1(\omega)+\hat{n}_1(\theta^{T_0^1(\omega)}\omega),T_1^1(\omega)-1]\\
  &\ \ \ \ \ \ \ \cup\cdots
  \cup[ T_{N_1(\omega)-1}^1(\omega),T_{N_1(\omega)-1}^1(\omega)+\hat{n}_1(\theta^{T_{N_1(\omega)-1}^1(\omega)}\omega)-1].
\end{align*}
On intervals $\cup_{j=0}^{N_1(\omega)-1}[T_j^1(\omega),T_j^1(\omega)+\hat{n}_1(\theta^{T_j^1(\omega)}\omega)-1]$, we use triangle inequality, shadowing property \eqref{eq shadowing 1 lemma 6.9} and notice $x_j^1\in C_1(\theta^{T_j^1(\omega)}\omega)\subset P(\alpha,4\delta_1,\hat{n}_1(\theta^{T_j^1(\omega)}\omega),\theta^{T_j^1(\omega)}\omega)$, while on other intervals we use inequality $|\varphi-\alpha|\leq 2\|\varphi\|_{C^0}$ to obtain
\begin{equation}\label{estimation on D1}
	\begin{split}		&\ \ \ \ \left|\sum_{i=0}^{T_{N_1(\omega)}^1(\omega)-1}\varphi(\Theta^i(\omega,y))-T_{N_1(\omega)}^1(\omega)\alpha\right|\\
&\leq \sum_{j=0}^{N_1(\omega)-1}\left|\sum_{i=T_j^1(\omega)}^{T_j^1(\omega)+\hat{n}_1(\theta^{T_j^1(\omega)}\omega)-1}\left(\varphi(\Theta^i(\omega,y))-\varphi(\Theta^{i-T_j^1(\omega)}(\theta^{T_j^1(\omega)}\omega,x_j^1))\right)\right|\\
&\quad\quad +\sum_{j=0}^{N_1(\omega)-1}\left|\sum_{i=T_j^1(\omega)}^{T_j^1(\omega)+\hat{n}_1(\theta^{T_j^1(\omega)}\omega)-1}\varphi(\Theta^{i-T_j^1(\omega)}(\theta^{T_j^1(\omega)}\omega,x_j^1))-\hat{n}_1(\theta^{T_j^1(\omega)}\omega)\alpha\right|\\
&\quad\quad\quad+2\left(T_0^1(\omega)+\sum_{j=1}^{N_1(\omega)-1}(m_1+l_j^1(\omega))\right)\|\varphi\|_{C^0}\\
&\leq\sum_{j=0}^{N_1(\omega)-1}\hat{n}_1(\theta^{T_j^1(\omega)}\omega)\mathrm{var}\Big(\varphi,\frac{\eta}{2^{4+1}}\Big)+4\sum_{j=0}^{N_1(\omega)-1}\hat{n}_1(\theta^{T_j^1(\omega)}\omega)\delta_1 \\
&\ \ \ \ \ \ \ \ \ \ \ \ \ \ \ \ \ \ \ \ \ \ +2\left(T_0^1(\omega)+\sum_{j=1}^{N_1(\omega)-1}(m_1+l_j^1(\omega))\right)\|\varphi\|_{C^0},
	\end{split}
\end{equation}where $\mathrm{var}(\varphi,c)=\sup\{|\varphi(\omega,x)-\varphi(\omega^\prime,x^\prime)|:\ d((\omega,x),(\omega^\prime,x^\prime))<c\}$ for some $c>0$.

\emph{Step 2, estimation on $D_k(\theta^{T_0^k(\omega)}\omega)$ for $k\geq 2$.} Suppose $y\in D_k(\theta^{T_0^k(\omega)}\omega),$ let us estimate the following difference
\begin{equation*}
  \Bigg|\sum_{i=T_0^k(\omega)}^{T_{N_k(\omega)}^k(\omega)-1}\varphi(\Theta^{i-T_0^k(\omega)}(\theta^{T_0^k(\omega)}\omega,y))-(T_{N_k(\omega)}^k(\omega)-T_0^k(\omega))\alpha\Bigg|.
\end{equation*}
 By the construction of $D_k(\theta^{T_0^k(\omega)}\omega),$ there exists $x_j^k\in C_k(\theta^{T_j^k(\omega)}\omega)$ for $j=\{0,\dots,N_k(\omega)-1\},$ such that
 \begin{equation}\label{eq shadowing 2 lemma 6.9}
   d_{\theta^{T_j^{k}(\omega)}\omega}^{\hat{n}_{k}(\theta^{T_j^{k}(\omega)}\omega)}\Big(x_j^{k}, F_{\theta^{T_0^{k}(\omega)}\omega}^{T_{j}^{k}(\omega)-T_0^{k}(\omega)}y\Big)<\frac{\eta}{2^{4+k}}.
 \end{equation}
we break the interval $[T_0^k(\omega),T_{N_k(\omega)}^k(\omega)-1]$ into
\begin{displaymath}
	\begin{split}
		&[T_0^k(\omega),T_0^k(\omega)+\hat{n}_k(\theta^{T_0^k(\omega)}\omega)-1]\cup[T_0^k(\omega)+\hat{n}_k(\theta^{T_0^k(\omega)}\omega),T_1^k(\omega)-1]\\
& \cup[T_1^k(\omega),T_1^k(\omega)+\hat{n}_k(\theta^{T_1^k(\omega)}\omega)-1]\cup[ T_{1}^k(\omega)+\hat{n}_k(\theta^{T_1^k(\omega)}\omega),T_{2}^k(\omega)-1]\\
&\ \ \ \ \cup\cdots\cup[T_{N_k(\omega)-1}^k(\omega),T_{N_k(\omega)-1}^k(\omega)+\hat{n}_k(\theta^{T_{N_k(\omega)-1}^k(\omega)}\omega)-1].
	\end{split}
\end{displaymath}
Similar as the estimation \eqref{estimation on D1}, on intervals $\cup_{j=0}^{N_k(\omega)-1}[T_j^k(\omega),T_j^k(\omega)+\hat{n}_k(\theta^{T_j^k(\omega)}\omega)-1]$, we use triangle inequality, shadowing property \eqref{eq shadowing 2 lemma 6.9} and notice $x_j^k\in C_k(\theta^{T_j^k(\omega)}\omega)\subset P(\alpha,4\delta_k,\hat{n}_k(\theta^{T_j^k(\omega)}\omega),\theta^{T_j^k(\omega)}\omega)$, while on other intervals we use $|\varphi-\alpha|\leq 2\|\varphi\|_{C^0}$ to obtain
\begin{align*}
& \Bigg|\sum_{i=T_0^k(\omega)}^{T_{N_k(\omega)}^k(\omega)-1}\varphi(\Theta^{i-T_0^k(\omega)}(\theta^{T_0^k(\omega)}\omega,y))-(T_{N_k(\omega)}^k(\omega)-T_0^k(\omega))\alpha\Bigg|\label{eq estimate on Dk}\\
\leq &\sum_{j=0}^{N_k(\omega)-1}\left|\sum_{i=T_j^k(\omega)}^{T_j^k(\omega)+\hat{n}_k(\theta^{T_j^k(\omega)}\omega)-1}\left(\varphi(\Theta^{i-T_0^k(\omega)}(\theta^{T_0^k(\omega)}\omega,y))-\varphi(\Theta^{i-T_j^k(\omega)}(\theta^{T_j^k(\omega)}\omega,x_j^k))\right)\right|\\
&\quad+\sum_{j=0}^{N_k(\omega)-1}\left|\sum_{i=T_j^k(\omega)}^{T_j^k(\omega)+\hat{n}_k(\theta^{T_j^k(\omega)}\omega)-1}\varphi(\Theta^{i-T_j^k(\omega)}(\theta^{T_j^k(\omega)}\omega,x_j^k))-\hat{n}_k(\theta^{T_j^k(\omega)}\omega)\alpha\right|\\
&\quad\quad+2\sum_{j=1}^{N_k(\omega)-1}(m_k+l_j^k(\omega))\|\varphi\|_{C^0}\\
\leq & \sum_{j=0}^{N_k(\omega)-1}\hat{n}_k(\theta^{T_j^k(\omega)}\omega)\mathrm{var}\Big(\varphi,\frac{\eta}{2^{4+k}}\Big)+4\sum_{j=0}^{N_k(\omega)-1}\hat{n}_k(\theta^{T_j^k(\omega)}\omega)\delta_k+2\sum_{j=1}^{N_k(\omega)-1}(m_k+l_j^k(\omega))\|\varphi\|_{C^0}.\nonumber	
\end{align*}

\emph{Step 3, estimation on $H_k(\omega)$.} Denote
\begin{equation}\label{eq def Rk}
  R_k(\omega)=\max_{z\in H_k(\omega)}\Bigg|\sum_{i=0}^{T_{N_k(\omega)}^k(\omega)-1}\varphi(\Theta^i(\omega,z))-T_{N_k(\omega)}^k(\omega)\alpha\Bigg|.
\end{equation}We show
\begin{equation}\label{eq estimation Rk}
	\begin{split} R_k(\omega)&\leq 2\sum_{i=1}^{k}T_{N_i(\omega)}^i(\omega)\Big(\mathrm{var}\Big(\varphi,\frac{\eta}{2^{4+i}}\Big)+2\delta_i\Big)+2\sum_{i=1}^{k}N_i(\omega)m_i\|\varphi\|_{C^0}\\
&\ \ \ \ +2\left(\sum_{i=0}^{k-1}l^{i,i+1}(\omega)+\sum_{i=1}^{k}\sum_{j=1}^{N_i-1}l_j^i(\omega)\right)\|\varphi\|_{C^0},
	\end{split}
\end{equation}
by induction on $k\in\mathbb{N}$.
When $k=1$, inequality (\ref{estimation on D1}) gives the desired estimation. Suppose now we have an upper estimation on $R_k(\omega),$ let us estimation $R_{k+1}(\omega).$ For any $z\in H_{k+1}(\omega),$ by \eqref{z shadow xy}, there exist $x\in H_k(\omega)$ and $y\in D_{k+1}(\theta^{T_0^{k+1}(\omega)}\omega),$ such that
\begin{equation}\label{eq shadowing 3 lemma 6.9}
  d_{\omega}^{T_{N_k(\omega)}^k(\omega)}(x,z)<\frac{\eta}{2^{4+k+1}},\mbox{ and } d_{\theta^{T_0^{k+1}(\omega)}\omega}^{T_{N_{k+1}(\omega)}^{k+1}(\omega)-T_0^{k+1}(\omega)}\big(y,F_{\omega}^{T_0^{k+1}(\omega)}z\big)<\frac{\eta}{2^{4+k+1}}.
\end{equation}
 We break $[0,T_{N_{k+1}(\omega)}^{k+1}(\omega)-1]$ into $$[0,T_{N_k(\omega)}^k(\omega)-1]\cup[T_{N_k(\omega)}^k(\omega),T_0^{k+1}(\omega)-1]\cup[T_0^{k+1}(\omega),T_{N_{k+1}(\omega)}^{k+1}(\omega)-1].$$ On interval $[0,T_{N_k(\omega)}^k(\omega)-1]$, we use triangle inequality, the first inequality of \eqref{eq shadowing 3 lemma 6.9}, and $R_k(\omega)$. On interval $[T_{N_k(\omega)}^k(\omega),T_0^{k+1}(\omega)-1]=[T_{N_k(\omega)}^k(\omega),T_{N_k(\omega)}^k(\omega)+m_{k+1}+l^{k,k+1}(\omega)-1]$, we use estimate $|\varphi-\alpha|\leq 2\|\varphi\|_{C^0}$. On interval $[T_0^{k+1}(\omega),T_{N_{k+1}(\omega)}^{k+1}(\omega)-1]$, we use triangle inequality, the second inequality of \eqref{eq shadowing 3 lemma 6.9} and estimate on $D_{k+1}(\theta^{T_0^{k+1}(\omega)}\omega)$. Therefore
\begin{displaymath}
	\begin{split}
&R_{k+1}(\omega)=\Bigg|\sum_{i=0}^{T_{N_{k+1}(\omega)}^{k+1}(\omega)-1}\varphi(\Theta^i(\omega,z))-T_{N_{k+1}(\omega)}^{k+1}(\omega)\alpha\Bigg|\\
\leq &\left|\sum_{i=0}^{T_{N_{k}(\omega)}^{k}(\omega)-1}\left(\varphi(\Theta^i(\omega,z))-\varphi(\Theta^i(\omega,x))\right)\right|+\left|\sum_{i=0}^{T_{N_{k}(\omega)}^{k}(\omega)-1}\varphi(\Theta^i(\omega,x))-T_{N_k(\omega)}^k(\omega)\alpha\right| \\
&\quad+2(m_{k+1}+l^{k,k+1}(\omega))\|\varphi\|_{C^0}+\left|\sum_{i=T_{0}^{k+1}(\omega)}^{T_{N_{k+1}(\omega)}^{k+1}(\omega)-1}\left(\varphi(\Theta^i(\omega,z))-\varphi(\Theta^{i-T_0^{k+1}(\omega)}(\theta^{T_0^{k+1}(\omega)}\omega,y))\right)\right|\\
&\quad\quad+\left|\sum_{i=T_{0}^{k+1}(\omega)}^{T_{N_{k+1}(\omega)}^{k+1}(\omega)-1}\varphi(\Theta^{i-T_0^{k+1}(\omega)}(\theta^{T_0^{k+1}(\omega)}\omega,y))-(T_{N_{k+1}(\omega)}^{k+1}(\omega)-T_0^{k+1}(\omega))\alpha\right|\\
\leq & T_{N_k(\omega)}^k(\omega)\mathrm{var}\Big(\varphi,\frac{\eta}{2^{4+k+1}}\Big)+R_k(\omega)+2(m_{k+1}+l^{k,k+1}(\omega))\|\varphi\|_{C^0}\\
&\ \ \ \ +\Big(T_{N_{k+1}(\omega)}^{k+1}(\omega)-T_0^{k+1}(\omega)\Big)\mathrm{var}\Big(\varphi,\frac{\eta}{2^{4+k+1}}\Big)+\sum_{j=0}^{N_{k+1}(\omega)-1}\hat{n}_{k+1}(\theta^{T_j^{k+1}(\omega)}\omega)\mathrm{var}\Big(\varphi,\frac{\eta}{2^{4+k+1}}\Big)\\
&\ \ \ \ \ \ \ \ +4\sum_{j=0}^{N_{k+1}(\omega)-1}\hat{n}_{k+1}(\theta^{T_j^{k+1}(\omega)}\omega)\delta_{k+1}+2\sum_{j=1}^{N_{k+1}(\omega)-1}(m_{k+1}+l_j^{k+1}(\omega))\|\varphi\|_{C^0}\\
\leq & R_k(\omega)+ 2T_{N_{k+1}(\omega)}^{k+1}(\omega)\mathrm{var}\Big(\varphi,\frac{\eta}{2^{4+k+1}}\Big)+2\Big(N_{k+1}(\omega)m_{k+1}+l^{k,k+1}(\omega)+\sum_{j=1}^{N_{k+1}-1}l_j^{k+1}(\omega)\Big)\|\varphi\|_{C^0}\\
&\ \ \ \ +4T_{N_{k+1}(\omega)}^{k+1}(\omega)\delta_{k+1}.
\end{split}
\end{displaymath}By the induction step, we have
\begin{displaymath}
\begin{split}
R_{k+1}(\omega)
\leq &2\sum_{i=1}^{k+1}T_{N_i(\omega)}^i(\omega)\Big(\mathrm{var}\Big(\varphi,\frac{\eta}{2^{4+i}}\Big)+2\delta_i\Big)+2\sum_{i=1}^{k+1}N_i(\omega)m_i\|\varphi\|_{C^0}\\
&\quad +2\left(\sum_{i=0}^{k}l^{i,i+1}(\omega)+\sum_{i=1}^{k+1}\sum_{j=1}^{N_i-1}l_j^i(\omega)\right)\|\varphi\|_{C^0}.
\end{split}
\end{displaymath}By induction, we obtain \eqref{eq estimation Rk}.
 Next, we claim that
  \begin{equation}\label{eq R OVER t GOES 0}
    \mbox{$R_k(\omega)/T_{N_k(\omega)}^k(\omega)\to 0$ as $k\to\infty.$}
  \end{equation} In fact, by \eqref{estimate 1 bad point} and \eqref{estimate of k bad point}, one can inductively estimate
 \begin{align*}
   T_{N_{k-1}(\omega)}^{k-1}(\omega)&\leq N_1(\omega)(\hat{n}^M_1+m_1)\prod_{i=1}^{k-1}(1-\xi_i)^{-1}+N_2(\omega)(\hat{n}^M_2+m_2)\prod_{i=2}^{k-1}(1-\xi_i)^{-1}+\\
   &\ \ \ \ \cdots+ N_{k-1}(\omega)(\hat{n}_{k-1}^M+m_{k-1})(1-\xi_{k-1})^{-1}.
 \end{align*}
By using (\ref{choose of Nk}), we have
\begin{equation}\label{quotient of T} \limsup_{k\rightarrow\infty}\frac{T_{N_{k-1}(\omega)}^{k-1}(\omega)}{T_{N_k(\omega)}^k(\omega)}\leq \limsup_{k\rightarrow\infty}\frac{ \sum_{j=1}^{k-1}(N_j(\omega)\cdot(\hat{n}_j^M+m_j)\cdot\prod_{i=j}^{k-1}(1-\xi_i)^{-1})}{N_k(\omega)}= 0.
\end{equation}Using Stolz's theorem, since the sequence $\{T_{N_k(\omega)}^k(\omega)\}_k$ is strictly increasing and  $T_{N_k(\omega)}^k(\omega)\to\infty$ as $k\to\infty$, we have
\begin{equation*}\label{quottient term 1}
  \lim_{k\to\infty}\frac{2\sum_{i=1}^{k}T_{N_i(\omega)}^i(\omega)(\mathrm{var}(\varphi,\frac{\eta}{2^{4+i}})+2\delta_i)}{T_{N_k(\omega)}^k(\omega)}=\lim_{k\to\infty}\frac{2T_{N_k(\omega)}^k(\omega)(\mathrm{var}(\varphi,\frac{\eta}{2^{4+k}})+2\delta_k)}{T_{N_k(\omega)}^k(\omega)-T_{N_{k-1}(\omega)}^{k-1}(\omega)}=0.
\end{equation*}
Noticing that $\hat{n}_k(\omega)\geq 2^{m_k}$ for all $k\in\mathbb{N}$, so by construction of $T_{N_k(\omega)}^k(\omega)$, we have $T_{N_k(\omega)}^k(\omega)\geq \sum_{i=1}^k N_i(\omega)\cdot 2^{m_i}.$ Hence, by using Stolz's theorem again, we have
\begin{equation*}\label{quotient term 2}
  \lim_{k\to\infty}\frac{2\sum_{i=1}^k N_i(\omega)m_i\|\varphi\|_{C^0}}{T_{N_k(\omega)}^k(\omega)}\leq \lim_{k\to\infty}\frac{2\sum_{i=1}^k N_i(\omega)m_i\|\varphi\|_{C^0}}{\sum_{i=1}^kN_i(\omega)\cdot 2^{m_i}}= \lim_{k\to\infty}\frac{2N_k(\omega)m_k\|\varphi\|_{C^0}}{N_k(\omega)\cdot 2^{m_k}}=0.
\end{equation*}
Finally, using  (\ref{estimate of k bad point}) and \eqref{quotient of T}, we have
\begin{align*}
 &\ \ \ \ \limsup_{k\to\infty}\frac{2(\sum_{i=0}^{k-1}l^{i,i+1}(\omega)+\sum_{i=1}^{k}\sum_{j=1}^{N_i-1}l_j^i(\omega))\|\varphi\|_{C^0}}{T_{N_k(\omega)}^k(\omega)}\\
 &=\limsup_{k\to\infty}\frac{2(\sum_{i=0}^{k-2}l^{i,i+1}(\omega)+\sum_{i=1}^{k-1}\sum_{j=1}^{N_i-1}l_j^i(\omega))\|\varphi\|_{C^0}+2(l^{k-1,k}(\omega)+\sum_{j=1}^{N_k-1}l_j^k(\omega))\|\varphi\|_{C^0}}{T_{N_k(\omega)}^k(\omega)}\\\\
 &\leq\limsup_{k\to\infty}\frac{2T_{N_{k-1}(\omega)}^{k-1}(\omega)\|\varphi\|_{C^0}+2(l^{k-1,k}(\omega)+\sum_{j=1}^{N_k-1}l_j^k(\omega))\|\varphi\|_{C^0}}{T_{N_k(\omega)}^k(\omega)}\\
 &\leq\limsup_{k\to\infty}2\left(\frac{T_{N_{k-1}(\omega)}^{k-1}(\omega)}{T_{N_k(\omega)}^k(\omega)}+\xi_{k+1}\right)\|\varphi\|_{C^0}=0.
\end{align*}
Combining all above together, we arrive ${R_k}(
\omega)/{T_{N_{k}(\omega)}^k(\omega)}\to 0$ as $k\to\infty.$

\emph{Last step, estimation on $\mathfrak{X}(\omega).$} Given $x\in\mathfrak{X}(\omega)$, we are going to estimate $|\frac{1}{n}\sum_{k=0}^{n-1}\varphi(\Theta^i(\omega,x))-\alpha|$ for $n\in\mathbb{N}$ large enough by using previous estimates. Note that for any $n\geq T_0^2(\omega)$, there exists a unique $k\geq 1$, such that
\begin{displaymath}
	T_{N_k(\omega)}^k(\omega)<n\leq T_{N_{k+1}(\omega)}^{k+1}(\omega).
\end{displaymath} More precisely, there are two cases:
\begin{enumerate}
	\item [Case 1.] $T_{N_k(\omega)}^k(\omega)<n\leq T_{N_k(\omega)}^k(\omega)+l^{k,k+1}(\omega)+m_{k+1}=T^{k+1}_0(\omega);$
	\item [Case 2.] $T_0^{k+1}(\omega)<n\leq T_{N_{k+1}(\omega)}^{k+1}(\omega).$
\end{enumerate}

In case 1: One has
\begin{equation*}
  n- T_{N_k(\omega)}^k(\omega)\leq m_{k+1}+l^{k,k+1}(\omega)\overset{\eqref{estimate of k bad point}}\leq  m_{k+1}+\xi_{k+1}n.
\end{equation*}
 Since $x\in\mathfrak{X}(\omega),$ by the construction of $\mathfrak{X}(\omega),$ there exists some $z\in H_k(\omega)$ such that $d_\omega^{T_{N_k(\omega)}^k(\omega)}(z,x)<\frac{\eta}{2^{4+k}}$.
Now by using triangle inequality, one has
\begin{align*}
   &\quad\Bigg|\frac{1}{n}\sum_{i=0}^{n-1}\varphi(\Theta^i(\omega,x))-\alpha\Bigg|\\
   &\leq \frac{1}{n}\Bigg(\Bigg|\sum_{i=0}^{T_{N_k(\omega)}^k(\omega)-1}\varphi(\Theta^i(\omega,x))-\sum_{i=0}^{T_{N_k(\omega)}^k(\omega)-1}\varphi(\Theta^i(\omega,z))\Bigg|+\Bigg|\sum_{i=0}^{T_{N_k(\omega)}^k(\omega)-1}\varphi(\Theta^i(\omega,z))-T_{N_k(\omega)}^k(\omega)\alpha\Bigg|\\
   &\quad\quad+\Bigg|\sum_{i=T_{N_k(\omega)}^k(\omega)}^{n-1}\varphi(\Theta^i(\omega,x))-(n-T_{N_k(\omega)}^k(\omega))\alpha\Bigg|\Bigg)\\
   &\leq\frac{1}{n}\Big(T_{N_k(\omega)}^k(\omega)\mathrm{var}\Big(\varphi,\frac{\eta}{2^{4+k}}\Big)+R_k(\omega)+2(\xi_{k+1}n
		+m_{k+1})\|\varphi\|_{C^0}\Big)\\
&\leq \mathrm{var}\Big(\varphi,\frac{\eta}{2^{4+k}}\Big)+R_k(\omega)/T_{N_k(\omega)}^k(\omega)+2(\xi_{k+1}+\frac{m_{k+1}}{N_k})\|\varphi\|_{C^0},
\end{align*}
which tends to zero as $n\to\infty$ by \eqref{eq R OVER t GOES 0} and \eqref{choose of Nk}.

In case 2: there exists some $j\in\{0,1,\dots,N_{k+1}(\omega)-1\}$ such that $ T_{j}^{k+1}(\omega)<n\leq T_{j+1}^{k+1}(\omega).$
Since $x\in\mathfrak{X}(\omega),$ by construction of $\mathfrak{X}(\omega)$ \eqref{construction of hua H}, there exists some $z\in H_{k+1}(\omega)$ such that
\begin{equation}\label{distance zx}
	d_{\omega}^{T_{N_{k+1}(\omega)}^{k+1}(\omega)}\big(z,x\big)<\frac{\eta}{2^{4+k+1}}.
\end{equation}
For such $z\in H_{k+1}(\omega)$, by \eqref{z shadow xy}, there exist some $\bar{x}\in H_k(\omega)$ and $y\in D_{k+1}(\theta^{T_0^{k+1}(\omega)}\omega)$ such that
\begin{equation}\label{distance zyxbar}
	d_{\omega}^{T_{N_k(\omega)}^k(\omega)}(z,\bar{x})<\frac{\eta}{2^{4+k+1}},\quad\mbox{and } d_{\theta^{T_0^{k+1}(\omega)}\omega}^{T_{N_{k+1}(\omega)}^{k+1}(\omega)-T_0^{k+1}(\omega)}\Big(y,F_{\omega}^{T_0^{k+1}(\omega)}z\Big)<\frac{\eta}{2^{4+k+1}}.
\end{equation}
Combining (\ref{distance zx}) and (\ref{distance zyxbar}) together, we have
\begin{equation}\label{x shadowed by x bar}
  d_{\omega}^{T_{N_k(\omega)}^k(\omega)}(x,\bar{x})<\frac{\eta}{2^{4+k}}.
\end{equation}
On the other hand, since $y\in D_{k+1}(\theta^{T_0^{k+1}(\omega)}\omega),$ there exists some $x_i^{k+1}\in C_{k+1}(\theta^{T_i^{k+1}(\omega)}\omega)$ such that
\begin{equation}\label{distance xy}
	d_{\theta^{T_i^{k+1}(\omega)}\omega}^{\hat{n}_{k+1}(\theta^{T_i^{k+1}(\omega)}\omega)}\Big(x_i^{k+1},F_{\theta^{T_0^{k+1}(\omega)}\omega}^{T_i^{k+1}(\omega)-T_0^{k+1}(\omega)}y\Big)<\frac{\eta}{2^{4+k+1}}, \mbox{ for } i\in\{0,\dots,N_{k+1}-1\}.
\end{equation}
For $i\in\{0,1,\dots,j-1\}$, combining (\ref{distance zx}), (\ref{distance zyxbar}) and (\ref{distance xy}) together gives
\begin{equation}\label{x shadowed by xi}
  d_{\theta^{T_i^{k+1}(\omega)}\omega}^{\hat{n}_{k+1}(\theta^{T_i^{k+1}(\omega)}\omega)}\Big(F_{\omega}^{T_{i}^{k+1}(\omega)}x,x_i^{k+1}\Big)<\frac{\eta}{2^{4+k+1}}+\frac{\eta}{2^{4+k+1}}+\frac{\eta}{2^{4+k+1}}<\frac{\eta}{2^{3+k}}.
\end{equation}
In case $T_{j}^{k+1}(\omega)<n\leq T_{j+1}^{k+1}(\omega)$, then either
\begin{equation}\label{1 subcase 1}
  T_j^{k+1}(\omega)<n\leq T_j^{k+1}(\omega)+\hat{n}_{k+1}(\theta^{T_j^{k+1}(\omega)}\omega)
\end{equation} or
\begin{equation}\label{1 subcase 2}
  T_{j}^{k+1}(\omega)+\hat{n}_{k+1}(\theta^{T_j^{k+1}(\omega)}\omega)<n\leq T_{j+1}^{k+1}(\omega)=T_{j}^{k+1}(\omega)+\hat{n}_{k+1}(\theta^{T_j^{k+1}(\omega)}\omega)+m_{k+1}+l_{j+1}^{k+1}(\omega).
\end{equation}
In subcases \eqref{1 subcase 1}: on interval $[0,T_{N_k(\omega)}^k(\omega)-1]$, we use shadowing \eqref{x shadowed by x bar} and estimate on $R_k(\omega)$; on interval $\cup_{i=0}^{j-1}[T_i^{k+1}(\omega),T_{i}^{k+1}(\omega)+\hat{n}_{k+1}(\theta^{T_i^{k+1}(\omega)}\omega)-1]$ (with the convention that $\cup_{i=0}^{-1}=\emptyset$), we use shadowing \eqref{x shadowed by xi} and the fact $x_i^{k+1}\in C_{k+1}(\theta^{T_i^{k+1}(\omega)}\omega)\subset P(\alpha,4\delta_{k+1},\hat{n}_{k+1}(\theta^{T_i^{k+1}(\omega)}\omega),\theta^{T_i^{k+1}(\omega)}\omega)$; on other intervals, we use inequality $|\varphi-\alpha|\leq 2\|\varphi\|_{C^0}$ to obtain
\begin{align*}
  &\quad \frac{1}{n}\bigg|\sum_{i=0}^{n-1}\varphi(\Theta^i(\omega,x))-n\alpha\bigg|\\
  &\leq \frac{1}{n}\left\{\Bigg|\sum_{i=0}^{T_{N_k(\omega)}^k(\omega)-1}\varphi(\Theta^i(\omega,x))-\sum_{i=0}^{T_{N_k(\omega)}^k(\omega)-1}\varphi(\Theta^i(\omega,\bar{x}))\Bigg|+\Bigg|\sum_{i=0}^{T_{N_k(\omega)}^k(\omega)-1}\varphi(\Theta^i(\omega,\bar{x}))-T_{N_k(\omega)}^k(\omega)\alpha\Bigg|\right.\\
&\quad+\sum_{i=0}^{j-1}\left|\sum_{p=T_i^{k+1}(\omega)}^{T_i^{k+1}(\omega)+\hat{n}_{k+1}(\theta^{T_i^{k+1}(\omega)}\omega)-1}(\varphi(\Theta^p(\omega,x))-\varphi(\Theta^{p-T_i^{k+1}(\omega)}(\theta^{T_i^{k+1}(\omega)}\omega,x_i^{k+1})))\right|\\
&\quad\quad \left.+2l^{k,k+1}(\omega)\|\varphi\|_{C^0} +2\sum_{i=1}^jl_i^{k+1}(\omega)\|\varphi\|_{C^0}+2(j+1)m_{k+1}\|\varphi\|_{C^0}+2(n-T_j^{k+1}(\omega))\|\varphi\|_{C^0}\right\}\\
  &\leq\frac{1}{n}\left\{\Bigg|\sum_{i=0}^{T_{N_k(\omega)}^k(\omega)-1}\varphi(\Theta^i(\omega,x))-\sum_{i=0}^{T_{N_k(\omega)}^k(\omega)-1}\varphi(\Theta^i(\omega,\bar{x}))\Bigg|+\Bigg|\sum_{i=0}^{T_{N_k(\omega)}^k(\omega)-1}\varphi(\Theta^i(\omega,\bar{x}))-T_{N_k(\omega)}^k(\omega)\alpha\Bigg|\right.\\
&\quad+2l^{k,k+1}(\omega)\|\varphi\|_{C^0}+2\sum_{i=1}^jl_i^{k+1}(\omega)\|\varphi\|_{C^0}+2(j+1)m_{k+1}\|\varphi\|_{C^0}\\
&\quad\quad\left.+\sum_{i=0}^{j-1}\hat{n}_{k+1}(\theta^{T_i^{k+1}(\omega)}\omega)\Big[\mathrm{var}\Big(\varphi,\frac{\eta}{2^{3+k}}\Big)+4\delta_{k+1}\Big]+2\hat{n}_{k+1}^M\|\varphi\|_{C^0}\right\}\\
&\leq\frac{1}{n}\left\{T_{N_k(\omega)}^k(\omega)\mathrm{var}\Big(\varphi,\frac{\eta}{2^{4+k}}\Big)+R_k(\omega)+2\xi_{k+1}n\|\varphi\|_{C^0}+n\Big[\mathrm{var}\Big(\varphi,\frac{\eta}{2^{3+k}}\Big)+4\delta_{k+1}\Big]\right.\\
&\quad\quad\left.+2(\hat{n}_{k+1}^M+(j+1)m_{k+1})\|\varphi\|_{C^0}\right\}\\
&\leq \mathrm{var}\Big(\varphi,\frac{\eta}{2^{4+k}}\Big)+\frac{R_k(\omega)}{T_{N_k(\omega)}^k(\omega)}+\mathrm{var}\Big(\varphi,\frac{\eta}{2^{3+k}}\Big)+4\delta_{k+1}+2(\xi_{k+1}+\frac{\hat{n}_{k+1}^M}{N_k(\omega)}+\frac{(j+1)m_{k+1}}{N_k(\omega)+j2^{m_{k+1}}})\|\varphi\|_{C^0},
\end{align*} with the convention that $\sum_{i=0}^{-1}=\sum_{i=1}^{0}=0$.
Hence $|\frac{1}{n}\sum_{i=0}^{n-1}\varphi(\Theta^i(\omega,x))-\alpha|\to 0$ as $n\to \infty$ by using previous estimates.

In subcase \eqref{1 subcase 2}, the estimate \eqref{x shadowed by xi} holds for $i\in\{0,1,...,j\}$. Moreover, we notice $n- T_{j}^{k+1}(\omega)-\hat{n}_{k+1}(\theta^{T_j^{k+1}(\omega)}\omega)\leq m_{k+1}+\xi_{k+1}n$. Use the same strategy as in subcase \eqref{1 subcase 1}, we obtain
\begin{align*}
&\quad \frac{1}{n}\bigg|\sum_{i=0}^{n-1}\varphi(\Theta^i(\omega,x))-n\alpha\bigg|\\
&\leq\frac{1}{n}\left\{\Bigg|\sum_{i=0}^{T_{N_k(\omega)}^k(\omega)-1}\varphi(\Theta^i(\omega,x))-\sum_{i=0}^{T_{N_k(\omega)}^k(\omega)-1}\varphi(\Theta^i(\omega,\bar{x}))\Bigg|+\Bigg|\sum_{i=0}^{T_{N_k(\omega)}^k(\omega)-1}\varphi(\Theta^i(\omega,\bar{x}))-T_{N_k(\omega)}^k(\omega)\alpha\Bigg|\right.\\
&\quad+2l^{k,k+1}(\omega)\|\varphi\|_{C^0}+2\sum_{i=1}^jl_i^{k+1}(\omega)\|\varphi\|_{C^0}+2\xi_{k+1}n\|\varphi\|_{C^0}+2(\hat{n}_{k+1}^M+m_{k+1})\|\varphi\|_{C^0}\\
&\quad\quad\left.+2(j+1)m_{k+1}\|\varphi\|_{C^0}+\sum_{i=0}^{j}\hat{n}_{k+1}(\theta^{T_i^{k+1}(\omega)}\omega)\Big[\mathrm{var}\Big(\varphi,\frac{\eta}{2^{3+k}}\Big)+4\delta_{k+1}\Big]\right\}\\
&\leq \frac{1}{n}\left\{T_{N_k(\omega)}^k(\omega)\mathrm{var}\Big(\varphi,\frac{\eta}{2^{4+k}}\Big)+R_k(\omega)+4\xi_{k+1}n\|\varphi\|_{C^0}+n\Big[\mathrm{var}\Big(\varphi,\frac{\eta}{2^{3+k}}\Big)+4\delta_{k+1}\Big]\right.\\
&\quad\quad \left.+2(\hat{n}_{k+1}^M+(j+2)m_{k+1})\|\varphi\|_{C^0}\right\}\\
&\leq \frac{R_k(\omega)}{T_{N_k(\omega)}^k(\omega)}+2\mathrm{var}\left(\varphi,\frac{\eta}{2^{3+k}}\right)+4\delta_{k+1}+2\left(2\xi_{k+1}+\frac{\hat{n}_{k+1}^M}{N_k(\omega)}+\frac{(j+2)m_{k+1}}{N_k(\omega)+j2^{m_{k+1}}}\right)\|\varphi\|_{C^0},
\end{align*}with the convention that $\sum_{i=1}^{0}=0$.
Hence $|\frac{1}{n}\sum_{i=0}^{n-1}\varphi(\Theta^i(\omega,x))-\alpha|\to 0$ as $n\to \infty$ by using previous estimates. Combining Case 1 and Case 2, we obtain $\mathfrak{X}(\omega)\subset K_{\varphi,\alpha}(\omega)$. The proof of lemma \ref{lemma contained} is complete.
\end{proof}
\begin{proof}[Proof of Lemma \ref{lemma limit measure}]
	It's sufficient to show that $\{\int_M\psi d\mu_{k,\omega}\}_{k=1}^{\infty}$ is a Cauchy sequence. Given $\delta>0$, for any sufficiently large $K$ such that $\mathrm{var}(\psi,\frac{\eta}{2^{4+K}})<\delta$, let $k_2> k_1\geq K$ be any numbers, then
\begin{equation*}
  \int_M\psi d\mu_{k_i,\omega}=\frac{1}{\# H_{k_i}(\omega)}\sum_{x\in H_{k_i}(\omega)}\psi(x)\mbox{ for }i=1,2.
\end{equation*}
 For any $x\in H_{k_1}(\omega),$ denote $Z(x)$ to be the collection of points $z$ in $H_{k_2}(\omega)$ such that $z$ descends from $x$, i.e., there is a sequence $z_{k_1+1}\in H_{k_1+1}(\omega),\dots,z_{k_2-1}\in H_{k_2-1}(\omega)$ satisfying
 \begin{equation}\label{def descend}
 	\begin{split}
 		z&=z(z_{k_2-1},y_{k_2}),\ \exists\ y_{k_2}\in D_{k_2}(\theta^{T_0^{k_2}(\omega)}\omega),\\
 z_{k_2-1}&=z(z_{k_2-2}, y_{k_2-1}),\ \exists
 \ y_{k_2-1}\in D_{k_2}(\theta^{T_0^{k_2-1}(\omega)}\omega),\\
 &\cdots\\
 z_{k_1+1}&=z(x,y_{k_1+1}),\ \exists\ y_{k_1+1}\in D_{k_1+1}(\theta^{T_0^{k_1+1}(\omega)}\omega).
 	\end{split}
 \end{equation}
It follows that $\#Z(x)=\# D_{k_1+1}(\theta^{T_0^{k_1+1}(\omega)}\omega)\cdots\# D_{k_2}(\theta^{T_0^{k_2}(\omega)}\omega)$, and by \eqref{number Hk},
\begin{equation}\label{Hk2=HK1ZX}
  \# H_{k_2}(\omega)=\# H_{k_1}(\omega)\cdot\# Z(x),\forall x\in H_{k_1}(\omega).
\end{equation}
By \eqref{z shadow xy}, for any $z\in Z(x),$ we have
\begin{equation}\label{distance descend}
	\begin{split}
		d_M(x,z)
\leq& d_{\omega}^{T_{N_{k_1}(\omega)}^{k_1}(\omega)}(x,z)\\
\leq & d_{\omega}^{T_{N_{k_1}(\omega)}^{k_1}(\omega)}(x,z_{k_1+1})+d_{\omega}^{T_{N_{k_1+1}(\omega)}^{k_1+1}(\omega)}(z_{k_1+1},z_{k_1+2})+\cdots+ d_{\omega}^{T_{N_{k_2-1}(\omega)}^{k_2-1}(\omega)}(z_{k_2-1},z)\\
\leq& \frac{\eta}{2^{4+k_1+1}}+\frac{\eta}{2^{4+k_1+2}}+\cdots+\frac{\eta}{2^{4+k_2}}\\
\leq &\frac{\eta}{2^{4+k_1}}.
	\end{split}
\end{equation}
Hence
\begin{displaymath}
	\begin{split}
		\Big|\int_M\psi d\mu_{k_1,\omega}-\int_M\psi d\mu_{k_2,\omega}\Big|&\overset{\eqref{Hk2=HK1ZX}}\leq \frac{1}{\# H_{k_2}(\omega)}\sum_{x\in H_{k_1}(\omega)}\sum_{z\in Z(x)}|\psi(x)-\psi(z)|\\&\overset{\eqref{distance descend}}\leq \mathrm{var}\Big(\psi,\frac{\eta}{2^{4+k_1}}\Big)\leq \mathrm{var}\Big(\psi,\frac{\eta}{2^{4+K}}\Big)<\delta.
	\end{split}
\end{displaymath}
 As a consequence, $\{\int_M\psi d\mu_{k,\omega}\}_{k=1}^{\infty}$ is a Cauchy sequence. The proof of Lemma \ref{lemma limit measure} is complete.
\end{proof}
\begin{proof}[Proof of Lemma \ref{X full measure}]
	For any $k\in\mathbb{N}, p\geq 0$, since $\mathfrak{X}_{k+p}(\omega)\subseteq \mathfrak{X}_k(\omega)$ and $\mu_{k+p,\omega}(\mathfrak{X}_{k+p}(\omega))=1,$ therefore, $\mu_{k+p,\omega}(\mathfrak{X}_k(\omega))=1.$ Note that $\mathfrak{X}_k(\omega)$ is a closed set, by the Portmanteau theorem, we have
	\begin{displaymath}
	\mu_{\omega}(\mathfrak{X}_k(\omega))\geq \limsup_{p\to\infty}\mu_{k+p,\omega}(\mathfrak{X}_k(\omega))=1.
	\end{displaymath}
    Since $\mathfrak{X}(\omega)=\cap_{k\geq 1}\mathfrak{X}_k(\omega),$ it follows that $\mu_{\omega}(\mathfrak{X}(\omega))=1.$ The proof is complete.
\end{proof}

\begin{proof}[Proof of Lemma \ref{lemma entropy distribution}]
 We first define $N(\omega)$ in the statement of Lemma \ref{lemma entropy distribution}.

\emph{Pick of $N(\omega)$.} For $\omega\in\tilde{\Omega}_\gamma$, any $k\in\mathbb{N}$ and $j=\{0,...,N_{k}(\omega)-1\}$, by the previous construction, we have $2^{m_k}\leq \hat{n}_k(\theta^{T_j^k(\omega)}\omega)\leq \hat{n}_k^M$ and $N_k(\omega)\geq 2^{N_1(\omega)(\hat{n}_1^M+m_1)+\cdots+ N_{k-1}(\omega)(\hat{n}_{k-1}^M+m_{k-1})}$. Then for any $\omega\in\tilde{\Omega}_\gamma$, there exists some $k_1(\omega)\in\mathbb{N},$ such that for any $k\geq k_1(\omega),$ and any $i\in\{0,1,...,N_{k+1}-1\}$, one has
\begin{equation}\label{estimation 1.9}
\begin{split}
&\quad\exp\Big((h_{\mu_0}(F)-4\gamma)\Big(\sum_{j=0}^{N_1(\omega)-1}\hat{n}_1(\theta^{T_j^1(\omega)}\omega)+\cdots+\sum_{j=0}^{N_k(\omega)-1}\hat{n}_k(\theta^{T_j^k(\omega)}\omega)+\sum_{j=0}^{i}\hat{n}_{k+1}(\theta^{T_j^{k+1}(\omega)}\omega)\Big)\Big)\\
&\geq \exp\Big(\Big(h_{\mu_0}(F)-\frac{9\gamma}{2}\Big)\Big(\sum_{j=0}^{N_1(\omega)-1}(\hat{n}_1(\theta^{T_j^1(\omega)}\omega)+m_1)+\cdots+\sum_{j=0}^{N_k(\omega)-1}(\hat{n}_k(\theta^{T_j^k(\omega)}\omega)+m_k)\\
&\quad\quad+\sum_{j=0}^{i}(\hat{n}_{k+1}(\theta^{T_j^{k+1}(\omega)}\omega)+m_{k+1})\Big)\Big).
\end{split}
\end{equation}
Note also that by (\ref{quotient of T}), we have $T_{N_{k-1}(\omega)}^{k-1}(\omega)/T_{N_{k}(\omega)}^{k}(\omega)\to 0$ as $k\to\infty,$ hence there exists some $k_2(\omega)>k_1(\omega)$, such that for any $k\geq k_2(\omega)$, we have
\begin{equation}\label{estimation 1.10} \frac{T_{N_{k-1}(\omega)}^{k-1}(\omega)}{T_{N_k(\omega)}^k(\omega)}+\xi_k+\xi_{k+1}+\frac{\hat{n}_{k+1}^M+m_{k+1}}{N_k(\omega)}<\frac{{\gamma}/{2}}{(h_{\mu_0}(F)-4\gamma)-\gamma/2}.
\end{equation}
Define $N(\omega)=T_{N_{k_2(\omega)}(\omega)}^{k_2(\omega)}(\omega)+1$.

Now, we start proving lemma \ref{lemma entropy distribution} for $n\geq N(\omega)$. Note that $n\geq N(\omega),$ then there exists some $k\geq k_2(\omega)$ such that	$T_{N_k(\omega)}^k(\omega)<n\leq T_{N_{k+1}(\omega)}^{k+1}(\omega).$	For any open set $B_n(\omega,x,\frac{\eta}{2^4})$, by the weak$^*$ convergence of measure, we have
	\begin{displaymath}
		\begin{split} \mu_{\omega}\Big(B_n\Big(\omega,x,\frac{\eta}{2^4}\Big)\Big)&\leq \liminf_{p\to\infty}\mu_{k+p,\omega}\Big(B_n\Big(\omega,x,\frac{\eta}{2^4}\Big)\Big).
		\end{split}
	\end{displaymath}
Next, we wish to estimate
\begin{equation}\label{mu k+p omega Bn}
  \mu_{k+p,\omega}\Big(B_n\Big(\omega,x,\frac{\eta}{2^4}\Big)\Big)=\frac{1}{\# H_{k+p}(\omega)}\cdot \#\Big\{z\in H_{k+p}(\omega):z\in B_n\Big(\omega,x,\frac{\eta}{2^4}\Big)\Big\}.
\end{equation}
As in the proof of lemma \ref{lemma contained}, there are $3$ cases:
 \begin{enumerate}
 	\item [Case 1.]$T_{N_k(\omega)}^k(\omega)<n\leq T_{N_k(\omega)}^k+l^{k,k+1}(\omega)+m_{k+1}=T_{0}^{k+1}(\omega);$
 	\item[Case 2.] There exists $j\in \{0,\dots,N_{k+1}(\omega)-1\},$ such that $T_{j}^{k+1}(\omega)<n\leq T_j^{k+1}(\omega)+\hat{n}_{k+1}(\theta^{T_j^{k+1}(\omega)}\omega);$
 	\item[Case 3.] There exists   $j\in\{0,\dots,N_{k+1}(\omega)-1\},$ such that $$T_j^{k+1}(\omega)+\hat{n}_{k+1}(\theta^{T_j^{k+1}(\omega)}\omega)<n\leq T_{j+1}^{k+1}(\omega)=T_j^{k+1}+\hat{n}_{k+1}(\theta^{T_j^{k+1}(\omega)}\omega)+m_{k+1}+l^{k+1}_{j+1}(\omega).$$
 \end{enumerate}

\emph{In Case 1.} We divide the proof into $3$ steps.
Firstly, we show $\#\{z\in H_k(\omega):z\in B_n(\omega,x,\frac{\eta}{2^4})\}\leq 1$. If there are $z_1\neq z_2\in H_k(\omega),$ such that $z_1,z_2\in B_n(\omega,x,\frac{\eta}{2^4}),$ then
\begin{equation*}
  	d_\omega^n(z_1,z_2)<\frac{\eta}{2^4}\times 2=\frac{\eta}{2^3}.
\end{equation*}
However, we have $d_{\omega}^n(z_1,z_2)\geq d_{\omega}^{T_{N_k(\omega)}^k(\omega)}(z_1,z_2)> \frac{3\eta}{8}$ by \eqref{eq 3 lemma 6.7}, which leads to a contradiction.

Secondly, we show for any $p\geq 1 $,
\begin{equation}\label{case 1 step 2}
  \#\{z\in H_{k+p}(\omega):z\in B_n(\omega,x,\frac{\eta}{2^4})\}\leq \# D_{k+1}(\theta^{T_0^{k+1}}\omega)\cdots\# D_{k+p}(\theta^{T_0^{k+p}}\omega).
\end{equation}
 If there are different points $z_1, z_2\in H_{k+p}(\omega)\cap B_n(\omega,x,\frac{\eta}{2^4})$ such that $z_1$ descends from $x_1\in H_k(\omega)$ and $z_2$ descends from $x_2\in H_k(\omega)$  defined as \eqref{def descend}, then we claim that $x_1=x_2$. In fact, if $x_1\neq x_2,$ by \eqref{eq 3 lemma 6.7}, we have $d_{\omega}^{T_{N_k(\omega)}^k(\omega)}(x_1,x_2)>\frac{3\eta}{8}.$ But, we also have
\begin{align*}
   d_{\omega}^{T_{N_k(\omega)}^k(\omega)}(x_1,x_2)&\leq d_{\omega}^{T_{N_k(\omega)}^k(\omega)}(x_1,z_1)+d_{\omega}^{T_{N_k(\omega)}^k(\omega)}(z_1,x)+d_{\omega}^{T_{N_k(\omega)}^k(\omega)}(x,z_2)+d_{\omega}^{T_{N_k(\omega)}^k(\omega)}(z_2,x_2)\\
   &\overset{\eqref{distance descend}}< \frac{\eta}{2^{4+k}}+\frac{\eta}{2^{4}}+\frac{\eta}{2^{4}}+\frac{\eta}{2^{4+k}}\\
   &\leq \frac{\eta}{4},
\end{align*}which leads to a contradiction.
As a consequence of \eqref{mu k+p omega Bn} and \eqref{case 1 step 2}, we have
\begin{displaymath} \mu_{k+p,\omega}\Big(B_n\Big(\omega,x,\frac{\eta}{2^4}\Big)\Big)\leq \frac{\# D_{k+1}(\theta^{T_0^{k+1}}\omega)\cdots\# D_{k+p}(\theta^{T_0^{k+p}}\omega)}{\# H_{k+p}(\omega)}=\frac{1}{\# H_k(\omega)}.
\end{displaymath}

Thirdly, we claim $\# H_k(\omega)\geq \exp((h_{\mu_0}(F)-5\gamma)n)$. In fact,
\begin{align*}
		\# H_k(\omega)&=\# D_1(\omega)\cdot \# D_2(\theta^{T_0^2(\omega)}\omega)\cdots\# D_k(\theta^{T_0^k(\omega)}\omega)=\prod_{i=1}^{k}\prod_{j=0}^{N_i(\omega)-1}\# C_i(\theta^{T_j^i(\omega)}\omega)\\
&= \prod_{i=1}^{k}\prod_{j=0}^{N_i-1}M(\alpha,4\delta_i,\hat{n}_i(\theta^{T_j^i(\omega)}\omega),\frac{\eta}{2},\theta^{T_j^i(\omega)}\omega)\overset{\eqref{frac 1 hat n M entropy}}\geq \prod_{i=1}^{k}\prod_{j=0}^{N_i-1}\exp (\hat{n}_i(\theta^{T_j^i(\omega)}\omega)(h_{\mu_0}(F)-4\gamma))\\
&\overset{\eqref{estimation 1.9}}\geq \exp\Big(\Big(h_{\mu_0}(F)-\frac{9\gamma}{2}\Big)\Big(\sum_{j=0}^{N_1-1}(\hat{n}_1(\theta^{T_j^1(\omega)}\omega)+m_1)+\cdots+\sum_{j=0}^{N_k-1}(\hat{n}_k(\theta^{T_j^k(\omega)}\omega)+m_k)\Big)\Big).
\end{align*} We notice the following fact
\begin{align*}
 &\quad  n-\Big(\sum_{j=0}^{N_1(\omega)-1}(\hat{n}_1(\theta^{T_j^1(\omega)}\omega)+m_1)+\cdots+\sum_{j=0}^{N_k(\omega)-1}(\hat{n}_k(\theta^{T_j^k(\omega)}\omega)+m_k)\Big)\\
 &\leq  T_{N_{k-1}(\omega)}^{k-1}(\omega)+\xi_kT_{N_k(\omega)}^k(\omega)+m_{k+1}+\xi_{k+1}n,
\end{align*}and therefore,
\begin{align*}
  &\frac{n-\left(\sum_{j=0}^{N_1(\omega)-1}(\hat{n}_1(\theta^{T_j^1(\omega)}\omega)+m_1)+\cdots+\sum_{j=0}^{N_k(\omega)-1}(\hat{n}_k(\theta^{T_j^k(\omega)}\omega)+m_k)\right)}{n}\\
  \leq &\frac{T_{N_{k-1}(\omega)}^{k-1}(\omega)+\xi_kT_{N_k(\omega)}^k(\omega)+m_{k+1}+\xi_{k+1}n}{n}\\
 \leq & \frac{T_{N_{k-1}(\omega)}^{k-1}(\omega)}{T_{N_{k}(\omega)}^{k}(\omega)}+\xi_k+\xi_{k+1}+\frac{m_{k+1}}{N_k(\omega)}\\
  \overset{\eqref{estimation 1.10}}\leq & \frac{\gamma/2}{h_{\mu_0}(F)-\frac{9}{2}\gamma}.
\end{align*}So we have
\begin{equation*}
  \Big(h_{\mu_0}(F)-\frac{9\gamma}{2}\Big)\Big(\sum_{j=0}^{N_1-1}(\hat{n}_1(\theta^{T_j^1(\omega)}\omega)+m_1)+\cdots+\sum_{j=0}^{N_k-1}(\hat{n}_k(\theta^{T_j^k(\omega)}\omega)+m_k)\Big)\geq n(h_{\mu_0}(F)-5\gamma).
\end{equation*}As a consequence, we obtain $\#H_k(\omega)\geq \exp(n(h_{\mu_0}(F)-5\gamma))$.
Hence, in case 1, we have
\begin{equation*}
 \mu_{k+p,\omega}\Big(B_n\Big(\omega,x,\frac{\eta}{2^4}\Big)\Big)\leq \exp(-n(h_{\mu_0}(F)-5\gamma)).
\end{equation*}

\emph{In Case 2.} i.e., $\exists j\in \{0,\dots,N_{k+1}(\omega)-1\}$ such that $T_{j}^{k+1}(\omega)<n\leq T_j^{k+1}(\omega)+\hat{n}_{k+1}(\theta^{T_j^{k+1}(\omega)}\omega).$ We divide the proof into $4$ steps.
Firstly, we show $\#\{z\in H_k(\omega):z\in B_n(\omega,x,\frac{\eta}{2^3})\}\leq 1.$ If there are $z_1\neq z_2\in H_k(\omega)\cap B_n(\omega,x,\frac{\eta}{2^3}),$ then
\begin{displaymath}
	d_\omega^n(z_1,z_2)<2\times \frac{\eta}{2^3}=\frac{\eta}{4}.
\end{displaymath}
But by \eqref{eq 3 lemma 6.7}, we have $d_\omega^n(z_1,z_2)\geq d_{\omega}^{T_{N_k(\omega)}^k(\omega)}(z_1,z_2)> \frac{3\eta}{8},$ which leads to a contradiction.

Secondly, we show $\#\{z\in H_{k+1}(\omega):z\in B_n(\omega,x,\frac{\eta}{2^3})\}\leq \prod_{i=j}^{N_{k+1}(\omega)-1} \# C_{k+1}(\theta^{T_i^{k+1}(\omega)}\omega).$ If there are $z_1\neq z_2\in H_{k+1}(\omega)\cap B_n(\omega,x,\frac{\eta}{2^3}),$ with
\begin{displaymath}
	\begin{split}
		z_1&=z(x_1,y_1), x_1\in H_k(\omega),y_1\in D_{k+1}(\theta^{T_0^{k+1}(\omega)}\omega), y_1=y\big(a_0^{k+1},\dots,a_{N_{k+1}(\omega)-1}^{k+1}\big),\\	
		z_2&=z(x_2,y_2), x_2\in H_k(\omega),y_2\in D_{k+1}(\theta^{T_0^{k+1}(\omega)}\omega), y_2=y\big(b_0^{k+1},\dots,b_{N_{k+1}(\omega)-1}^{k+1}\big),
	\end{split}
\end{displaymath}
where $a_i^{k+1},b_i^{k+1}\in C_{k+1}(\theta^{T_i^{k+1}(\omega)}\omega)$ for $i\in\{0,\dots,N_{k+1}(\omega)-1\}.$ We claim that $x_1=x_2$ and $a_i^{k+1}=b_i^{k+1}$ for $i\in \{0,1,\dots,j-1\}.$ If $x_1\neq x_2\in H_k(\omega)$, by \eqref{eq 3 lemma 6.7}, we have $d_{\omega}^{T_{N_k(\omega)}^k(\omega)}(x_1,x_2)>\frac{3\eta}{8},$. But
\begin{displaymath}
	\begin{split}
		d_{\omega}^{T_{N_k(\omega)}^k(\omega)}(x_1,x_2)&\leq d_{\omega}^{T_{N_k(\omega)}^k(\omega)}(x_1,z_1)+d_{\omega}^{T_{N_k(\omega)}^k(\omega)}(z_1,x)+d_{\omega}^{T_{N_k(\omega)}^k(\omega)}(x,z_2)+d_{\omega}^{T_{N_k(\omega)}^k(\omega)}(z_2,x_2)\\
&\overset{\eqref{z shadow xy}}\leq \frac{\eta}{2^{4+k+1}}+\frac{\eta}{2^3}+\frac{\eta}{2^3}+\frac{\eta}{2^{4+k+1}}\\&\leq \frac{5\eta}{16},
	\end{split}
\end{displaymath}
which leads to a contradiction. Hence $x_1=x_2.$ Next, we prove that $a_i^{k+1}=b_i^{k+1}$ for $i\in\{0,1,\dots,j-1\}$. If $j=0,$ there is nothing to prove. Suppose $j\geq 1$ and there exists $i$ with $0\leq i\leq j-1$ such that $a_{i}^{k+1}\neq b_i^{k+1}.$ On the one hand, by \eqref{eq y shadowing xk+1} and \eqref{z shadow xy}, one has
\begin{displaymath}
	\begin{split}
		&\quad d_{\theta^{T_i^{k+1}(\omega)}\omega}^{\hat{n}_{k+1}(\theta^{T_i^{k+1}(\omega)}\omega)}\Big(a_i^{k+1},b_i^{k+1}\Big)\\
&\leq d_{\theta^{T_i^{k+1}(\omega)}\omega}^{\hat{n}_{k+1}(\theta^{T_i^{k+1}(\omega)}\omega)}\Big(a_i^{k+1},F_{\theta^{T_0^{k+1}(\omega)}\omega}^{T_i^{k+1}(\omega)-T_0^{k+1}(\omega)}y_1\Big)+d_{\theta^{T_i^{k+1}(\omega)}\omega}^{\hat{n}_{k+1}(\theta^{T_i^{k+1}(\omega)}\omega)}\Big(F_{\theta^{T_0^{k+1}(\omega)}\omega}^{T_i^{k+1}(\omega)-T_0^{k+1}(\omega)}y_1,F_{\omega}^{T_i^{k+1}(\omega)}z_1\Big)\\
&\quad+  d_{\theta^{T_i^{k+1}(\omega)}\omega}^{\hat{n}_{k+1}(\theta^{T_i^{k+1}(\omega)}\omega)}\Big(F_{\omega}^{T_i^{k+1}(\omega)}z_1,F_{\omega}^{T_i^{k+1}(\omega)}z_2\Big)+  d_{\theta^{T_i^{k+1}(\omega)}\omega}^{\hat{n}_{k+1}(\theta^{T_i^{k+1}(\omega)}\omega)}\Big(F_{\omega}^{T_i^{k+1}(\omega)}z_2,F_{\theta^{T_0^{k+1}(\omega)}\omega}^{T_i^{k+1}(\omega)-T_0^{k+1}(\omega)}y_2\Big)   \\&\quad\quad+   d_{\theta^{T_i^{k+1}(\omega)}\omega}^{\hat{n}_{k+1}(\theta^{T_i^{k+1}(\omega)}\omega)}\Big(F_{\theta^{T_0^{k+1}(\omega)}\omega}^{T_i^{k+1}(\omega)-T_0^{k+1}(\omega)}y_2,b_i^{k+1}\Big)\\
&\leq \frac{\eta}{2^{4+k+1}}+\frac{\eta}{2^{4+k+1}}+\frac{\eta}{2^3}\times 2+\frac{\eta}{2^{4+k+1}}+\frac{\eta}{2^{4+k+1}}\leq \frac{\eta}{2^2}+\frac{\eta}{2^{3+k}}<\frac{3\eta}{8}.
  	\end{split}
\end{displaymath}
On the other hand, $a_i^{k+1}\neq b_i^{k+1}\in C_{k+1}(\theta^{T_i^{k+1}}\omega)$ are $(\theta^{T_i^{k+1}}\omega,\frac{\eta}{2},\hat{n}_{k+1}(\theta^{T_i^{k+1}}\omega))$-separated, which leads to a contradiction. Hence, there are at most $\prod_{i=j}^{N_{k+1}(\omega)-1} \# C_{k+1}(\theta^{T_i^{k+1}(\omega)}\omega)$ points lying in $H_{k+1}(\omega)\cap B_n(\omega,x,\frac{\eta}{2^3}).$

Thirdly, we show for any $p\geq 1$,
\begin{equation*}
  \# \{z\in H_{k+p}(\omega):z\in B_n(\omega,x,\frac{\eta}{2^4})\}\leq \left( \prod_{i=j}^{N_{k+1}(\omega)-1}\# C_{k+1}(\theta^{T_i^{k+1}(\omega)}\omega)\right)\cdot \left(\prod_{i=2}^{p}\# D_{k+i}(\theta^{T_0^{k+i}(\omega)}\omega)\right).
\end{equation*} We prove it by showing that $z\in H_{k+p}(\omega)\cap B_n(\omega,x,\frac{\eta}{2^4})$ must descend from the points of $H_{k+1}(\omega)\cap B_n(\omega,x,\frac{\eta}{2^3}).$ Suppose that we have $z_1\in H_{k+1}(\omega)$ and $z_p\in H_{k+p}(\omega)\cap B_{n}(\omega,x,\frac{\eta}{2^4}),$ where $z_p$ descends from $z_1$.
 Denote $z_p=z(z_{p-1},y_p)$ for $z_{p-1}\in H_{k+p-1}(\omega),y_p\in D_{k+p}(\theta^{T_0^{k+p}}\omega),\dots,z_2=z(z_1,y_2)$ for $y_2\in D_{k+2}(\theta^{T_0^{k+2}}\omega).$ Then by \eqref{z shadow xy}, one has
 \begin{displaymath}
 	\begin{split}
 		d_{\omega}^n(z_1,z_p)&\leq d_{\omega}^{T_{N_{k+1}(\omega)}^{k+1}(\omega)}(z_1,z_2)+d_{\omega}^{T_{N_{k+2}(\omega)}^{k+2}(\omega)}(z_2,z_3)+\cdots+ d_{\omega}^{T_{N_{k+p-1}(\omega)}^{k+p-1}(\omega)}(z_{p-1},z_p)\\
 &\leq \frac{\eta}{2^{4+k+2}}+\frac{\eta}{2^{4+k+3}}+\cdots+\frac{\eta}{2^{k+p}}\\
 &\leq\frac{\eta}{2^{4+k+1}}.
 	\end{split}
 \end{displaymath}
Hence $d_{\omega}^n(x,z_1)\leq d_{\omega}^n(x,z_p)+d_{\omega}^n(z_1,z_p)<\frac{\eta}{2^4}+\frac{\eta}{2^{4+k+1}}<\frac{\eta}{2^3},$ which implies that $z_1\in B_n(\omega,x,\frac{\eta}{2^3}).$ Therefore
\begin{displaymath}
	\begin{split}
	&\quad	\# \Big\{z\in H_{k+p}(\omega):z\in B_n\Big(\omega,x,\frac{\eta}{2^4}\Big)\Big\}\\
&\leq \# H_{k+1}(\omega)\cap B_n(\omega,x,\frac{\eta}{2^3})\# D_{k+2}(\theta^{T_0^{k+2}(\omega)}\omega)\cdots\# D_{k+p}(\theta^{T_0^{k+p}(\omega)}\omega)\\
&\leq\left(\prod_{i=j}^{N_{k+1}(\omega)-1}\# C_{k+1}(\theta^{T_j^{k+1}(\omega)}\omega)\right)\cdot\prod_{i=2}^p D_{k+i}(\theta^{T_0^{k+i}(\omega)}\omega),
	\end{split}
\end{displaymath}where the last inequality is given by the second step.
It follows by \eqref{number Hk} and \eqref{number Dk+1} that
\begin{equation}\label{estimation 1.12}
	\begin{split}
	\mu_{k+p,\omega}\Big(B_n\Big(\omega,x,\frac{\eta}{2^4}\Big)\Big)&\leq\frac{\left(\prod_{i=j}^{N_{k+1}(\omega)-1}\# C_{k+1}(\theta^{T_i^{k+1}(\omega)}\omega)\right)\cdot\left(\prod_{i=2}^{p}D_{k+i}(\theta^{T_0^{k+i}(\omega)}\omega)\right)}{\# H_{k+p}(\omega)}\\&=\frac{1}{\# H_k(\omega)\prod_{i=0}^{j-1}\# C_{k+1}(\theta^{T_i^{k+1}(\omega)}\omega)},
	\end{split}
\end{equation}with the convention that $\prod_{i=0}^{-1}=1.$

Fourthly, we show
\begin{equation}\label{estimate number hkck1}
  \# H_k(\omega)\prod_{i=0}^{j-1}\# C_{k+1}(\theta^{T_i^{k+1}}\omega)\geq \exp((h_{\mu_0}(F)-5\gamma)n).
\end{equation}  In fact,
\begin{align*}
 &\quad\# H_k(\omega)\cdot\prod_{i=0}^{j-1} C_{k+1}(\theta^{T_i^{k+1}}\omega)=\left(\prod_{t=1}^{k}\prod_{i=0}^{N_t(\omega)-1}\# C_t(\theta^{T_i^t(\omega)}\omega)\right)\cdot \left(\prod_{i=0}^{j-1} \# C_{k+1}(\theta^{T_i^{k+1}(\omega)}\omega)\right)\nonumber\\
 &=\prod_{t=1}^{k}\prod_{i=0}^{N_t(\omega)-1}M(\alpha,4\delta_t,\hat{n}_t(\theta^{T_i^t(\omega)}\omega),\frac{\eta}{2},\theta^{T_i^t(\omega)}\omega)\cdot \prod_{i=0}^{j-1}M(\alpha,4\delta_{k+1},\hat{n}_{k+1}(\theta^{T_i^{k+1}(\omega)}\omega),\frac{\eta}{2},\theta^{T_i^{k+1}(\omega)}\omega) \nonumber\\
 &\geq\exp\Big((h_{\mu_0}(F)-4\gamma)\Big(\sum_{i=0}^{N_1-1}\hat{n}_1(\theta^{T_i^1(\omega)}\omega)+\cdots+\sum_{i=0}^{N_k-1}\hat{n}_k(\theta^{T_i^k(\omega)}\omega)+\sum_{i=0}^{j-1}\hat{n}_{k+1}(\theta^{T_i^{k+1}(\omega)}\omega)\Big)\Big)\nonumber\\
 &\overset{\eqref{estimation 1.9}}\geq \exp\Big(\Big(h_{\mu_0}(F)-\frac{9\gamma}{2}\Big)\Big(\sum_{i=0}^{N_1(\omega)-1}(\hat{n}_1(\theta^{T_i^1(\omega)}\omega)+m_1)+\cdots+\sum_{i=0}^{N_k(\omega)-1}(\hat{n}_k(\theta^{T_i^k(\omega)}\omega)+m_k)\nonumber\\
 &\quad\quad\quad\quad\quad\quad\quad\quad\quad\quad\quad\quad\quad\quad+\sum_{i=0}^{j-1}(\hat{n}_{k+1}(\theta^{T_i^{k+1}(\omega)}\omega)+m_{k+1})\Big)\Big).
\end{align*}We notice the following fact
\begin{align*}
 & n-\left(\sum_{i=0}^{N_1(\omega)-1}(\hat{n}_1(\theta^{T_i^1(\omega)}\omega)+m_1)+\cdots+\sum_{i=0}^{N_k(\omega)-1}(\hat{n}_k(\theta^{T_i^k(\omega)}\omega)+m_k)\right.\\
 &\quad\quad\left.+\sum_{i=0}^{j-1}(\hat{n}_{k+1}(\theta^{T_i^{k+1}(\omega)}\omega)+m_{k+1})\right)\\
\leq & T_{N_{k-1}(\omega)}^{k-1}(\omega)+l^{k-1,k}(\omega)+\sum_{i=1}^{N_k(\omega)-1}l_i^k(\omega)+l^{k,k+1}(\omega)+\sum_{i=1}^{j}l_i^{k+1}(\omega)+n-T_{j}^{k+1}(\omega)\nonumber\\
\leq &T_{N_{k-1}(\omega)}^{k-1}(\omega)+\xi_kn+\xi_{k+1}n+\hat{n}_{k+1}^M
\leq \left(\frac{T_{N_{k-1}(\omega)}^{k-1}(\omega)}{T_{N_{k}(\omega)}^{k}(\omega)}+\xi_k+\xi_{k+1}+\frac{\hat{n}_{k+1}^M}{N_k(\omega)}\right)n\\
\overset{\eqref{estimation 1.10}}\leq & \frac{\gamma/2}{h_{\mu_0}(F)-\frac{9\gamma}{2}}\cdot n
\end{align*}
with the convention that $\sum_{i=1}^{0}=0$. As a consequence, we have
\begin{align*}
   & \Big(h_{\mu_0}(F)-\frac{9\gamma}{2}\Big)\Big(\sum_{i=0}^{N_1(\omega)-1}(\hat{n}_1(\theta^{T_i^1(\omega)}\omega)+m_1)+\cdots+\sum_{i=0}^{N_k(\omega)-1}(\hat{n}_k(\theta^{T_i^k(\omega)}\omega)+m_k)\nonumber\\
 &\quad\quad\quad\quad\quad\quad\quad\quad\quad\quad\quad\quad\quad\quad+\sum_{i=0}^{j-1}(\hat{n}_{k+1}(\theta^{T_i^{k+1}(\omega)}\omega)+m_{k+1})\Big)\\
 \geq & (h_{\mu_0}(F)-5\gamma )n
\end{align*}and \eqref{estimate number hkck1} is proved. Therefore, in case 2, by \eqref{estimation 1.12} and \eqref{estimate number hkck1}, we have
\begin{displaymath} \mu_{k+p,\omega}\Big(B_n\Big(\omega,x,\frac{\eta}{2^4}\Big)\Big)\leq \exp(-n(h_{\mu_0}(F)-5\gamma)).
\end{displaymath}

\emph{In Case 3.} i.e., there exists   $j\in\{0,\dots,N_{k+1}(\omega)-1\},$ such that $$T_j^{k+1}(\omega)+\hat{n}_{k+1}(\theta^{T_j^{k+1}(\omega)}\omega)<n\leq T_{j+1}^{k+1}(\omega)=T_j^{k+1}+\hat{n}_{k+1}(\theta^{T_j^{k+1}(\omega)}\omega)+m_{k+1}+l^{k+1}_{j+1}(\omega).$$
We also divide our proof into $4$ steps. Firstly, same as the proof of step 1 in case 2, we can show that $\#\{z\in H_k(\omega):z\in B_n(\omega,x,\frac{\eta}{2^3})\}\leq 1.$

Secondly, we show that $\#\{z\in H_{k+1}(\omega):z\in B_n(\omega,x,\frac{\eta}{2^3})\}\leq \prod_{i=j+1}^{N_{k+1}(\omega)-1} \# C_{k+1}(\theta^{T_i^{k+1}(\omega)}\omega)$ with the convention that $\prod_{i=N_{k+1}(\omega)}^{N_{k+1}(\omega)-1}=1$. If there are $z_1\neq z_2\in H_{k+1}(\omega)\cap B_n(\omega,x,\frac{\eta}{2^3}),$ with
\begin{displaymath}
	\begin{split}
		z_1&=z(x_1,y_1), x_1\in H_k(\omega),y_1\in D_{k+1}(\theta^{T_0^{k+1}(\omega)}\omega), y_1=y\big(a_0^{k+1},\dots,a_{N_{{k+1}}(\omega)-1}^{k+1}\big),\\	
		z_2&=z(x_2,y_2), x_2\in H_k(\omega),y_2\in D_{k+1}(\theta^{T_0^{k+1}(\omega)}\omega), y_2=y\big(b_0^{k+1},\dots,b_{N_{{k+1}}(\omega)-1}^{k+1}\big),
	\end{split}
\end{displaymath}
where $a_i^{k+1},b_i^{k+1}\in C_{k+1}(\theta^{T_i^{k+1}(\omega)}\omega)$ for $i\in\{0,\dots,N_{k+1}(\omega)-1\}.$ We claim that $x_1=x_2$ and $a_i^{k+1}=b_i^{k+1}$ for $i\in \{0,1,\dots,j\}.$ In fact, the same proof as in step 2 of case 2 indicates that $x_1=x_2$ and $a_i^{k+1}=b_i^{k+1}$ for $i\in\{0,1,\dots,j-1\}$. It remains to show that $a_j^{k+1}=b_j^{k+1}.$ If $a_j^{k+1}\neq b_j^{k+1}$, on the one hand, by \eqref{eq y shadowing xk+1} and \eqref{z shadow xy}, one has
\begin{displaymath}
	\begin{split}
		&\quad d_{\theta^{T_j^{k+1}(\omega)}\omega}^{\hat{n}_{k+1}(\theta^{T_j^{k+1}(\omega)}\omega)}\Big(a_j^{k+1},b_j^{k+1}\Big)\\
&\leq d_{\theta^{T_j^{k+1}(\omega)}\omega}^{\hat{n}_{k+1}(\theta^{T_j^{k+1}(\omega)}\omega)}\Big(a_j^{k+1},F_{\theta^{T_0^{k+1}(\omega)}\omega}^{T_j^{k+1}(\omega)-T_0^{k+1}(\omega)}y_1\Big)+d_{\theta^{T_j^{k+1}(\omega)}\omega}^{\hat{n}_{k+1}(\theta^{T_j^{k+1}(\omega)}\omega)}\Big(F_{\theta^{T_0^{k+1}(\omega)}\omega}^{T_j^{k+1}(\omega)-T_0^{k+1}(\omega)}y_1,F_{\omega}^{T_j^{k+1}(\omega)}z_1\Big) \\ &\quad+  d_{\theta^{T_j^{k+1}(\omega)}\omega}^{\hat{n}_{k+1}(\theta^{T_j^{k+1}(\omega)}\omega)}\Big(F_{\omega}^{T_j^{k+1}(\omega)}z_1,F_{\omega}^{T_j^{k+1}(\omega)}z_2\Big) +  d_{\theta^{T_j^{k+1}(\omega)}\omega}^{\hat{n}_{k+1}(\theta^{T_j^{k+1}(\omega)}\omega)}\Big(F_{\omega}^{T_j^{k+1}(\omega)}z_2,F_{\theta^{T_0^{k+1}}\omega}^{T_j^{k+1}(\omega)-T_0^{k+1}(\omega)}y_2\Big)\\   &\quad+   d_{\theta^{T_i^{k+1}(\omega)}\omega}^{\hat{n}_{k+1}(\theta^{T_j^{k+1}(\omega)}\omega)}\Big(F_{\theta^{T_0^{k+1}(\omega)}\omega}^{T_j^{k+1}(\omega)-T_0^{k+1}(\omega)}y_2,b_j^{k+1}\Big)\\
&\leq \frac{\eta}{2^{4+k+1}}+\frac{\eta}{2^{4+k+1}}+\frac{\eta}{2^3}\times 2+\frac{\eta}{2^{4+k+1}}+\frac{\eta}{2^{4+k+1}}\leq \frac{\eta}{2^2}+\frac{\eta}{2^{3+k}}<\frac{3\eta}{8}.
	\end{split}
\end{displaymath}
On the other hand, $a_j^{k+1}\neq b_j^{k+1}\in C_{k+1}(\theta^{T_j^{k+1}(\omega)}\omega)$ are $(\theta^{T_j^{k+1}(\omega)}\omega,\frac{\eta}{2},\hat{n}_{k+1}(\theta^{T_j^{k+1}(\omega)}\omega))$-separated, which leads to a contradiction.

Thirdly, we show that for any $p\geq 2$,
\begin{equation*}
  \# \{z\in H_{k+p}(\omega):z\in B_n(\omega,x,\frac{\eta}{2^4})\}\leq \left( \prod_{i=j+1}^{N_{k+1}(\omega)-1}\# C_{k+1}(\theta^{T_i^{k+1}(\omega)}\omega)\right)\cdot\left(\prod_{i=2}^{p}\# D_{k+i}(\theta^{T_0^{k+i}(\omega)}\omega)\right),
\end{equation*}with the convention that $\prod_{i=N_{k+1}(\omega)}^{N_{k+1}(\omega)-1}=1$.
Exactly same proof as in step $3$ of cases $2$ indicates that $z\in H_{{k+p}}(\omega)\cap B_n(\omega,x,\frac{\eta}{2^4})$ must descends from some point in $H_{k+1}(\omega)\cap B_n(\omega,x,\frac{\eta}{2^3}).$
Therefore,
\begin{displaymath}
	\begin{split}
		&\quad\# \Big\{z\in H_{k+p}(\omega):z\in B_n\Big(\omega,x,\frac{\eta}{2^4}\Big)\Big\}\\
&\leq \# H_{k+1}(\omega)\cap B_n(\omega,x,\frac{\eta}{2^3})\# D_{k+2}(\theta^{T_0^{k+2}(\omega)}\omega)\cdots\# D_{k+p}(\theta^{T_0^{k+p}(\omega)}\omega)\\
&\leq\left(\prod_{i=j+1}^{N_{k+1}(\omega)-1}\# C_{k+1}(\theta^{T_j^{k+1}(\omega)}\omega)\right)\cdot\left(\prod_{i=2}^p D_{k+i}(\theta^{T_0^{k+i}(\omega)}\omega)\right).
	\end{split}
\end{displaymath}
It follows that
\begin{equation}
	\begin{split}
		\mu_{k+p,\omega}\Big(B_n\Big(\omega,x,\frac{\eta}{2^4}\Big)\Big)&\leq\frac{\prod_{i=j+1}^{N_{k+1}(\omega)-1}\# C_{k+1}(\theta^{T_i^{k+1}(\omega)}\omega)\cdot\prod_{i=2}^{p}D_{k+i}(\theta^{T_0^{k+i}(\omega)}\omega)}{\# H_{k+p}(\omega)}\\&=\frac{1}{\# H_k(\omega)\prod_{i=0}^{j}\# C_{k+1}(\theta^{T_i^{k+1}(\omega)}\omega)}.
	\end{split}
\end{equation}

Fourthly, we show $\# H_k(\omega)\prod_{i=0}^{j}\# C_{k+1}(\theta^{T_i^{k+1}(\omega)}\omega)\geq \exp((h_{\mu_0}(F)-5\gamma)n).$
 Using (\ref{estimation 1.9}), we have
\begin{displaymath}
	\begin{split}
		&\quad\# H_k(\omega)\cdot\prod_{i=0}^{j} C_{k+1}(\theta^{T_i^{k+1}(\omega)}\omega)=\prod_{t=1}^{k}\prod_{i=0}^{N_t(\omega)-1}\# C_t(\theta^{T_i^t(\omega)}\omega)\cdot \prod_{i=0}^{j} \# C_{k+1}(\theta^{T_i^{k+1}(\omega)}\omega)\nonumber\\
 &=\prod_{t=1}^{k}\prod_{i=0}^{N_t(\omega)-1}M(\alpha,4\delta_t,\hat{n}_t(\theta^{T_i^t(\omega)}\omega),\frac{\eta}{2},\theta^{T_i^t(\omega)}\omega)\cdot \prod_{i=0}^{j}M(\alpha,4\delta_{k+1},\hat{n}_{k+1}(\theta^{T_i^{k+1}(\omega)}\omega),\frac{\eta}{2},\theta^{T_i^{k+1}(\omega)}\omega) \\
&\geq\exp\Big((h_{\mu_0}(F)-4\gamma)\Big(\sum_{i=0}^{N_1(\omega)-1}\hat{n}_1(\theta^{T_i^1(\omega)}\omega)+\cdots+\sum_{i=0}^{N_k(\omega)-1}\hat{n}_k(\theta^{T_i^k(\omega)}\omega)+\sum_{i=0}^{j}\hat{n}_{k+1}(\theta^{T_i^{k+1}(\omega)}\omega)\Big)\Big)\\&\geq \exp\Big(\Big(h_{\mu_0}(F)-4\gamma-\frac{\gamma}{2}\Big)\Big(\sum_{i=0}^{N_1(\omega)-1}(\hat{n}_1(\theta^{T_i^1(\omega)}\omega)+m_1)+\cdots+\sum_{i=0}^{N_k(\omega)-1}(\hat{n}_k(\theta^{T_i^k(\omega)}\omega)+m_k)\\
&\quad+\sum_{i=0}^{j-1}(\hat{n}_{k+1}(\theta^{T_i^{k+1}(\omega)}\omega)+m_{k+1})+\hat{n}_{k+1}(\theta^{T_j^{k+1}(\omega)}\omega)\Big)\Big).
	\end{split}
\end{displaymath} We notice the following fact that in Case 3
\begin{displaymath}
	\begin{split}
		&\quad n-\left(\sum_{i=0}^{N_1(\omega)-1}(\hat{n}_1(\theta^{T_i^1(\omega)}\omega)+m_1)+\cdots+\sum_{i=0}^{N_k(\omega)-1}(\hat{n}_k(\theta^{T_i^k(\omega)}\omega)+m_k)\right.
\\ &\quad\quad\quad\quad\left.+\sum_{i=0}^{j-1}(\hat{n}_{k+1}(\theta^{T_i^{k+1}(\omega)}\omega)+m_{k+1})+\hat{n}_{k+1}(\theta^{T_j^{k+1}(\omega)}\omega)\right)\\
\leq & T_{N_{k-1}(\omega)}^{k-1}(\omega)+l^{k-1,k}(\omega)+\sum_{i=1}^{N_k(\omega)-1}l_i^k(\omega)+l^{k,k+1}(\omega)+\sum_{i=1}^{j}l_i^{k+1}(\omega)\\
&\quad\quad+(n-T_{j}^{k+1}(\omega)- \hat{n}_{k+1}(\theta^{T_j^{k+1}(\omega)}\omega))\\
\leq & T_{N_{k-1}(\omega)}^{k-1}(\omega)+\xi_kn+\xi_{k+1}n+m_{k+1}\\
\leq &\left(\frac{T_{N_{k-1}(\omega)}^{k-1}(\omega)}{T_{N_{k}(\omega)}^{k}(\omega)}+\xi_k+\xi_{k+1}+\frac{m_{k+1}}{N_{k}(\omega)}\right)\cdot n\\
\overset{\eqref{estimation 1.10}}\leq &\frac{\gamma/2}{h_{\mu_0}(F)-\frac{9\gamma}{2}}\cdot n.
	\end{split}
\end{displaymath} As a consequence,
\begin{align*}
  & \Big(h_{\mu_0}(F)-4\gamma-\frac{\gamma}{2}\Big)\Big(\sum_{i=0}^{N_1(\omega)-1}(\hat{n}_1(\theta^{T_i^1(\omega)}\omega)+m_1)+\cdots+\sum_{i=0}^{N_k(\omega)-1}(\hat{n}_k(\theta^{T_i^k(\omega)}\omega)+m_k)\\
&\quad+\sum_{i=0}^{j-1}(\hat{n}_{k+1}(\theta^{T_i^{k+1}(\omega)}\omega)+m_{k+1})+\hat{n}_{k+1}(\theta^{T_j^{k+1}(\omega)}\omega)\Big)\\
\geq &(h_{\mu_0}(F)-5\gamma)n,
\end{align*}and $\# H_k(\omega)\cdot\prod_{i=0}^{j} C_{k+1}(\theta^{T_i^{k+1}(\omega)}\omega)\geq \exp((h_{\mu_0}(F)-5\gamma)n)$.
Therefore,
\begin{displaymath}
	\mu_{k+p,\omega}\Big(B_n\Big(\omega,x,\frac{\eta}{2^4}\Big)\Big)\leq \exp(-n(h_{\mu_0}(F)-5\gamma)).
\end{displaymath}
In all three cases, we conclude
\begin{displaymath}
	\mu_{\omega}\Big(B_n(\omega,x,\frac{\eta}{2^4})\Big)\leq \varliminf_{p\to\infty}\mu_{k+p,\omega}\Big(B_n\Big(\omega,x,\frac{\eta}{2^4}\Big)\Big)\leq \exp(-n(h_{\mu_0}(F)-5\gamma)).
\end{displaymath}
The proof of Lemma \ref{lemma entropy   distribution} is complete.
\end{proof}
\subsection*{Acknowledgment}
The authors would like to thank Wen Huang for many valuable comments. Partial work was finished at the Brigham Young University and constitutes part of the second author's PhD thesis. The second author would like to thank Kening Lu for many useful discussions.

 \bibliographystyle{plain}

\end{document}